\documentclass[12pt, onecolumn]{IEEEtran}

\usepackage{float}

%

\usepackage{amsthm,amsmath,amsfonts,amssymb}

\usepackage{xspace,exscale,relsize}
\usepackage{fancybox,shadow}
\usepackage{graphicx}
\usepackage{caption}
\usepackage{subcaption} 
\usepackage{xcolor}
\usepackage{extarrows}
\usepackage{comment}
\usepackage{booktabs} 
\usepackage{makecell, multirow}
\usepackage[noadjust]{cite}

\usepackage{mathrsfs}
\makeatletter
\let\over=\@@over \let\overwithdelims=\@@overwithdelims
\let\atop=\@@atop \let\atopwithdelims=\@@atopwithdelims
\let\above=\@@above \let\abovewithdelims=\@@abovewithdelims
\makeatother
\interdisplaylinepenalty=10000

\usepackage{rotating}

	

	%
	\usepackage{ifpdf}
	
	\usepackage{psfrag}
	
	\usepackage{prettyref}

	\usepackage{tikz}
	\usetikzlibrary{arrows}
	\tikzstyle{int}=[draw, fill=blue!20, minimum size=2em]
	\tikzstyle{dot}=[circle, draw, fill=blue!20, minimum size=2em]
	\tikzstyle{init} = [pin edge={to-,thin,black}]

	\usepackage[
	CJKbookmarks=true,
	bookmarksnumbered=true,
	bookmarksopen=true,
	colorlinks=true,
	citecolor=red,
	linkcolor=blue,
	anchorcolor=red,
	urlcolor=blue
	]{hyperref}
	
	\usepackage[all]{xy}
	
	\usepackage{paralist}
	\usepackage{enumitem}
	
	\usepackage{algorithm}
	\usepackage{algpseudocode}
	
	\usepackage{mathtools}
	
	\usepackage{ifthen}




	\ifx\eqref\undefined
	\newcommand{\eqref}[1]{~(\ref{#1})}
	\fi
	\ifx\mod\undefined
	\def\mod{\mathop{\rm mod}}
	\fi
	
	\usepackage{bm}

	\newcommand{\norm}[1]{{\left\Vert #1 \right\Vert}}

	\def\argmin{\mathop{\rm argmin}}

	\def\exp{\mathop{\rm exp}}

	\def\EE{\Expect}

	\def\PP{\mathbb{P}}

	\def\eqdef{\triangleq}



	\newcommand{\abs}[1]{\left| #1 \right|}


	\makeatletter
	\def\bbordermatrix#1{\begingroup \m@th
		\@tempdima 4.75\p@
		\setbox\z@\vbox{%
			\def\cr{\crcr\noalign{\kern2\p@\global\let\cr\endline}}%
			\ialign{$##$\hfil\kern2\p@\kern\@tempdima&\thinspace\hfil$##$\hfil
				&&\quad\hfil$##$\hfil\crcr
				\omit\strut\hfil\crcr\noalign{\kern-\baselineskip}%
				#1\crcr\omit\strut\cr}}%
		\setbox\tw@\vbox{\unvcopy\z@\global\setbox\@ne\lastbox}%
		\setbox\tw@\hbox{\unhbox\@ne\unskip\global\setbox\@ne\lastbox}%
		\setbox\tw@\hbox{$\kern\wd\@ne\kern-\@tempdima\left[\kern-\wd\@ne
			\global\setbox\@ne\vbox{\box\@ne\kern2\p@}%
			\vcenter{\kern-\ht\@ne\unvbox\z@\kern-\baselineskip}\,\right]$}%
		\null\;\vbox{\kern\ht\@ne\box\tw@}\endgroup}
	\makeatother
	


	\newcommand{\stepa}[1]{\overset{\rm (a)}{#1}}
	\newcommand{\stepb}[1]{\overset{\rm (b)}{#1}}
	\newcommand{\stepc}[1]{\overset{\rm (c)}{#1}}
	\newcommand{\stepd}[1]{\overset{\rm (d)}{#1}}

	\newcommand{\subG}{\mathsf{SubG}}
	\newcommand{\out}{\mathsf{out}}

	\newcommand{\Unif}{\mathrm{Uniform}}

	\newcommand{\floor}[1]{{\left\lfloor {#1} \right \rfloor}}
	\newcommand{\ceil}[1]{{\left\lceil {#1} \right \rceil}}

	\newcommand{\reals}{\mathbb{R}}

	\newcommand{\Expect}{\mathbb{E}}

	\newcommand{\TV}{{\rm TV}}

	\newcommand{\iid}{iid\xspace}

	\newcommand{\pth}[1]{\left( #1 \right)}
	\newcommand{\qth}[1]{\left[ #1 \right]}
	\newcommand{\sth}[1]{\left\{ #1 \right\}}

	\newcommand{\iiddistr}{{\stackrel{\text{\iid}}{\sim}}}

	\newcommand{\Binom}{\text{Binom}}

	\newcommand{\indc}[1]{{\mathbf{1}_{\left\{{#1}\right\}}}}

	\definecolor{myblue}{rgb}{.8, .8, 1}
	\definecolor{mathblue}{rgb}{0.2472, 0.24, 0.6} 
	\definecolor{mathred}{rgb}{0.6, 0.24, 0.442893}
	\definecolor{mathyellow}{rgb}{0.6, 0.547014, 0.24}

	\newcommand{\blue}{\color{blue}}
	\newcommand{\nb}[1]{{\sf\blue[#1]}}

	\newcommand{\sfM}{{\mathsf{M}}}

	\newcommand{\calB}{{\mathcal{B}}}
	
	\newcommand{\calD}{{\mathcal{D}}}
	\newcommand{\calE}{{\mathcal{E}}}
	\newcommand{\calF}{{\mathcal{F}}}

	\newcommand{\calP}{{\mathcal{P}}}
	\newcommand{\calQ}{{\mathcal{Q}}}
	\newcommand{\calR}{{\mathcal{R}}}
	\newcommand{\calS}{{\mathcal{S}}}

	\newcommand{\calZ}{{\mathcal{Z}}}


	\newrefformat{eq}{(\ref{#1})}
	\newrefformat{thm}{Theorem~\ref{#1}}
	\newrefformat{th}{Theorem~\ref{#1}}
	\newrefformat{chap}{Chapter~\ref{#1}}
	\newrefformat{sec}{Section~\ref{#1}}
	\newrefformat{seca}{Section~\ref{#1}}
	\newrefformat{algo}{Algorithm~\ref{#1}}
	\newrefformat{fig}{Fig.~\ref{#1}}
	\newrefformat{tab}{Table~\ref{#1}}
	\newrefformat{rmk}{Remark~\ref{#1}}
	\newrefformat{clm}{Claim~\ref{#1}}
	\newrefformat{def}{Definition~\ref{#1}}
	\newrefformat{cor}{Corollary~\ref{#1}}
	\newrefformat{lmm}{Lemma~\ref{#1}}
	\newrefformat{prop}{Proposition~\ref{#1}}
	\newrefformat{pr}{Proposition~\ref{#1}}
	\newrefformat{property}{Property~\ref{#1}}
	\newrefformat{app}{Appendix~\ref{#1}}
	\newrefformat{apx}{Appendix~\ref{#1}}
	\newrefformat{ex}{Example~\ref{#1}}
	\newrefformat{exer}{Exercise~\ref{#1}}
	\newrefformat{soln}{Solution~\ref{#1}}
	
	\def\unifto{\mathop{{\mskip 3mu plus 2mu minus 1mu%
				\setbox0=\hbox{$\mathchar"3221$}%
				\raise.6ex\copy0\kern-\wd0%
				\lower0.5ex\hbox{$\mathchar"3221$}}\mskip 3mu plus 2mu minus 1mu}}

	\ifx\lesssim\undefined
	\def\simleq{{{\mskip 3mu plus 2mu minus 1mu%
				\setbox0=\hbox{$\mathchar"013C$}%
				\raise.2ex\copy0\kern-\wd0%
				\lower0.9ex\hbox{$\mathchar"0218$}}\mskip 3mu plus 2mu minus 1mu}}
	\else
	\def\simleq{\lesssim}
	\fi
	
	\ifx\gtrsim\undefined
	\def\simgeq{{{\mskip 3mu plus 2mu minus 1mu%
				\setbox0=\hbox{$\mathchar"013E$}%
				\raise.2ex\copy0\kern-\wd0%
				\lower0.9ex\hbox{$\mathchar"0218$}}\mskip 3mu plus 2mu minus 1mu}}
	\else
	\def\simgeq{\gtrsim}
	\fi



		\newtheorem{theorem}{Theorem}
		\newtheorem{lemma}[theorem]{Lemma}
		\newtheorem{corollary}[theorem]{Corollary}
		
		\newtheorem{proposition}[theorem]{Proposition}

		\theoremstyle{definition}

		\newtheorem{remark}{Remark}
		
%
		%
		%
		\newif\ifmapx
		{\catcode`/=0 \catcode`\\=12/gdef/mkillslash\#1{#1}}
		\edef\jobnametmp{\expandafter\string\csname ic_apx\endcsname}
		\edef\jobnameapx{\expandafter\mkillslash\jobnametmp}
		\edef\jobnameexpand{\jobname}
		\ifx\jobnameexpand\jobnameapx
		\mapxtrue
		\else
		\mapxfalse
		\fi


		\newcommand{\decay}{G}

		\renewcommand{\hat}{\widehat}
		\renewcommand{\tilde}{\widetilde}

		\newcommand{\cod}{{\sf COD}~}
		\newcommand{\codelta}{{$\sf COD_\delta$}~}

		\newcommand{\snr}{\mathsf{SNR}}
		
		\newcommand{\enorm}{\calE^{\sf{norm}}}
		\newcommand{\econ}{\calE^{\sf{con}}}

		\newcommand{\hdp}{${\sf HDP}$}
		\newcommand{\dist}{{\sf dist}}
		\newcommand{\totdist}{{\sf totdist}}
		\newcommand{\iod}{{\sf IOD}~}

\begin{document}

\ifpdf
\DeclareGraphicsExtensions{.pgf,.jpg,.pdf}
\graphicspath{{figures/}{plots/}}
\fi

\title{A provable initialization and robust clustering method for general mixture models}

\author{Soham Jana, Jianqing Fan, and Sanjeev Kulkarni\thanks{S.~J. is with the Department of Applied and Computational Mathematics and Statistics, University of Notre Dame, Notre Dame, IN, USA, email:\url{soham.jana@nd.edu}.  J.~F. and S.~K. are with the Department of Operations Research and Financial Engineering and Department of Electric and Computer Engineering, Princeton University, Princeton, NJ, USA email:\url{jqfan@princeton.edu}, \ \url{kulkarni@princeton.edu}. J.F. is partially supported by NSF grants DMS-2210833, DMS-2053832, DMS-2052926 and ONR grant N00014-22-1-2340.}
			}
			
\date{\today}
	

\maketitle

	\begin{abstract}
			Clustering is a fundamental tool in statistical machine learning in the presence of heterogeneous data. Most recent results focus primarily on optimal mislabeling guarantees when data are distributed around centroids with sub-Gaussian errors. Yet, the restrictive sub-Gaussian model is often invalid in practice since various real-world applications exhibit heavy tail distributions around the centroids or suffer from possible adversarial attacks that call for robust clustering with a robust data-driven initialization. In this paper, we present initialization and subsequent clustering methods that provably guarantee near-optimal mislabeling for general mixture models when the number of clusters and data dimensions are finite. We first introduce a hybrid clustering technique with a novel multivariate trimmed mean type centroid estimate to produce mislabeling guarantees under a weak initialization condition for general error distributions around the centroids. A matching lower bound is derived, up to factors depending on the number of clusters. In addition, our approach also produces similar mislabeling guarantees even in the presence of adversarial outliers. Our results reduce to the sub-Gaussian case in finite dimensions when errors follow sub-Gaussian distributions. To solve the problem thoroughly, we also present novel data-driven robust initialization techniques and show that, with probabilities approaching one, these initial centroid estimates are sufficiently good for the subsequent clustering algorithm to achieve the optimal mislabeling rates. Furthermore, we demonstrate that the Lloyd algorithm is suboptimal for more than two clusters even when errors are Gaussian and for two clusters when error distributions have heavy tails. Both simulated data and real data examples further support our robust initialization procedure and clustering algorithm.
	\end{abstract}

	\noindent%
	{\it Keywords:} Heavy tail, adversarial outliers, initialization, mislabeling, adaptive methods

	\section{Introduction}

\subsection{Problem}

Clustering is an essential task in statistics and machine learning \cite{hastie2009elements,xu2005survey} that has diverse practical applications (e.g., wireless networks \cite{ABBASI20072826,sasikumar2012k}, grouping biological species \cite{maravelias1999habitat,pigolotti2007species}, medical imaging \cite{ng2006medical,ajala2012fuzzy} and defining peer firms in finance \cite{beatty2013spillover,fan2023unearthing}). One of the simplest and most studied clustering models is the additive $k$-centroid setup, where data points are distributed around one of the centroids according to some unknown additive error distribution. Mathematically, this model can be described as
\begin{align}\label{eq:model}
	Y_i = \theta_{z_i} + w_i, \quad z_1,\dots,z_n\in \{1,\dots,k\}, \quad \theta_1,\dots,\theta_k\in \reals^d,
\end{align}
for a given $k$.
Here $Y_1,\dots, Y_n$ are the data, $\theta_g$ is the centroid corresponding to the $g$-th cluster, $z=(z_1,\dots,z_n)$ is the unknown label vector of the data describing which clusters they belong to, and $w_1,\dots,w_n$ are unknown independent errors. Recent advances in the literature have focused on recovering the labels $z$. Given any estimate $\hat z = \pth{\hat z_1,\dots,\hat z_n}$ of $z$, define the mislabeling error as the proportion of label estimates that do not match the correct ones (up to a permutation of labels): 
\begin{align}\label{eq:mislabeling}
	\ell(\hat z,z)=\inf_{\pi\in \calS_k}\qth{\frac 1n\sum_{i=1}^n\indc{\pi(z_i)\neq \hat z_i}},
\end{align}
where $\calS_k$ is the collection of all mappings from $[k]$ to $[k]$. Then, clustering algorithms are constructed to partition the data into $k$ groups such that the data labels are recovered with a small mislabeling error.


A surge of recent work focuses on constructing provable algorithms to guarantee smaller mislabeling errors as the minimum separation of centroids increases. Let $\|\cdot \|$ denote the Euclidean norm for the rest of the paper, unless mentioned otherwise. The majority of this work studies specific light-tailed models. For example, in the Gaussian mixture model, \cite{lu2016statistical} established that the Lloyd algorithm with spectral initialization can achieve the mislabeling rate $\exp\pth{-(1+o(1)){\Delta^2\over 8\sigma^2}}$, where  $\Delta = \min_{g\neq h\in [k]}\|\theta_g-\theta_h\|$ and $\sigma$ is the common standard deviation of the error coordinates. They also show that the above rate is minimax optimal in finite dimensions. Extending on the above work, \cite{loffler2021optimality,abbe2022} show that spectral clustering algorithms can achieve the above rate of error in high dimensions. In the very specific case of $k=2$, when $d$ is much larger than the sample size $n$, \cite{ndaoud2022sharp} used a variant of Lloyd's algorithm with spectral initialization to achieve a more precise error rate $\exp\pth{-\Theta\pth{\Delta^4/\sigma^4\over {\Delta^2/\sigma^2 + d/n}}}$. Recently \cite{dreveton2024universal} extended these results to Laplace distributions for the $w_i$. They showed that a variant of the spectral initialization combined with a maximum likelihood-based estimation strategy can achieve the minimax mislabeling error $\exp\pth{-(1+o(1))\pth{\min_{g\neq h\in [k]}\|\theta_g-\theta_h\|_1\over \sigma}}$, where $\|\cdot\|_1$ denotes the $L_1$ norm of vectors and $\sigma$ is the common standard deviation in each coordinate of the error vector. Their work also addresses the clustering problem for specific types of sub-exponential mixture distributions, for which they used variants of Lloyd's algorithms based on the Bregman divergence. Notably, the mislabeling in the sub-exponential case is significantly different than in the Gaussian case.

The above exposition illustrates that given any fixed constellation of centroids, the statistical guarantees for mislabeling depend significantly on the properties of the tail of the distributions of the $w_i$. In particular, given two data-generating setups with the same centroids, the heavy-tailed setup is expected to have a higher mislabeling error. The clustering and initialization methods in the existing literature, such as spectral techniques, are mostly geared to tackle specific light-tailed distributional setups, and it is unknown whether their guarantees extend to general heavy-tailed errors where outliers are prevalent. We address the above issues in the current work. Our proposed algorithm is a significant departure from the existing literature as it is designed to adapt to heavy-tailed error distributions, a scenario that has yet to be extensively explored. To summarize, our paper deviates from most of the existing literature in trying to answer the following:
\begin{center}
	\begin{minipage}{0.8\linewidth}
		\bigskip	
		{\it Can we construct an initialization method and subsequent clustering algorithm that provably adapt to the decay conditions for general heavy-tailed error distributions and produce (nearly) optimal mislabeling errors under generic assumptions?}
		\bigskip
	\end{minipage}
\end{center}
To address the above question,
we primarily focus on generalizing the existing results concerning the tails of the error distributions. As a first work in this direction, we restrict ourselves to the scenario where the actual clusters are of similar sizes (see, e.g., \cite{abbe2022} for similar assumptions), and $d,k$ are finite. In particular, we assume the following:

\begin{enumerate}[label=(C\arabic*)]
	\item There is a constant $C$ such that $k,d\leq C$.
	\item \label{cond:alpha} $\alpha=\min_{g\in [k]}{\sum_{i=1}^n \indc{z_i=g}/ n}\geq c/k$ for some $c>0$
	
	\item \label{cond:G-decay} ($G$-decay condition) The error distributions in \eqref{eq:model} satisfy
	\begin{align}
		\PP\qth{\|w_i\|>x}\leq G(x/\sigma),\quad \sigma>0,x\geq 0, i\in \{1,\dots,n\}, \quad \text{$G(\cdot)$ is decreasing on $\reals_+\cup \sth{0}$}.
	\end{align}
	\item (Parameter space) The parameters $(z,\{\theta_g\}_{g\in [k]})$ is generated from the parameter space $\calP_\Delta$
	$$
	\calP_{\Delta} = \sth{(z,\{\theta_g\}_{g\in [k]}): z\in [k]^n, \min_{g\neq h\in [k]}\|\theta_g-\theta_h\|\geq \Delta}.
	$$
\end{enumerate}
Note that we do not assume much knowledge about the decay function $G$, making our result general in the true sense. We propose a novel initialization algorithm \iod (Initialization via Ordered Distances, see \prettyref{algo:init} for $k=2$ and \prettyref{algo:init-k} for $k\geq 3$) and a novel subsequent iterative clustering technique \cod (Clustering via Ordered Distances, provided in \prettyref{algo:cod}) that are also oblivious to the decay condition $G$. In addition to the observations $\{Y_1,\dots,Y_n\}$, the only extra information our algorithms use is knowledge about a lower bound on $\alpha$. This is often assumed to be known in practice, e.g., the smallest cluster contains at least 5-10\% of the data points. Our main result is the following.
\begin{theorem}\label{thm:main}
	Suppose that we know $\alpha>c/k$ for some $c>0$ and let $\hat z$ be the output label vector when we run the \cod clustering method with \iod initialization scheme that knows this $c$. There exist constants $\{c_{k,i}\}_{i=1}^4, c_{G,\alpha} C_{G,\alpha}$ such that the following are satisfied. Whenever $\Delta\geq \sigma C_{G,\alpha} $, we have
	$$
	\sup_{\calP_\Delta}\EE\qth{\ell(\hat z,z)}
	\leq c_{k,1} G\pth{{\Delta\over 2\sigma}-c_{G,\alpha}}
	+c_{k,2}e^{-n\alpha/4}.
	$$
	The entire algorithm runs in $c_{k,3}n^2(d+\log n)$ time. In addition, under minor smoothness condition on $G$, whenever $\alpha\in \pth{{c\over k},{\bar c\over k}}$ for some constants $0<c<\bar c<1$, there exists $c_G>0$ such that
	$$\inf_{\hat z}\sup_{\calP_\Delta}\EE\qth{\ell(\hat z,z)}\geq c_{k,4}G\pth{{\Delta\over 2\sigma}+c_{G}}.$$
\end{theorem}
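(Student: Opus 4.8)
\emph{Strategy.} The plan is the classical route: lower bound the minimax risk by the Bayes risk of a sub-experiment in which the centroids are frozen at a least-favourable configuration and only the labels are random, reduce the mislabeling loss to a family of independent single-coordinate binary tests, and finally pick the error law that makes those tests as hard as \ref{cond:G-decay} allows. Only the last step is not routine.

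\emph{Reduction and killing the permutation.} Fix a unit vector $e_1$, set $\theta^*_1=-\tfrac\Delta2 e_1$, $\theta^*_2=\tfrac\Delta2 e_1$, and place $\theta^*_3,\dots,\theta^*_k$ off the line $\reals e_1$ with all mutual distances and all distances to $\theta^*_1,\theta^*_2$ at least $\Delta$. Let $z^{(0)}$ be a reference labelling whose $k$ cluster sizes all sit at the centre of the window $\big((c/k)n,(\bar c/k)n\big)$, fix a constant $\beta<\alpha/2$ small enough that the cluster sizes remain in this window however a set $S\subset[n]$ of $\lfloor\beta n\rfloor$ coordinates inside clusters $1,2$ of $z^{(0)}$ is relabelled, and let $\mu_z$ draw $z_i=z^{(0)}_i$ for $i\notin S$ and $z_i\sim\Unif\{1,2\}$ independently for $i\in S$. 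Then $(z,\theta^*)\in\calP_\Delta$ for every $z$ in the support of $\mu_z$, so $\sup_{\calP_\Delta}\EE[\ell(\hat z,z)]\ge\inf_{\hat z}\Expect_{z\sim\mu_z}\EE[\ell(\hat z,z)]$, the infimum being attained by the Bayes rule $\hat z^B$. Since the labels off $S$ are deterministic, $\hat z^B_i=z^{(0)}_i$ there; and because $z^{(0)}$ is balanced with $\beta<\alpha/2$, any non-identity $\pi\in\calS_k$ disagrees with $z^{(0)}$ on at least $(\alpha-\beta)n>\beta n$ coordinates outside $S$, so the identity attains the infimum in \eqref{eq:mislabeling} and $\ell(\hat z^B,z)=\tfrac1n\sum_{i\in S}\indc{\hat z^B_i\ne z_i}$ identically. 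Under $\mu_z$ the pair $(z_i,Y_i)$ is independent of $Y_{-i}$, so $\hat z^B_i$ is a function of $Y_i$ alone and $\PP[\hat z^B_i\ne z_i]=\tau:=\tfrac12\big(1-\TV(P_1,P_2)\big)$, the optimal error of testing $z_i\in\{1,2\}$ with equal priors, where $P_g$ is the law of $\theta^*_g+w$. Hence, for $n$ large,
$$\inf_{\hat z}\sup_{\calP_\Delta}\EE[\ell(\hat z,z)]\ \ge\ \frac{\lfloor\beta n\rfloor}{n}\,\tau\ \ge\ \frac{\beta}{4}\,\big(1-\TV(P_1,P_2)\big),$$
which (as $\beta\asymp\alpha\asymp c/k$) produces the factor $c_{k,4}$.

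\emph{The least-favourable error law (the crux).} It remains to make $1-\TV(P_1,P_2)$ at least $G\big(\tfrac{\Delta}{2\sigma}+c_G\big)$ with an error law obeying \ref{cond:G-decay}. Take $w=\sigma\,\epsilon R\,e_1$ with $\epsilon\sim\Unif\{\pm1\}$ and $R\ge0$ a scalar whose tail $\Psi(t):=\PP[R>t]$ is a convex decreasing function with $G(t+c_G)\le\Psi(t)\le G(t)$ for all $t\ge0$; producing such a convex function wedged between $G$ and a bounded shift of $G$ is precisely the role of the ``minor smoothness condition on $G$'' (when $G$ is already convex one may take $\Psi=G$ and drop the shift). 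Convexity of $\Psi$ means $R$ has a decreasing density, hence $\sigma\epsilon R$ is symmetric and unimodal about $0$; since $\theta^*_1,\theta^*_2$ lie on the common line $\reals e_1$, the laws $P_1,P_2$ form a one-dimensional location family on that line, and a direct computation of the affinity gives
$$1-\TV(P_1,P_2)\ =\ \Psi\!\left(\tfrac{\Delta}{2\sigma}\right)\ =\ \PP\!\left[R>\tfrac{\Delta}{2\sigma}\right]\ \ge\ G\!\left(\tfrac{\Delta}{2\sigma}+c_G\right),$$
while $\PP[\|w\|>x]=\Psi(x/\sigma)\le G(x/\sigma)$ shows \ref{cond:G-decay} holds (the same $w$ is used in every cluster). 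Substituting into the previous display yields $\inf_{\hat z}\sup_{\calP_\Delta}\EE[\ell(\hat z,z)]\ge c_{k,4}\,G\big(\tfrac{\Delta}{2\sigma}+c_G\big)$.

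\emph{Main obstacle.} The genuine difficulty is this last step. For an \emph{arbitrary} decreasing $G$ one cannot realise $G$ itself as the tail of a unimodal location family, and a too-spread or singular choice of $w$ makes the affinity collapse --- e.g.\ a spherically symmetric $w$ with radial tail $G$ only yields $1-\TV\asymp G(\Delta/\sigma)$, losing a factor of two in the argument. Pinning down the precise regularity hypothesis on $G$ under which the convex sandwich costs only the absolute constant $c_G$, and checking that this $c_G$ is compatible with the constant $c_{G,\alpha}$ of the upper bound so that the two estimates genuinely trap the truth up to $k$-dependent factors, is where care is required. The reduction in the first two steps is a routine adaptation of the clustering lower bounds of \cite{lu2016statistical,ndaoud2022sharp,dreveton2024universal} to a generic tail function $G$.
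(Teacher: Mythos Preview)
Your proposal only treats the lower bound (the third display in the theorem); the upper bound and the runtime claim are not addressed and in the paper are obtained separately from \prettyref{thm:main-genr}, \prettyref{thm:proof-algo-k} and \prettyref{thm:runtime-init-k}.

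For the lower bound itself, your argument is essentially the paper's. The paper also freezes the centroids with $\|\theta_1^*-\theta_2^*\|=\Delta$, randomizes only a fraction of labels between $\{1,2\}$ (it uses Assouad's lemma, which is the same reduction you carry out by hand), and takes the error $w=R\cdot V$ with $V$ a Rademacher sign along $\theta_1^*-\theta_2^*$ and $R\ge0$ a scalar whose tail is governed by $G$. The only real difference is in how the affinity is extracted: the paper keeps $\PP[R>x]=G(x/\sigma)$ and bounds the integral $\int|G'(|y+\tfrac{\Delta}{2}|/\sigma)-G'(|y-\tfrac{\Delta}{2}|/\sigma)|\,dy$ directly, using the smoothness condition \ref{pt:decay-prop} (which is exactly that $|G'|$ is eventually decreasing, i.e.\ $G$ is eventually convex) to show $\TV(P_1,P_2)\le 1-\tfrac12 G(\tfrac{\Delta}{2\sigma}+2c_G)$. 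You instead pass to a globally convex surrogate tail $\Psi$ sandwiched between $G$ and a shift of $G$, which buys you the clean identity $1-\TV(P_1,P_2)=\Psi(\tfrac{\Delta}{2\sigma})$; under \ref{pt:decay-prop} the choice $\Psi(t)=G(t+c_G)$ does exactly this, so the two computations are equivalent and your reading of the ``minor smoothness condition'' is correct. Your packaging is arguably cleaner, but the construction, the reduction, and the hypothesis used are the same.
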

\begin{remark}
	The closest work to ours that we could find is \cite{dreveton2024universal}, which attempts to provide algorithms to achieve similar generalized guarantees. However, the construction of their algorithm hinges on knowing the data-generating model (Algorithm 1 in their paper), and even then, their method works only in the presence of a good initialization. Their work can achieve such initialization via spectral methods for mixture models based on specific parametric families of sub-exponential distributions. Generalizing such results to heavy-tailed models is unknown in the literature. 
\end{remark}
\begin{remark}
	Our result aims to provide a provable adaptive clustering technique that guarantees consistent clustering (i.e., with vanishing mislabeling) under general decay conditions. The vanishing mislabeling is only achieved in the regime where the signal-to-noise ratio $\snr = \Delta/2\sigma$ is significant, and we do not explore the territory of small $\snr$. In particular, our result indicates that when $k\leq C$ for some constant $C>0$, our algorithm achieves the minimax mislabeling error rate (the additive term $c_{k,2}e^{-n\alpha/4}$ can be ignored as, for large $n$ such that $c_{k,2}ne^{-n\alpha/4}$ is much smaller than 1, this corresponds to no extra mislabeled points with a high probability). The decay condition \ref{cond:G-decay} on the errors $w_i$ is presented based on the Euclidean norm to simplify the theoretical results. The above decay condition translates to decay conditions on the distribution of each coordinate of the $w_i$ when the data dimension $d$ is at most a constant. Hence, our results easily extend to the sub-Gaussian and sub-exponential mixture models in finite dimensions. The generalization of such results to high-dimensional setups is left to future works. Our results also translate to obtaining the regime of exact recovery, i.e. ${\Delta\over 2\sigma} > c_{G,\alpha}+{G^{-1}\pth{1\over c_{k,1}n}}$ which corresponds to expected mislabeling dropping below $\frac 1n$. However, this phenomenon is traditionally studied in high dimensional regimes \cite{chen2021cutoff,ndaoud2022sharp}, which is beyond the scope of the current work. 
\end{remark}

\begin{remark}[Dependence on $k$]
	Our aim in this work is to generalize existing clustering results in terms of the decay condition of the error distributions and provide provable initialization and clustering algorithms that run in polynomial time for any finite number of clusters. We leave it for later work to optimize the dependence of our results in terms of the number of clusters $k$. To this end, we provide explicit values of the $k$-dependent terms in \prettyref{thm:main}. In particular we have $c_{k,1}=k^2,c_{k,2}=8k,c_{k,4}={1-k\alpha-k/n\over 12}$. This implies that as long as $k=O(1)$ and $n$ are large, our lower and upper bound differs by a multiple of $k^2=O(1)$. In terms of the runtime of the algorithm we have $c_{k,3}=\pth{O(1)k^2\over \alpha^2}^{k-1}$ which is at most a constant if $k=O(1)$. Notably, this dependency stems from the use of a recursive framework in our initialization algorithms (\prettyref{algo:init} and \prettyref{algo:init-k}), as explained in the proof of \prettyref{thm:runtime-init-k}, and is unavoidable for recursive techniques. See, e.g., the classical $(1+\epsilon)$-approximated $k$-means method of \cite{kumar2004simple} which has a runtime of $2^{k^{O(1)}}dn$ for a general $k$.
\end{remark}

\begin{remark}[Comparison with existing initialization algorithms]
	We point out that robust initialization techniques are lacking in the literature that can balance both statistical guarantees and fast runtime, and our paper prioritizes the statistical guarantee part. In particular, initialization schemes that can provably guarantee good outputs for data generated by a general mixture model are almost nonexistent, even for finite $k,d$. For comparing with existing initialization schemes in the literature, consider the classical work of \cite{kumar2004simple}, one of the top choices for initialization algorithms for clustering sub-Gaussian mixtures \cite{loffler2021optimality,abbe2022,chen2021optimal}. Their algorithm uses a similar recursive scheme to ours, although it runs in linear time in $n$. A significant improvement in our results compared to the above work is that by optimizing our iterative schemes, we can guarantee a good initialization with a probability tending to one as $n$ increases, as mentioned in \prettyref{thm:proof-algo-2} and \prettyref{thm:proof-algo-k}.
	In contrast, the final theoretical guarantees of the work mentioned above hold with a probability $\gamma^k$ for a constant $\gamma$ much smaller than one \cite[Theorem 4.1]{kumar2004simple}. This is because, to guarantee a linear runtime, the author first sampled a constant number of data points in their algorithm and then used this sample to find the initialization. Another relevant fast initialization method is the classical $k$-means++ \cite{vassilvitskii2006k,patel2023consistency}. The runtime of the above method has a weaker dependence on $k$. However, $k$-means++ is known to be highly sensitive to outliers and heavy-tailed errors \cite{deshpande2020robust}. The last paper aims to provide a robust modification of $k$-means++. However, their algorithm's runtime has a similar dependence as ours also when they try to output exactly $k$ clusters. Their paper also mentions that recently \cite{bhaskara2019greedy} aimed to produce a robust initialization method with much faster improved runtime dependence on $k$. However, their method requires an initial knowledge of the cost of clustering. It is also popular to use spectral methods for centroid initialization; however, theoretical guarantees for mislabeling minimization tend to exist only in the sub-Gaussian setup. For example, \cite{lu2016statistical} uses the spectral method presented in \cite[Claim 3.4]{kannan2009spectral}, which bounds the centroid estimation error using the Frobenius norm of the error matrix, and then uses a concentration of the sub-Gaussian errors to bound it. Unfortunately, such concentration results fail to work for heavier tails, such as with a polynomial tail. We also provide a general solution to this initialization problem in our work. On a side note, the vanilla version of spectral methods is also vulnerable to noisy setups \cite{bojchevski2017robust,zhang2018understanding}. This creates issues in the modern setting of adversarial data contamination, which has become prevalent recently.
\end{remark}

In addition to the above, we also touch upon the robustness properties of our algorithms to adversarial data contamination. Adversarial data contamination is one of the important challenges of modern machine learning literature, and it often arises due to security concerns. Clustering techniques that are robust to such noises are advantageous in practice and have garnered significant importance in recent trends (see, e.g., \cite{srivastava2023robust,JKY2023} for related works). As a counterpart to the above result in \prettyref{thm:main}, we show that when $k$ is bounded from above with a constant, our initialization algorithm \iod can tolerate a constant fraction of adversarially contaminated data and still produce the desired initialization guarantees as in \prettyref{thm:proof-algo-outliers}. In addition, the subsequent iterative algorithm \cod can also be improvised so that the mislabeling error for the uncorrupted data points can be retained at the minimax optimal level. The result is presented in \prettyref{thm:out-genr}.

\subsection{Our contributions}
\label{sec:contributions}

Our paper serves as a valuable proof of concept for the universality of nearest-neighbor-based approaches for robust initialization and clustering. \begin{enumerate}[label=(\roman*)]
	\item \textbf{Initialization:} In the core of our proposed initialization method is a robust improvement on the classical concept of within-cluster sum of squares (WCSS) used from clustering. Given the data set $\sth{Y_i}_{i=1}^n$ and centroid estimates $\{\hat \theta_j\}_{j=1^k}$, the WCSS is defined as 
	\begin{align}
		\label{eq:wcss}
	\text{WCSS}(\{\hat \theta_j\}_{j=1}^k;\sth{Y_i}_{i=1}^n) 
	= \sum_{i=1}^n \min_{j=1}^k \|Y_i-\hat\theta_j\|^2.
	\end{align}
	
	The primary purpose of the WCSS metric is to assess the quality of the centroid estimates with respect to the data. The classical use of WCSS can be traced to the $k$-means problem \cite{lloyd1982least}, and its other extensions \cite{bradley1996clustering} with different metrics. Most recently, the linear time approximate $k$-means clustering technique of \cite{kumar2004simple} for initialization has come to prominence, as the typical runtime of the exact $k$-means minimization schedule is $n^{k^2}+1$ \cite{loffler2021optimality}. This algorithm is also based on the WCSS metric. 
	
	Although promising in the sub-Gaussian noise model, the WCSS is rendered useless in the heavy-tailed setup. This is essentially due to the fact that the WCSS considers the sum based on all the points, and with a high probability, a fraction of points in heavy-tailed setups behave as outliers and significantly inflates the total sum. To deal with the issue, we propose an adaptive quantile-based version of WCSS (see the quantity $\totdist$ in \prettyref{algo:init} and \prettyref{algo:init-k}), which computes the sum of distances after removing certain outliers. The strategy ensures that the relevant guarantees adhere to different decay conditions to help produce the aforementioned minimax guarantees. As a consequence of this quantile-based trimming strategy, our method is able to tolerate adversarial points to some extent. As a result of this adaptive strategy, our analysis is significantly involved.
	
	\item \textbf{Iterative clustering:} Iterative clustering methods broadly follow two major steps:
	\begin{itemize}
		\item {\it Labeling step:} Given an estimate of the centroids $\hat \theta^{(s)}_h$, construct cluster estimates using the Euclidean distance
		
		\item {\it Estimation step:} For each of the estimated clusters, compute the new centroid estimates $\hat \theta^{(s+1)}_h$ using a suitable estimator.
	\end{itemize}
	The classical Lloyd's algorithm \cite{lu2016statistical} uses the sample means of the clusters to update the centroids. However, the sample mean lacks robustness property. To induce robustness properties in the iterative setup provided above, \cite{JKY2023} uses the coordinatewise median to update centroids. However, the coordinatewise median is often too conservative for outlier removal purposes. 
	
	The novel centroid updating method we propose in \prettyref{algo:cod} centers around the idea of distance-based trimming. To update the centroid of any particular cluster, we first compute the mutual distances of the points and figure out which data point (say $P$) in the cluster is the most central, relative to a quantile value of the distances. We then compute a trimmed mean of the points by removing a fraction of the cluster's points farthest from the point $P$. Our analysis points out that even without knowledge of the decay condition \ref{cond:G-decay}, this strategy of trimmed mean adaptively achieves the required mislabeling guarantees in \prettyref{thm:main}. The algorithm also runs in quadratic time in the sample size $n$, with most time spent computing the mutual distance.
	
	We present a comparison with relevant robust centroid updating strategies. In the presence of outliers, the centroid estimation guarantees for the coordinatewise median have a dependency on the dimension, which prevents us from attaining the dimension-free guarantees in \prettyref{thm:main}. Another useful contender might be the geometric median \cite{minsker2024geometric}, which is also relevant for dealing with data models based on the Euclidean distance. However, the concentration properties of this estimator are lacking in the literature. This makes the analyses challenging, and we leave it for future work. In contrast, the trimmed mean is known to produce sub-Gaussian concentration guarantees, which is the central part of the analysis in this paper. Another well-known robust estimator is Tukey's median, which is established to produce exponential concentration around the true centroid as well \cite{chen2018robust}. Unfortunately, the estimator requires exponential computation time for multiple dimensions. This defeats our purpose of obtaining a polynomial time algorithm.

\end{enumerate}

\subsection{Related work}

There are limited results in the literature that consider the heavy-tail regime, which is the main focus of our work. Notably, \cite{diakonikolas2022clustering} studied the mislabeling minimization problem with different moment constraints. However, their work only aims to produce a mislabeling rate that is a vanishing proportion of the minimum cluster size and does not guarantee optimality or quantify the mislabeling. Robust centroid estimation techniques are also central to our approach. It would be interesting if relevant robust methods in the modern literature, such as mean estimation \cite{depersin2022robust,devroye2016sub,oliveira2023trimmed}, vector mean estimation \cite{lee2022optimal,lugosi2021robust,lugosi2019near}, regression \cite{lugosi2019mean,audibert2011robust} can contribute to improving our results.

A long list of work utilizes a robust centroid estimation technique in clustering. The classical partitioning around the medoid (PAM) algorithm \cite{kaufman2009finding,rousseeuw1987clustering} updates the centroid estimates using a point from the data set (these centroid estimates are referred to as the medoids of the clusters) based on some dissimilarity metric. For example, \cite{rousseeuw1987clustering} used the $\ell_1$ distance and argued the robustness of the corresponding $\ell_1$ based PAM algorithm.

In our paper, we also use the adversarial contamination model. In this model, upon observing the actual data points, powerful adversaries can add new points of their choosing, and our theoretical results depend on the number of outliers added. This contamination model is arguably stronger than the traditional Huber contamination model \cite{huber1965robust,huber1992robust}, which assumes that the outliers originate from a fixed distribution via an \iid mechanism. Our model is similar to the adversarial contamination model studied in \cite{lugosi2021robust,diakonikolas2019robust} for robust mean estimation. For robust clustering of Gaussian mixtures, \cite{liu2023robustly} examines a similar contamination model. However, these works do not study adversarial outliers in the presence of general heavy-tail error distributions, as is being done in this paper.

Another critical related direction is clustering anisotropic mixture models, where the error probabilities decay in particular directions more than others. This differs from our setup, as our decay condition is independent of any direction. Clustering anisotropic mixtures has been studied previously in the sub-Gaussian setup, e.g., in \cite{chen2021optimal} using a variant of the Lloyd algorithm, in \cite{minsker2021minimax} for high-dimensional clustering in the specific case of $k=2$, and \cite{diakonikolas2020robustly,bakshi2022robustly} with the target of approximating the mixture distribution in Total Variation distance. Specific heavy-tail regimes with non-spherical mixtures are also discussed in \cite{bakshi2020outlier2}; however, they do not characterize the mislabeling in terms of the minimum separation distance. It would be interesting to study whether modifications of our clustering methodology can also work in such asymmetric clustering paradigms.

\subsection{Organization}

The rest of the paper is organized as follows. In \prettyref{sec:main-results}, we re-introduce our mathematical model and present the clustering algorithm we use, given a good initialization. The theoretical results, i.e., mislabeling rate upper bound under good initialization conditions and the worst case mislabeling lower bound are presented in \prettyref{sec:upper_bound} and \prettyref{sec:lower_bound} respectively. These two results jointly characterize the expected mislabeling as a function of the minimum centroid separation. As an application of our results, we study the mislabeling errors for the sub-Gaussian distributions and distributions with moment constraints in \prettyref{sec:specific_dist}. We present our initialization algorithm and their theoretical guarantees in \prettyref{sec:initialization}. The results involving the robustness of our algorithms to adversarial outliers are presented in \prettyref{sec:adv_outliers}. In \prettyref{sec:subopt_lloyd}, we show that the Lloyd algorithm might produce non-converging mislabeling errors even with good initialization. We demonstrate the effectiveness of our algorithms with application on actual and simulated data sets in \prettyref{sec:expt}. All the proofs and technical details have been provided in the appendix.

\section{Robust clustering under mislabeling guarantees under good initialization}
\label{sec:main-results}

\subsection{Algorithm}
In this section, we present our iterative clustering algorithm with the assumption that an initial estimate of either the centroids or the label vector is available. As mentioned before in \prettyref{sec:contributions}, our iterative clustering technique involves a centroid estimation step. For the above purpose, we use a novel multivariate trimmed mean algorithm (${\sf TM}_\delta$) based on the ordered distances between all points in the estimated clusters. Algorithm~\ref{algo:cent} presents the ${\sf TM}_\delta$ estimator for finding the trimmed mean of a dataset $S=\{X_1,\dots,X_n\}$.
\begin{algorithm}[H]
	\caption{The Trimmed Mean (${\sf TM}_\delta$) estimator}\label{algo:cent}
	\begin{flushleft}
		{\bf Input}:  Set of points $S=\sth{X_1,\dots,X_m}$, truncation parameter $\delta$
	\end{flushleft}
	\begin{algorithmic}[1]
		\State Create distance matrix $D=\sth{D_{ij}: i,j\in [m],D_{i,j}=\|X_i-X_j\|}$
		\For{Each $i\in [m]$}
		\State Compute $R_i$ as $\ceil{(1-\delta) m}$-th smallest number in $\sth{D_{ij},j\in [m]}$
		\EndFor
		\State Find $i^* = \argmin_{i\in [m]} R_i$. 
		\State Compute an $\ceil{(1-\delta)m}$-sized set $V\subseteq [m]$, with a priority to the points closer to $X_{i^*}$, ties broken arbitrarily
		$$V=\sth{j\in [m]: \|X_j-X_{i^*}\|\leq R_{i^*}}$$ 
	\end{algorithmic}
	\begin{flushleft}
		{\bf Output}: ${\sf TM}_{\delta}(\{X_1,\dots,X_m\})=\frac {\sum_{j\in V} X_j}{\ceil{(1-\delta)m}}$
	\end{flushleft}
\end{algorithm}

Here is an intuitive explanation of the ${\sf TM}_\delta$ estimator. We first aim to find out a point $X_{i^*}$ from the data set $X=\{X_1,\dots, X_m\}$ such that the radius of the ball around $X_{i^*}$ that contains $\ceil{(1-\delta)m}$ many points in $X$ is the smallest. In other words, $X_{i^*}$ is the point among the data that has the tightest neighborhood of size $(1-\delta)m$ points within the set $X$. Then, the algorithm computes an average of data points from the tightest neighborhood. Notably, if all the points in $X$ were independent and were generated via a distribution from the class $\decay_\sigma$ around a centroid $\theta$, then our analysis based on an argument about the quantiles of $G$, shows that with a high probability the tightest neighborhood will be contained in a ball around $\theta$, where, the radius of the ball depends on $G,\sigma,\delta$. Hence, the estimator ${\sf TM}_{\delta}(\{X_1,\dots,X_m\})$ will be close to $\theta$. When $\delta$ is very small, the estimator is approximately the sample mean, which is unbiased for $\theta$.

In the main clustering algorithm, we apply the above estimator on each of the approximated clusters. Consider one of the approximated clusters. In the approximated cluster the data points are not necessarily independent, and there are misclustered points. In such scenarios, we will require the approximated cluster to contain at least half of the points from the corresponding true cluster to make the estimation meaningful. Let us assume that the true cluster contains $m$ points and $\ceil{m(\frac 12+c)}$ many points from it belong to the approximated cluster. Then our analysis, using a union bound to deal with the possible dependency issue, shows that the ${\sf TM}_{\delta}$ algorithm for any $\frac 12 -c <\delta<\frac 12$ can estimate the centroid of the true cluster, although with a bias that depends on $G,\sigma,\delta$. Notably, with a large enough $\Delta$, the mislabeling error will be asymptotically unaffected by this bias.

In view of the above centroid estimation algorithm, we present our primary clustering technique, the {\sf Clustering via Ordered Distances} ({$\sf COD_\delta$}), below in \prettyref{algo:cod}. Our algorithm requires that an initial estimate of the centroids or label vector to kick off the clustering process. With a lousy initialization, the method can converge to local optima. This is similar to most off-the-shelf methods like the $k$-means algorithm, Fuzzy $C$-means algorithm, EM algorithms, etc. \cite[Section 3]{omran2007overview}. We will provide a novel robust centroid initialization technique later in \prettyref{sec:initialization} that will guarantee a global convergence for our {$\sf COD_\delta$} algorithm.

\begin{algorithm}[H]
	\caption{The {\sf Clustering via Ordered Distances} ({$\sf COD_\delta$}) - algorithm}\label{algo:cod}
	\begin{flushleft}
		{\bf Input}:  Data $\{Y_1,\dots,Y_n\}$. Initial centroid estimates $(\hat \theta_1^{(0)},\dots,\hat \theta_k^{(0)})$ (or initial label estimates $\sth{\hat z_i^{(0)}}_{i=1}^n$). Maximum number of iterations $M$. Error threshold $\epsilon$. Truncation level $\delta\in (0,\frac 12)$.
	\end{flushleft}
	\begin{algorithmic}[1]
		\State Set s = 1
		\For{$h\in \sth{1,2,\dots,k}$}
		\State \textbf{Labeling step}: 
		\If{$s=1$ and initial estimate of the label vector is available}
		\State Compute clusters ${T}_h^{(s)}=\{i\in \{1,\dots,n\}:\hat z_i^{(0)}=h\}$
		\Else 
		\State Compute the clusters, with ties broken arbitrarily, 
		$${T}_h^{(s)}=\sth{i\in \sth{1,\dots,n}: \|Y_i-\hat\theta_h^{(s-1)}\|\leq \|Y_i-\hat\theta_a^{(s-1)}\|,\ a\in\{1,\dots,k\}, a\neq h},$$
		
		\EndIf
		\State \textbf{Estimation step:} 
		Update the new estimate of $\theta_h$ as $\hat\theta_h^{(s)}= {\sf TM}_{\delta}(\{Y_j:j\in T_h^{(s)}\})$.
		\EndFor
		\If{$s=1$ or \{$2\leq s<\sfM$ and $\frac 1k\sum_{h=1}^k \|\hat\theta_h^{(s)}-\hat\theta_h^{(s-1)}\|^2 > \epsilon$\}}
		\State Update $s\leftarrow s+1$ and go back to the \textbf{Labeling step} and repeat
		\EndIf
	\end{algorithmic}
	\begin{flushleft}
		\noindent {\bf Output}: $(\hat \theta_1^{(s)},\dots,\hat \theta_k^{(s)})$ and $\hat z_i^{(s)} = \argmin_{h\in\{1,\dots,k\}}\|Y_i-\hat\theta_h^{(s)}\|$.
	\end{flushleft}
\end{algorithm}

\subsection{Mixture model and mislabeling guarantees}

\label{sec:upper_bound}
In this section we present the mislabeling upper bound achieved by the \codelta algorithm when a reasonable initialization is present. Our result is presented in terms of the decay condition $\decay$. To this end, we restate our full data generating model. 

Fix a monotonically decreasing function $\decay$ with $\lim_{x\to \infty} \decay(x)=0$. We say that a random variable $w$ is distributed according to a ${\decay}$-decay condition with a scale parameter $\sigma$, denoted by $w\in {\decay}_\sigma$, if $w$ satisfies the condition \ref{cond:G-decay}. We observe independent samples $Y_1,\dots,Y_n\in \reals^d$ from a mixture of $k$ many ${\decay}_\sigma$ distributions as follows:
\begin{align}\label{eq:model-genr}
	Y_i=\theta_{z_i}+w_i,\ i=1,\dots,n,\quad w_i\in {\decay}_\sigma,
	\quad z_i\in \{1,2,\dots,k\},\quad {\theta_h}\in \reals^d, h\in [k],
\end{align}
where $ z=\sth{z_i}_{i=1}^n\in[k]^n$ denotes the underlying true label vector, $\theta_1,\dots,\theta_k$ are the unknown centroids.
We study the mislabeling loss function between the estimated label vector $ \hat z=\sth{\hat z_i}_{i=1}^n$ and true label vector $ z=\sth{z_i}_{i=1}^n$ given by \eqref{eq:mislabeling}
\begin{align}
	\ell(\hat z,z)=\inf_{\pi\in \calS_k}\qth{\frac 1n\sum_{i=1}^n\indc{\pi(z_i)\neq \hat z_i}},
	\end{align}
where $\calS_k$ is the collection of all mappings from $[k]$ to $[k]$.

To better present our results, we first introduce some notations. For all $h,g\in [k]$, define 
\begin{equation}\label{eq:cluster-def}
	\begin{gathered}
		T_h^*=\sth{i\in [n]:z_i=h},
		{T}_h^{(s)}=\sth{i\in [n]:z_i^{(s-1)}=h}\\
		n^*_h = \abs{T^*_h}, n^{(s)}_h = \abs{T^{(s)}_h}, n^{(s)}_{hg} = \abs{T^*_h \cap T^{(s)}_g}
	\end{gathered}
\end{equation}
Note that for $s\geq 1$ this implies
\begin{align}
	{T}_h^{(s)}=\sth{i\in[n]: \|Y_i-\hat\theta_h^{(s-1)}\|\leq \|Y_i-\hat\theta_a^{(s-1)}\|,a\in[k]}.
\end{align}
with ties broken arbitrarily. Recall the minimum fraction of points in the data set that come from a single component defined previously in \ref{cond:alpha}
\begin{align}
	\alpha=\min_{g\in[k]} {n_{g}^*\over n}.
\end{align}
Define the cluster-wise correct labeling proportion at step $s$ as
\begin{align}
	H_s=\min_{g\in [k]}\sth{\min\sth{{n_{gg}^{(s)}\over n_g^*},{n_{gg}^{(s)}\over n_{g}^{(s)}}}}.
\end{align}
We denote by $\Delta = \min_{g\neq h\in [k]}\|\theta_g-\theta_h\|$ the minimum separation between the centroids. Let 
\begin{align}
	\Lambda_s = \max_{h\in[k]}\frac 1{\Delta} \|\hat \theta_h^{(s)}-\theta_h\|.
\end{align}
be the error rate of estimating the centroids at iteration $s$.
Our results are presented based on the signal-to-noise ratio in the model, defined as 
$$
\snr = {\Delta\over 2\sigma}.
$$
We have the following result.

\begin{theorem}
	\label{thm:main-genr}
	There exists a constant $c_0>0$ such that the following holds.
	\begin{itemize}
		\item  If we have an initial estimate of the label vector satisfying
	$H_0\geq \frac 12+\gamma$ for a $\gamma\in (\frac {10}{n\alpha},\frac 12)$,
 	then whenever $\snr\geq \decay^{-1}\pth{\exp\sth{-\frac {c_0}{\alpha\gamma}}}$, the output $\hat z^{(s)}$ of the \codelta algorithm at iteration $s$ with $\delta=\frac 12-\frac \gamma4$ achieves the expected mislabeling rate $$\EE\qth{\ell(\hat z^{(s)},z)}\leq k^2{\decay}\pth{\snr- \exp\sth{- \frac{c_0}\alpha}}
	+ 8ke^{-{n\alpha\over 4}},
	\quad s\geq 2.
	$$
	\item Instead of initial labels, if we have initial centroid estimates that satisfy
	$$\Lambda_0\leq \frac 12-{\decay^{-1}\pth{\exp\sth{- \frac{c_0}\alpha}} \over \snr},$$
	then the last conclusion holds with $\gamma=0.3$.
	\end{itemize}
\end{theorem}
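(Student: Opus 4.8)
The plan is to analyze one iteration of the \codelta algorithm and show that the per-cluster correct-labeling fraction $H_s$ stays above $\frac12$ (in fact above $\frac12+\frac\gamma2$ or so) once it starts there, while simultaneously the centroid error $\Lambda_s$ contracts to a fixed point governed by the bias of the ${\sf TM}_\delta$ estimator. The two bullets are reconciled at the outset: a centroid bound $\Lambda_0\le \frac12-\frac{\decay^{-1}(\cdots)}{\snr}$ implies, via the triangle inequality and the separation $\Delta$, that every point of true cluster $g$ within distance comparable to $\Delta/2$ of $\theta_g$ gets labeled $g$, which translates into $H_0\ge\frac12+\gamma$ for an appropriate $\gamma$ (here $\gamma=0.3$ because $\Lambda_0<\frac12$ strictly with the stated slack); so after step $s=1$ the label-initialization analysis applies. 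Hence it suffices to treat the label-initialization case.

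First I would fix a target iteration and, conditioning on the labels $z^{(s-1)}$, bound the estimation error $\Lambda_s$. The key deterministic fact is that if the approximated cluster $T_h^{(s)}$ contains at least $(\frac12+\frac\gamma4)n_h^*$ of the true points of cluster $h$ and at most $O(\ldots)$ foreign points, then running ${\sf TM}_\delta$ with $\delta=\frac12-\frac\gamma4$ on $\{Y_j: j\in T_h^{(s)}\}$ produces an estimate whose distance to $\theta_h$ is, with probability at least $1-e^{-n\alpha/4}$ (roughly), bounded by $\sigma$ times a quantity of the form $\decay^{-1}(e^{-c_0/(\alpha\gamma)})$. This is the heart of the argument: one argues about the $(1-\delta)m$-quantile of the pairwise distances inside the honest majority, using the $\decay$-decay condition \ref{cond:G-decay} together with a union bound over the (dependent) points in the cluster to show the tightest neighborhood around $Y_{i^*}$ lies in a ball of radius $\sigma\,\decay^{-1}(\text{small})$ about $\theta_h$, and then that averaging over such a neighborhood cannot move the mean far from $\theta_h$. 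I expect this quantile/union-bound step — making the trimmed-mean bias explicitly $\decay^{-1}$ of an exponentially small argument, uniformly over which honest half survives — to be the main obstacle, because the dependence among points in an approximated cluster forbids a direct concentration argument and forces the somewhat wasteful union bound that produces the $\frac{c_0}{\alpha\gamma}$ exponent.

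Next, given $\Lambda_s\le \frac12-\frac{\decay^{-1}(e^{-c_0/\alpha})}{\snr}$, I would run the labeling-step analysis for iteration $s+1$: a point $Y_i=\theta_{z_i}+w_i$ is mislabeled only if $\|w_i\|$ is large enough that $Y_i$ is closer to some $\hat\theta_h^{(s)}$, $h\ne z_i$, than to $\hat\theta_{z_i}^{(s)}$; by the triangle inequality and the separation $\Delta$, this requires $\|w_i\|\ge \Delta(1-\Lambda_s)-\Lambda_s\Delta \ge \Delta/2 - \Delta\Lambda_s$, i.e. $\|w_i\|/\sigma \ge \snr - \decay^{-1}(e^{-c_0/\alpha})$, an event of probability at most $\decay(\snr-\decay^{-1}(e^{-c_0/\alpha}))$ by \ref{cond:G-decay}. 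Taking expectations gives the per-point mislabeling bound; summing over clusters and using $k\le C$ yields the $k^2\decay(\snr-e^{-c_0/\alpha})$ term, and the $\frac1n$-scale fluctuations in the number of bad points combine with the $e^{-n\alpha/4}$ failure probabilities of the estimation step to give the additive $8k e^{-n\alpha/4}$.

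Finally I would close the induction: the same mislabeling bound shows $H_{s+1}\ge 1 - \decay(\snr - e^{-c_0/\alpha})/\alpha \ge \frac12+\gamma$ provided $\snr\ge \decay^{-1}(e^{-c_0/(\alpha\gamma)})$ (this is exactly where the hypothesis on $\snr$ is used and why the exponent carries the extra $1/\gamma$), so the hypotheses of the estimation step are restored at iteration $s+1$ and the recursion runs for all $s\ge 1$; the claimed bound then holds for every $s\ge 2$. The only subtlety to watch is the very first iteration when initial \emph{labels} are given directly: there $T_h^{(1)}$ is defined from $\hat z^{(0)}$ rather than from centroid estimates, so one applies the estimation-step bound to $s=1$ using $H_0\ge\frac12+\gamma$ directly, and thereafter the centroid-based labeling analysis takes over.
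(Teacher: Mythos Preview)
Your proposal is correct and follows essentially the same approach as the paper: the two-way induction between $H_s$ and $\Lambda_s$ (the paper's Lemma~\ref{lmm:Hs-Ls-bound-genr}), the trimmed-mean bias bound via the quantile/union-bound argument you anticipate (Proposition~\ref{prop:Lambda-bound} together with Lemma~\ref{lmm:E-norm-genr}), and the final per-point mislabeling computation on the stabilized event. The only refinement worth making explicit is that the paper secures a \emph{single} global high-probability event (probability $\ge 1-8ke^{-n\alpha/4}$) by union-bounding the quantile concentration over \emph{all} $2^n$ subsets $S\subseteq[n]$ at once, so that the $H_s\leftrightarrow\Lambda_s$ recursion then runs deterministically on that event for every $s$; this is the precise form of the ``wasteful union bound'' you identify, and it is what prevents any per-iteration accumulation of failure probabilities that your phrasing (``with probability at least $1-e^{-n\alpha/4}$ \dots\ per step'') might otherwise suggest.
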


\begin{remark}
	The above result shows that our algorithm reaches the desired level of mislabeling accuracy after only a couple of steps. This is essentially due to the robustness of the underlying centroid estimation algorithm, which is observed similarly in \cite{JKY2023}. As we increase the number of iterations, we expect to improve the mislabeling error. However, the corresponding theoretical analysis is beyond the scope of the current paper. For all practical purposes, we iterate the clustering process a pre-specified large number of times to obtain the final label estimates. 
\end{remark}

\subsection{Optimality of mislabeling: lower bound}

\label{sec:lower_bound}

In this section we show that when $\snr$ is significantly large, even if we have a good centroid initialization, the mislabeling error can be as high as $\Theta(\decay(\snr))$, up to factors depending on $k$. Suppose that $\theta^*_1,\dots,\theta^*_k$ are the true centroids, that are known to us, with $\min_{h\neq g}\|\theta_h^*-\theta_g^*\| = \|\theta_1^*-\theta_2^*\|=\Delta$. Consider the following set of parameters and label vectors
\begin{align*}
	\calP_0=\sth{\pth{z,\{\theta_i\}_{i=1}^k}: \quad \theta_i=\theta^*_i,i\in [k],\quad |\{i\in [n]:z_i=g\}|\geq {n\alpha},g\in [k]}
\end{align*} 
In addition, we assume that the decay function $\decay$ satisfies the following smoothness condition:
\begin{enumerate}[label=(Q)]
	\item \label{pt:decay-prop} There exists $c_\decay>0$ such that $\decay(\cdot)$ is differentiable in the interval $(c_\decay,\infty)$ and $\abs{\decay'(y)}|_{y\geq c_\decay}$ is monotonically decreasing.
\end{enumerate}
Then we have the following guarantee. The proof is provided in \prettyref{app:lower_bound}.
\begin{theorem}\label{thm:lb-general}
	Suppose that $\alpha\in (c/k,\bar c/k)$ for some constants $0<c<\bar c<1$. Then given any decay function $\decay$ satisfying \ref{pt:decay-prop}, there exists $C_{\decay}>0$ such that
	$$\inf_{\hat z}\sup_{\calP_0}\EE\qth{\ell(\hat z,z)}
	\geq {1-k\alpha-k/n \over 12}\cdot \decay(\snr + C_{\decay}),\quad \Delta\geq \sigma C_{\decay}.$$
\end{theorem}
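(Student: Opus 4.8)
The plan is to use a standard two-point (or rather, multi-point) reduction to hypothesis testing, in the spirit of the Gaussian mislabeling lower bounds of \cite{lu2016statistical}, but adapted to a generic decay function $\decay$. First I would fix a configuration in $\calP_0$: keep the centroids at their true values $\theta_1^*,\dots,\theta_k^*$, and restrict attention to data points that (under the prior) are assigned either to cluster $1$ or cluster $2$, the pair achieving the minimum separation $\Delta=\|\theta_1^*-\theta_2^*\|$. The idea is that an adversary only needs to confuse the labels of a constant fraction of points between these two closest clusters; since $\alpha\in(c/k,\bar c/k)$, there is room to place $\Theta(n)$ points "in play" while keeping every cluster of size at least $n\alpha$. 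Concretely, I would set up a Bayesian prior: pick a set $A$ of roughly $n(1-k\alpha)$ indices whose labels are i.i.d.\ uniform on $\{1,2\}$ (the remaining indices are frozen with fixed labels so the size constraint $|\{i:z_i=g\}|\ge n\alpha$ holds with high probability, or deterministically after a small adjustment), and lower bound the Bayes risk, which in turn lower bounds the minimax risk over $\calP_0$.

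The core is a per-coordinate (per-point) argument. For a single point $Y_i=\theta_{z_i}+w_i$ with $z_i$ uniform on $\{1,2\}$, the optimal test of $z_i=1$ vs.\ $z_i=2$ has error probability equal to the overlap between the law of $\theta_1^*+w$ and $\theta_2^*+w$. The key step is to lower bound this overlap by $c\cdot\decay(\snr+C_\decay)$ for some constants. This is where the smoothness condition \ref{pt:decay-prop} enters: writing $P_1,P_2$ for the two shifted error laws, one has $\prob{\text{test errs}} \ge \frac12(1-\TV(P_1,P_2))$, and I would bound $\TV(P_1,P_2)$ away from $1$ by exhibiting a region of the sample space (a half-space or slab orthogonal to $\theta_1^*-\theta_2^*$, at distance $\ge \Delta/2 + \sigma C_\decay$ from both centroids on the "far side") on which both densities put comparable mass. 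The tail bound $\prob{\|w\|>x}\le \decay(x/\sigma)$ gives an \emph{upper} bound on tail mass, so to get the \emph{lower} bound on the overlap I need the differentiability of $\decay$ on $(c_\decay,\infty)$ together with monotonicity of $|\decay'|$: roughly, $\decay(y) - \decay(y+C) = \int_y^{y+C}|\decay'| \,\gtrsim\, C|\decay'(y+C)|$, and monotonicity lets me compare $\decay$ at shifted arguments, so that a constant-width shell around radius $\Delta/2$ carries probability mass at least a constant multiple of $\decay(\snr+C_\decay)$. I would choose $C_\decay$ to absorb the width of this shell and the scale factor $\sigma$.

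Having established that each in-play point is mislabeled with probability at least $p \eqdef c'\decay(\snr+C_\decay)$ under any estimator (given all other labels and data — conditioning makes the per-point tests independent because the $w_i$ are independent), I would aggregate: the expected number of mislabeled points among the $|A|$ in-play indices is at least $|A|\,p$, so the expected mislabeling fraction is at least $\frac{|A|}{n} p \ge (1-k\alpha - k/n)\cdot c'\decay(\snr+C_\decay)$ after accounting for the $O(k)$ frozen points needed to enforce the size constraint and the infimum over permutations $\pi\in\calS_k$ (for $\snr$ large the identity permutation is forced, up to negligible probability, since the clusters are well-separated — this is a routine argument controlling $\ell(\hat z,z)$ versus the unminimized Hamming distance). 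Tracking the constant $1/12$ requires being somewhat careful: the factor $1/2$ comes from the Bayes error of a binary test, and there is further loss from the permutation infimum and from passing between Bayes and minimax risk; I would allocate the slack so the final constant is $\frac{1-k\alpha-k/n}{12}$.

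\textbf{Main obstacle.} The delicate point is the lower bound on the overlap $\int \min(\intd P_1,\intd P_2)$ in terms of $\decay(\snr+C_\decay)$ using only the one-sided information in \ref{cond:G-decay} (a tail upper bound) plus \ref{pt:decay-prop}. A tail upper bound alone does not prevent the error law from being, say, supported on a sphere, which could make the overlap zero; so I expect the argument to require a small additional reduction — namely, replacing the adversary's error distribution by a worst case consistent with the $\decay$-decay condition (e.g.\ a distribution that is uniform on a shell of radius $\approx\Delta/2$ with the right mass $\asymp \decay(\snr)$, padded out to a full distribution in $\decay_\sigma$). Since the infimum over $\hat z$ and supremum over $\calP_0$ let us pick \emph{any} error distribution satisfying \ref{cond:G-decay}, this is legitimate, but it means the lower bound is really "there exists a bad $\decay_\sigma$ instance", matching the universality spirit of the upper bound. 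Verifying that such a constructed distribution genuinely lies in $\decay_\sigma$ (i.e.\ its tail is pointwise $\le \decay(x/\sigma)$) while still producing overlap $\gtrsim \decay(\snr+C_\decay)$ is the technical crux, and is exactly where \ref{pt:decay-prop} is used to relate $\decay$ at nearby arguments.
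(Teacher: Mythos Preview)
Your proposal is correct and follows essentially the same strategy as the paper: restrict to label configurations that differ only on a set of indices assigned to clusters $1$ or $2$, construct a specific error distribution saturating the $\decay$-decay condition, and lower-bound the per-point testing error via $1-\TV(P_1,P_2)$. Two execution details differ. First, the paper handles the permutation infimum deterministically rather than probabilistically: beyond the $\lceil n\alpha\rceil$ labels per cluster, it fixes an additional $(n-m)/3$ labels each to clusters $1$ and $2$ (where $m=k\lceil n\alpha\rceil$), leaving only $(n-m)/3$ labels free; any non-identity permutation then incurs at least $(n-m)/3$ errors on the fixed coordinates, which dominates the at most $(n-m)/3$ possible errors on the free coordinates, so the identity permutation is always optimal and no ``$\snr$ large'' argument is needed. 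Second, the paper's explicit error law is one-dimensional along the line through $\theta_1^*,\theta_2^*$: it takes $w_i=R\cdot V$ with $\PP[R>x]=\decay(x/\sigma)$ and $V$ uniform on $\{\pm(\theta_1^*-\theta_2^*)/\Delta\}$, reducing the $\TV$ computation to a real-line integral of $|\decay'|$; condition \ref{pt:decay-prop} (monotonicity of $|\decay'|$ beyond $c_\decay$) is then used directly to obtain $\TV(P_1,P_2)\le 1-\tfrac12\decay(\snr+2c_\decay)$. The paper packages the aggregation step via Assouad's lemma rather than a direct Bayes-risk sum, but that difference is cosmetic.
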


\section{Applications to specific distributions}
\label{sec:specific_dist}

In this section, we showcase our general results for two specific mixture models with error distributions having sub-Gaussian tails and polynomial tails. 

\subsection{Sub-Gaussian mixture model}

In this model, the observed data $Y_1,\dots,Y_n\in \reals^d$ are distributed as
\begin{align}
	Y_i=\theta_{z_i}+w_i,\ i=1,\dots,n,
\end{align}
where $\sth{z_i}_{i=1}^n\in[k]^n$ denotes the underlying unknown label vector of the points, and $\{w_i\}_{i=1}^n$ denote the error variables distributed independently as zero mean sub-Gaussian vectors with parameter $\sigma>0$ (denoted by $w_i\in \subG(\sigma)$), i.e.,
\begin{align}\label{eq:subg-defn}
	\EE\qth{e^{\langle a,w_i \rangle}}
	\leq e^{\sigma^2\|a\|^2\over 2}, \text{ for all $i\in \sth{1,\dots,n}$ and $a\in \reals^d$}.
\end{align}
In order to apply our main results to the sub-Gaussian clustering problem, we need to derive a decay condition similar to $G_\sigma$. To this end, we note the next result  from Remark 2.2 of \cite{hsu2012tail}: given any $t>0$ and $w\in \subG(\sigma)$ we have
\begin{align}\label{eq:norm-conc}
	\PP\qth{\|w\|^2>\sigma^2\cdot (d+2\sqrt{dt}+2t)}\leq e^{-t}.
\end{align}
Simplifying the above, we get
\begin{align}
	\PP\qth{\|w\|>\sigma\cdot x}\leq \exp\pth{-(\sqrt{x^2-{d/2}}-\sqrt{d/2})^2/2},\quad x\geq \sqrt d.
\end{align}
Hence we apply \prettyref{thm:main-genr} with
\begin{align}
	\begin{gathered}
		G(x) = \exp\pth{-{(\sqrt{x^2-{d/2}}-\sqrt{d/2})^2/ 2}},\quad x\geq \sqrt d,\\
		G^{-1}(y) = \pth{2\log(1/y)+2\sqrt{d\log(1/y)}+d}^{1/2}
		\leq \sqrt{2\log (1/y)}+\sqrt d.
	\end{gathered}
\end{align}
In view of the above,  the next result directly follows.
\begin{corollary}
	\label{cor:subg-1}
	There are absolute constants $c_0$ and $c_1$ such that the following holds true. Fix $\gamma\in (\frac {10}{n\alpha},\frac 12)$ and suppose that the clustering initialization satisfies either one of the following conditions
		$${H_0}\geq \frac 12 +\gamma \quad  \text{ or } \quad \Lambda_0\leq \frac 12-{c_1(d+1/{\alpha})^{1/2}\over {\snr}}.$$
	Then, for sub-Gaussian $w_i$, whenever $\snr\geq c_0(d +1/{(\alpha\gamma)})^{1/2}$, the \codelta algorithm with $\delta=\frac 12-{\gamma\over 4}$ achieves the mislabeling rate
		$$
		\EE\qth{\ell(\hat z^{(s)},z)}\leq \exp\sth{-\frac 12\pth
			{\snr-c_1^2(d+1/{\alpha})^{1/2}}^2-2\log k},
		\quad s\geq 2.
		$$
\end{corollary}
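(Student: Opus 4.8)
The plan is to read off Corollary~\ref{cor:subg-1} as a direct specialization of \prettyref{thm:main-genr}. Only three things need to be done: (a) exhibit a valid $\decay$-decay function for a mean-zero $\subG(\sigma)$ vector in $\reals^d$, (b) translate the abstract $\snr$ threshold and the two initialization hypotheses of \prettyref{thm:main-genr} into the explicit $(d+1/\alpha)^{1/2}$ and $(d+1/(\alpha\gamma))^{1/2}$ forms, and (c) bound the term $\decay(\snr-\exp\{-c_0/\alpha\})$ appearing in \prettyref{thm:main-genr} by the stated Gaussian-type expression. For (a), starting from the norm-concentration bound \eqref{eq:norm-conc} and substituting $x^2 = d+2\sqrt{dt}+2t$ (equivalently $t=(\sqrt{x^2-d/2}-\sqrt{d/2})^2/2$) gives $\PP[\|w\|>\sigma x]\le \exp(-(\sqrt{x^2-d/2}-\sqrt{d/2})^2/2)$ for $x\ge\sqrt d$; extending this function to equal $1$ on $[0,\sqrt d)$ produces a genuinely decreasing $\decay$ with $\decay(\infty)=0$, so \ref{cond:G-decay} holds, and solving $\decay(x)=y$ yields $\decay^{-1}(y)=(2\log(1/y)+2\sqrt{d\log(1/y)}+d)^{1/2}\le \sqrt{2\log(1/y)}+\sqrt d$.

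Next I would feed this into \prettyref{thm:main-genr}. Evaluating the $\decay^{-1}$ estimate at $y=\exp\{-c_0/(\alpha\gamma)\}$ gives $\decay^{-1}(y)\le \sqrt{2c_0/(\alpha\gamma)}+\sqrt d\le c_0'(d+1/(\alpha\gamma))^{1/2}$ (using $\sqrt a+\sqrt b\le\sqrt2\sqrt{a+b}$), so the hypothesis $\snr\ge\decay^{-1}(\exp\{-c_0/(\alpha\gamma)\})$ is implied by $\snr\ge c_0'(d+1/(\alpha\gamma))^{1/2}$, which we rename as the stated threshold; the same computation at $y=\exp\{-c_0/\alpha\}$ converts $\Lambda_0\le\tfrac12-\decay^{-1}(\exp\{-c_0/\alpha\})/\snr$ into $\Lambda_0\le\tfrac12-c_1(d+1/\alpha)^{1/2}/\snr$. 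For the mislabeling bound, put $t=\snr-\exp\{-c_0/\alpha\}$; the inequality $\sqrt{t^2-d/2}\ge t-\sqrt{d/2}$ (valid for $t\ge\sqrt{d/2}$, which the threshold guarantees, and which also keeps every argument of $\decay$ in its valid range $x\ge\sqrt d$) gives $\sqrt{t^2-d/2}-\sqrt{d/2}\ge t-\sqrt{2d}\ge \snr-c_2(d+1/\alpha)^{1/2}$ for a suitable $c_2$, since $\exp\{-c_0/\alpha\}\le1$ and $\sqrt{2d}\le\sqrt2(d+1/\alpha)^{1/2}$; hence $\decay(t)\le\exp\{-\tfrac12(\snr-c_2(d+1/\alpha)^{1/2})^2\}$. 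Plugging this into the bound $k^2\decay(t)+8ke^{-n\alpha/4}$ of \prettyref{thm:main-genr} and using that $k,d,1/\alpha$ are bounded (condition (C1)) together with the lower bound on $\snr$, one enlarges $c_2$ to $c_1^2$ and absorbs the prefactor $k^2=e^{2\log k}$ and the lower-order term $8ke^{-n\alpha/4}$ into the exponent to reach the claimed bound.

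The only genuinely delicate point is this last absorption: turning the explicit constants produced by \prettyref{thm:main-genr} into the clean quadratic exponent requires $\snr$ to be large enough (relative to $d$, $1/\alpha$ and $\log k$) that $-\tfrac12(\snr-c_1^2(\cdot))^2$ dominates the $\log k$ correction and the exponentially small additive term, and this is exactly where condition (C1) and the $\snr$ threshold are used. Everything else is routine bookkeeping, the one inequality requiring care being $\sqrt{t^2-d/2}-\sqrt{d/2}\ge t-\sqrt{2d}$ and the accompanying check that all arguments of $\decay$ stay in the region $x\ge\sqrt d$ throughout.
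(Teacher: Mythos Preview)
Your proposal is correct and follows the paper's own approach essentially line by line: the paper derives the same $\decay$ and $\decay^{-1}$ from \eqref{eq:norm-conc}, then simply states that the corollary ``directly follows'' from \prettyref{thm:main-genr}, while you spell out the bookkeeping of translating $\decay^{-1}(\exp\{-c/\alpha\})$ and $\decay^{-1}(\exp\{-c/(\alpha\gamma)\})$ into the $(d+1/\alpha)^{1/2}$ and $(d+1/(\alpha\gamma))^{1/2}$ forms and bounding $\decay(t)$ by the Gaussian expression via $\sqrt{t^2-d/2}\ge t-\sqrt{d/2}$. The one place both you and the paper are slightly loose is the absorption of the additive $8ke^{-n\alpha/4}$ term into the stated exponential bound; this does not follow from (C1) and the $\snr$ threshold alone (it needs $n$ large), but the paper glosses over this just as you do.
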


\begin{remark}
	The implications of the above result are the following: whenever $\snr$ is significantly larger than $(d+1/(\alpha\gamma))^{1/2}$ and $\log k$, the mislabeling rate in the sub-Gausssian mixture model is approximately $\exp\pth{-{\Delta^2/(8\sigma^2)}}$. This matches the theoretical limit for mislabeling proportion in the sub-Gaussian mixture model with a constant $d$; see \cite{lu2016statistical} for an example which demonstrates that Lloyd's algorithm achieves a similar error rate. When $d$ is fixed, the initialization conditions stated above in \prettyref{cor:subg-1} are weaker than the conditions required for Lloyd's algorithm. In particular, the initialization condition on $H_0$ for Lloyd's algorithm depends on the relative distance between the closest cluster centroid and the farthest cluster centroids, given by $\lambda={\max_{h\neq g\in [k]}\|\theta_g-\theta_h\|/\Delta}$. As the value of $\lambda$ increases Lloyd's algorithm requires a stronger initialization condition to guarantee the optimal mislabeling. Notably, this dependency of initialization condition on $\lambda$ is necessary for Lloyd's algorithm to converge as the mean based centroid estimate for any cluster can be destabilized via contamination from the farthest away clusters. We believe that the dependency on $d$ in the condition involving $\snr$ can be further improved by first running a spectral method on the dataset and then applying the \codelta algorithm. However, the analysis is beyond the scope of the current paper.
\end{remark}

\subsection{Mixture models with moment constraints on the norm}

\label{sec:model-moment}
In this section, we explore the clustering guarantees of \codelta when the data generating distributions have moment constraints. We say that a random variable $w$ is distributed according to a $p$-th moment constraint on the norm with a scale parameter $\sigma$, denoted by $w\in \calR_p(\sigma)$ for a given $p > 0$, if it satisfies the following condition:
\begin{itemize}[label=(P)]
\item There exists $x_0>0$ such that $\PP\qth{\|w\|>x}<{\sigma^p\over x^p}$ for all $x\geq x_0$. Without a loss of generality we will assume $x_0 \geq \sigma$ as otherwise the bound is trivial.
\end{itemize}
 We observe independent samples $Y_1,\dots,Y_n\in \reals^d$ from a mixture of $k$ many $\calR_p(\sigma)$ distributions
\begin{align}
Y_i=\theta_{z_i}+w_i,\ i=1,\dots,n,\quad w_i\in \calR_p(\sigma),
\quad z_i\in \{1,2,\dots,k\},\quad {\theta_h}\in \reals^d, h\in [k],
\end{align}
where $ z=\sth{z_i}_{i=1}^n\in[k]^n$ denote the underlying label vector. The mislabeling proportion for the estimated label vector $\hat z$ produced by the \codelta algorithm is summarized as follows.

\begin{theorem}
	\label{thm:main-norm}
	Suppose that $\gamma\in (\frac {10}{n\alpha},\frac 12)$. Then there exist absolute constants $c_1,c_2>0$ such that the following holds. If the clustering initialization satisfies  
	$$H_0\geq \frac 12+\gamma\quad \text{or}\quad \Lambda_0\leq \frac 12-{e^{{c_1/p\alpha}}\over \snr},$$
	then whenever $\snr\geq  e^{{c_2/p\alpha\gamma}}$ 
	we have that the \codelta algorithm with $\delta=\frac 12-\frac \gamma4$ achieves the expected mislabeling rate $$\EE\qth{\ell(\hat z^{(s)},z)}\leq k^2 (\snr-e^{{2c_1/p\alpha}})^{-p}
	+ 8ke^{-{n\alpha\over 4}},
		\quad s\geq 2.
		$$
	In addition, this rate is optimal, up to a factor depending on $k,\alpha$.
\end{theorem}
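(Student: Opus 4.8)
The plan is to obtain \prettyref{thm:main-norm} as a direct specialization of the two general results already in hand---\prettyref{thm:main-genr} for the mislabeling upper bound and \prettyref{thm:lb-general} for the matching lower bound---instantiated at the decay function attached to the $p$-th moment class $\calR_p(\sigma)$. The only genuine work is to choose the right $\decay$ and invert it; everything substantive (the trimmed-mean concentration, the per-iteration mislabeling recursion, the lower-bound construction) is already established in those two theorems and is used here as a black box.

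First I would fix $\decay$. Condition (P) gives $\PP[\|w_i\|>x]<(\sigma/x)^p$ for $x\ge x_0$, and $\PP[\|w_i\|>x]\le 1$ always, so a nonincreasing $\decay$ on $\reals_+$ with $\PP[\|w_i\|>x]\le\decay(x/\sigma)$ is obtained by setting $\decay(x)=x^{-p}$ on the range where this is $<1$ and $\decay\equiv 1$ on an initial bounded segment; the threshold $x_0$, which by the stated convention exceeds $\sigma$, merely rescales that segment and is absorbed into the constants. Then $\decay^{-1}(y)=\Theta(y^{-1/p})$ for small $y$, and the one elementary point that matters is that $\decay^{-1}$ sends an exponentially small argument to a subexponential value: $\decay^{-1}(e^{-c_0/(\alpha\gamma)})=\Theta(e^{c_0/(p\alpha\gamma)})$ and $\decay^{-1}(e^{-c_0/\alpha})=\Theta(e^{c_0/(p\alpha)})$. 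Feeding $\decay(x)=x^{-p}$ into \prettyref{thm:main-genr}, the initialization requirement $H_0\ge\tfrac12+\gamma$ is unchanged, its alternative $\Lambda_0\le\tfrac12-\decay^{-1}(e^{-c_0/\alpha})/\snr$ becomes $\Lambda_0\le\tfrac12-e^{c_1/(p\alpha)}/\snr$, the separation hypothesis $\snr\ge\decay^{-1}(e^{-c_0/(\alpha\gamma)})$ becomes $\snr\ge e^{c_2/(p\alpha\gamma)}$, and the error bound becomes $k^2(\snr-e^{2c_1/(p\alpha)})^{-p}+8ke^{-n\alpha/4}$, because $\decay$ acts as $x\mapsto x^{-p}$ on the large-argument range where the separation hypothesis places $\snr$, the subexponential shift being $\decay^{-1}$ of an exponentially small quantity and any smaller-order shift being absorbed.

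For the optimality claim I would check that $\decay(x)=x^{-p}$ satisfies the smoothness condition \ref{pt:decay-prop}: it is differentiable on $(c_\decay,\infty)$ for every $c_\decay>0$, with $|\decay'(y)|=p\,y^{-p-1}$ monotonically decreasing there. Hence \prettyref{thm:lb-general} applies and gives, for $\alpha\in(c/k,\bar c/k)$ and $\Delta\ge\sigma C_\decay$, the lower bound $\inf_{\hat z}\sup_{\calP_0}\EE\qth{\ell(\hat z,z)}\ge\tfrac{1-k\alpha-k/n}{12}(\snr+C_\decay)^{-p}$. It then remains to note that in the regime $\snr\ge e^{c_2/(p\alpha\gamma)}$---choosing $c_2$ large enough relative to $c_1$ and $C_\decay$, which is legitimate since $\gamma<\tfrac12$---the additive shifts $e^{2c_1/(p\alpha)}$ and $C_\decay$ are both negligible against $\snr$, so $(\snr-e^{2c_1/(p\alpha)})^{-p}$ and $(\snr+C_\decay)^{-p}$ are each within a $(p,\alpha)$-dependent constant of $\snr^{-p}$ and hence of one another; together with the prefactors $k^2$ and $\tfrac{1-k\alpha-k/n}{12}$ this shows the upper and lower rates agree up to a factor depending on $k$ and $\alpha$ (and $p$).

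The main obstacle is not conceptual but is precisely this decay-function bookkeeping: ensuring the modification of $\decay$ near the origin (needed because the moment bound does not control $\PP[\|w_i\|>x]$ for small $x$) is harmless, tracking how $x_0$ propagates into the ``absolute'' constants, and verifying that the several subexponential-in-$1/(p\alpha)$ quantities produced by $\decay^{-1}$ are simultaneously small enough relative to $\snr$ both for the error-bound simplification and for the upper/lower matching. Everything else is transcription of \prettyref{thm:main-genr} and \prettyref{thm:lb-general}.
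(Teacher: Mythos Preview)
Your proposal is correct and follows exactly the approach the paper takes: the paper presents \prettyref{thm:main-norm} as a direct specialization of \prettyref{thm:main-genr} and \prettyref{thm:lb-general} (no separate proof is given; it is treated analogously to the sub-Gaussian case in \prettyref{cor:subg-1}, where one simply computes $\decay$ and $\decay^{-1}$ and plugs in). Your choice $\decay(x)=x^{-p}$, the inversion $\decay^{-1}(e^{-c/\alpha})=e^{c/(p\alpha)}$, and the verification of \ref{pt:decay-prop} for the lower bound are precisely what is needed.
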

Notably, in the above result, we never assume that the error distributions are centered around zero. As long as there is sufficient decay around the location parameter $\theta_h$, our result states that we should be able to produce good clustering guarantees.  Note that the second term is usually negligible.

	\section{Provable initialization methods}

\label{sec:initialization}

In this section, we propose  centroid initialization algorithms which guarantee that the conditions on $\Lambda_0$ required in \prettyref{thm:main-genr} are met with a high probability. We deal with the cases $k=2$ and $k\geq 3$ separately. The algorithm in the case of $k\geq 3$ follows from a recursive structure which calls the algorithm for $k=2$ at the end.

\subsection{Two centroids}

	We first present our initialization algorithm for the setup with $k=2$. Our algorithm revolves around searching for data points with dense neighborhoods. With a high signal-to-noise ratio, such dense neighborhoods are expected to be close to the centroids. Hence, the data points with a high density neighborhoods can be chosen as good estimates of the true centroids. Our algorithm for finding such data points is presented in \prettyref{algo:hdp}: Given a data set with size $n$ and neighborhood size parameter $q$, the algorithm outputs a data point with the tightest neighborhood in the data set with at least $nq$ points from the set. 

\begin{algorithm}[H]
	\caption{{\sf The High Density Point} (${\sf HDP}_q$) - algorithm}\label{algo:hdp}
	\begin{flushleft}
		{\bf Input}:  Set of points $S=\sth{Y_1,\dots,Y_n}$, neighborhood size parameter $q$
	\end{flushleft}
	\begin{algorithmic}[1]
		\State Create distance matrix $D=\sth{D_{ij}: i,j\in [n],D_{i,j}=\|Y_i-Y_j\|}$
		\For{Each $i\in [n]$}
		\State Compute the $R_i$ as the $\ceil{nq}$-th smallest number in $\sth{D_{ij},j\in [n]}$
		\EndFor
		\State Find $i^* = \argmin_{i\in [n]} R_i$. 
	\end{algorithmic}
	\begin{flushleft}
		{\bf Output}: $Y_i^*$
	\end{flushleft}
\end{algorithm}
In view of the above, we present the initialization algorithm for $k=2$ below. 
\begin{algorithm}[H]
	\caption{The {\sf Initialization via Ordered Distances} ({$\iod_{2,m_1,m,\beta}$})- algorithm with 2 centroids}\label{algo:init}
	\begin{flushleft}
		{\bf Input}:  Data $Y_1,\dots,Y_n$, truncation parameter $\beta$, batch size $m$, and initial cluster size $m_1$
	\end{flushleft}
	\begin{algorithmic}[1]
		\State Compute $\mu_1^{(1)}=\hdp_{m_1\over n}(\{Y_1,\dots,Y_n\})$.
		\State Order the rest of the points in increasing Euclidean distance from $\mu^{(1)}_1$. 
		\State Denote the first $m_1$ points in the list as $\calP_1^{(1)}$ and the rest of the points list as $\overline{\calP_1^{(1)}}$ in increasing order of distance form $\mu_1^{(1)}$.
		\State Compute $\dist_1^{(1)}$ as the $(1-\beta)m_1$-th smallest value among the distances from $\mu_1^{(1)}$ to $\calP_1^{(1)}$. 
		\For{$\ell=1,\dots,\ceil{n-m_1\over m}$}
		\State Assign $\mu_1^{(\ell)}=\mu_1^{(1)}$. Compute $\dist_1^{(\ell)}$ as the $(1-\beta)\abs{\calP_1^{(\ell)}}$-th smallest value among the distances from $\mu_1^{(1)}$ to $\calP_1^{(\ell)}$ 
		\State Compute $\mu_2^{(\ell)} = {\sf HDP}_{1-\beta}(\overline{\calP_1^{(\ell)}})$.
		\State Compute $\dist_2^{(\ell)}$ as the $(1-\beta)\abs{\overline{\calP_1^{(\ell)}}}$-th smallest value among the distances from $\mu_2^{(\ell)}$ in the set $\overline{\calP_1^{(\ell)}}$.
		\State Store $\totdist^{(\ell)} = \dist_1^{(\ell)}+\dist^{(\ell)}_2$.
		\State Move the first $m$ points in the list $\overline{\calP_1^{(\ell)}}$ to $\calP_1^{(\ell)}$ to construct $\overline{\calP_1^{(\ell+1)}}, \calP_1^{(\ell+1)}$
		\EndFor
		\State Find $(\mu_1^*,\mu_2^*)=(\mu_1^{(\ell^*)},\mu_2^{(\ell^*)})$ and $\totdist^*=\totdist^{(\ell^*)}$ corresponding to 
		$$\ell^* = \argmin_{\ell\in \{1,\dots, \ceil{n-m_1\over m}-1\}}\totdist^{(\ell)}.$$
	\end{algorithmic}
	\begin{flushleft}
		\noindent {\bf Output}: $(\mu_1^*,\mu_2^*)$ and $\totdist^*$.
	\end{flushleft}
\end{algorithm}

\begin{remark}[Explanation of \prettyref{algo:init}]
	We apply \prettyref{algo:init} to the full data set $\calS=\{Y_1,\dots,Y_n\}$ with the choice $m_1={n\alpha\over 4}$. The first centroid estimate $\hat \theta_1$ is chosen by picking the index $i^*\in \{1,\dots,n\}$ such that the tightest neighborhood in $\calS$ of size $m_1$ around $Y_{i^*}$ has the smallest radius compared to any other such neighborhood around any other point $Y_i$. Using results on concentration of the quantiles of $\decay$ \prettyref{lmm:E-conc-init-2}, it is not too difficult to show that, for some constant $C=\sigma\tilde C_{G,\alpha}$ depending on the decay function $G$ and minimum cluster proportion $\alpha$, with a high probability
	$$
	\hat \theta_1=Y_{i^*}\in \cup_{i=1,2}\calB(\theta_i,C),$$ 
	where $\calB(x,R)$ denotes the Euclidean ball of radius $R$ around the point $x$. Without a loss of generality, suppose that $\calB(\theta_1,C)$ is the set containing $\hat\theta_1$. Denote the first $m_1$ points in the data set closest to $\hat\theta_1$ as $\calP_1$ and denote the complement set of points as $\overline{\calP_1}$. 
	
	In view of the above, it is clear that the inherent challenge in finding a good initialization lies in obtaining a good estimate of $\theta_2$. At this stage, it might seem reasonable to apply the $\sf HDP$ algorithm again on the remaining set of points $\overline{\calP_1}$ to estimate $\theta_2$. Unfortunately, a direct application of the ${\sf HDP}$ on the set $\overline{\calP_1}$ need not guarantee a good estimate of $\theta_2$. This is because as there are significantly many points in $\overline{\calP_1}\cap \sth{Y_i:i\in T^*_1}$ (at least $n^*_1-{n\alpha\over 4}\geq {3n\alpha\over 4}$ points) and the data point chosen via ${\sf HDP}$ can indeed belong to $\{Y_i:i\in T^*_1\}$, which will be closer to $\theta_1$ than $\theta_2$ with a high probability. To remedy this issue, we gradually move $m$ points from $\overline{\calP_1}$ to ${\calP_1}$, prioritizing the points in $\overline{\calP_1}$ that are closer to $\hat \theta_1$. At each transfer step, we can compute the corresponding centroid estimate $\hat \theta_2$, using ${\sf HDP}$ estimator, while keeping $\hat \theta_1$ as it is. To control the stopping point at which we terminate the transfer of points from $\overline{\calP_1}$ to ${\calP_1}$ we use the quantile-based measure $\totdist$. The reason behind using the quantiles of the intra-cluster distances rather than their sum, which is often used in the WCSS metric defined in \eqref{eq:wcss}, is that the quantiles are more robust to outlying observations. Notably, once we transfer a significant number of points from $\overline{\calP_1}$ that belong to $T^*_1$ and keep a substantial number of points in $\overline{\calP_1}$ that belong to $T^*_2$, a second application of the ${\sf HDP}$ algorithm will guarantee a good estimate $\hat \theta_2$ of $\theta_2$
\end{remark}

The following result describes our choice for the parameters $m_1,m,\beta$ in the above algorithm and the corresponding centroid estimation guarantees.
\begin{theorem}\label{thm:proof-algo-2}
	Suppose that out of the $n$ many observed data points, $n^*_i$ many are from cluster $T_i^*, i=1,2$ and assume that for some constant $\alpha>0$ the counts satisfy $n^*_1,n^*_2> n\alpha$. Then there are constants $c_1,c_2>0$ such that if $\Delta\geq c_1\sigma\decay^{-1}\pth{e^{-{c_2\over \alpha^2}}}$ then the $\iod_{2,m_1,m,\beta}$ algorithm with $m_1=\ceil{n\alpha\over 4},m=\max\{1,\floor{n\alpha^2\over 16}\},\beta={\alpha\over 4}$ guarantees, for a permutation $\pi$ on $\{1,2\}$ 
	$$
	\max_{i=1,2}\|\theta_{\pi(i)}-\mu_i^*\|\leq \Delta/3
	$$ 
	with probability at least $1-4e^{-{n\alpha/ 4}}$.
\end{theorem}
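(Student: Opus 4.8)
\emph{Overall strategy.} The plan is to condition on a single high-probability event $\calE$ on which the run of \prettyref{algo:init} becomes deterministic, and then to read off the two centroid bounds from the behaviour of the sequence $\ell\mapsto\totdist^{(\ell)}$ along the transfer loop. The key structural fact to establish is that the indices $\ell$ with $\totdist^{(\ell)}$ small form a contiguous window, that on this window $\mu_1^{(\ell)}$ and $\mu_2^{(\ell)}$ are close to the two centroids, and that off the window $\totdist^{(\ell)}\ge\Delta/3$; then the minimizer $\ell^*$ is trapped in the window.

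\emph{Step 1: the event $\calE$ and the first centroid.} Applying the quantile-concentration bounds of \prettyref{lmm:E-conc-init-2} to $T^*_1$ and $T^*_2$ (each of size $>n\alpha$), let $\calE$ be the intersection of the events that, for $g=1,2$: at least a $(1-\alpha/16)$-fraction of $T^*_g$ lies in $\calB(\theta_g,O(\sigma))$; all but at most $n\alpha^2/16$ points of $T^*_g$ lie in $\calB(\theta_g,\sigma\decay^{-1}(\poly(\alpha)))$; and fewer than $m_1=n\alpha/4$ of all $n$ data points lie outside $\calB(\theta_1,C_0)\cup\calB(\theta_2,C_0)$ for $C_0=\sigma\decay^{-1}(\alpha/8)$. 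On $\calE$, any ball holding $m_1$ data points must meet one of these two small balls, whereas a central point of a cluster has an $m_1$-neighborhood of radius $O(\sigma)$; hence the ${\sf HDP}_{m_1/n}$ output on line~1, which is $\mu_1^{(\ell)}$ for every $\ell$, lies in $\calB(\theta_1,C)\cup\calB(\theta_2,C)$ with $C=O(\sigma\decay^{-1}(\alpha/8))$. Relabelling the clusters (this fixes the permutation $\pi$), assume $\hat\theta_1\in\calB(\theta_1,C)$; since the hypothesis $\Delta\ge c_1\sigma\decay^{-1}(e^{-c_2/\alpha^2})$ forces $C\le\Delta/3$, we already have $\|\theta_{\pi(1)}-\mu_1^*\|\le\Delta/3$, and it remains to bound $\mu_2^*$.

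\emph{Step 2: the shape of $\totdist^{(\ell)}$.} Because the loop always moves the points of $\overline{\calP_1^{(\ell)}}$ nearest $\hat\theta_1$ first, $\calP_1^{(\ell)}$ is always the set of the $|\calP_1^{(\ell)}|$ points closest to $\hat\theta_1$; hence $\dist_1^{(\ell)}$ is nondecreasing in $\ell$ and the number of $T^*_1$-points left in $\overline{\calP_1^{(\ell)}}$ is nonincreasing in $\ell$. Let $\ell_1$ be the first index at which this residual $T^*_1$-content drops below the $\beta|\overline{\calP_1^{(\ell)}}|$ that ${\sf HDP}_{1-\beta}$ may discard ($\beta=\alpha/4$), and let $\ell_2$ be the last index with $\dist_1^{(\ell)}\le\Delta/3$; on $\calE$ one checks $\ell_1\le\ell_2$, since a batch $m=\lfloor n\alpha^2/16\rfloor$ and the fact that all but $O(n\alpha^2)$ points of $T^*_1$ lie within $\Delta/3$ of $\hat\theta_1$ (every $T^*_2$-point being at distance $\ge\Delta/2$ from $\hat\theta_1$) leave a full iteration of slack. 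Then: (a) for $\ell\in[\ell_1,\ell_2]$, $\overline{\calP_1^{(\ell)}}$ is dominated by the $\gtrsim n\alpha$ points of $T^*_2$ (near $\theta_2$) with residual $T^*_1$-content below the discard budget, so ${\sf HDP}_{1-\beta}$ returns $\mu_2^{(\ell)}\in\calB(\theta_2,O(\sigma\decay^{-1}(\poly(\alpha))))$ and $\dist_2^{(\ell)}=O(\sigma\decay^{-1}(\poly(\alpha)))$, while $\dist_1^{(\ell)}\le\Delta/3$ — hence $\totdist^{(\ell)}$ is small; (b) for $\ell<\ell_1$, $\overline{\calP_1^{(\ell)}}$ carries more than a $\beta$-fraction from each of the two $\Delta$-separated clusters, and since two balls of total radius $<\Delta$ cannot each be $(1-\beta)$-covering of such a set, $\dist_2^{(\ell)}\ge\Delta/3$; (c) for $\ell>\ell_2$, $\dist_1^{(\ell)}>\Delta/3$ by definition. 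Thus $\totdist^{(\ell)}\ge\Delta/3$ exactly off the window $[\ell_1,\ell_2]$.

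\emph{Step 3: conclusion and the main obstacle.} Under $\Delta\ge c_1\sigma\decay^{-1}(e^{-c_2/\alpha^2})$ we have $\min_\ell\totdist^{(\ell)}<\Delta/3$ (choosing $c_1,c_2$ so that the $\poly(\alpha)$-level radii above are $\le\Delta/3$, using $\decay^{-1}(\poly(\alpha))\le\decay^{-1}(e^{-c_2/\alpha^2})$), so the minimizer $\ell^*$ from \prettyref{algo:init} lies in $[\ell_1,\ell_2]$; combining with (a), $\|\theta_{\pi(2)}-\mu_2^*\|\le\Delta/3$, and a union bound gives $\Prob[\calE]\ge 1-4e^{-n\alpha/4}$, finishing the proof. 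The delicate part is twofold. First, turning Step~2(a)–(c) into a rigorous statement near the endpoints $\ell_1,\ell_2$, where the residual $T^*_1$-content is comparable to $\beta|\overline{\calP_1^{(\ell)}}|$ and $\totdist^{(\ell)}$ is a priori neither small nor $\gtrsim\Delta$; there one must show that smallness of $\dist_2^{(\ell)}$ by itself, together with the concentration of $T^*_2$ on $\calE$, already pins $\mu_2^{(\ell)}$ near $\theta_2$, so landing anywhere with small $\totdist$ is harmless. Second — and this is where the exponent $e^{-c_2/\alpha^2}$ is genuinely needed rather than a mere $\decay^{-1}(\poly(\alpha))$ — several of the bad counts in $\calE$ (residual cluster content, points beyond $\Delta/3$ of their centroid) must be forced below a level of order $n\alpha^2$, which is finer than the $n\alpha$ in the target failure probability $e^{-n\alpha/4}$; for a Binomial with mean $\decay(\Delta/3\sigma)\,n$ the Poisson tail at a fixed level $\Theta(n\alpha^2)$ beats $e^{-n\alpha/4}$ only once the mean is exponentially small in $1/\alpha^2$, i.e.\ $\decay(\Delta/3\sigma)\le e^{-\Theta(1/\alpha^2)}$, exactly the stated condition. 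The remaining ingredients — the union bound, the monotonicity of $\dist_1^{(\ell)}$ and of the cluster contents, and the ``two balls of total radius $<\Delta$ cannot both be $(1-\beta)$-covering'' lemma — are routine.
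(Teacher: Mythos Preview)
Your proposal is workable and lands on the right event and the right quantitative condition $\Delta\gtrsim\sigma\decay^{-1}(e^{-c/\alpha^2})$, but it takes a noticeably more laborious route than the paper.  The paper does \emph{not} try to characterize the full window of $\ell$ on which $\totdist^{(\ell)}$ is small, nor does it argue that $\totdist^{(\ell)}\ge\Delta/3$ off that window.  Instead it decouples the argument into two independent pieces: (i) existence of \emph{one} good index---defined as the first $\bar\ell$ at which $\dist_1^{(\bar\ell+1)}$ jumps above $\Delta/16$---at which a direct count of cluster contents in $\calP_1^{(\bar\ell)}$ and $\overline{\calP_1^{(\bar\ell)}}$ yields $\totdist^{(\bar\ell)}\le\Delta/8$; and (ii) the implication ``$\totdist_\ell\le\Delta/8\Rightarrow$ both $\mu_i^{(\ell)}$ lie in distinct balls $\calB(\theta_{\pi(i)},\Delta/3)$,'' proved by a counting contradiction that shows both centroids cannot fall in the same $\calB(\theta_j,\Delta/3)$.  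Since the algorithm outputs the minimizer of $\totdist$, (i)$+$(ii) finish the proof with no need for your Steps 2(b),(c).

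What each buys: your window picture is more informative about the algorithm's trajectory, but the edge cases you flag near $\ell_1$ are real---the remaining $T^*_1$ points in $\overline{\calP_1^{(\ell)}}$ are by construction the \emph{farthest} from $\hat\theta_1$, so just before $\ell_1$ the $T^*_1$ content can be dominated by outliers and your ``two balls'' covering lemma does not directly give $\dist_2^{(\ell)}\ge\Delta/3$.  The fix you propose (``smallness of $\dist_2^{(\ell)}$ alone pins $\mu_2^{(\ell)}$ near $\theta_2$'') is exactly the paper's item (ii); once you have it, your Steps 2(b),(c) become redundant, and you might as well drop the window and argue as the paper does.  Your explanation of why the exponent $e^{-c_2/\alpha^2}$ is forced---needing Binomial counts below order $n\alpha^2$ with failure probability $e^{-n\alpha/4}$---is correct and matches the paper's use of \prettyref{lmm:E-conc-init-2}(i) with $\beta=\alpha/4$.
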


\begin{remark}\label{rmk:outlier-init2}
	 Our main result \prettyref{thm:main-genr} states that for a large enough $\snr$, the mislabeling guarantee for the \cod algorithm holds for any initial centroid estimates that satisfy $\Lambda_0\leq {1\over 2+c}$ for some constant $c>0$. In other words, given centroid estimates $\mu_1^*,\mu_2^*$ of $\theta_1,\theta_2$, it is sufficient to satisfy
	 \begin{align}
	 	\max_{i=1,2}\|\theta_{\pi(i)}-\mu_i^*\| 
	 	= \Delta \Lambda_0\leq {\Delta\over 2+c},
	 \end{align}
	 for some $c>0$. In view of \prettyref{thm:proof-algo-2},  our proposed initialization paired with the proposed \cod algorithm leads to the desired mislabeling.  
\end{remark}

The following result resolves the time complexity to run \prettyref{algo:init}.
\begin{theorem}
	The runtime of $\iod_{2,m_1,m,\beta}$ is at most $O\pth{{1\over \beta^2}\pth{n^2d+n^2\log n}}$.
\end{theorem}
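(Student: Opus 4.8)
This is a line-by-line runtime accounting of \prettyref{algo:init}, so the plan is to (i) bound the cost of a single call to the ${\sf HDP}_q$ subroutine, (ii) tally the cost of each line of \prettyref{algo:init}, identifying the per-iteration bottleneck of the main loop, and (iii) bound the number of loop iterations and multiply. For step (i): on an input set of size $\le n$, \prettyref{algo:hdp} first forms the distance matrix $D$, which costs $O(n^2 d)$ since each of the $O(n^2)$ entries is a Euclidean distance in $\reals^d$; then, for each of the $\le n$ rows, it extracts the $\ceil{nq}$-th order statistic by sorting that row at cost $O(n\log n)$, for $O(n^2\log n)$ in total; finally the $\argmin$ over the $R_i$ costs $O(n)$. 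Hence a single ${\sf HDP}_q$ call runs in $O(n^2 d + n^2\log n)$.

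\textbf{Per-line tally.} Outside the loop: line~1 is one ${\sf HDP}$ call on $n$ points, $O(n^2 d + n^2\log n)$; line~2 computes $n$ distances from $\mu_1^{(1)}$ and sorts them, $O(nd + n\log n)$; line~4 reads one order statistic from this sorted list, $O(1)$; line~12 is a minimum over the $\le\ceil{(n-m_1)/m}$ stored values $\totdist^{(\ell)}$. Inside iteration $\ell$: line~6 extracts one order statistic from the distances of $\mu_1^{(1)}$ to $\calP_1^{(\ell)}$, and because line~10 keeps $\calP_1^{(\ell)}$ equal to a prefix of the list sorted in line~2, this is an $O(1)$ lookup; line~7 is an ${\sf HDP}_{1-\beta}$ call on $\overline{\calP_1^{(\ell)}}$, a set of size $\le n$, hence $O(n^2 d + n^2\log n)$; line~8 asks for the distances from $\mu_2^{(\ell)}$ within $\overline{\calP_1^{(\ell)}}$ and one of their order statistics, but since ${\sf HDP}$ returns a point of its input, both are already available from the distance matrix built inside the line~7 call, so line~8 is $O(1)$ (and even a naive recomputation costs only $O(nd + n\log n)$); lines~9--10 are $O(n)$. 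Thus each iteration is dominated by its line~7 ${\sf HDP}$ call, at $O(n^2 d + n^2\log n)$, and the out-of-loop work is dominated by one more such call.

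\textbf{Iteration count and conclusion.} The loop runs $\ceil{(n-m_1)/m}\le\ceil{n/m}$ times. Under the parameters of \prettyref{thm:proof-algo-2}, namely $m=\max\{1,\floor{n\alpha^2/16}\}$ and $\beta=\alpha/4$ (so $n\beta^2 = n\alpha^2/16$), one checks that $m\ge\tfrac12\max\{1,n\beta^2\}$ for every $n$; therefore $\ceil{n/m}\le\ceil{2n/\max\{1,n\beta^2\}}$, which is $O(1/\beta^2)$ when $n\beta^2\ge1$ and is $O(n)=O(1/\beta^2)$ when $n\beta^2<1$ (since then $n<1/\beta^2$). Multiplying the $O(1/\beta^2)$ iterations by the $O(n^2 d + n^2\log n)$ per-iteration cost, which also absorbs the out-of-loop contribution, gives the claimed $O\pth{\tfrac{1}{\beta^2}\pth{n^2 d + n^2\log n}}$. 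I expect the only place needing care to be this iteration count: one must carry the relation $m\asymp n\beta^2$ together with the degenerate regime $m=1$; the rest is a routine tally whose bottleneck is simply building an $n\times n$ distance matrix inside each ${\sf HDP}$ call.
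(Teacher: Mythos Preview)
Your proof is correct and follows essentially the same approach as the paper's: both decompose the cost into one initial ${\sf HDP}$ call plus $O(1/\beta^2)$ loop iterations, each dominated by another ${\sf HDP}$ call costing $O(n^2d+n^2\log n)$. Your treatment of the iteration count is in fact more careful than the paper's---the paper simply asserts the loop runs at most $2/\beta^2$ times, whereas you explicitly invoke the parameter choice $m=\max\{1,\lfloor n\alpha^2/16\rfloor\}$, $\beta=\alpha/4$ from \prettyref{thm:proof-algo-2} to justify $\ceil{n/m}=O(1/\beta^2)$, including the degenerate regime $n\beta^2<1$.
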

\begin{proof}
	We first find the point in the data set with the tightest neighborhood of $m_1$ other points and the corresponding $(1-\beta)$-quantile of the distances from it. We have the following observations.
	\begin{itemize}
		\item Computing the tightest neighborhood of $m_1$ involves computing all the pairwise distances, which has a time complexity of $n^2d$. 
		\item Computing the $m_1/n$-quantile of the distances for all the points has a time complexity $O(n^2\log n)$, and then subsequently computing the minimum of those values takes at most $O(\log n)$ time.
	\end{itemize}
	Once we have found the first centroid, for each $1\leq \ell\leq {2/\beta^2}$ we construct $\calP_1^{(\ell)},\overline{\calP_1^{(\ell)}}$ according to distances from the first centroid. For each values of $\ell$, we repeat the following.
	\begin{itemize} 
		\item We find the $(1-\beta)$ quantiles of the distances from $\mu_1^{(1)}$ in $\calP_1^{(\ell)}$, which takes $n\log n$ time. 
		\item We then find the point in the data set with the tightest neighborhood of $(1-\beta)\abs{\overline{\calP_1^{(\ell)}}}$ other points in $\overline{\calP_1^{(\ell)}}$ and the corresponding $(1-\beta)$-quantile of the distances from it. This will again take at most $O(n^2d+n^2\log n)$ time similar to above considerations.
	\end{itemize} 
	Combining the above, we get that the total runtime is ${O(1)\over \beta^2}\pth{n^2d+n^2\log n}$.
\end{proof}

\subsection{Algorithm with a general $k$}

To extend the above algorithm for a general cluster number $k$ we use a recursive framework that utilizes the structure of \prettyref{algo:init}. We first locate a point from the data set that has the tightest neighborhood of size $m_1$ (denote it by $\calP$). This will serve as the first centroid estimate. Then for the remaining point set (call it $\overline{\calP}$) we recursively 
apply the initialization algorithm to find the best $k-1$ cluster centers. We repeat the process of finding the best $k-1$ cluster centroids from $\overline{\calP}$ after successively removing $m$ points from $\overline{\calP}$ and adding it to $\calP$. In each step, say $\ell$, we compute an appropriate distance measure similar to $\totdist^{(\ell)} = \dist_1^{(\ell)}+\dist_2^{(\ell)}$ in \prettyref{algo:cent}, that quantifies the goodness of the clustering at that step. Finally, the centroids generated in the step with the lowest distance measure chosen to be the final output. Whenever we are left with the task of finding two centroids from $\overline{\calP}$, we resort to ${\sf IOD}_{2,m_1,m,\beta}$. The details are provided in \prettyref{algo:init-k}.


\begin{algorithm}[H]
	\caption{The {\sf Initialization via Ordered Distances} ({$\iod_{k,m_1,m,\beta}$})- algorithm}\label{algo:init-k}
	\begin{flushleft}
		{\bf Input}:  Data $\{Y_1,\dots,Y_n\}$, $k$ clusters to be found, truncation parameter $\beta$, batch size $m$, initial cluster size $m_1$.
	\end{flushleft}
	
	\begin{flushleft}
		{\bf Output}: Centroid estimates $\{\mu_i^{*}\}_{i=1}^k$ and Error measure $\totdist_k^*$.
	\end{flushleft}
	
	\begin{algorithmic}[1]
		
		\If{$k\geq 3$}
		\State \label{steps:old1} Compute $\mu^{(k,1)}_k=\hdp_{m_1\over n}(\{Y_1,\dots,Y_n\})$
		\State Denote the first $m_1$ points closest to $\mu_k^{(k,1)}$ as $\calP_k^{(1)}$ and the rest of the points as $\overline{\calP_k^{(1)}}$.
		\For{$\ell_k=1,\dots,\floor{n-m_1\over m}$}
		\State Set $\mu^{(k,\ell_k)}_k = \mu^{(k,1)}_k$ and compute 
		$$\text{$\dist_{k}^{(\ell_k)}$= 
			the distance to the $(1-\beta)|\calP_k^{(\ell_k)}|$-th closest point from $\mu^{(k,\ell_k)}_k$ in $\calP_k^{(\ell_k)}$}.$$
		\State Run the $\iod_{k-1,m_1,m,\beta}$ algorithm on the set $\overline{\calP_{k}^{(\ell_k)}}$ and note the outputs: 
		$$\text{centroid set $\{\mu_{i}^{(k,\ell_k)}\}_{i=1}^{k-1}$ and error measure as $\totdist_{k-1}^{(\ell_k)}$}.$$
		
		\State Store $\totdist_{k}^{(\ell_k)} = \dist_k^{(\ell_k)}+\totdist_{k-1}^{(\ell_k)}$.
		\State Move the first $m$ points in $\overline{\calP_k^{(\ell_k)}}$, that are closer to $\mu_k^{(k,1)}$, to ${\calP_k^{(\ell_k)}}$ to construct $\overline{\calP_k^{(\ell_k+1)}}, \calP_k^{(\ell_k+1)}$.
		\EndFor
		\State  $\ell_k^* = \argmin_{\ell_k}\totdist_k^{(\ell_k)},\ \{\mu_i^{*}\}_{i=1}^k=\{\mu_i^{(k,\ell_k^*)}\}_{i=1}^k,
		\ \totdist_{k} = \totdist_{k}^{(\ell_k^*)}$.
		\ElsIf{k=2}
		\State Run the $\iod_{2,m_1,m,\beta}$ algorithm and note the output as $\{\mu_1^{*},\mu_2^{*}\}$ and $\totdist_k^*$.
		\EndIf
	\end{algorithmic}
\end{algorithm}

The following result describes a choice of the parameters $m_1,m,\beta$ that guarantees a good initialization, sufficient to meet the requirements related to $\Lambda_0$ in \prettyref{thm:main-genr}. Hence, our initialization algorithm, paired with the clustering technique $\sf COD$, produces the desired mislabeling starting from scratch.

\begin{theorem}\label{thm:proof-algo-k}
	Suppose that out of the $n$ many observed data points, $n^*_i$ many are from cluster $T_i^*, i=1,\dots,k$ and for some constant $\alpha>0$ the counts satisfy $n^*_i> {n\alpha}, i=1,\dots,k$. Then there are constants $c_1,c_2$ such that the following is satisfied.  Whenever $\Delta>c_1k\sigma\decay^{-1}\pth{e^{-{c_2/ \beta^2}}}$, there is a permutation $\pi$ of the set $[k]$ that satisfies $\max_{i\in [k]}\|\theta_{\pi(i)}-\mu_i^*\|\leq \Delta/3$ with probability at least $1-2ke^{-n\alpha/4}$, where the $\{\mu_i^*\}_{i=1}^k$ are centroids generated via the $\iod_{k,m_1,m,\beta}$ algorithm with 
	$$m_1=\ceil{n\alpha\over 4},m=\max\sth{1,\floor{n\beta^2\over 2}},\beta={\alpha\over 4k}.$$
\end{theorem}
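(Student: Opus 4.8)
The plan is to argue by induction on the number of clusters $k$, with the base case $k=2$ supplied by \prettyref{thm:proof-algo-2}; its failure probability $4e^{-n\alpha/4}$ already matches $2ke^{-n\alpha/4}$ at $k=2$. I would prove a slightly stronger, deterministic-on-a-good-event statement: there is a data-dependent event $\calG_k$ with $\Prob[\calG_k^c]\le 2ke^{-n\alpha/4}$ on which every ${\sf IOD}$ subroutine invoked reports (a) centroid estimates within $\Delta/3$ of distinct true centroids of the set it is run on, and (b) an error measure $\totdist$ that is of order $\sigma\decay^{-1}(e^{-c_2/\beta^2})$ when that set is, up to a $\beta$-fraction of stray points, a union of that many mutually $\Delta$-separated clusters, and is $\Omega(\Delta)$ otherwise. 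The event $\calG_k$ only needs to collect, for each true cluster $T_g^*$, the conclusions of the quantile-concentration lemma \prettyref{lmm:E-conc-init-2} — empirical $(1-\beta)$-quantiles of pairwise distances inside any large subset of $T_g^*$ are at most $\sigma$ times a fixed quantile of $\decay$, and points of $T_g^*$ far from $\theta_g$ are rare — together with the statement that the ${\sf HDP}$ point of any set formed by a large chunk of a single $T_g^*$ plus few foreign points lies in $\calB(\theta_g,\sigma\tilde C_{G,\alpha})$. Because the data are fixed once sampled, these are the same events for every loop index $\ell_k$ and every recursive call, so the union bound over $\ell_k$ is free; each extra recursion level needs only these two event families, contributing $2e^{-n\alpha/4}$, and iterating from the base yields $\Prob[\calG_k^c]\le 4e^{-n\alpha/4}+2(k-2)e^{-n\alpha/4}=2ke^{-n\alpha/4}$.

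On $\calG_k$ the induction step runs as follows. The initial ${\sf HDP}$ call in \prettyref{algo:init-k} returns $\mu_k^{(k,1)}$; since each cluster has more than $n\alpha$ points while $m_1=\ceil{n\alpha/4}\le n\alpha$, this point lies in some $\calB(\theta_g,\sigma\tilde C_{G,\alpha})$, and $\Delta>c_1k\sigma\decay^{-1}(e^{-c_2/\beta^2})$ keeps that ball strictly inside the Voronoi cell of $\theta_g$; relabel so $g=k$. As $\ell_k$ increases, the points transferred into $\calP_k^{(\ell_k)}$ are the points nearest to $\mu_k^{(k,1)}\approx\theta_k$, which on $\calG_k$ are exactly the points of $T_k^*$ in increasing distance to $\theta_k$, every foreign cluster lying at distance $\gtrsim\Delta$. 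Hence there is a ``sweet-spot'' range of $\ell_k$ for which $\calP_k^{(\ell_k)}$ contains all but $O(m)$ points of $T_k^*$ and no foreign point, so $\overline{\calP_k^{(\ell_k)}}$ is the union of the remaining $k-1$ clusters — each still of size $>n\alpha$, with $|\overline{\calP_k^{(\ell_k)}}|=\Theta(n)$, so the passed-down parameters $m_1,m,\beta$ remain admissible for the $(k-1)$-level statement — plus $O(m)$ strays from $T_k^*$, and $O(m)\le n\beta^2/2$ is exactly why these strays fall within the $\beta$-trimming budget. Applying the induction hypothesis to the call ${\sf IOD}_{k-1}$ on $\overline{\calP_k^{(\ell_k)}}$ produces $k-1$ estimates within $\Delta/3$ of $\{\theta_h\}_{h\ne k}$ and $\totdist_{k-1}^{(\ell_k)}=O(\sigma\decay^{-1}(e^{-c_2/\beta^2}))$; combined with $\dist_k^{(\ell_k)}\le\sigma$ times a quantile of $\decay$, this keeps $\totdist_k^{(\ell_k)}$ small over the whole sweet-spot range, so $\min_{\ell_k}\totdist_k^{(\ell_k)}$ is small.

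The main obstacle is the converse needed to certify that the reported minimizer is a good step: whenever $\ell_k$ is not in the sweet-spot range, $\totdist_k^{(\ell_k)}$ must be large enough that $\ell_k^*=\argmin_{\ell_k}\totdist_k^{(\ell_k)}$ cannot land there, after which the output is good by the argument above. Two regimes are ruled out. If $\calP_k^{(\ell_k)}$ is ``too small'', then $\overline{\calP_k^{(\ell_k)}}$ retains $\Omega(n\alpha)$ points of $T_k^*$, hence contains $k$ mutually $\Delta$-separated clusters of mass $\Omega(n\alpha)$, each confined on $\calG_k$ to a ball of radius $\ll\Delta$; the lower-bound half of part (b) of the induction hypothesis then forces $\totdist_{k-1}^{(\ell_k)}=\Omega(\Delta)$, since no $k-1$ of the trimmed balls produced by ${\sf IOD}_{k-1}$ can reach the bulk of all $k$ clusters. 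If $\calP_k^{(\ell_k)}$ is ``too large'', it already contains points of a foreign cluster, so the $(1-\beta)|\calP_k^{(\ell_k)}|$-th distance from $\mu_k^{(k,1)}$ is $\Omega(\Delta)$, forcing $\dist_k^{(\ell_k)}=\Omega(\Delta)$. Making both lower bounds explicit so that a genuine gap between $\Theta(\Delta)$ and $\sigma\decay^{-1}(e^{-c_2/\beta^2})$ persists is what pins down $c_1$; the factor $k$ in the hypothesis on $\Delta$ enters because $\totdist_k$ is a sum of $k$ quantile radii, so the sweet-spot threshold is of order $k\sigma\decay^{-1}(e^{-c_2/\beta^2})$ and must still be dominated by $\Delta$. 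Finally, the relabelings made at the successive levels compose into one permutation $\pi$ of $[k]$, and $\max_{i\in[k]}\|\theta_{\pi(i)}-\mu_i^*\|\le\Delta/3$ follows by assembling the per-level bounds from the base case and the induction hypothesis.
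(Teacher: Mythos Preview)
Your plan differs structurally from the paper's argument, and the difference matters. The paper does \emph{not} induct on $k$; it fixes a single concentration event $\tilde\calE_k$ (\prettyref{lmm:E-conc-init-k}, one pair of bounds per true cluster, hence probability $\ge 1-2ke^{-n\alpha/4}$) and on that event proves two separate deterministic facts: (A) \emph{existence} of a full index path $\bar\ell_k,\dots,\bar\ell_2$ with every $\dist_i^{(\bar\ell_i)}\le\Delta/(8k)$, established by a level-by-level induction that tracks explicit leakage bounds (Q1)--(Q4) on $\overline{\calP_i^{(\bar\ell_i)}}$ (\prettyref{lmm:outlier-small-totdist-k}); and (B) \emph{goodness} of \emph{any} index path with $\totdist_k\le\Delta/8$, by a direct pigeonhole argument with no recursion (\prettyref{lmm:outlier-apprx-centroid-k}). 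Since the algorithm returns the minimizer, (A) forces $\totdist_k^{(\ell_k^*)}\le\Delta/8$, and (B) immediately gives the centroid guarantee. The point is that one never needs to argue that bad $\ell_k$ produce large $\totdist_k$; once some path is small and every small path is good, the minimizer is good.

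Your route instead tries to pin down $\ell_k^*$ as lying in a sweet spot, which forces you to carry a two-sided bound on $\totdist$ through the induction on $k$. The gap is the base case for the lower-bound half: \prettyref{thm:proof-algo-2} only addresses ${\sf IOD}_2$ on genuinely two-cluster inputs and says nothing about its $\totdist$ when fed three or more well-separated clusters, yet your induction at level $k-1$ applies the hypothesis to $\overline{\calP_k^{(\ell_k)}}$ containing $k$ clusters, which cascades down to exactly that uncovered case. You can patch this by proving the $\Omega(\Delta)$ lower bound directly via pigeonhole at each level, but then the induction is no longer doing the work for that half, and you have essentially rediscovered the paper's (B) wrapped in extra scaffolding. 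Two smaller points: your sweet-spot description (``no foreign point'' in $\calP_k^{(\ell_k)}$) is too strong, since on the good event a few foreign points can be within $O(\sigma G^{-1}(e^{-c/\beta^2}))$ of $\theta_k$; the paper handles this with explicit leakage counts like \eqref{eq:outlier-init-claim2-k}. And the probability bookkeeping is not ``$2e^{-n\alpha/4}$ per recursion level'' but rather two events per \emph{true cluster}, all collected at once into $\tilde\calE_k$; the recursive calls are deterministic on that single event.
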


In view of \prettyref{thm:proof-algo-k},  our initialization paired with the \cod algorithm leads to the desired mislabeling.  Notably, the Lloyd algorithm \cite{lu2016statistical} and the hybrid $k$-median algorithm in \cite{JKY2023} also required the initialization condition $\Lambda_0<{ 1/(2+c)}$, for a constant $c>0$, to produce the optimal mislabeling rate in the sub-Gaussian clustering problem. In view of \prettyref{sec:specific_dist}, and the proof of \prettyref{thm:proof-algo-k} in \prettyref{app:proof-init-k}, we note that \prettyref{thm:proof-algo-k} will require $\Delta\geq \sigma c_{\alpha} \sqrt d$ for some constant $c_{\alpha}$ depending on $\alpha$. This implies the following. 
\begin{corollary}
	There is a constant $c_{\alpha}$ depending on $\alpha$ such that the following holds true. When initialized with our initialization scheme \iod, the hybrid $k$-median algorithm in \cite{JKY2023} and Lloyd's algorithm \cite{lu2016statistical} produce the optimal mislabeling rate in the sub-Gaussian error setup, provided $\Delta>\sigma c_{\alpha} \sqrt d$ and $d$ is bounded by a constant.
\end{corollary}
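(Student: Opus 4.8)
The plan is to derive the corollary by specializing \prettyref{thm:proof-algo-k} to the sub-Gaussian decay profile and then invoking the existing analyses of the two downstream algorithms.

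First I would recall from \prettyref{sec:specific_dist} that a zero-mean sub-Gaussian error with parameter $\sigma$ obeys condition \ref{cond:G-decay} with $\decay(x)=\exp(-(\sqrt{x^2-d/2}-\sqrt{d/2})^2/2)$ on $x\ge\sqrt d$, for which $\decay^{-1}(y)\le\sqrt{2\log(1/y)}+\sqrt d$. Plugging this bound into the separation hypothesis of \prettyref{thm:proof-algo-k} with its prescribed choice $\beta=\alpha/(4k)$, the requirement $\Delta>c_1k\sigma\,\decay^{-1}(e^{-c_2/\beta^2})$ becomes $\Delta>c_1k\sigma\pth{\sqrt{2c_2}\,(4k/\alpha)+\sqrt d}$. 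Since $k\le C$ is a fixed constant and $d\ge 1$, the $O_\alpha(1)$ additive term is dominated by $\sqrt d$ up to a multiplicative constant, so this is implied by $\Delta>\sigma c_\alpha\sqrt d$ for a suitable $c_\alpha$ depending only on $\alpha$ (and the absolute constants). Under that hypothesis \prettyref{thm:proof-algo-k} supplies, with probability at least $1-2ke^{-n\alpha/4}$, centroid estimates $\{\mu_i^*\}_{i=1}^k$ and a permutation $\pi$ of $[k]$ with $\max_{i\in[k]}\|\theta_{\pi(i)}-\mu_i^*\|\le\Delta/3$; equivalently, the \iod output has $\Lambda_0\le 1/3$ on this event.

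Next I would observe that $1/3<1/(2+c)$ for every $c\in(0,1)$, so this $\Lambda_0$ bound meets the initialization condition $\Lambda_0<1/(2+c)$ under which, as recalled immediately above, both the Lloyd algorithm of \cite{lu2016statistical} and the hybrid $k$-median algorithm of \cite{JKY2023} attain the minimax sub-Gaussian mislabeling rate $\exp(-(1+o(1))\Delta^2/(8\sigma^2))$ in finite dimension. Conditioning on the event above and bounding the mislabeling trivially by $1$ on its complement contributes only the additive term $2ke^{-n\alpha/4}$, which is negligible for large $n$ and is discarded exactly as in the remark following \prettyref{thm:main}. This gives the claim.

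I do not expect a deep obstacle: the statement is in essence a composition of \prettyref{thm:proof-algo-k} with pre-existing guarantees. The one place that needs care is the constant bookkeeping --- tracking how $c_\alpha$ emerges from composing $\decay^{-1}$ with $\beta=\alpha/(4k)$, and checking that the $\sqrt d$ term produced by sub-Gaussian norm concentration together with the $\alpha,k$-dependent constants collapse into a single bound $\sigma c_\alpha\sqrt d$ using $d\ge 1$ and $k=O(1)$. A secondary point is to make sure the version of the Lloyd / hybrid-$k$-median guarantee being invoked is the one phrased in terms of the centroid error $\Lambda_0$ (rather than the $\lambda$-dependent condition on $H_0$ mentioned in the remark after \prettyref{cor:subg-1}), since it is precisely $\Lambda_0$ that \iod directly controls.
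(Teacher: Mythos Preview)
Your proposal is correct and mirrors the paper's own derivation: the corollary is stated as an immediate consequence of \prettyref{thm:proof-algo-k}, obtained by substituting the sub-Gaussian bound $\decay^{-1}(y)\le\sqrt{2\log(1/y)}+\sqrt d$ into the separation condition (with $\beta=\alpha/(4k)$, $k=O(1)$, $d\ge 1$) to produce $\Delta\ge\sigma c_\alpha\sqrt d$, and then noting that the resulting guarantee $\Lambda_0\le 1/3$ meets the $\Lambda_0<1/(2+c)$ initialization requirement of \cite{lu2016statistical} and \cite{JKY2023}. Your write-up in fact spells out the constant tracking and the complement-event bookkeeping more explicitly than the paper does.
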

The following result resolves the time complexity to run \prettyref{algo:init-k}.
\begin{theorem}\label{thm:runtime-init-k}
	The runtime of $\iod_{k,m_1,m,\beta}$ is at most $ {(O(1)/\beta^2)^{k-1}(n^2d + n^2\log n)}$.
\end{theorem}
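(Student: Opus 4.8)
The plan is to unroll the recursive structure of \prettyref{algo:init-k} and solve the resulting runtime recurrence, using the $k=2$ bound established just above as the base case. Throughout I fix the parameters as in \prettyref{thm:proof-algo-k}, namely $m_1=\ceil{n\alpha/4}$, $m=\max\{1,\floor{n\beta^2/2}\}$, $\beta=\alpha/(4k)$, so that a bound stated purely in terms of $\beta$ is meaningful; the only property of this choice that matters is that the outer loop of \prettyref{algo:init-k} has length $O(1/\beta^2)$. I write $T_k(n)$ for an upper bound on the runtime of $\iod_{k,m_1,m,\beta}$ when run on a set of at most $n$ points; since the cost is monotone in the input size, a recursive call on any subset of $\{Y_1,\dots,Y_n\}$ costs at most $T_k(n)$. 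The base case is $T_2(n)\leq (C_0/\beta^2)(n^2 d+n^2\log n)$ for an absolute constant $C_0$.

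Next I would account for the cost of a single top-level call to $\iod_{k,m_1,m,\beta}$ with $k\geq 3$ by reading off the steps of \prettyref{algo:init-k}. Forming the pairwise distance matrix and running the ${\sf HDP}$ subroutine to obtain $\mu_k^{(k,1)}$ costs $O(n^2 d+n^2\log n)$ (the $\log n$ factor is for extracting, per index, the $m_1$-th smallest distance and then minimizing), and sorting the remaining points by distance from $\mu_k^{(k,1)}$ to split off $\calP_k^{(1)}$ adds $O(n\log n)$. The outer loop executes $\floor{(n-m_1)/m}$ times, which I claim is $O(1/\beta^2)$ in all cases: immediate when $n\beta^2/2\geq 1$, and when $m=1$ we have $\beta^2<2/n$, so the loop runs at most $n<2/\beta^2$ times. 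Each iteration costs a quantile computation over at most $n$ distances ($O(n\log n)$), one recursive call of cost at most $T_{k-1}(n)$, and $O(n)$ bookkeeping to move $m$ points; the final $\argmin$ over $O(1/\beta^2)$ stored values of $\totdist_k^{(\ell_k)}$ is negligible. Collecting these gives the recurrence $T_k(n)\leq C(n^2 d+n^2\log n)+\frac{C}{\beta^2}\pth{T_{k-1}(n)+Cn\log n}$ for an absolute constant $C\geq\max\{C_0,1\}$.

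Finally I would solve this recurrence by induction on $k$, proving $T_k(n)\leq (2C/\beta^2)^{k-1}(n^2 d+n^2\log n)$, which is the asserted $(O(1)/\beta^2)^{k-1}(n^2d+n^2\log n)$ with $O(1)=2C$. The base case $k=2$ follows from $C_0\leq C$. For the inductive step, substituting the hypothesis for $T_{k-1}(n)$ and using $n\log n\leq n^2 d+n^2\log n$, the contribution of the recursive term is exactly $\frac12 (2C/\beta^2)^{k-1}(n^2d+n^2\log n)$, while the leftover terms $C(n^2d+n^2\log n)$ and $(C^2/\beta^2)(n^2d+n^2\log n)$ are absorbed into the remaining half using $\beta<1$, $C\geq1$, and $k\geq3$. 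I do not expect a genuine obstacle here: this is a standard divide-and-recurse analysis, and the only points requiring a little care are the uniform $O(1/\beta^2)$ bound on the loop length across the two regimes $m=\floor{n\beta^2/2}$ and $m=1$, and verifying that the non-leading terms in the induction are genuinely dominated so that the constants close.
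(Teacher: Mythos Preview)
Your proposal is correct and follows essentially the same approach as the paper: both set up a recurrence for the runtime by observing that the outer loop has $O(1/\beta^2)$ iterations, each invoking the $(k-1)$-centroid routine, then unwind the recursion down to the $k=2$ base case. Your treatment is slightly more careful in two places---explicitly handling the $m=1$ regime for the loop-length bound and closing the induction constants---but the underlying argument is the same.
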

\begin{proof}
	As our method is a recursive process, we construct a recursion that relates the computation time of finding the best $k$ centroids to that of finding $k-1$ best centroids. In the recursion process, when we want to find out the best $k$ centroids from the data, we first find the point in the data set with the tightest neighborhood of $m_1$ other points and the corresponding ${m_1\over n}$-quantile of the distances from it. This involves the computation of all the pairwise distances, which has a time complexity of $O(n^2d)$, computing the $(1-\beta)$-quantile of the distances for all the points, which has a time complexity $O(n^2\log n)$, and finally computing the minimum which takes $\log n$ at most. Once we have found the first centroid, for each $1\leq \ell_k\leq {2/\beta^2}$ we construct $\calP_k^{(\ell_k)},\overline{\calP_k^{(\ell_k)}}$ according to distances from the first centroid, which takes another $n$ unit time and perform the $k-1$ centroid finding algorithm on $\overline{\calP_k^{(\ell_k)}}$ which has at most $n$ points. Let $U_k$ be the time complexity of finding the best $k$-centroids given $n$ data points. Then, given the above reasoning, we have
	\begin{align*}
		U_k \leq (O(1)/\beta^2) \qth{U_{k-1} + n} + O(n^2d + n^2\log n).
	\end{align*}
	Solving the above recursion we get
	\begin{align*}
		U_k \leq (O(1)/\beta^2)^{k-2} U_2 + (O(1)/\beta^2)^{k-2}\qth{n^2d + n^2\log n + 2n/\beta^2}.
	\end{align*}note that via a similar argumentthe 2-centroid finding problem takes ${O(1)\over \beta^2}\pth{n^2d + n^2\log n}$ time. Hence, we simplify the above to get the desired result. 
\end{proof}

	\section{Clustering in the presence of adversarial outliers}

\label{sec:adv_outliers}

In this section we show that the inherent robustness of our algorithms extends to the scenario when a constant fraction of data (for a finite $k$) might be adversarial outliers. We study the setup where an adversary, after accessing the original data set, adds $n^{\out}$ many new points of its choice. Our results in this section primarily address how the previous theoretical guarantees change when outliers might be present and how much adversarial contamination our initialization methods can tolerate without misbehaving significantly. We do not aim to optimize the outlier levels that our algorithms can tolerate; it is left for future works.

We first discuss the extension of \prettyref{thm:main-genr}. To retain the above mislabeling guarantees, we need to apply a higher value of the truncation parameter $\delta$ for running $\sf{TM}_{\delta}$. We have the following guarantees.
\begin{theorem}
	\label{thm:out-genr}
	Suppose that an adversary, after analyzing the data $Y_1,\dots,Y_n$ coming from the general mixture model \eqref{eq:model-genr}, adds $n^{\out}=n\alpha(1-\psi)$ many outliers of its choice for some $\psi\in (0,1]$. Then there exists a constant $c_0>0$ such that the following holds. \begin{itemize}
		\item If we have an initial estimate of the label vector satisfying $H_0\geq \frac 12+\gamma$ with $\gamma\in (\frac {10}{n\alpha},\frac 12)$,
	then whenever $\snr\geq\decay^{-1}\pth{\exp\sth{-\frac {c_0}{\alpha\cdot\min\{\gamma,\psi\}}}}$ 
	we have that the label vector output $\hat z^{(s)}$ obtained from the \codelta algorithm after $s$ iterations, with $\delta=\frac 12-\frac 14 \min\sth{\gamma,\frac \psi6}$, achieves $$\EE\qth{\ell(\hat z^{(s)},z)}\leq k^2{\decay}\pth{\snr- \decay^{-1}\pth{\exp\sth{-\frac {c_0}{\alpha\psi}}}}
	+ 8ke^{-{n\alpha\over 4}},
	\quad s\geq 2.
	$$
	
	\item Instead of initial cluster labels, if we have initial centroid estimates satisfying $\Lambda_0\leq \frac 12-{\decay^{-1}\pth{\exp\sth{-\frac {c_0}{\alpha\psi}}} \over \snr}$, then the above conclusion holds with $\gamma=0.3$.
	\end{itemize}
\end{theorem}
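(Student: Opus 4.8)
The plan is to mirror the proof of \prettyref{thm:main-genr}, tracking the effect of the $n^{\out}=n\alpha(1-\psi)$ adversarial points through the two steps of the \codelta iteration. As in the outlier-free case it suffices to treat the initialization-by-labels branch: one labeling step turns a centroid estimate with $\Lambda_0\le\frac12-\decay^{-1}(\exp\{-c_0/(\alpha\psi)\})/\snr$ into a labeling with $H_1\ge\frac12+0.3$ with probability $1-O(ke^{-n\alpha/4})$, using the $\decay$-decay concentration of the genuine errors $w_i$ exactly as in the outlier-free argument (the outliers can only increase the number of points assigned to a ``wrong'' centroid, which does not hurt that bound). So assume $H_0\ge\frac12+\gamma$.

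I would first record the cluster-size bookkeeping that dictates the choice $\delta=\frac12-\frac14\min\{\gamma,\psi/6\}$. Since $n^{\out}<n\alpha\le n_h^*$ for every $h$, and since — given $\Lambda_{s-1}$ bounded away from $\frac12$, which I maintain inductively — only an exponentially small fraction $\varepsilon_n$ of the true points are ``outlying'' and hence misassigned by the labeling step, the estimated cluster $T_h^{(s)}$ obeys $n_{hh}^{(s)}\ge(1-\varepsilon_n)n_h^*$ while $|T_h^{(s)}|\le n_h^*+n^{\out}+\varepsilon_n n\le(2-\psi)n_h^*+\varepsilon_n n$. Hence $n_{hh}^{(s)}/|T_h^{(s)}|\ge1-\delta$ and $n_{hh}^{(s)}\ge(1-\delta)|T_h^{(s)}|$ for the above $\delta$; this is precisely where the factor $\psi/6$ is calibrated, and one also checks that $H_s$ stays above $\frac12+\gamma$ (up to the $\varepsilon_n$ slack), so the induction closes.

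The core step is the analysis of ${\sf TM}_\delta$ run on $T_h^{(s)}$. Using the concentration of the order statistics of $\{\|w_i\|\}_{i\in T_h^*}$ under \ref{cond:G-decay} (the quantile bound behind \prettyref{lmm:E-conc-init-2}), with high probability a ball of radius $r=O(\sigma\decay^{-1}(\exp\{-c_0/(\alpha\psi)\}))$ about $\theta_h$ contains at least $(1-\delta)|T_h^{(s)}|$ of the correctly placed cluster-$h$ points, so the tightest $(1-\delta)|T_h^{(s)}|$-neighbourhood has radius $R_{i^*}=O(r)$. Because $(1-\delta)|T_h^{(s)}|>\delta|T_h^{(s)}|\ge\#\{\text{misassigned or outlier points in }T_h^{(s)}\}$, that neighbourhood must contain a genuine cluster-$h$ point, which forces its centre $X_{i^*}$ — and hence the whole retained set $V$ — to lie within $O(r)$ of $\theta_h$; in particular no outlier placed far from $\theta_h$ can survive the trimming, and averaging over $V$ gives $\|\hat\theta_h^{(s)}-\theta_h\|=O(r)$, i.e.\ $\Lambda_s=O(\decay^{-1}(\exp\{-c_0/(\alpha\psi)\}))/\snr$, which is $<\frac12$ under the stated hypothesis on $\snr$. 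Feeding this bias back into the labeling step, a true point $Y_i$ with $z_i=h$ is mislabeled only if $\|w_i\|$ exceeds $\Delta$ minus twice the bias, an event of probability $\le\decay(\snr-\decay^{-1}(\exp\{-c_0/(\alpha\psi)\}))$ after choosing $c_0$; summing over the at most $n$ true points, union-bounding over the $\binom{k}{2}$ centroid pairs and over the size/concentration events (whose failure contributes the additive $8ke^{-n\alpha/4}$), and noting $s\ge2$ suffices for the iteration to stabilize, yields the claimed bound.

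I expect the main obstacle to be the robustness argument for ${\sf TM}_\delta$ in the third step — ruling out that a dense pocket of adversarial points ``hijacks'' the chosen centre $X_{i^*}$. This rests on the counting inequality $(1-\delta)|T_h^{(s)}|>\#\{\text{bad points}\}$ holding simultaneously with the high-probability cluster-size control at every iteration $s\ge1$, and it is exactly this requirement that forces both the delicate value $\delta=\frac12-\frac14\min\{\gamma,\psi/6\}$ and the $\snr$ threshold scaling like $\decay^{-1}(\exp\{-c_0/(\alpha\min\{\gamma,\psi\})\})$; reconciling all these constants while keeping the bias term at the level $\decay^{-1}(\exp\{-c_0/(\alpha\psi)\})$ is the technical heart of the proof.
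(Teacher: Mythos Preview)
Your plan is essentially the paper's: establish an outlier-adapted $H_s\leftrightarrow\Lambda_s$ recursion (packaged there as \prettyref{lmm:Hs-Ls-bound-genr-out}), iterate it until $\Lambda_s\le O(\decay^{-1}(e^{-c/(\alpha\psi)}))/\snr$, then bound the per-point mislabeling as in the outlier-free case. Your identification of the bound $|T_h^{(s)}|\le(2-\psi)n_h^*+o(n)$ as the source of the factor $\psi/6$ in $\delta$ is exactly right, and your ${\sf TM}_\delta$ sketch matches \prettyref{prop:Lambda-bound}.

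One recurring slip to fix: you twice assert an $H$-invariant that is independent of $\psi$ --- ``$H_1\ge\frac12+0.3$'' after one labeling step from centroid initialization, and later ``$H_s$ stays above $\frac12+\gamma$''. Neither holds in general. Because all $n^{\out}=n\alpha(1-\psi)$ outliers may land in a single estimated cluster, the ratio $n_{hh}^{(s)}/n_h^{(s)}$ is only guaranteed to exceed roughly $1/(2-\psi)=\frac12+\psi/(2(2-\psi))$, no matter how small $\Lambda_{s-1}$ is; the paper records this as $H_{s+1}\ge\frac12+\frac{\psi-2\xi}{2(2-\psi)}$. So for $s\ge1$ the correct invariant is $H_s\ge\frac12+\psi/6$ (say), not $\frac12+\gamma$ --- the initial $\gamma$ is used only at $s=0$, and the iteration thereafter runs with $\gamma_1=\psi/6$, which is the actual reason $\delta$ carries $\min\{\gamma,\psi/6\}$. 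Your own computation in the second paragraph already yields the $1/(2-\psi)$ bound; just make the stated invariant match it. A minor point in the ${\sf TM}_\delta$ step: ``the neighbourhood must contain a genuine cluster-$h$ point'' is not by itself enough to locate $X_{i^*}$, since that point could have large $\|w\|$; the clean argument is pigeonhole --- both the retained neighbourhood and $\calB(\theta_h,r)$ have more than $\frac12|T_h^{(s)}|$ points, so they share a point within $r$ of $\theta_h$.
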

\begin{remark}
	Since the adversarial data are arbitrary, the result also applies to the case where the number of clusters is undetermined. In that case, we can simply regard data beyond the first $k$ class as adversarial attacks, so long as the total number of such data points is not too large. Then, Theorem~\ref{thm:out-genr} gives an accurate bound on the first $k$ clusters.
\end{remark}

\begin{remark}
	The recent work of \cite{JKY2023} studies a similar setup of adversarial outliers, although in the specific case of sub-Gaussian mixture models. The previous work uses the coordinate-wise median for centroid estimation. We expect that the similar theoretical guarantees as in \prettyref{thm:out-genr} for the above coordinate-wise median based clustering algorithm will require $\snr\geq \sqrt d \decay^{-1}{\exp\sth{-{c_0\over \alpha\cdot \min{\gamma,\psi}}}}$, which is worse than the results in this paper by a factor of $\sqrt d$. This is because in the worst case, the outliers can equally impact the estimation guarantees of the coordinate-wise median in each coordinate, which produces this $\sqrt d$ factor. Our algorithm can avoid this extra factor of $\sqrt d$ as it utilizes the underlying Euclidean distance-based structure of the error distributions, in contrast to the algorithm based on the coordinate-wise median, which originates from an $\ell_1$-norm based minimization procedure.
\end{remark}

\begin{remark}
	Our method requires that the proportion of outliers is, at most, the proportion of points in the smallest cluster. It is a requirement to produce a small mislabeling as adversarial setups that allow larger outliers than the above can allow the adversary to substitute the smallest cluster with a different cluster of their choice and mislead any reasonable clustering algorithm. 
\end{remark}

Next, we discuss the results regarding our initialization algorithms in the presence of adversarial outliers.

\begin{theorem}\label{thm:proof-algo-outliers}
Suppose that out of the $n$ many observed data points, $n^*_i$ many are from cluster $T_i^*, i=1,\dots,k$ and $n^\out$ many are adversarial outliers (i.e., $\sum_{i=1}^kn^*_i+n^\out=n$). Also, assume that for some constant $\alpha>0$ the counts satisfy $n^*_i> {n\alpha}, i=1,\dots,k$.  We apply \prettyref{algo:init} for $k=2$ and \prettyref{algo:init-k} for $k\geq 3$. Then the consequences of \prettyref{thm:proof-algo-2} (i.e., for $k=2$) hold if $n^\out\leq {n\alpha^2\over 32}$ and the consequences of \prettyref{thm:proof-algo-k} (i.e., for a general $k\geq 3$) hold if $n^\out\leq {n\alpha^2\over 64k}$.
\end{theorem}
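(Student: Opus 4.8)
Since the adversary only \emph{adds} points, the $n-n^\out$ genuine observations still obey \eqref{eq:model-genr} with every true cluster of size $>n\alpha$. Consequently the high-probability events underlying the proofs of \prettyref{thm:proof-algo-2} and \prettyref{thm:proof-algo-k} hold verbatim: concentration of the empirical $(1-\beta)$-quantiles of $\|Y_i-\theta_{z_i}\|$ inside each true cluster, and the packing statement that, uniformly over $x$ at distance $\gtrsim\sigma$ from every centroid, a Euclidean ball of radius $O(\sigma)$ about $x$ holds $o(n\alpha)$ genuine points. These bounds involve only the genuine sample and the unchanged cluster sizes, so they still have probability $\geq 1-4e^{-n\alpha/4}$ (for $k=2$) and $\geq 1-2ke^{-n\alpha/4}$ (for $k\geq3$). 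I would fix this event and show that each step of the two proofs is perturbed by at most constants once $n^\out$ is below the stated threshold.

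\emph{The $\sf HDP$ calls still land near centroids.} The index returned by \prettyref{algo:hdp} minimizes the radius of the tightest $m_1$-neighborhood, $m_1=\ceil{n\alpha/4}$. On the good event some genuine point within $O(\sigma)$ of a centroid has an $m_1$-neighborhood of radius $O(\sigma)$ among its $>4m_1$ genuine cluster-mates, so the achieved minimum radius is $O(\sigma)$; meanwhile any $x$ at distance $\gg\sigma$ from all centroids has, in every $O(\sigma)$-ball, at most $o(n\alpha)+n^\out$ points, which is $<m_1$ because $n^\out\leq n\alpha^2/32<m_1/2$, so such an $x$ is never selected. Thus the first centroid estimate is $O(\sigma)$-close to a true centroid exactly as before. \emph{The quantile distances absorb the outliers.} Each of the distances $\dist$ and $\totdist$ computed by \prettyref{algo:init} and \prettyref{algo:init-k} is a $(1-\beta)$-quantile of pairwise distances inside a set of size $\Theta(n\alpha)$, and for all such sets $n^\out\leq\tfrac12\beta N$, where $N$ is the size of the set (with $\beta=\alpha/4$ when $k=2$; with $\beta=\alpha/4k$ and the sharper hypothesis $n^\out\leq n\alpha^2/64k$ when $k\geq3$). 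Deleting the $\leq n^\out$ outliers shifts the rank of that quantile by at most $n^\out$, so among the genuine sub-collection it still lies between the $(1-2\beta)$- and the $1$-quantile, which the outlier-free proofs already control within the required radius; replacing $\beta$ by $2\beta$ in their estimates merely rescales the constant $c_2$ inside $\decay^{-1}(e^{-c_2/\beta^2})$, leaving the form of the separation condition untouched.

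\emph{The case $k=2$.} The loop in \prettyref{algo:init} transfers $m=\max\{1,\floor{n\alpha^2/16}\}\geq n^\out$ points per step from $\overline{\calP_1}$ to $\calP_1$ in increasing distance from $\mu_1^{(1)}$, and in the clean proof $\totdist^{(\ell)}$ is small exactly once $\calP_1$ has absorbed essentially the whole cluster nearest $\mu_1^{(1)}$ while $\overline{\calP_1}$ still holds essentially all of the other cluster. Interleaving the $\leq n^\out\leq m$ outliers into the distance ordering moves the first index at which this configuration holds, and the index at which it is lost, by at most $O(1)$ batches; hence some $\ell$ still has small $\totdist^{(\ell)}$ with the second $\sf HDP$ output $O(\sigma)$-close to the far centroid, and since $\ell^*$ minimizes $\totdist$, the selected pair $(\mu_1^*,\mu_2^*)$ is at least that good. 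Combined with the two ingredients above, this reproduces \prettyref{thm:proof-algo-2}, which is the $k=2$ claim.

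\emph{The case $k\geq3$, and the main obstacle.} Here I would extend the induction on $k$ that already drives the proof of \prettyref{thm:proof-algo-k}, carrying an ``effective outlier count'' through it. At the top level the ingredients above place $\mu_k^*$ within $O(\sigma)$ of some $\theta_g$ and, at the minimizer $\ell_k^*$, leave in $\overline{\calP_k^{(\ell_k^*)}}$ at least $n\alpha$ genuine points of each of the other $k-1$ clusters, only $O(m)=O(n\beta^2)$ residual genuine points of cluster $g$, and $\leq n^\out$ outliers. Treating the residual cluster-$g$ points as additional adversarial points, the $(k-1)$-cluster recursion (run with the \emph{same} $m_1,m,\beta$) meets effective contamination $O(n\beta^2)+n^\out$, and since $n^\out\leq n\alpha^2/64k=nk\beta^2/4$ this stays inside the slack left by the clean induction at level $k-1$ (the crossover shift is $O(k)$ batches, negligible against the $\Theta(1/\beta^2)$ total); a union bound over the $k$ recursion levels keeps the failure probability $\leq 2ke^{-n\alpha/4}$. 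The delicate part is exactly this bookkeeping: at \emph{every} recursion level one must verify that the genuine-but-uncontrolled residual points of already-peeled clusters stay $O(m)$ in number at the chosen index, and that these together with $n^\out$ still fit inside the trimming-plus-batch slack of the smaller subproblem. It is this accumulation across $k$ levels that forces the contamination threshold to carry the factor $1/k$ rather than the flat $n\alpha^2/32$ permissible for two clusters.
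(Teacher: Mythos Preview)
Your overall strategy---fix the concentration event for the genuine points and show that every counting or quantile step in the clean proofs is robust to at most $n^\out$ added points---matches the paper's philosophy. The paper, however, runs the argument in the opposite direction: it proves the outlier version \emph{directly} by carrying an explicit additive $n^\out$ term through each of the four lemmas (\prettyref{lmm:outlier-centroid1}--\prettyref{lmm:outlier-apprx-centroid} for $k=2$, and the induction invariants (Q1)--(Q4) for $k\geq 3$), and then obtains the outlier-free theorems by setting $n^\out=0$. Your reduction and the paper's direct calculation are interchangeable for three of the four lemmas, so the difference is largely organizational.

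There is, however, a genuine gap in your sketch. Your quantile-shift argument (``$n^\out\leq\tfrac12\beta N$ with $N=\Theta(n\alpha)$'') is only valid at the \emph{existentially good} index $\bar\ell$ produced by \prettyref{lmm:outlier-first-jump} and \prettyref{lmm:outlier-small-totdist}, where indeed both $\calP_1^{(\bar\ell)}$ and $\overline{\calP_1^{(\bar\ell)}}$ have $\Theta(n\alpha)$ genuine points. But the algorithm returns $\ell^*=\argmin_\ell\totdist^{(\ell)}$, and you must rule out that some other $\ell$ achieves a smaller $\totdist$ with \emph{bad} centroids. Your sentence ``since $\ell^*$ minimizes $\totdist$, the selected pair is at least that good'' conflates ``smaller $\totdist$'' with ``better centroid estimates'', which is exactly what \prettyref{lmm:outlier-apprx-centroid} has to establish. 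At such a hypothetical bad $\ell$, the set $\overline{\calP_1^{(\ell)}}$ can be dominated by a tight adversarial cluster of size $n^\out$, in which case $N$ is \emph{not} $\Theta(n\alpha)$, your quantile-shift bound $n^\out\leq\tfrac12\beta N$ fails, and the reduction to the clean lemma breaks. The paper handles this by a global pigeonhole: if $\mu_2^{(\ell)}\notin\cup_i\calB(\theta_i,\Delta/3)$ and $\dist_2^{(\ell)}\leq\Delta/8$, then $|\overline{\calP_1^{(\ell)}}|\leq\frac{5n}{2\log(1/G(\Delta/32\sigma))}+2n^\out$, forcing $|\calP_1^{(\ell)}|\geq n-n\alpha^2/8$, which in turn forces the $(1-\beta)$-ball around $\mu_1^{(\ell)}$ to contain points from \emph{both} true clusters and hence $\dist_1^{(\ell)}\gtrsim\Delta$. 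This step uses $n^\out$ only as an additive count, not through a quantile comparison, and it is precisely where the threshold $n^\out\leq n\alpha^2/32$ is needed; it cannot be recovered from your quantile-shift reduction and must be argued separately. The same issue recurs at every level of the $k\geq 3$ recursion (the analogue is \prettyref{lmm:outlier-apprx-centroid-k}), so your inductive plan also needs this converse direction made explicit before the ``effective outlier'' bookkeeping can close.
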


\begin{remark}
	The above result guarantees that for $k=O(1),\alpha>c/k>0$, our clustering algorithm can tolerate $O(n)$ outliers in the data and still retain the initialization guarantees similar to the case without outliers. It is reasonable to comment that with an increase in the number of clusters, i.e., comparatively fewer points in each cluster, the tolerable amount of adversarial outliers should decrease for any algorithm. However, optimizing the number of adversarial outliers our algorithm can tolerate for a general $k$ is beyond the scope of the current work and is left for future directions. 
\end{remark}
	
	\section{Suboptimality of Lloyd's algorithm}

\label{sec:subopt_lloyd}

In this section, we establish that Lloyd's algorithm might produce a suboptimal mislabeling even when the initial labels are reasonably good.
In the case of at least three centroids, even when error distributions have bounded support, if one of the centroids is far away from the rest, then the mislabeled points originating from that centroid can destabilize the cluster means and hence lead to the poorly estimated centroid. In the two-centroid setup,  the suboptimality occurs when error distributions exhibit heavy tails.

\subsection{The case of at least three centroids}
This section assumes that whenever Lloyd's algorithm produces an empty cluster, it randomly picks one of the data points as the missing centroid for the next iteration step. Then, we have the following result.
\begin{lemma}
	Given any $\beta \in (0, 1)$, there exists a system of three centroids and an initialization with mislabeling proportion $\beta$ such that Lloyd's algorithm does not produce better than a constant proportion of mislabeling. 
\end{lemma}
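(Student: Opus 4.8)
The plan is to build, for each $\beta$ in the feasible range, a one-dimensional instance of the mixture model \eqref{eq:model} with bounded-support errors on which Lloyd's iteration, started from a labeling of mislabeling exactly $\beta$, is driven in two sweeps to a fixed configuration of mislabeling $\tfrac12$, even though that instance is perfectly clusterable. The mechanism is that a small fraction of points from a far‑away centroid, if initially misassigned, hijacks two of the three centroids.

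\emph{The construction.} Fix a small $\epsilon>0$ (depending on $\beta$) and pick $M\gg\Delta\gg\delta_0>0$ with $\delta_0$ tiny and $M$ chosen last. Place $n$ points on the line: a cluster $C_1$ of $\lfloor\epsilon n\rfloor$ points at $\theta_1=0$, a cluster $C_2$ of $\lfloor\epsilon n\rfloor$ points at $\theta_2=\Delta$, and a cluster $C_3$ of the remaining $q=n-2\lfloor\epsilon n\rfloor$ points split into two equal clumps, $A$ at position $M$ and $B$ at position $M+\delta_0$ (so $\theta_3=M+\delta_0/2$, all errors bounded with scale $\sigma=\delta_0/2$); then $\min_{g\neq h}\|\theta_g-\theta_h\|=\Delta$ and $\snr=\Delta/\delta_0$ can be made arbitrarily large. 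For the initial labeling $\hat z^{(0)}$ I would assign all of $C_1\cup C_2$ and a $(1-\rho)$-fraction of each clump to cluster $2$, the remaining $\rho$-fraction of $A$ to cluster $1$, and the remaining $\rho$-fraction of $B$ to cluster $3$. Enumerating the six permutations shows that, as long as $\rho$ stays below a threshold $\rho^\ast(\epsilon)$ with $\rho^\ast(\epsilon)\to\tfrac23$ as $\epsilon\to0$, this labeling has mislabeling exactly $1-(1-\rho)(1-2\epsilon)$; hence for any target $\beta\in(0,\tfrac23)$ one picks $\epsilon$ and $\rho\in(0,\rho^\ast(\epsilon))$ making this equal to $\beta$. (For $\beta\geq\tfrac23$ the statement is vacuous, since no $3$-clustering of any data set can have mislabeling above $\tfrac23$.)

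\emph{The dynamics.} First I would run the estimation step from $\hat z^{(0)}$: clusters $1$ and $3$ contain only $C_3$-points, so $\hat\theta_1^{(1)}=M$ and $\hat\theta_3^{(1)}=M+\delta_0$, while $\hat\theta_2^{(1)}$ is the mean of $\Theta(\epsilon n)$ points near the origin together with $\Theta(n)$ points near $M$, which for $M$ large lies strictly between $\Delta$ and $M$. The relabeling step then reassigns every point of $C_1\cup C_2$ to cluster $2$ (its nearest centroid is $\hat\theta_2^{(1)}$), every point of clump $A$ to cluster $1$, and every point of clump $B$ to cluster $3$; no ties arise, and crucially no cluster is ever emptied, so the empty-cluster tie-breaking rule plays no role here. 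At the next estimation step $\hat\theta_1,\hat\theta_3$ are unchanged and $\hat\theta_2^{(2)}=\Delta/2$ (the mean of the now-merged $C_1\cup C_2$), and the relabeling repeats verbatim — so this configuration is a fixed point of Lloyd. Its mislabeling is $\tfrac12$: the optimal relabeling permutation can align cluster $2$ with one of $C_1,C_2$ and the image of $3$ with one of the two clumps, recovering $\epsilon n+\tfrac{q}{2}=\tfrac n2$ points and no more.

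Thus Lloyd's output has mislabeling $\tfrac12$ after every iteration $\geq1$, regardless of tie-breaking, whereas on the same instance the three clusters are perfectly separated and the optimal mislabeling is $o(1)$ as $\Delta/\delta_0\to\infty$ (e.g.\ the \cod algorithm attains it from a good initialization, cf.\ \prettyref{thm:main-genr}); this proves the lemma. I expect the routine part to be the bookkeeping that pins down how large $M$ must be and how small $\delta_0,\epsilon$ must be relative to $\Delta,\beta$ so that every nearest-centroid comparison breaks as claimed. The main obstacle — the step needing the most thought — is the design of the absorbing bad state: the $\rho$-fraction of far-away $C_3$-points must capture \emph{two} of the three centroids (hence the splitting of $C_3$ into two clumps), so that after a single Lloyd sweep the two nearby clusters $C_1$ and $C_2$ are irrevocably fused into one Lloyd cluster with no spare centroid left to separate them again.
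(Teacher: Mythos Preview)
Your construction is correct and establishes the lemma, but it differs substantially from the paper's argument. The paper places three \emph{equal-sized} clusters at $-\Delta/2$, $\Delta/2$, and $c\Delta/(2\beta)$ with bounded (uniform) errors, and mislabels a $\beta$-fraction of the far-away cluster into class~2. After one estimation step the centroid $\hat\theta_2$ is dragged so far right that the next labeling step empties cluster~2 entirely; the paper then invokes the random-restart rule for empty clusters and argues that with constant probability the replacement centroid lands in the far-away region, after which clusters~1 and~2 are permanently fused. So the paper's failure is \emph{probabilistic} and hinges on a specific empty-cluster tie-breaking convention.

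Your route avoids this: by splitting the far-away cluster into two clumps you arrange that two of the three Lloyd centroids are captured there from the outset, so no cluster is ever empty and Lloyd converges \emph{deterministically} to a bad fixed point with mislabeling $\tfrac12$. This is a genuinely stronger conclusion and does not depend on any implementation detail of Lloyd. The price you pay is twofold: (i) your construction uses highly unbalanced clusters (sizes $\epsilon n,\epsilon n,(1-2\epsilon)n$), whereas the paper's counterexample works already under the balanced-cluster assumption~\ref{cond:alpha} that the rest of the paper operates under --- so the paper's example is in that sense a sharper indictment of Lloyd; (ii) a minor bookkeeping point: the paper defines $\ell(\hat z,z)$ via the infimum over \emph{all} maps $[k]\to[k]$, not just the six permutations, so your initial mislabeling is actually $\rho(1-2\epsilon)$ and the terminal one $\tfrac12-\epsilon$ rather than $\tfrac12$. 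This does not affect your argument (both values are still tunable/constant), but you should adjust the arithmetic accordingly.
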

\begin{proof}
	We consider the one dimensional setup with three centroids, located at $-{\Delta\over 2}, {\Delta\over 2}$ and ${c\Delta\over 2\beta}$ for some constant $c>2$ and sufficiently large $\Delta$. Consider the data generating model
	\begin{align}
		\begin{gathered}
			Y_i = \theta_{z_i}+w_i,\ i=1,\dots, n,\\
			w_i\iiddistr~ \Unif(-1,1),\ z_i\in \{1,2,3\},\ \theta_1= -{\Delta\over 2}, \theta_2={\Delta\over 2}, \theta_3={c\Delta\over 2\beta}.
		\end{gathered}
	\end{align}
	Let $T^*_h=\{i\in [n]:z_i=h\}, h\in \{1,2,3\}$ be as before. We assume equal number of points in all three clusters, i.e., $|n_h^*|={n/3}$. To define the initial label estimates, choose any $\ceil{n\beta/3}$ points from $T^*_3$, say $S$ and consider the initialization
	\begin{align}
		\hat z_i^{(0)} = 
		\begin{cases}
			2 \quad &\text{if }i\in S,\\
			z_i\quad &\text{otherwise}.
		\end{cases}
	\end{align}  
    This is a good initialization, except for a fraction of $\beta$ mislabels in class 2.
    
	We now study the iteration steps for Lloyd's algorithm. After the first iteration, assuming $\Delta$ is sufficiently large, the centroid estimates satisfy
	\begin{align*}
		\theta_1^{(0)}
		\in \pth{-{\Delta\over 2}-1, {\Delta\over 2}+1},
		\quad 
		\hat \theta_2^{(0)}\in \pth{{(c+1)\Delta\over 2(1+\beta)}-1,{(c+1)\Delta\over 2(1+\beta)}+1},
		\quad  \theta_3^{(0)}
		\in \pth{{c\Delta\over 2\beta}-1, {c\Delta\over 2\beta}+1}.
	\end{align*}
	Note that the above implies that given any data point, it is either closer to $\hat \theta_1^{(0)}$  or to $\hat \theta_3^{(0)}$, depending on whether the data is from clusters 1 and 2 or from cluster 3. As a result, $T_2^{(1)}$ is empty, and we randomly pick one of the data points as $\hat \theta_2^{(1)}$. With a constant probability, the choice is given by one of the points in $\{Y_i:i\in T^*_3\}$. In that scenario, in all subsequent stages $\hat \theta_2^{(s)},\hat \theta_3^{(s)}$ will continue to be inside the interval $({c\Delta\over 2\beta}-1,{c\Delta\over 2\beta}+1)$. As a result, all the points from $T^*_2$ are mislabeled. This shows that with constant probability we will have a constant proportion of mislabeling even if all possible label permutations are considered.
\end{proof}
\subsection{The case of two centroids}

We produce a counter example where Lloyd's algorithm fails even with a good initialization. Fix $\epsilon\in (0,1)$. Given any $\Delta>0$ we choose a sample size so big that $n^{\epsilon}> 4\Delta$. Next consider the decay function
\begin{align}
	\decay(x) = \frac 1{1+x^{1-\epsilon}}, \quad  x> 0.
\end{align}
The model we use is 
\begin{align}
	\begin{gathered}
		Y_i = \theta_{z_i}+w_i,\ i=1,\dots, n,\\
		w_i\iiddistr~ W,\quad \PP\qth{W>x} = \decay(x), x>0, \quad z_i\in \{1,2\},\quad \theta_1= 0, \theta_2={\Delta},
	\end{gathered}
\end{align}
with equal cluster sizes. Then, given $n$ samples from the above mixture model, we have
\begin{align}
	\PP\qth{\cup_{i=1}^n\sth{w_i>n^{1+\epsilon}}}
	&= 1 - \PP\qth{\cap_{i=1}^n\sth{w_i\leq n^{1+\epsilon}}}
	\nonumber\\
	&= 1 - \Pi_{i=1}^n \PP\qth{w_i\leq n^{1+\epsilon}}
	= 1 - \pth{1 - \frac 1{1 + n^{1-\epsilon^2}}}^n
	\geq 1 - e^{-n^{\epsilon^2}}.
\end{align}
This implies that with probability at least 1/2 there is at least one index $i^*$ such that $w_{i^*}> n^{1+\epsilon}$. Then, whichever cluster contains $Y_{i^*}$, its corresponding centroid estimate will be bigger than $n^{\epsilon}$. Notably, in the next step, when we use the Euclidean distance to cluster estimate, the best estimated clusters will be of the form
\begin{align*}
	{T}_1^{(s+1)} = \{i\in [n]: Y_i\in [0,x]\}, 
	{T}_2^{(s+1)} = \{i\in [n]: Y_i\in (x,\infty)\},\
	x={(\hat \theta_1^{(s)}+\hat \theta_2^{(s)})/ 2}.
\end{align*}
As one of the centroid estimates is bigger than $n^{\epsilon}$ we get that $x\geq n^{\epsilon}/2\geq 2\Delta$. Next, we present the following concentration result.

\begin{lemma}
	\label{lmm:E-conc-genr}
	Fix $\epsilon_0 > 0$. Then there is an event $\econ_{\epsilon_0}$ with probability at least $1-k\cdot e^{-{\min_{g\in [k]}{n_g^*}\over 4}}$ on which 
	$$\sum_{i\in T_g^*}\indc{\epsilon\Delta\leq \|w_i\|}
	\leq \frac {5n_g^*}{4 \log(1/\decay(\epsilon_0\Delta/{\sigma}))}, \quad \epsilon\geq \epsilon_0,\forall g\in [k].$$
\end{lemma}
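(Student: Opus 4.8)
The plan is to reduce the uniform-in-$\epsilon$ bound to a single threshold via monotonicity, then apply a multiplicative Chernoff bound cluster by cluster and conclude with a union bound over $g\in[k]$.

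First, since $\{\|w_i\|\ge \epsilon\Delta\}\subseteq\{\|w_i\|\ge \epsilon_0\Delta\}$ for every $\epsilon\ge \epsilon_0$, the quantity $\sum_{i\in T_g^*}\indc{\epsilon\Delta\le\|w_i\|}$ is nonincreasing in $\epsilon$, so it suffices to bound $X_g:=\sum_{i\in T_g^*}\indc{\epsilon_0\Delta\le\|w_i\|}$ for each $g$; accordingly $\econ_{\epsilon_0}$ will be taken to be $\bigcap_{g\in[k]}\{X_g\le \tfrac{5n_g^*}{4q}\}$, where I abbreviate $q:=\log\!\big(1/\decay(\epsilon_0\Delta/\sigma)\big)$. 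Under the $\decay$-decay condition, the summands of $X_g$ are independent Bernoulli variables with success probability at most $p:=\decay(\epsilon_0\Delta/\sigma)=e^{-q}$, hence $\EE[X_g]\le n_g^* e^{-q}$. If $q\le \tfrac54$ then $\tfrac{5n_g^*}{4q}\ge n_g^*\ge X_g$ deterministically and there is nothing to prove, so I assume $q>\tfrac54$ from now on.

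For the main case I would invoke the Chernoff bound $\PP[X_g\ge a]\le (e\mu/a)^a$ with $\mu:=n_g^* e^{-q}\ge \EE[X_g]$, which is applicable because $a:=\tfrac{5n_g^*}{4q}$ satisfies $a\ge \mu$ (equivalently $\tfrac{5e^q}{4q}\ge1$, a consequence of $e^q\ge 1+q$). Substituting the values of $a$ and $\mu$ yields
\[
\PP\qth{X_g\ge \frac{5n_g^*}{4q}}\le\pth{\frac{4eq\,e^{-q}}{5}}^{5n_g^*/(4q)},
\]
whose logarithm equals $\tfrac{5n_g^*}{4q}\big(\log\tfrac{4e}{5}+\log q-q\big)$. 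The one genuinely nonroutine ingredient is the tangent-line inequality $\log q\le \log\tfrac54+\tfrac45\big(q-\tfrac54\big)$ (concavity of $\log$ at the point $q=\tfrac54$), which rearranges to $\log\tfrac{4e}{5}+\log q\le \tfrac45 q$, and therefore $\log\tfrac{4e}{5}+\log q-q\le -\tfrac q5$. Plugging this in bounds the exponent by $-\tfrac{5n_g^*}{4q}\cdot\tfrac q5=-\tfrac{n_g^*}{4}$, so $\PP[X_g> \tfrac{5n_g^*}{4q}]\le e^{-n_g^*/4}$. A union bound over $g\in[k]$ then gives $\PP[(\econ_{\epsilon_0})^c]\le k\,e^{-\min_{g}n_g^*/4}$, and on $\econ_{\epsilon_0}$ the asserted inequality holds for every $g$ and every $\epsilon\ge\epsilon_0$ by the monotonicity established in the first step.

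The only delicate point is matching the constant $\tfrac54$ appearing in the statement: it is precisely the value for which the Chernoff exponent closes at exactly $-n_g^*/4$, as witnessed by the equality case $q=\tfrac54$ of the tangent-line inequality. I do not expect any other obstacle; the remaining manipulations are elementary.
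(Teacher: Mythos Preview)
Your proposal is correct and follows essentially the same approach as the paper: reduce to the single threshold $\epsilon_0$ by monotonicity, apply a Chernoff bound for each cluster, then take a union bound over $g\in[k]$. The only differences are cosmetic: the paper uses the relative-entropy (KL) form of Chernoff and closes the constant via $x\log x\ge -\tfrac12$, while you use the cruder multiplicative form $(e\mu/a)^a$ and close the constant with the tangent-line inequality for $\log q$ at $q=\tfrac54$; both routes land exactly at $e^{-n_g^*/4}$.
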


A proof of the above result is presented at the end of this section. Note that in view of \prettyref{lmm:E-conc-genr}, for all large enough $\Delta$ and $n$ we have
\begin{align*}
	\PP\qth{\sum_{i\in T_h^*}\indc{w_i<\Delta} > \frac {3n}8, \ h\in \{1,2\}}\geq \frac 34.
\end{align*}
In view of $x\geq 2\Delta$, using the above inequality conditioned on the event $\cup_{i=1}^n \{w_i>n^{1+\epsilon}\}$ we have that
\begin{align*}
	\PP\qth{|\hat{T}_1^{(s+1)}\cap T_h^*| \geq {3n\over 8}, \ h\in \{1,2\}}\geq \frac 34.
\end{align*}
Hence, on the event $\cup_{i=1}^n \{w_i>n^{1+\epsilon}\}$, that has a probability at least 1/2, there will be at least ${3n\over 8}$ points that are mislabeled.

\begin{proof}[Proof of \prettyref{lmm:E-conc-genr}]
	We define $B_i=\indc{\epsilon_0\Delta\leq \|w_i\|}$. As $\epsilon\geq \epsilon_0$, it is enough to find an event $\calE_1$ with the said probability on which
	\begin{align}
		\label{eq:km8-genr}
		\PP\qth{\sum_{i\in T_g^*} B_{i} \geq \frac {5n_g^*}{4 \log(1/\decay(\epsilon_0\Delta/\sigma))}}\leq e^{-{n_g^*\over 4}} \text{ for each } g\in [k].
	\end{align}
	Note that
	\begin{align}
		\PP\qth{\epsilon_0\Delta\leq \|w_i\|}
		\leq \decay(\epsilon_0\Delta/\sigma)
		.
	\end{align}
	This implies $\sum_{i\in T_g^*}B_i$ is stochastically smaller than a random variable distributed as $\Binom(n_g^*,\decay(\epsilon_0\Delta/\sigma))$. We continue to analyze \eqref{eq:km8-genr} via Chernoff's inequality in \prettyref{lmm:chernoff} for a random variable with the $\Binom(n_g^*,\decay(\epsilon_0\Delta/\sigma))$ distribution. Define $$a= \frac {5}{4 \log(1/\decay(\epsilon_0\Delta/\sigma))},\quad m=n_g^*,\quad  q=e^{-5/(4a)} = \decay(\epsilon_0\Delta/\sigma).$$
	Then we have $a=\frac 5{4\log(1/q)}>\frac 1{\log(1/q)}\geq q$. Using $a\log a\geq -0.5$ for $a\in (0,1)$ we get
	\begin{align}\label{eq:km7-genr}
		\PP\qth{\sum_{i\in T_g^*} B_{i} \geq \frac {5n_g^*}{4 \log(1/\decay(\epsilon_0\Delta/\sigma))}}
		&\leq \exp\pth{-mh_q(a)}
		\nonumber\\
		&\leq \exp\pth{-m\pth{a\log{a\over q}+(1-a)\log{1-a\over 1-q}}}
		\nonumber\\
		&\leq \exp\pth{-m\sth{a\log{a\over e^{-{5/ (4a)}}}+(1-a)\log(1-a)}}
		\nonumber\\
		&=  \exp\pth{-m\sth{a\log a+(1-a)\log(1-a)+\frac 54}}
		\leq e^{-n_g^*/4}.
	\end{align}
	
\end{proof}

\section{Experiments}

\label{sec:expt}

\subsection{Synthetic datasets}
In this section, we evaluate our proposed algorithm ({\sf IOD} for initialization and {\sf COD} for clustering) on synthetic datasets and compare its performance in terms of the mislabeling proportion with the classical Lloyd's algorithm (e.g., the Lloyd–Forgy algorithm \cite{lloyd1982least}). For initializing Lloyd's algorithm, we consider three methods: 
\begin{itemize}
	\item the  proposed $\iod$ algorithm
	\item the $k$-means++ algorithm \cite{vassilvitskii2006k}
	\item randomly chosen initial centroid estimates from the dataset.
\end{itemize}
We simulate the data points with the errors $\{w_i\}$ independently from the multivariate $t_\nu$-distribution with a scale parameter $\sigma$, i.e., the $w_i$ random variable has a density
\begin{align}\label{eq:t-dist}
	f(x) = {\Gamma((\nu+d)/2)\over \Gamma(\nu/2)\nu^{d/2}\pi^{d/2}\sigma}
	\qth{1+ {\|x\|^2\over \sigma \nu}}^{-(\nu+d)/2}.
\end{align}
We study the effect of different dimensions $d$, degrees of freedom $\nu$ for the $t$ distribution, and the scale parameter $\sigma$. We consider the number of centroids $k=2,3$ for our experiments. The centroids of the cluster components are generated randomly and then scaled to make sure that they are at least 25 units apart. For each of the clusters, we generate 200 data points. When running the ${\sf IOD}$ initialization method in \prettyref{algo:init}, \prettyref{algo:init-k} and the {\sf COD} clustering method in \prettyref{algo:cod}, we use the parameters
$$m_1 = 20,m = 10,\beta = 0.05, \delta = 0.3.$$ 
Our experiments are divided into the following regimes.
\begin{itemize}
	
	\item {\it Different degrees of freedom.} We fix the data dimension $d=5$ and $\sigma=5$. We vary the degrees of freedom $\nu$ in the set $
	\{1, 1.5, 10\}$ to cover the cases of a very heavy tail where the mean does not exist, a moderately heavy tail where the mean exists but variance does not, and finally a very light tail where other higher moments exist.
	
	\item {\it Different scale parameters.} We fix the data dimension $d=10$ and $\nu = 1.5$. We vary the scale parameter $\sigma$ in the set $
	\{1, 5, 10\}$ to cover the cases of large, moderate, and low signal-to-noise ratios, respectively.
	\item {\it Different dimensions.} The true points are generated with $\nu = 1.5, \sigma=5$. We vary the data dimension $d$ in the set $\{2, 10, 30\}$.
\end{itemize}
We repeat all the experiment setups 150 times to estimate the mislabeling proportion and its standard error. The average mislabeling errors are presented in \prettyref{tab:nu}, \prettyref{tab:sigma}, \prettyref{tab:dim} (along with the standard errors within the parenthesis). 

\subsubsection*{Results}
We first present the numerical study describing the effect of $\nu$ \prettyref{tab:nu}. For the large value of $\nu=10$ the data are supposed to be highly concentrated around the centroids, which should guarantee a low mislabeling error. Lloyd's algorithm should work well in such a light tail setup, even though its mislabeling optimality is unknown. Nonetheless, our simulations demonstrate a low mislabeling error for all the algorithms for both $k=2,3$. As we consider heavier tails by decreasing $\nu$ to 1.5, we observe a steep increase in the mislabeling error for all the methods, although our algorithm produces the best performance. Notably, Lloyd's algorithm, when paired with our proposed {\sf IOD} initialization method, improves on the performance of the classical $k$-means++ initialization technique. However, further decreasing $\nu$ to 1, a setup where even the population mean does not exist, all instances of Lloyd's-type methods perform equally poorly, while our algorithm produces significantly lower mislabeling errors. 

\begin{table}[H]
	\centering
	\caption{Effect of degrees of freedom: $n = 200k, \sigma=5 , d=5, \Delta = 25$}\label{tab:nu}
	\begin{tabular}{|c|c|c|c|c|c|}
		\hline
		$k$ & $\nu$ & {\sf COD} + {\sf IOD} & Lloyd + {\sf IOD} & Lloyd + $k$-means++ & Lloyd + random init\\
		\hline
		\multirow{3}{*}{2} & 1 & 0.322
		 (0.011)	&	0.495
		  (0.002)	& 0.498
		   (0.000) & 0.497
		    (0.000)
		\\
		& 1.5 & 0.128
		 (0.001)	& 0.322 (0.014)	& 0.48 (0.006) & 0.366 (0.014)\\
		& 10 & 0.014 (0.000)	& 0.013 (0.000) & 0.014 (0.000) & 0.014 (0.000)
		\\
		\hline
		\multirow{3}{*}{3} &1 & 0.422
		 (0.005)	&	0.652 (0.004)	& 0.664 (0.000) & 0.65 (0.005)
		\\
		& 1.5 & 0.364
		 (0.007)	& 0.411 (0.009) & 0.576 (0.011) & 0.403 (0.013)\\
		& 10 & 0.043 (0.008) & 0.034 (0.007) & 0.014 (0.000)
		& 0.081 (0.013)
		\\
		\hline
	\end{tabular}
\end{table}

Next, we demonstrate the effect of the scale parameter $\sigma$ in \prettyref{tab:sigma}. For a fixed $\nu,\Delta$ this amounts to studying the effect of $\snr={\Delta\over 2\sigma}$ on the mislabeling error. The proportion of mislabeling should decay with large $\snr$, or equivalently with low $\sigma$ values, and this is supported by our demonstrations. Additionally, in all the setups, our algorithm performs significantly better than its competitors.

\begin{table}[H]
	\centering
	\caption{Effect of scale: $n = 200k, \nu=1.5 , d=10, \Delta = 25$}
	\label{tab:sigma}
	\begin{tabular}{|c|c|c|c|c|c|}
		\hline
		$k$ & $\sigma$ & {\sf COD} + {\sf IOD} & Lloyd + {\sf IOD} & Lloyd + $k$-means++ & Lloyd + random init\\
		\hline
		\multirow{3}{*}{2} 
		& 1 & 0.014 (0.000)& 0.029 (0.006)& 0.274 (0.018)& 0.1 (0.014)\\
		& 5  &0.173 (0.003)&	0.424 (0.006)& 0.496 (0.001)&	0.451 (0.005)
		\\
		& 10 &0.352 (0.005)&	0.492 (0.001)&	0.497 (0.000)&	0.495 (0.001)
		\\
		\hline
		\multirow{3}{*}{3} & 1 & 0.161
		 (0.012)	&	0.169 (0.012)	& 0.27 (0.017) & 0.169 (0.013)
		\\
		& 5 & 0.412
		 (0.001)	& 0.485
		  (0.006) & 0.654
		   (0.003) & 0.53
		    (0.008) \\
		& 10 & 0.509
		 (0.003)	& 0.628
		  (0.005) & 0.664
		   (0.000) & 0.647
		    (0.004) \\
		\hline
	\end{tabular}
\end{table}

In \prettyref{tab:dim}, we demonstrate how the data dimensions affect the performance of our algorithm. As the data dimension increases while keeping the centroid separation fixed, the performance of the clustering algorithm deteriorates. This is because the norm of the error random variables increases proportionally to the square root of the dimension, multiplied with variability in each coordinate. Nonetheless, our proposed clustering algorithm performs more robustly compared to the other methods in the simulation studies. It might be possible to improve all the clustering techniques by applying some dimension reduction, for example, feature screening approaches \cite{fan2008high} and the spectral methods in \cite{loffler2021optimality}, to the data set before applying the clustering methods. However, such analysis is beyond the scope of the current work.

\begin{table}[H]
	\centering
	\caption{Effect of dimension: $n = 200k, \nu=1.5 , \sigma = 5, \Delta = 25$}
	\label{tab:dim}
	\begin{tabular}{|c|c|c|c|c|c|}
		\hline
		$k$ & $d$ & {\sf COD} + {\sf IOD} & Lloyd + {\sf IOD} & Lloyd + $k$-means++ & $k$-means + random init\\
		\hline
		\multirow{3}{*}{2} & 2 & 0.099 (0.001) &	0.154 (0.007) &	0.398 (0.01) &	0.231 (0.01)\\
		& 10 & 0.174 (0.004) & 0.414 (0.008) & 0.495 (0.001) & 0.445 (0.006)
		\\
		&30 & 0.309 (0.01) & 0.492 (0.002) & 0.497 (0.000) & 0.494 (0.002)
		\\
		\hline
		\multirow{3}{*}{3} & 2 & 0.156 (0.008)	& 0.2 (0.009)	& 0.38 (0.009) &	0.236 (0.009)
		\\
		& 10 & 0.41 (0.002) &	0.479 (0.009) &	0.655 (0.004) &	0.528 (0.011)
		\\
		&30 & 0.467 (0.002) &	0.64 (0.002) &	0.664 (0.000) &	0.653 (0.004)
		\\
		\hline
	\end{tabular}
\end{table}

\subsection{Real data experiments}
Furthermore, we evaluated our proposed algorithm on the publicly available Letter Recognition dataset \cite{misc_letter_recognition_59}. The data set contains 16 primitive numerical attributes (statistical moments and edge counts) of black-and-white rectangular pixel displays of the 26 capital letters in the English alphabet. The character images were based on 20 different fonts, and each letter within these 20 fonts was randomly distorted to produce a file of 20,000 unique stimuli. We apply our proposed algorithm to this data with the aim of clustering data points corresponding to the same letters together. Additionally, we explore the robustness guarantees of the algorithms when some small number of data points corresponding to other contaminating letter classes are also present. In that setup, the goal is to minimize the mislabeling error corresponding to the letter classes with larger sample sizes.

\subsubsection*{Experiment setup}

For our experiment, we consider two and three-cluster setups. In the two-cluster setup, we pick the data points corresponding to the letters ``W" and ``V" as the clusters, and in the three-cluster setup, we pick the data points corresponding to the letters ``X", ``M", and ``A". We randomly sample 100 points from each cluster in both setups to simulate a contamination-free setup. To introduce outliers, in each scenario, we add 20 randomly chosen data points corresponding to the letter ``R". Once the data set is prepared, we apply the following clustering algorithms
\begin{itemize}
	\item the proposed $\iod$ initialization algorithm and {\sf COD} clustering algorithm
	\item the hybrid $k$-median algorithm in \cite{JKY2023} for clustering in the presence of adversarial outliers, initialized with the ${\sf IOD}$ algorithm
	\item Lloyd's algorithm initialized with the ${\sf IOD}$ algorithm
	\item Lloyd's algorithm initialized with the $k$-means++ algorithm
	\item Lloyd's algorithm initializations from the dataset.
\end{itemize}
The relevant parameters for the clustering are the same as those in the simulation studies section, with the only modification being for the value of $\delta$. This is in accordance with \prettyref{thm:out-genr}, which proposes that in the presence of outliers, it is meaningful to choose a more robust clustering algorithm, which corresponds to a higher value of $\delta$. For our studies, we fix $\delta = 0.48$. The entire process, starting from data generation to applying the algorithms, is independently repeated 150 times to measure the average mislabeling proportion and the corresponding standard errors. The results are presented in \prettyref{tab:letter2} (the standard errors are presented within the parentheses beside the average mislabeling values).

\subsubsection*{Results}
All the results show that our method consistently yields the lowest proportion of mislabeling, outperforming the other algorithms. Remarkably, our method produces a better mislabeling rate even in the absence of outliers. This indicates a heavy tail structure in the data set. Interestingly, in the two cluster setup, the mislabeling proportion reduces in the presence of data points from the letter class ``R". This is possible, as we did not aim to pick the outlier class that distorts the clustering process. We instead study the effect of a particular outlier class. This indicates a similarity of the data points from the outlier class with one of the clusters, resulting in more points being observed in the neighborhood of the corresponding cluster. When we observe more points in the clusters, separating the clusters becomes much more accessible, resulting in lower mislabeling. 

\begin{table}[H]
	\centering
	\caption{Results for clustering letters: $n = 100k, \delta = 0.48$, outlier proportion = 20\%, outlier class = R}
	\label{tab:letter2}
	\begin{tabular}{|c|c|c|c|c|c|c|}
		\hline
		Classes & Outliers & {\sf COD} + {\sf IOD} & $k$-median + {\sf IOD}& Lloyd + {\sf IOD} & Lloyd + $k$-means++ & Lloyd + random\\
		\hline
		\multirow{2}{*}{ W, V} & without & 0.276 (0.008) & 0.32 (0.005) &	0.391 (0.005) &	0.355 (0.004) &	0.402 (0.004)\\
		& with & 0.269 (0.008) & 0.317 (0.004) & 0.381 (0.004) & 0.352 (0.004) & 0.398 (0.004)
		\\
		\hline
		\multirow{2}{*}{X, M, A} & without & 0.194 (0.010) & 0.245 (0.007)	& 0.374 (0.004)	& 0.342 (0.006) &	0.357 (0.007)
		\\
		& with & 0.264 (0.009) & 0.275 (0.006) & 0.388 (0.003) &	0.354 (0.004) &	0.379 (0.005)
		\\
		\hline
	\end{tabular}
\end{table}

	\noindent \textbf{Acknowledgments:} Soham Jana thanks Debarghya Mukherjee and Sohom Bhattacharya for helpful discussions at the initial stage of the paper.

	\bibliographystyle{IEEEtran}
	
	\bibliography{tex/References}

\begin{thebibliography}{10}
\providecommand{\url}[1]{#1}
\csname url@samestyle\endcsname
\providecommand{\newblock}{\relax}
\providecommand{\bibinfo}[2]{#2}
\providecommand{\BIBentrySTDinterwordspacing}{\spaceskip=0pt\relax}
\providecommand{\BIBentryALTinterwordstretchfactor}{4}
\providecommand{\BIBentryALTinterwordspacing}{\spaceskip=\fontdimen2\font plus
\BIBentryALTinterwordstretchfactor\fontdimen3\font minus
  \fontdimen4\font\relax}
\providecommand{\BIBforeignlanguage}[2]{{%
\expandafter\ifx\csname l@#1\endcsname\relax
\typeout{** WARNING: IEEEtran.bst: No hyphenation pattern has been}%
\typeout{** loaded for the language `#1'. Using the pattern for}%
\typeout{** the default language instead.}%
\else
\language=\csname l@#1\endcsname
\fi
#2}}
\providecommand{\BIBdecl}{\relax}
\BIBdecl

\bibitem{hastie2009elements}
T.~Hastie, R.~Tibshirani, J.~H. Friedman, and J.~H. Friedman, \emph{The
  elements of statistical learning: data mining, inference, and
  prediction}.\hskip 1em plus 0.5em minus 0.4em\relax Springer, 2009, vol.~2.

\bibitem{xu2005survey}
R.~Xu and D.~Wunsch, ``Survey of clustering algorithms,'' \emph{IEEE
  Transactions on neural networks}, vol.~16, no.~3, pp. 645--678, 2005.

\bibitem{ABBASI20072826}
\BIBentryALTinterwordspacing
A.~A. Abbasi and M.~Younis, ``A survey on clustering algorithms for wireless
  sensor networks,'' \emph{Computer Communications}, vol.~30, no.~14, pp.
  2826--2841, 2007, network Coverage and Routing Schemes for Wireless Sensor
  Networks. [Online]. Available:
  \url{https://www.sciencedirect.com/science/article/pii/S0140366407002162}
\BIBentrySTDinterwordspacing

\bibitem{sasikumar2012k}
P.~Sasikumar and S.~Khara, ``K-means clustering in wireless sensor networks,''
  in \emph{2012 Fourth international conference on computational intelligence
  and communication networks}.\hskip 1em plus 0.5em minus 0.4em\relax IEEE,
  2012, pp. 140--144.

\bibitem{maravelias1999habitat}
C.~D. Maravelias, ``Habitat selection and clustering of a pelagic fish: effects
  of topography and bathymetry on species dynamics,'' \emph{Canadian Journal of
  Fisheries and Aquatic Sciences}, vol.~56, no.~3, pp. 437--450, 1999.

\bibitem{pigolotti2007species}
S.~Pigolotti, C.~L{\'o}pez, and E.~Hern{\'a}ndez-Garc{\'\i}a, ``Species
  clustering in competitive lotka-volterra models,'' \emph{Physical review
  letters}, vol.~98, no.~25, p. 258101, 2007.

\bibitem{ng2006medical}
H.~Ng, S.~Ong, K.~Foong, P.-S. Goh, and W.~Nowinski, ``Medical image
  segmentation using k-means clustering and improved watershed algorithm,'' in
  \emph{2006 IEEE southwest symposium on image analysis and
  interpretation}.\hskip 1em plus 0.5em minus 0.4em\relax IEEE, 2006, pp.
  61--65.

\bibitem{ajala2012fuzzy}
A.~Ajala~Funmilola, O.~Oke, T.~Adedeji, O.~Alade, and E.~Adewusi, ``Fuzzy
  kc-means clustering algorithm for medical image segmentation,'' \emph{Journal
  of information Engineering and Applications, ISSN}, vol. 22245782, pp.
  2225--0506, 2012.

\bibitem{beatty2013spillover}
A.~Beatty, S.~Liao, and J.~J. Yu, ``The spillover effect of fraudulent
  financial reporting on peer firms' investments,'' \emph{Journal of Accounting
  and Economics}, vol.~55, no. 2-3, pp. 183--205, 2013.

\bibitem{fan2023unearthing}
J.~Fan, Q.~Liu, B.~Wang, and K.~Zheng, ``Unearthing financial statement fraud:
  Insights from news coverage analysis.'' \emph{Available at SSRN 4338277},
  2023.

\bibitem{lu2016statistical}
Y.~Lu and H.~H. Zhou, ``Statistical and computational guarantees of lloyd's
  algorithm and its variants,'' \emph{arXiv preprint arXiv:1612.02099}, 2016.

\bibitem{loffler2021optimality}
M.~L{\"o}ffler, A.~Y. Zhang, and H.~H. Zhou, ``Optimality of spectral
  clustering in the gaussian mixture model,'' \emph{The Annals of Statistics},
  vol.~49, no.~5, pp. 2506--2530, 2021.

\bibitem{abbe2022}
E.~Abbe, J.~Fan, and K.~Wang, ``An $\ell_p$ theory of pca and spectral
  clustering,'' \emph{The Annals of Statistics}, vol.~50, no.~4, pp.
  2359--2385, 2022.

\bibitem{ndaoud2022sharp}
M.~Ndaoud, ``Sharp optimal recovery in the two component gaussian mixture
  model,'' \emph{The Annals of Statistics}, vol.~50, no.~4, pp. 2096--2126,
  2022.

\bibitem{dreveton2024universal}
M.~Dreveton, A.~G{\"o}zeten, M.~Grossglauser, and P.~Thiran, ``Universal lower
  bounds and optimal rates: Achieving minimax clustering error in
  sub-exponential mixture models,'' \emph{arXiv preprint arXiv:2402.15432},
  2024.

\bibitem{chen2021cutoff}
X.~Chen and Y.~Yang, ``Cutoff for exact recovery of gaussian mixture models,''
  \emph{IEEE Transactions on Information Theory}, vol.~67, no.~6, pp.
  4223--4238, 2021.

\bibitem{kumar2004simple}
A.~Kumar, Y.~Sabharwal, and S.~Sen, ``A simple linear time (1+/spl
  epsiv/)-approximation algorithm for k-means clustering in any dimensions,''
  in \emph{45th Annual IEEE Symposium on Foundations of Computer
  Science}.\hskip 1em plus 0.5em minus 0.4em\relax IEEE, 2004, pp. 454--462.

\bibitem{chen2021optimal}
X.~Chen and A.~Y. Zhang, ``Optimal clustering in anisotropic gaussian mixture
  models,'' \emph{arXiv preprint arXiv:2101.05402}, 2021.

\bibitem{vassilvitskii2006k}
S.~Vassilvitskii and D.~Arthur, ``k-means++: The advantages of careful
  seeding,'' in \emph{Proceedings of the eighteenth annual ACM-SIAM symposium
  on Discrete algorithms}, 2006, pp. 1027--1035.

\bibitem{patel2023consistency}
D.~Patel, H.~Shen, S.~Bhamidi, Y.~Liu, and V.~Pipiras, ``Consistency of lloyd's
  algorithm under perturbations,'' \emph{arXiv preprint arXiv:2309.00578},
  2023.

\bibitem{deshpande2020robust}
A.~Deshpande, P.~Kacham, and R.~Pratap, ``Robust $ k $-means++,'' in
  \emph{Conference on Uncertainty in Artificial Intelligence}.\hskip 1em plus
  0.5em minus 0.4em\relax PMLR, 2020, pp. 799--808.

\bibitem{bhaskara2019greedy}
A.~Bhaskara, S.~Vadgama, and H.~Xu, ``Greedy sampling for approximate
  clustering in the presence of outliers,'' \emph{Advances in Neural
  Information Processing Systems}, vol.~32, 2019.

\bibitem{kannan2009spectral}
R.~Kannan, S.~Vempala \emph{et~al.}, ``Spectral algorithms,'' \emph{Foundations
  and Trends{\textregistered} in Theoretical Computer Science}, vol.~4, no.
  3--4, pp. 157--288, 2009.

\bibitem{bojchevski2017robust}
A.~Bojchevski, Y.~Matkovic, and S.~G{\"u}nnemann, ``Robust spectral clustering
  for noisy data: Modeling sparse corruptions improves latent embeddings,'' in
  \emph{Proceedings of the 23rd ACM SIGKDD international conference on
  knowledge discovery and data mining}, 2017, pp. 737--746.

\bibitem{zhang2018understanding}
Y.~Zhang and K.~Rohe, ``Understanding regularized spectral clustering via graph
  conductance,'' \emph{Advances in Neural Information Processing Systems},
  vol.~31, 2018.

\bibitem{srivastava2023robust}
P.~R. Srivastava, P.~Sarkar, and G.~A. Hanasusanto, ``A robust spectral
  clustering algorithm for sub-gaussian mixture models with outliers,''
  \emph{Operations Research}, vol.~71, no.~1, pp. 224--244, 2023.

\bibitem{JKY2023}
S.~Jana, K.~Yang, and S.~Kulkarni, ``Adversarially robust clustering with
  optimality guarantees,'' \emph{arXiv preprint arXiv:2306.09977}, 2023.

\bibitem{lloyd1982least}
S.~Lloyd, ``Least squares quantization in pcm,'' \emph{IEEE transactions on
  information theory}, vol.~28, no.~2, pp. 129--137, 1982.

\bibitem{bradley1996clustering}
P.~Bradley, O.~Mangasarian, and W.~Street, ``Clustering via concave
  minimization,'' \emph{Advances in neural information processing systems},
  vol.~9, 1996.

\bibitem{minsker2024geometric}
S.~Minsker and N.~Strawn, ``The geometric median and applications to robust
  mean estimation,'' \emph{SIAM Journal on Mathematics of Data Science},
  vol.~6, no.~2, pp. 504--533, 2024.

\bibitem{chen2018robust}
M.~Chen, C.~Gao, and Z.~Ren, ``Robust covariance and scatter matrix estimation
  under huber’s contamination model,'' \emph{The Annals of Statistics},
  vol.~46, no.~5, pp. 1932--1960, 2018.

\bibitem{diakonikolas2022clustering}
I.~Diakonikolas, D.~M. Kane, D.~Kongsgaard, J.~Li, and K.~Tian, ``Clustering
  mixture models in almost-linear time via list-decodable mean estimation,'' in
  \emph{Proceedings of the 54th Annual ACM SIGACT Symposium on Theory of
  Computing}, 2022, pp. 1262--1275.

\bibitem{depersin2022robust}
J.~Depersin and G.~Lecu{\'e}, ``Robust sub-gaussian estimation of a mean vector
  in nearly linear time,'' \emph{The Annals of Statistics}, vol.~50, no.~1, pp.
  511--536, 2022.

\bibitem{devroye2016sub}
L.~Devroye, M.~Lerasle, G.~Lugosi, and R.~I. Oliveira, ``Sub-gaussian mean
  estimators,'' 2016.

\bibitem{oliveira2023trimmed}
R.~I. Oliveira and L.~Resende, ``Trimmed sample means for robust uniform mean
  estimation and regression,'' \emph{arXiv preprint arXiv:2302.06710}, 2023.

\bibitem{lee2022optimal}
J.~C. Lee and P.~Valiant, ``Optimal sub-gaussian mean estimation in very high
  dimensions,'' in \emph{13th Innovations in Theoretical Computer Science
  Conference (ITCS 2022)}.\hskip 1em plus 0.5em minus 0.4em\relax
  Schloss-Dagstuhl-Leibniz Zentrum f{\"u}r Informatik, 2022.

\bibitem{lugosi2021robust}
G.~Lugosi and S.~Mendelson, ``Robust multivariate mean estimation: the
  optimality of trimmed mean,'' 2021.

\bibitem{lugosi2019near}
------, ``Near-optimal mean estimators with respect to general norms,''
  \emph{Probability theory and related fields}, vol. 175, no.~3, pp. 957--973,
  2019.

\bibitem{lugosi2019mean}
------, ``Mean estimation and regression under heavy-tailed distributions: A
  survey,'' \emph{Foundations of Computational Mathematics}, vol.~19, no.~5,
  pp. 1145--1190, 2019.

\bibitem{audibert2011robust}
J.-Y. Audibert and O.~Catoni, ``Robust linear least squares regression,'' 2011.

\bibitem{kaufman2009finding}
L.~Kaufman and P.~J. Rousseeuw, \emph{Finding groups in data: an introduction
  to cluster analysis}.\hskip 1em plus 0.5em minus 0.4em\relax John Wiley \&
  Sons, 2009.

\bibitem{rousseeuw1987clustering}
P.~Rousseeuw and P.~Kaufman, ``Clustering by means of medoids,'' in
  \emph{Proceedings of the statistical data analysis based on the L1 norm
  conference, neuchatel, switzerland}, vol.~31, 1987.

\bibitem{huber1965robust}
P.~J. Huber, ``A robust version of the probability ratio test,'' \emph{The
  Annals of Mathematical Statistics}, pp. 1753--1758, 1965.

\bibitem{huber1992robust}
------, ``Robust estimation of a location parameter,'' \emph{Breakthroughs in
  statistics: Methodology and distribution}, pp. 492--518, 1992.

\bibitem{diakonikolas2019robust}
I.~Diakonikolas, G.~Kamath, D.~Kane, J.~Li, A.~Moitra, and A.~Stewart, ``Robust
  estimators in high-dimensions without the computational intractability,''
  \emph{SIAM Journal on Computing}, vol.~48, no.~2, pp. 742--864, 2019.

\bibitem{liu2023robustly}
A.~Liu and A.~Moitra, ``Robustly learning general mixtures of gaussians,''
  \emph{Journal of the ACM}, 2023.

\bibitem{minsker2021minimax}
S.~Minsker, M.~Ndaoud, and Y.~Shen, ``Minimax supervised clustering in the
  anisotropic gaussian mixture model: a new take on robust interpolation,''
  \emph{arXiv preprint arXiv:2111.07041}, 2021.

\bibitem{diakonikolas2020robustly}
I.~Diakonikolas, S.~B. Hopkins, D.~Kane, and S.~Karmalkar, ``Robustly learning
  any clusterable mixture of gaussians,'' \emph{arXiv preprint
  arXiv:2005.06417}, 2020.

\bibitem{bakshi2022robustly}
A.~Bakshi, I.~Diakonikolas, H.~Jia, D.~M. Kane, P.~K. Kothari, and S.~S.
  Vempala, ``Robustly learning mixtures of k arbitrary gaussians,'' in
  \emph{Proceedings of the 54th Annual ACM SIGACT Symposium on Theory of
  Computing}, 2022, pp. 1234--1247.

\bibitem{bakshi2020outlier2}
A.~Bakshi and P.~Kothari, ``Outlier-robust clustering of non-spherical
  mixtures,'' \emph{arXiv preprint arXiv:2005.02970}, 2020.

\bibitem{omran2007overview}
M.~G. Omran, A.~P. Engelbrecht, and A.~Salman, ``An overview of clustering
  methods,'' \emph{Intelligent Data Analysis}, vol.~11, no.~6, pp. 583--605,
  2007.

\bibitem{hsu2012tail}
D.~Hsu, S.~Kakade, and T.~Zhang, ``A tail inequality for quadratic forms of
  subgaussian random vectors,'' \emph{Electronic communications in
  Probability}, vol.~17, pp. 1--6, 2012.

\bibitem{fan2008high}
J.~Fan and Y.~Fan, ``High dimensional classification using features annealed
  independence rules,'' \emph{Annals of statistics}, vol.~36, no.~6, p. 2605,
  2008.

\bibitem{misc_letter_recognition_59}
D.~Slate, ``{Letter Recognition},'' UCI Machine Learning Repository, 1991,
  {DOI}: https://doi.org/10.24432/C5ZP40.

\bibitem{yu1997assouad}
B.~Yu, ``Assouad, fano, and le cam,'' in \emph{Festschrift for Lucien Le Cam:
  research papers in probability and statistics}.\hskip 1em plus 0.5em minus
  0.4em\relax Springer, 1997, pp. 423--435.

\bibitem{roch2024modern}
S.~Roch, \emph{Modern discrete probability: An essential toolkit}.\hskip 1em
  plus 0.5em minus 0.4em\relax Cambridge University Press, 2024.

\bibitem{boucheron2013concentration}
S.~Boucheron, G.~Lugosi, and P.~Massart, \emph{Concentration inequalities: A
  nonasymptotic theory of independence}.\hskip 1em plus 0.5em minus 0.4em\relax
  Oxford university press, 2013.

\end{thebibliography}
	
	\newpage

\section{Mislabeling upper bound in \prettyref{thm:main-genr}}
\subsection{Preparation}
The proof of \prettyref{thm:main-genr} is primarily based on the following two stages: 
\begin{enumerate}[label=(\alph*)]
	\item analyzing accuracy of the clustering method based on current centroid estimates,
	\item analyzing the next centroid updates based on current labels.
\end{enumerate} 
We obtain results on these steps separately and then combine them to prove \prettyref{thm:main-genr}. Our analysis depends on high-probability events $\enorm_{\tau},\calE_{\gamma_0,\epsilon_0}$ given in the following lemmas.

\begin{lemma}\label{lmm:E-norm-genr}
	Suppose that $w_i$-s are independent random variables satisfying the $\decay_\sigma$-decay condition \ref{cond:G-decay} and $\beta\in (0,1)$ is fixed. Then given any $\tau>0$ there is an event $\enorm_{\tau}$ with probability at least $1-e^{-0.3n}$ on which the following holds. For any $S\subseteq [n]$ with $|S|\geq {n\beta}$, the cardinality of the set 
	$$\sth{i\in S: \norm{w_i}\leq \sigma \decay^{-1}\pth{\exp\sth{-\frac {1+1/\beta}{\tau}}}}$$ 
	is at least $(1-\tau)|S|$.
\end{lemma}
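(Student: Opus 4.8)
We may assume $\tau\in(0,1)$, since for $\tau\geq 1$ the required cardinality $(1-\tau)|S|$ is non-positive and the statement holds on the whole probability space. The plan is to set $p=\exp\{-(1+1/\beta)/\tau\}$ and fix the threshold $t=\sigma\decay^{-1}(p)$; the $\decay_\sigma$-decay condition \ref{cond:G-decay} together with the definition of the (generalized) inverse gives $\PP[\norm{w_i}>t]\le\decay(t/\sigma)\le p$ for every $i$. Put $B_i=\indc{\norm{w_i}>t}$, so the $B_i$ are independent Bernoulli variables of mean at most $p$, and let $\mathrm{Bad}=\{i\in[n]:B_i=1\}$.

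The crucial first step is a combinatorial reduction of the universally quantified condition to a single event about $|\mathrm{Bad}|$; a naive union bound over the exponentially many admissible subsets $S$ loses a factor near $2^n$ and is too weak to reach $e^{-0.3n}$. For any $S$ with $|S|\geq n\beta$ one has $|\{i\in S:\norm{w_i}\le t\}|=|S|-|S\cap\mathrm{Bad}|$, so the desired bound is equivalent to $|S\cap\mathrm{Bad}|\le\tau|S|$. Since every admissible $S$ has integer cardinality at least $\ceil{n\beta}$, the ratio $|S\cap\mathrm{Bad}|/|S|$ is maximized by making $S$ absorb as much of $\mathrm{Bad}$ as possible while keeping $|S|=\ceil{n\beta}$; a short two-case analysis (according to whether $|\mathrm{Bad}|\geq\ceil{n\beta}$ or $|\mathrm{Bad}|<\ceil{n\beta}$) shows the worst-case ratio is exactly $\min\{1,|\mathrm{Bad}|/\ceil{n\beta}\}$. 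Hence the conclusion of the lemma holds precisely on the event $\enorm_{\tau}:=\{|\mathrm{Bad}|\le\tau\ceil{n\beta}\}$, and it only remains to bound $\PP[\overline{\enorm_{\tau}}]$.

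To do this, note $\sum_i B_i$ is stochastically dominated by a $\Binom(n,p)$ variable and $\tau\ceil{n\beta}\geq\tau n\beta$, so it suffices to bound $\PP[\Binom(n,p)\geq\tau n\beta]$. First I would check $p\le\tau\beta$, i.e.\ $(1+1/\beta)/\tau\geq\log(1/\tau)+\log(1/\beta)$, which follows from $1/\tau\geq-\log\tau$ and $1/(\beta\tau)\geq 1/\beta\geq-\log\beta$ when $\tau\in(0,1)$. Thus $\tau n\beta\geq np$ and the multiplicative Chernoff bound (as in \prettyref{lmm:chernoff}) gives, after inserting $p=e^{-(1+1/\beta)/\tau}$,
$$\PP[\Binom(n,p)\geq\tau n\beta]\le\left(\frac{ep}{\tau\beta}\right)^{\tau n\beta}=\exp\!\left(n\bigl[\tau\beta-\beta-1-\tau\beta\log(\tau\beta)\bigr]\right).$$
Finally, with $u=\tau\beta\le\beta\le 1$, using that $u\mapsto u(1-\log u)$ is increasing on $(0,1)$ and that $-x\log x\le 1/e$, one gets $\tau\beta-\beta-1-\tau\beta\log(\tau\beta)=u(1-\log u)-\beta-1\le\beta(1-\log\beta)-\beta-1=-\beta\log\beta-1\le\frac{1}{e}-1<-0.3$, hence $\PP[\overline{\enorm_{\tau}}]\le e^{-0.3n}$, as required.

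The main obstacle is the reduction in the second paragraph: recognizing that the condition ``for every large $S$'' is governed by the single statistic $|\mathrm{Bad}|$, so that no union bound over subsets is needed. Everything after that — stochastic domination by a binomial, a Chernoff estimate, and the elementary inequality for $u(1-\log u)$ — is routine and should cause no difficulty.
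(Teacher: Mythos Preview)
Your proof is correct, and in fact cleaner than the paper's, but it proceeds by a genuinely different mechanism. The paper does \emph{not} avoid the union bound: it fixes an arbitrary $S$ with $|S|\ge n\beta$, applies Chernoff (\prettyref{lmm:chernoff}) with $m=|S|$, $q=\exp\{-(1+1/\beta)/\tau\}$, $a=\tau$, and uses $|S|\ge n\beta$ together with $x\log x\ge -\tfrac12$ to obtain the per-subset bound $e^{-n}$; it then union-bounds over the at most $2^n$ subsets to get $2^n e^{-n}=e^{-(1-\log 2)n}\le e^{-0.3n}$. So your assertion that ``a naive union bound \dots\ is too weak to reach $e^{-0.3n}$'' is mistaken --- the paper does exactly this and it works, because the choice of threshold makes the per-subset tail as small as $e^{-n}$ rather than merely $e^{-cn}$.

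That said, your reduction is the nicer argument. By observing that the uniform statement over all admissible $S$ is equivalent to the single event $\{|\mathrm{Bad}|\le \tau\lceil n\beta\rceil\}$, you need only one binomial tail bound on $\Binom(n,p)$, and your final exponent $\tfrac1e-1\approx -0.632$ is strictly better than the paper's $\log 2-1\approx -0.307$. The remaining steps --- checking $p\le\tau\beta$ via $y\ge\log y$, the $(ep/a)^{na}$ form of Chernoff (which follows from \prettyref{lmm:chernoff} since $(1-a)\log\frac{1-a}{1-q}\ge q-a\ge -a$), and the monotonicity of $u\mapsto u(1-\log u)$ on $(0,1)$ --- are all correct as you wrote them.
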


The following lemma provides a lower bound on $H_{s+1}$ based on $\Lambda_s$ and establishes an upper bound on $\Lambda_{s}$ in terms of $H_s$. 

\begin{lemma}\label{lmm:Hs-Ls-bound-genr}
	Fix any $\epsilon_0\in (0,\frac 12),\gamma_0\in (\frac {10}{n\alpha},\frac 12)$. Then whenever $\Delta,\sigma>0$ satisfies $\frac {5}{2\alpha\log(1/\decay(\epsilon_0\Delta/\sigma))}< \frac 12$ the following holds true. There is an event $\calE_{\gamma_0,\epsilon_0}$ with a probability of at least $1-2ke^{-{n\alpha/ 4}}-e^{-0.3n}$ on which for all $s\geq 0$, the \codelta algorithm with $\delta\in(\frac 12 - {\gamma_0\over 4},\frac 12)$ ensures:
	\begin{enumerate}[label=(\roman*)]
		\item if $\Lambda_s\leq \frac 12-\epsilon_0$ then $H_{s+1} \geq 1- \frac {5}{2 \alpha \log(1/\decay(\epsilon_0\Delta/\sigma))}$,
		\item if $H_{s+1}\geq \frac 12 + \gamma_0$ then $\Lambda_{s+1}\leq {8\sigma\over \Delta}\decay^{-1}\pth{\exp\sth{-\frac {1+2/\alpha}{\tau}}}$, where $\tau = {\gamma_0\over 1+2\gamma_0}$.
	\end{enumerate}
\end{lemma}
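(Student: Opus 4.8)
\emph{Proof strategy.}
The plan is to prove the two implications on a single good event $\calE_{\gamma_0,\epsilon_0}$, taken to be the intersection of the event of \prettyref{lmm:E-norm-genr} — instantiated with $\beta=\alpha/2$ and truncation level $\tau=\tfrac{\gamma_0}{1+2\gamma_0}$ — with the event $\econ_{\epsilon_0}$ of \prettyref{lmm:E-conc-genr}; a union bound over the $k$ clusters (together with $\min_g n^*_g\ge n\alpha$) yields the claimed probability $1-2ke^{-n\alpha/4}-e^{-0.3n}$. Since $\Lambda_s<\tfrac12$ in every hypothesis below, each $\hat\theta^{(s)}_h$ is strictly closest to $\theta_h$ among $\{\theta_1,\dots,\theta_k\}$, so the pairing of estimated clusters with true clusters is unambiguous and $n^{(s+1)}_{gg},n^{(s+1)}_g$ are well defined. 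Write $L=\log\bigl(1/\decay(\epsilon_0\Delta/\sigma)\bigr)$.

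For (i), fix $g$ and $i\in T^*_g$, so $Y_i=\theta_g+w_i$. If the labeling step driven by $\hat\theta^{(s)}$ puts $Y_i$ into a cluster $h\ne g$, then $\|Y_i-\hat\theta^{(s)}_h\|\le\|Y_i-\hat\theta^{(s)}_g\|$; estimating the left side below by $\Delta-\Lambda_s\Delta-\|w_i\|$ and the right side above by $\|w_i\|+\Lambda_s\Delta$ forces $\|w_i\|\ge\tfrac{\Delta}{2}(1-2\Lambda_s)\ge\epsilon_0\Delta$. Hence the number of mislabeled points of $T^*_g$ is at most $\sum_{i\in T^*_g}\indc{\epsilon_0\Delta\le\|w_i\|}$, which on $\econ_{\epsilon_0}$ is at most $\tfrac{5n^*_g}{4L}$ (apply \prettyref{lmm:E-conc-genr} with $\epsilon=\epsilon_0$). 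This gives $n^{(s+1)}_{gg}\ge n^*_g\bigl(1-\tfrac{5}{4L}\bigr)$, and, since a point of $T^*_h$ with $h\ne g$ lands in estimated cluster $g$ only when mislabeled, $n^{(s+1)}_g\le n^*_g+\sum_{h\ne g}\tfrac{5n^*_h}{4L}\le n^*_g\bigl(1+\tfrac{5}{4\alpha L}\bigr)$ using $n^*_g\ge n\alpha$. Feeding these into the definition of $H_{s+1}$ and using $\tfrac{1-at}{1+t}\ge 1-2t$ (valid for $a\le 1$, $t>0$) with $t=\tfrac{5}{4\alpha L}$ gives $H_{s+1}\ge 1-\tfrac{5}{2\alpha L}$; the hypothesis $\tfrac{5}{2\alpha L}<\tfrac12$ is exactly what makes this bound nontrivial.

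For (ii), assume $H_{s+1}\ge\tfrac12+\gamma_0$, fix $g$, and analyze ${\sf TM}_\delta$ run on $S=\{Y_j:j\in T^{(s+1)}_g\}$, $N:=|S|=n^{(s+1)}_g\ge n\alpha$, with $\delta\in(\tfrac12-\tfrac{\gamma_0}{4},\tfrac12)$, so that $\ceil{(1-\delta)N}$ lies strictly between $\tfrac N2$ and $(\tfrac12+\tfrac{\gamma_0}{4})N+1$. Let $A=T^{(s+1)}_g\cap T^*_g$ be the genuine members, $|A|=n^{(s+1)}_{gg}\ge(\tfrac12+\gamma_0)N\ge n\alpha/2$, and set $\rho=\sigma\decay^{-1}\bigl(\exp\{-(1+2/\alpha)/\tau\}\bigr)$. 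Applying $\enorm_\tau$ to the data-dependent set $A$ — legitimate precisely because that event holds simultaneously for all index subsets of size $\ge n\beta=n\alpha/2$ — at least $(1-\tau)|A|\ge(\tfrac12+\tfrac{\gamma_0}{2})N$ members of $A$ lie in $\calB(\theta_g,\rho)$; since $\gamma_0N>10$ this count exceeds $\ceil{(1-\delta)N}$, so any such member has its $\ceil{(1-\delta)N}$-nearest-neighbor radius at most $2\rho$, whence $R_{i^*}\le 2\rho$. The chosen set $V$ has $\ceil{(1-\delta)N}>\tfrac N2$ points, of which fewer than $(\tfrac12-\gamma_0)N$ can lie outside $A$, so $V$ contains more than $\gamma_0N>\tau N\ge\tau|A|$ members of $A$ and hence at least one member of $A\cap\calB(\theta_g,\rho)$; with $R_{i^*}\le 2\rho$ this gives $\|Y_{i^*}-\theta_g\|\le 3\rho$, so every point of $V$ is within $R_{i^*}+3\rho\le 5\rho$ of $\theta_g$. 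As $\hat\theta^{(s+1)}_g$ is an average of points of $V$, $\|\hat\theta^{(s+1)}_g-\theta_g\|\le 5\rho$, and maximizing over $g$ gives $\Lambda_{s+1}\le\tfrac{5\rho}{\Delta}\le\tfrac{8\sigma}{\Delta}\decay^{-1}\bigl(\exp\{-(1+2/\alpha)/\tau\}\bigr)$.

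The main obstacle is step (ii): one must convert the purely counting-level guarantee of \prettyref{lmm:E-norm-genr} into statements about nearest-neighbor radii and about the location of the ${\sf TM}_\delta$ pivot $Y_{i^*}$, while the cluster $T^{(s+1)}_g$ on which ${\sf TM}_\delta$ runs is itself random and data-dependent — this is exactly why the ``holds for all subsets'' form of that lemma is needed rather than a fixed-subset concentration inequality. A secondary, bookkeeping-heavy point is checking that the window $\delta\in(\tfrac12-\tfrac{\gamma_0}{4},\tfrac12)$ and the choice $\tau=\tfrac{\gamma_0}{1+2\gamma_0}$ fit together, i.e.\ that $(1-\tau)(\tfrac12+\gamma_0)\ge 1-\delta$ with enough slack to absorb the $\ceil{\cdot}$'s; here the standing assumption $\gamma_0>10/(n\alpha)$, equivalently $\gamma_0N>10$, is precisely what renders every ceiling harmless.
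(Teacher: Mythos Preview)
Your argument is correct; part~(i) is essentially the paper's proof (the paper expands $\|Y_i-\hat\theta_h^{(s)}\|^2\le\|Y_i-\hat\theta_g^{(s)}\|^2$ via the inner product rather than using the triangle inequality directly, but both routes yield the same bound $\|w_i\|\ge\epsilon_0\Delta$ and then invoke \prettyref{lmm:E-conc-genr}). One small slip: you write $N=n_g^{(s+1)}\ge n\alpha$, but only $N\ge n_{gg}^{(s+1)}\ge(\tfrac12+\gamma_0)n\alpha$ follows from $H_{s+1}\ge\tfrac12+\gamma_0$; this is harmless, since what you actually use is $|A|\ge n\alpha/2$ (for \prettyref{lmm:E-norm-genr} with $\beta=\alpha/2$) and $\gamma_0 N>4$ to absorb the ceiling, both of which hold.

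For part~(ii) your route genuinely differs from the paper's. The paper packages the step as a separate \prettyref{prop:Lambda-bound} and proves it by contradiction: assuming $\|\hat\theta_h^{(s)}-\theta_h\|>8D_\tau$, it shows that the set $\{j\in T_h^{(s)}:\|Y_j-\theta_h\|\le D_\tau\}$ has size $\ge(\tfrac12+\tfrac\gamma2)n_h^{(s)}$ while the ${\sf TM}_\delta$ output forces $\{j:\|Y_j-\hat\theta_h^{(s)}\|\le 4D_\tau\}$ to have size $\ge(\tfrac12+\tfrac\gamma4)n_h^{(s)}$, and these two sets would be disjoint under the assumed separation, overfilling $T_h^{(s)}$. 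You instead argue constructively: after establishing $R_{i^*}\le 2\rho$ exactly as the paper does, you use the count $|V\cap A|>\gamma_0 N>\tau|A|$ to force $V$ to meet $A\cap\calB(\theta_g,\rho)$, which pins $Y_{i^*}$ within $3\rho$ of $\theta_g$ and hence every point of $V$ within $5\rho$. Your direct approach is a bit cleaner, avoids the auxiliary proposition, and yields the sharper constant $5$ in place of $8$; the paper's contradiction argument, on the other hand, never needs to locate $Y_{i^*}$ itself and is perhaps more robust to variants of the trimming rule.
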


\begin{proof}[Proof of \prettyref{lmm:Hs-Ls-bound-genr}]
	
	We first prove part (i). For any $g\neq h\in[k]\times [k]$,
	\begin{align}\label{eq:ht4}
		\indc{z_i=g,\hat z_i^{(s+1)}=h}
		&\leq \indc{\|Y_i-\hat\theta_h^{(s)}\|^2
			\leq \|Y_i-\hat\theta_g^{(s)}\|^2,\ i\in T^*_g}
		\nonumber\\
		&=\indc{\|\theta_g+w_i-\hat\theta_h^{(s)}\|^2
			\leq \|\theta_g+w_i-\hat\theta_g^{(s)}\|^2}
		=\indc{\|\theta_g-\hat\theta_h^{(s)}\|^2-\|\theta_g-\hat\theta_g^{(s)}\|^2\leq 2\langle w_i,\hat\theta_h^{(s)}-\hat\theta_g^{(s)}\rangle}.
	\end{align}
	The triangle inequality and the fact $\|\theta_h-\hat\theta_h^{(s)}\|\leq \Lambda_s\Delta
	\leq \Lambda_s\|\theta_g-\theta_h\|$ implies
	\begin{align*}
		\|\theta_g-\hat\theta_h^{(s)}\|^2\geq \pth{\|\theta_g-\theta_h\|-\|\theta_h-\hat\theta_h^{(s)}\|}^2
		\geq (1-\Lambda_s)^2\|\theta_g-\theta_h\|^2.
	\end{align*}
	This implies
	\begin{align}
		\label{eq:km1}
		&\|\theta_g-\hat\theta_h^{(s)}\|^2-\|\theta_g-\hat\theta_g^{(s)}\|^2
		\nonumber\\
		&\geq (1-\Lambda_s)^2\|\theta_g-\theta_h\|^2
		-\Lambda_s^2\|\theta_g-\theta_h\|^2
		= (1-2\Lambda_s)\|\theta_g-\theta_h\|^2.
	\end{align}
	In view of the last inequality, using the fact
	\begin{align*}
		|\langle w_i,\hat\theta^{(s)}_h-\hat\theta^{(s)}_g\rangle|
		&\leq \|w_i\|\cdot \|\hat\theta^{(s)}_h-\hat\theta^{(s)}_g\|
		\nonumber\\
		&\leq \|w_i\|\cdot(\|\hat\theta^{(s)}_g-\theta_g\|+\|\hat\theta^{(s)}_h-\theta_h\|+\|\theta_g-\theta_h\|)
		\leq \|w_i\|(2\Lambda_s+1)\|\theta_g-\theta_h\|,
	\end{align*}
	we continue \eqref{eq:ht4} to get
	\begin{align}\label{eq:bound1}
		\indc{z_i=g,\hat z_i^{(s+1)}=h}
		&\leq \indc{{1-2\Lambda_s\over 2(1+2\Lambda_s)}\|\theta_g-\theta_h\|\leq \|w_i\|}.
	\end{align}
	Simplifying above with $\Lambda_s\leq \frac 12 - \epsilon_0$ 
	\begin{align}
		\indc{z_i=g,\hat z_i^{(s+1)}=h} \leq \indc{{\epsilon_0}\|\theta_g-\theta_h\|\leq \|w_i\|}
	\end{align}
	Summing $\indc{z_i=g,\hat z_i^{(s+1)}=h}$ over $\sth{i\in T_g^*}$, in view of \prettyref{lmm:E-conc-genr}, we get on the event $\econ_{\epsilon_0}$
	\begin{align}
		\label{eq:km2-genr}
		n_{gh}^{(s+1)}&\leq \sum_{i\in T_g^*}\indc{{\epsilon_0}\Delta\leq \|w_i\|}
		\leq \frac {5n_g^*}{4\log(1/\decay(\epsilon_0\Delta/\sigma))},\ \forall h\in [k],h\neq g.
	\end{align}
	Using the last display and noting that $k\leq \frac 1{\alpha}$ and $n_g^*\geq n\alpha$ we get
	\begin{align}\label{eq:km20-genr}
		{\sum_{h\in[k]\atop h\neq g}n_{gh}^{(s+1)}\over n_g^*}
		&\leq 
		\frac {5k}{4\log(1/\decay(\epsilon_0\Delta/\sigma))}
		\leq
		\frac {5}{4\alpha \log(1/\decay(\epsilon_0\Delta/\sigma))}.
	\end{align}
	Next, we switch $g,h$ in \eqref{eq:km2-genr} and sum over $h\in [k], h\neq g$. We get on the event $\econ_{\epsilon_0}$
	\begin{align*}
		\sum_{h\in[k]\atop h\neq g}n_{hg}^{(s+1)}
		&\leq \frac {5\sum_{h\in [k],h\neq g}n_h^*}{4\log(1/\decay(\epsilon_0\Delta/\sigma))} 
		\leq \frac {5n}{4\log(1/\decay(\epsilon_0\Delta/\sigma))}. 
	\end{align*}
	Using the relation between $\epsilon_0,\Delta,\sigma,\alpha$ in the lemma statement
	we get 
	$${\sum_{h\in[k]\atop h\neq g}n_{gh}^{(s+1)}\over n_g^*}\leq \frac 12,$$ 
	which implies
	$$n_g^{(s+1)}\geq n_{gg}^{(s+1)}=
	n_g^*-\sum_{h\in[k]\atop h\neq g}n_{gh}^{(s+1)}
	\geq \frac 12 n_g^*\geq \frac 12 n\alpha.$$ 
	This gives us
	\begin{align*}
		{\sum_{h\neq g}n_{hg}^{(s+1)}\over n_g^{(s+1)}}
		\leq \frac {5}{2 \alpha\log(1/\decay(\epsilon_0\Delta/\sigma))}.
	\end{align*} 
	Using the above with \eqref{eq:km20-genr} we get with probability $1-2kn^{-c/4}$
	\begin{align*}
		H_{s+1}=1-\max\sth{{\sum_{h\neq g}n_{gh}^{(s+1)}\over n_g^{*}},{\sum_{h\neq g}n_{hg}^{(s+1)}\over n_g^{(s+1)}}}
		\geq 
		1-\frac {5}{2 \alpha\log(1/\decay(\epsilon_0\Delta/\sigma))}.
	\end{align*}	
	
	Next we present below the proof of \prettyref{lmm:Hs-Ls-bound-genr}(ii). We use  \prettyref{prop:Lambda-bound} provided below.
	\begin{proposition}\label{prop:Lambda-bound}
		Suppose that given any $\tau\in (0,1)$, there is an event $\calF_{\tau}$ and a number $D_{\tau}>0$ such that, on $\calF_{\tau}$, for any $S\subseteq [n]$ with $|S|>{n\alpha\over 2}$, the cardinality of the set 
		$\sth{i\in S: \norm{w_i}\leq D_{\tau}}$ is at least $(1-\tau)|S|$. Then for any $\gamma\in (10/{n\alpha},1/2)$, using $\tau={\gamma\over 1+2\gamma}$, we get that on the event $\calF_\tau$, if $H_{s}\geq \frac 12+\gamma$, then the \codelta algorithm with any $\delta\in(\frac 12 - {\gamma\over 4},\frac 12)$ returns $\Lambda_{s}\leq {8D_{\tau}/\Delta}$.
	\end{proposition}
	
	A proof of the above result is provided at the end of this section. We choose $$\gamma=\gamma_0,\quad \tau={\gamma_0\over 1+2\gamma_0},
	\quad D_{\tau} = \sigma \decay^{-1}\pth{\exp\sth{-\frac {1+2/\alpha}{\tau}}},
	\quad \calF_{\tau}=\enorm_{\tau}.$$
	In view of \prettyref{lmm:E-norm-genr} note that the event $\calF_\tau$ has probability at least $1-e^{-0.3n}$ and satisfies the requirement in \prettyref{prop:Lambda-bound}. This implies that we get the required bound on $\Lambda_{s+1}$. 
	
	Combining the proof of part (i) we conclude that both the claims hold with probability at least $1-8ke^{-{n\alpha\over 4}}$. 
\end{proof}

\begin{proof}[Proof of \prettyref{prop:Lambda-bound}]
	We prove the above result using a contradiction. Fix $\tau={\gamma\over 1+2\gamma}$ as specified. Our entire analysis will be on the event $\calF_\tau$. Let us assume $\Lambda_{s} > {8D_{\tau}/\Delta}$. This implies that there exists a cluster $h$ such that the centroid estimation error satisfies 
	$$\|\hat \theta_h^{(s)}-\theta_h\|> {8D_{\tau}}.$$ As $H_{s}\geq \frac 12+\gamma$, we know that $n_{hh}^{(s)}\geq \pth{\frac 12+\gamma} n_h^*$. As we are on the set $\calF_{\tau}$ and
	$$
	S=T_{h}^{(s)}\cap T_{h}^*, \quad |S|=n_{hh}^{(s)} \geq {\pth{\frac 12+\gamma}n_h^*}\geq {\pth{\frac 12+\gamma}n \alpha}
	$$ 
	we get $\abs{\sth{j\in S: \norm{w_j}\leq D_{\tau}}}
	\geq (1-\tau) |S|$. In view of $n_{hh}^{(s)}\geq \pth{\frac 12+\gamma} n_h^{(s)}$ from $H_{s}\geq \frac 12+\gamma$ the above implies
	$$
	\abs{\sth{j\in S: \norm{w_j}\leq D_{\tau}}}
	\geq {1+\gamma\over 1+2\gamma}|S|
	\geq \pth{{1+\gamma\over 1+2\gamma}}\pth{\frac 12 + \gamma}n_h^{(s)}\geq \pth{\frac 12 + \frac \gamma2} n_h^{(s)}.
	$$
	This gives us 
	\begin{align}
		\abs{\sth{j\in T_h^{(s)}: \|{Y_j-\theta_h}\|\leq D_{\tau}}}
		\geq \pth{\frac 12 + \frac \gamma2} n_h^{(s)}
		\label{eq:temp0}
	\end{align}
	Next we will show 
	\begin{align}
		\abs{\sth{j\in T_h^{(s)}:\|Y_j-\hat\theta_h^{(s)}\|\leq 4D_\tau}}\geq (1-\delta)n_h^{(s)} \geq \pth{\frac 12 +\frac \gamma4}n_h^{(s)}.
		\label{eq:temp2}
	\end{align} 
	To prove the above, we first set $
	W = \sth{j\in T_h^{(s)}: \|Y_j-\theta_h\|\leq D_{\tau}}. 
	$
	Then given any $j_0\in W$, all the points in $\sth{Y_j:j\in W}$ are within $2D_{\tau}$ distance of $Y_{j_0}$. This implies
	\begin{align}
		\abs{\sth{j\in T_h^{(s)}:\|Y_j-Y_{j_0}\|\leq 2D_\tau}}\geq \pth{\frac 12 +\frac \gamma2}n_h^{(s)}.
		\label{eq:temp1}
	\end{align}
	Now, remember the computation of ${\sf TM}_{\delta}(\{Y_j:j\in T_h^{(s)}\})$ in \prettyref{algo:cod} according to \prettyref{algo:cent}. In view of \eqref{eq:temp1} we then have $R_{i^*}\leq 2D_\tau$. Hence, for $\delta=\frac 12-\frac \gamma4$
	$$
	\abs{\sth{j\in T_h^{(s)}:\|Y_j-Y_{i^*}\|\leq 2D_\tau}}
	\geq \abs{\sth{j\in T_h^{(s)}:\|Y_j-Y_{i^*}\|\leq R_{i^*}}}
	\geq \pth{\frac 12 + \frac \gamma4} n_{h}^{(s)}.
	$$ 
	Then, the steps in \prettyref{algo:cent} imply for some $V\subset T_h^{(s)}$ with $|V| = \pth{\frac 12+\frac \gamma4}n_h^{(s)}$
	\begin{align*}
		\|\hat\theta_h^{(s)}-Y_j\|
		\leq \|\hat\theta_h^{(s)}-Y_{i^*}\|+\|Y_j-Y_{i^*}\|
		\leq \frac {\sum_{j\in V}\| Y_j -Y_{i^*}\|}{\floor{(1-\delta)n_h^{(s)}}+1}
		+R_{i^*}
		\leq 2R_{i^*}
		\leq 4D_{\tau},\quad j\in V.
	\end{align*}
	This completes the proof of \eqref{eq:temp2}. 
	
	Finally, combining \eqref{eq:temp0} and \eqref{eq:temp2}
	we get a contradiction as
	\begin{itemize}
		\item 
		$
		\abs{\sth{j\in T_h^{(s)}:\|Y_j-\hat\theta_h^{(s)}\|\leq 4D_\tau} 
		\cup \sth{j\in T_h^{(s)}: \|{Y_j-\theta_h}\|\leq D_{\tau}}}
		\leq \abs{\{j\in T_h^{(s)}\}} = n_h^{(s)}
		$
		\item 
		$\abs{\sth{j\in T_h^{(s)}:\|Y_j-\hat\theta_h^{(s)}\|\leq 4D_\tau} 
		\cap \sth{j\in T_h^{(s)}: \|{Y_j-\theta_h}\|\leq D_{\tau}}}=0$
		
		\item $\abs{\sth{j\in T_h^{(s)}:\|Y_j-\hat\theta_h^{(s)}\|\leq 4D_\tau}}+  \abs{\sth{j\in T_h^{(s)}: \|{Y_j-\theta_h}\|\leq D_{\tau}}}\geq \pth{1 +\frac {3\gamma}4}n_h^{(s)}.$
	\end{itemize}
\end{proof}

\subsection{Proof of \prettyref{thm:main-genr}}

In view of the above results we provide the proof of \prettyref{thm:main-genr} below. Note that it suffices to bound the larger quantity $\EE\qth{\frac 1n\sum_{i=1}^n\indc{z_i\neq \hat z_i}}$ to obtain a bound on $\EE\qth{\ell(\hat z,z)}$, as the later contains a minimum over all possible label permutations. For an ease of notations, denote
\begin{align}
	A_s=\frac 1n \sum_{i=1}^n\indc{\hat z^{(s)}_i\neq z_i}
	=\frac 1n \sum_{h\neq g \in [k]} n_{hg}^{(s)}.
\end{align} 
For $c_1>0$ to be chosen later, we define
$$\epsilon_0={\decay^{-1}(e^{-c_1/ \alpha})\over {\Delta/\sigma}},\quad \gamma_0=\gamma.$$
Then from \prettyref{lmm:Hs-Ls-bound-genr} it follows that we can choose $c_1,c_2>0$ such that if $$\Delta\geq c_2\sigma\decay^{-1}\pth{\exp\sth{-\frac {1+2/\alpha}{\tau}}}, \tau={\gamma_0\over 1+2\gamma_0},$$ then on the set $\calE_{\epsilon_0,\gamma_0}$, for a large enough $c_1$, we have 
\begin{itemize}
	\item if $\Lambda_0\leq \frac 12-\epsilon_0$ then $H_1\geq 0.8$,
	\item if $H_0\geq \frac 12+\gamma_0$ then $\Lambda_0\leq 0.2$
\end{itemize}
A second application of \prettyref{lmm:Hs-Ls-bound-genr}, with $\epsilon_1=0.3,\gamma_1=0.3$, guarantees that if 
${\Delta}\geq \decay^{-1}\pth{\exp\sth{-\frac {c_3}\alpha}}$ for a large enough $c_3$,
then on the set $\calE_{\epsilon_1,\gamma_1}$ we have for all $s\geq 1$,
\begin{enumerate}[label=(P\arabic*)]
	\item \label{eq:P1-genr} If $\Lambda_s\leq \frac 12 - \epsilon_1$ then $H_{s+1}\geq 1- \frac {5}{2 \alpha \log(1/\decay(\epsilon_0\Delta/\sigma))}\geq 0.8$,
	\item \label{eq:P2-genr} If $H_s\geq \frac 12 + \gamma_1$ then $\Lambda_s\leq {8\sigma\over \Delta}\decay^{-1}\pth{\exp\sth{-\frac {(1+2/\alpha)(1+2\gamma_1)}{\gamma_1}}}\leq { \decay^{-1}(e^{-c_4/\alpha})\over \snr}\leq 0.2$,
\end{enumerate}
where $c_4$ is an absolute constant. Note that from \prettyref{lmm:Hs-Ls-bound-genr} the probabilities of each of the sets $\calE_{\epsilon_0,\gamma_0},\calE_{\epsilon_1,\gamma_1}$ are at least $1-2k^{n\alpha\over 4}-e^{-0.3 n}$, and hence
\begin{align}
	\label{eq:Prob-E-new-genr}
	\calE&=\calE_{\epsilon_0,\gamma_0}\cap \calE_{\epsilon_1,\gamma_1}\text{ with }
	\PP\qth{\calE}
	\geq 1-8ke^{-{n\alpha\over 4}}.
\end{align} 
In view of the above arguments, on the event $\calE$ we have
\begin{align}
	\label{eq:km14-genr}
	\Lambda_s\leq{ \decay^{-1}(e^{-c_4/\alpha})\over \snr}\leq 0.2,\ H_{s}\geq 0.8 \text{ for all } s\ge 1.
\end{align}
Next, we will show that $\PP\qth{\left.z_i\neq \hat z_i^{(s+1)}\right|\calE}$ is small for each $i\in [n]$, and then sum over $i$ to achieve the required result. Fix a choice for $z_i$, say equal to $g\in [k]$. Remember \eqref{eq:bound1}
\begin{align}
	\indc{z_i=g,\hat z_i^{(s+1)}=h}
	&\leq \indc{{1-2\Lambda_s\over 2(1+2\Lambda_s)}\|\theta_g-\theta_h\|\leq \|w_i\|}.
\end{align}
Then in view of the inequalities
\begin{itemize}
	\item $(1+x)^{-1}\geq 1-x$ with $x=2\Lambda_s< 1$
	\item $(1-x)^2\geq 1-2x$ with the above choices of $x$,
\end{itemize}
and $\|\theta_g-\theta_h\|\geq \Delta$ we continue the last display to get
\begin{align*}
	\indc{z_i=g,\hat z_i^{(s+1)}=h}
	\leq \indc{\frac 12(1-4\Lambda_s)\Delta\leq \|w_i\|}
	\leq \indc{\sigma (\snr-4\decay^{-1}(e^{-c_4/\alpha}))\leq \|w_i\|},
\end{align*}
where the last inequality followed using the bound on  $\Lambda_s\leq{ \decay^{-1}(e^{-c_4/\alpha})\over \snr}$ in \eqref{eq:km14-genr}.
Taking expectation conditioned on the event $\calE$ in \eqref{eq:Prob-E-new-genr} and using the inequality
$$
\indc{z_i\neq\hat z_i^{(s+1)}}
\leq\sum_{g,h\in [k]\atop g\neq h} \indc{\hat z_i^{(s+1)}=h,z_i=g}
$$
we get
\begin{align*}
	\PP\qth{\left.z_i\neq \hat z_i^{(s+1)}\right|\calE}
	&\leq
	k^2 \max_{g,h\in [k]\atop g\neq h}\PP\qth{\left.z_i=g, \hat z_i^{(s+1)}=h \right|\calE}
	\leq k^2\decay\pth{\snr-4\decay^{-1}(e^{-c_4/\alpha})}
\end{align*}
This implies
\begin{align*}
	\EE\qth{A_{s+1}|\calE}
	=\frac 1n\sum_{i=1}^n\PP\qth{\left.z_i\neq \hat z_i^{(s+1)}\right| \calE}\leq k^2\decay\pth{\snr- 4\decay^{-1}(e^{-{c_4/\alpha}})}.
\end{align*}
Combining the above with \eqref{eq:Prob-E-new-genr} we get
\begin{align*}
	\EE\qth{A_{s+1}}&\leq 8ke^{-{n\alpha\over 4}}+k^2\decay\pth{\snr- 4\decay^{-1}(e^{-{c_4/\alpha}})}.
\end{align*}

\section{Proof of results with outlier (\prettyref{thm:out-genr})}

\subsection{Preparation}

The following lemma provides results that show a lower bound on $H_{s+1}$ based on $\Lambda_s$ and establish upper bound on $\Lambda_{s}$ in terms of $H_s$, when $n\alpha(1-\psi)$ outliers are present. 

\begin{lemma}\label{lmm:Hs-Ls-bound-genr-out}
	Fix any $\epsilon_0\in (0,\frac 12),\gamma_0\in (\frac {10}{n\alpha},\frac 12)$. Then, whenever $\Delta,\sigma>0$ satisfies $\frac {5}{2\alpha\log(1/\decay(\epsilon_0\Delta/\sigma))}< \frac 12$ the following holds true. There is an event $\calE_{\gamma_0,\epsilon_0}$, which has a probability at least $1-4ke^{-{n\alpha\over 4}}$, on which we have for all $s\geq 0$, the \codelta algorithm with $\delta\in(\frac 12 - {\gamma_0\over 4},\frac 12)$ ensures:
	\begin{enumerate}[label=(\roman*)]
		\item if $\Lambda_s\leq \frac 12-\epsilon_0$ then $H_{s+1} \geq \frac 12 +{{\psi-2\xi\over 2(2-\psi)}}$ where $\xi={5\over 4\alpha \log(1/\decay(\epsilon_0\Delta/\sigma))}$,
		\item if $H_{s+1}\geq \frac 12 + \gamma_0$ then $\Lambda_{s+1}\leq {8\sigma\over \Delta}\decay^{-1}\pth{\exp\sth{-\frac {1+2/\alpha}{\tau}}}$, where $\tau = {\gamma_0\over 1+2\gamma_0}$.
	\end{enumerate}
\end{lemma}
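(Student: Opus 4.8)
The plan is to reuse the two-part architecture of \prettyref{lmm:Hs-Ls-bound-genr}. Part (i) is purely a statement about how the current centroid estimates $\hat\theta_h^{(s)}$ relabel the \emph{genuine} data points, so it should transfer with only an extra accounting of where the $n^{\out}=n\alpha(1-\psi)$ adversarial points are placed by the labeling step; part (ii) is a statement about the trimmed-mean centroid update, and \prettyref{prop:Lambda-bound} was already designed to tolerate a minority of arbitrary points inside each estimated cluster, so it should carry over essentially verbatim.

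For part (i), I would condition on the event $\econ_{\epsilon_0}$ of \prettyref{lmm:E-conc-genr}, which concerns only the genuine errors $\{w_i:i\in T_g^*\}$ and is therefore untouched by the outliers. Because the outliers never change the distances $\|Y_i-\hat\theta_h^{(s)}\|$ for a genuine index $i$, the chain \eqref{eq:ht4}--\eqref{eq:km2-genr} applies unchanged under $\Lambda_s\le\frac12-\epsilon_0$ and gives $n_{gh}^{(s+1)}\le \frac{5n_g^*}{4\log(1/\decay(\epsilon_0\Delta/\sigma))}$ for $h\ne g$. Summing over $h\ne g$ and using $k\alpha\le1$ yields $n_{gg}^{(s+1)}\ge(1-\xi)n_g^*\ge(1-\xi)n\alpha$ and, swapping the roles of $g,h$, $\sum_{h\ne g}n_{hg}^{(s+1)}< \xi\alpha n$, where $\xi=\frac{5}{4\alpha\log(1/\decay(\epsilon_0\Delta/\sigma))}$ as in the statement. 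The only new effect is that the adversary may route up to all $n^{\out}$ of its points into one estimated cluster $g$, so I would bound
\[
n_g^{(s+1)} \;\le\; n_{gg}^{(s+1)} + \xi\alpha n + n^{\out} \;\le\; n_{gg}^{(s+1)} + n\alpha(\xi+1-\psi).
\]
Since $x\mapsto x/(x+c)$ is increasing with $c=n\alpha(\xi+1-\psi)>0$ and $n_{gg}^{(s+1)}\ge(1-\xi)n\alpha$,
\[
\frac{n_{gg}^{(s+1)}}{n_g^{(s+1)}} \;\ge\; \frac{(1-\xi)n\alpha}{(1-\xi)n\alpha+n\alpha(\xi+1-\psi)} \;=\; \frac{1-\xi}{2-\psi} \;=\; \frac12+\frac{\psi-2\xi}{2(2-\psi)}.
\]
As $2-\psi\ge1$, this quantity is also at most $1-\xi\le n_{gg}^{(s+1)}/n_g^*$, so it lower bounds both ratios defining $H_{s+1}$, which is part (i).

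For part (ii), I would invoke \prettyref{prop:Lambda-bound} with $\gamma=\gamma_0$, $\tau=\frac{\gamma_0}{1+2\gamma_0}$, $\calF_\tau=\enorm_\tau$ (\prettyref{lmm:E-norm-genr} applied with $\beta=\alpha/2$, so that every genuine index set of size exceeding $\frac{n\alpha}{2}$ has at least a $(1-\tau)$-fraction of its errors of norm at most $D_\tau=\sigma\decay^{-1}(\exp\{-\frac{1+2/\alpha}{\tau}\})$). The hypotheses of the proposition constrain only genuine points, and its proof only uses that $H_{s+1}\ge\frac12+\gamma_0$ forces the concentrated genuine members of $T_h^{(s+1)}$ to number at least $(\frac12+\frac{\gamma_0}{2})n_h^{(s+1)}$, which exceeds the trimmed-mean retention $\lceil(1-\delta)n_h^{(s+1)}\rceil$ for any $\delta\in(\frac12-\frac{\gamma_0}{4},\frac12)$; this majority statement is insensitive to however many outliers also sit in $T_h^{(s+1)}$. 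Hence the tightest-neighborhood and trimmed-mean radius argument of \prettyref{prop:Lambda-bound} goes through unchanged and yields $\Lambda_{s+1}\le 8D_\tau/\Delta=\frac{8\sigma}{\Delta}\decay^{-1}(\exp\{-\frac{1+2/\alpha}{\tau}\})$. Finally I would intersect $\econ_{\epsilon_0}$ with $\enorm_\tau$ and absorb the $e^{-0.3n}$ term into a multiple of $e^{-n\alpha/4}$ to reach the stated probability $1-4ke^{-n\alpha/4}$.

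I expect the accounting in part (i) to be the main obstacle: the adversary is free to concentrate all $n^{\out}$ points in whichever estimated cluster is most harmful, so the argument must hold uniformly over the outlier assignment, and one must verify that the worst case produces precisely the ratio $\frac{1-\xi}{2-\psi}$; relatedly, one must check that the genuine relabeling estimates \eqref{eq:ht4}--\eqref{eq:km2-genr} really are untouched by the outliers (they are, since those points enter only through distances to the already-fixed estimates $\hat\theta_h^{(s)}$). For part (ii) the only delicate point is confirming that the retention level $1-\delta$ stays strictly below the guaranteed genuine fraction $\frac12+\gamma_0$, so the trimmed mean still averages a concentrated majority.
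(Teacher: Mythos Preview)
Your proposal is correct and follows essentially the same approach as the paper: for part (i) you reproduce the paper's relabeling bound \eqref{eq:km2-genr-out}--\eqref{eq:km20-genr-out} and then add the outlier count $n^{\out}=n\alpha(1-\psi)$ to the denominator of $n_{gg}^{(s+1)}/n_g^{(s+1)}$ to obtain exactly the ratio $\tfrac{1-\xi}{2-\psi}=\tfrac12+\tfrac{\psi-2\xi}{2(2-\psi)}$, and for part (ii) you invoke \prettyref{prop:Lambda-bound} via \prettyref{lmm:E-norm-genr}, which is precisely what the paper means by ``similar to the proof of \prettyref{lmm:Hs-Ls-bound-genr}(ii).'' Your monotonicity argument for the ratio is in fact slightly cleaner than the paper's intermediate step through $n_g^*(1-\psi+\xi)/n_{gg}^{(s+1)}$, but both arrive at the same bound.
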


\begin{proof}[Proof of \prettyref{lmm:Hs-Ls-bound-genr-out}]
	
	Repeating the argument in \eqref{eq:bound1} in the proof of \prettyref{lmm:Hs-Ls-bound-genr} we have
	\begin{align*}
		\indc{z_i=g,\hat z_i^{(s+1)}=h}
		&\leq \indc{{\epsilon_0\Delta/\sigma}\leq \|w_i\|}.
	\end{align*}
	Summing $\indc{z_i=g,\hat z_i^{(s+1)}=h}$ over $\sth{i\in T_g^*}$, in view of \prettyref{lmm:E-conc-genr}, we get on the event $\econ_{\epsilon_0}$
	\begin{align}
		\label{eq:km2-genr-out}
		n_{gh}^{(s+1)}&\leq \sum_{i\in T_g^*}\indc{\epsilon_0\Delta\leq \|w_i\|}
		\leq \frac {5n_g^*}{4\log(1/\decay(\epsilon_0\Delta/\sigma))},\ \forall h\in [k],h\neq g.
	\end{align}
	Using the last display and noting that $k\leq \frac 1{\alpha}$ and $n_g^*\geq n\alpha$ we get
	\begin{align}\label{eq:km20-genr-out}
		{\sum_{h\in[k]\atop h\neq g}n_{gh}^{(s+1)}\over n_g^*}
		&\leq 
		\frac {5k}{4\log(1/\decay(\epsilon_0\Delta/\sigma))}
		\leq
		\frac {5}{4\alpha \log(1/\decay(\epsilon_0\Delta/\sigma))}.
	\end{align}
	Next we switch $g,h$ in \eqref{eq:km2-genr-out} and sum over $h\in [k], h\neq g$. We get with probability similar to $\PP\qth{\econ_{\epsilon_0}}$
	\begin{align*}
		\sum_{h\in[k]\atop h\neq g}n_{hg}^{(s+1)}
		&\leq \frac {5\sum_{h\in [k],h\neq g}n_h^*}{4\log(1/\decay(\epsilon_0\Delta/\sigma))} 
		\leq \frac {5n}{4\log(1/\decay(\epsilon_0\Delta/\sigma))}. 
	\end{align*}
	We define 
	$\xi={5\over 4\alpha \log(1/\decay(\epsilon_0\Delta/\sigma))}.$
	In view of \eqref{eq:km20-genr-out} this implies 
	\begin{align}\label{eq:bound2}
		n_{gg}^{(s+1)}=n_g^*-\sum_{h\in[k]\atop h\neq g}n_{gh}^{(s+1)}\geq n_g^*-n\alpha\xi\geq n_g^*(1-\xi).
	\end{align} 
	Using the above and noticing that in addition to the points in $\cup_{h\in[k]}\sth{T_{h}^*\cap T_g^{(s+1)}}$, $T_g^{(s+1)}$ can at most have $n\alpha(1-\psi)$ many extra points, accounting for the outliers, we get
	\begin{align*}
		{n_{gg}^{(s+1)}\over n_g^{(s+1)}}\geq {n_{gg}^{(s+1)}\over n_{gg}^{(s+1)}+n\alpha\xi+n\alpha(1-\psi)}
		\geq \frac 1{1+{n_g^*(1-\psi+\xi)\over n_{gg}^{(s+1)}}}
		\geq \frac 1{1+{1-\psi+\xi\over 1-\xi}}
		=\frac 12 + {\psi-2\xi\over 2(2-\psi)}.
	\end{align*}
	Combining the last display with \eqref{eq:bound2} we get
	\begin{align*}
		H_{s+1}=\min_{g\in [k]}\sth{\min\sth{{n_{gg}^{(s)}\over n_g^*},{n_{gg}^{(s)}\over n_{g}^{(s)}}}}
		\geq \frac 12+\min\sth{{\psi-2\xi\over 2(2-\psi)},\frac12-\xi}.
	\end{align*}
	As $\psi<1$, we get ${\psi-2\xi\over 2(2-\psi)}\leq \frac \psi2-\xi\leq \frac 12-\xi$. This finishes the proof.
	
	The proof of \prettyref{lmm:Hs-Ls-bound-genr-out}(ii) is similar to the proof of \prettyref{lmm:Hs-Ls-bound-genr}(ii).
\end{proof}

\subsection{Proof of \prettyref{thm:out-genr}}

For $c_1>0$ to be chosen later, we define
$$\epsilon_0={\decay^{-1}\pth{e^{-{c_1\over \alpha\psi}}}\over {\Delta/\sigma}},\quad \gamma_0=\gamma.$$
Then from \prettyref{lmm:Hs-Ls-bound-genr-out} it follows that we can choose $c_2>0$ such that if 
\begin{align*}
	\Delta\geq c_2\sigma\max\sth{\decay^{-1}\pth{\exp\sth{-\frac {1+2/\alpha}{\tau}}},\decay^{-1}\pth{e^{-{c_1\over \alpha\psi}}}}, \quad \tau={\gamma_0\over 1+2\gamma_0},
\end{align*}
then on the set $\calE_{\epsilon_0,\gamma_0}$, as $\delta= \frac 12 - \min\sth{\frac {\gamma_0}4,{\delta/24}}$, we have that the \codelta algorithm guarantees
\begin{itemize}
	\item if $\Lambda_0\leq \frac 12-\epsilon_0$ then $H_1\geq \frac 12 + {\psi\over 6}$.
	\item if $H_0\geq \frac 12+\gamma_0$ then $\Lambda_0\leq 0  .3$,
\end{itemize}
A second application of \prettyref{lmm:Hs-Ls-bound-genr-out}, with $\epsilon_1=0.2,\gamma_1={\psi\over 6}$ and the above lower bound on $\Delta$ for large enough $c_1,c_2$, implies that the \codelta algorithm guarantees on the event $\calE_{\epsilon_1,\gamma_1}$ for all $s\geq 1$,
\begin{enumerate}[label=(P\arabic*)]
	\item \label{eq:P1-genr-out} if $\Lambda_s\leq \frac 12 - \epsilon_1$ then $H_{s+1}\geq \frac 12+\frac \psi6 $,
	\item \label{eq:P2-genr-out} if $H_s\geq \frac 12 + \gamma_1$ then $\Lambda_s\leq {8\sigma\over \Delta}\decay^{-1}\pth{\exp\sth{-\frac {(1+2/\alpha)(1+2\gamma_1)}{\gamma_1}}}\leq { \decay^{-1}(e^{-c_4/(\alpha\psi)})\over \snr}\leq 0.2$,
\end{enumerate}
where $c_4$ is an absolute constant. Combining the above displays we get that on the event
\begin{align}
	\calE&=\calE_{\epsilon_0,\gamma_0}\cap \calE_{\epsilon_1,\gamma_1}\text{ with }
	\PP\qth{\calE}
	\geq 1-8ke^{-{n\alpha\over 4}}
\end{align} 
we have
\begin{align}
	\Lambda_s\leq { \decay^{-1}(e^{-c_4/(\alpha\psi)})\over \snr}\leq 0.2,\ H_{s}\geq \frac 12+\frac \psi6 \quad \text{for all } s\ge 2.
\end{align}
Now, it is sufficient to show that $\PP\qth{\left.z_i\neq \hat z_i^{(s+1)}\right|\calE}$ is small for each $i\in [n]$. This will imply that on the event $\calE$, $\ell(\hat z^{(s+1)},z)=\frac 1n\sum_{i=1}^n\indc{z_i\neq \hat z_i^{(s+1)}}$ is also small in probability. In view of the above, using a Markov inequality we will conclude the result. From this point onward the proof is again similar to the proof in \prettyref{thm:main-genr} for showing that $\PP\qth{\left.z_i\neq \hat z_i^{(s+1)}\right|\calE}$ is small. The only difference is that we replace the term $\decay^{-1}(e^{-c_4/\alpha})$ by $\decay^{-1}(e^{-c_4/(\alpha\psi)})$. This finishes the proof.

\section{Proof of mislabeling lower bound (\prettyref{thm:lb-general})}

\label{app:lower_bound}

We will consider a smaller set of labels to perform the analysis. For a simplicity of notations, let $m=k\ceil{n\alpha}\leq k(n\alpha+1)$ and $n-m$ is divisible by 3. Define
\begin{align}
	\begin{gathered}
		\calZ^*=\bar z \times \{1,2\}^{n-m\over 3}\subseteq [k]^n,\\
		m= k\ceil{n\alpha}, \quad \bar z\in [k]^{m+{2(n-m)\over 3}},\quad 
		\bar z_i = u , u\in \{1,\dots,k\}, \ i\in (u-1){m\over k}+1,\dots,u{m\over k},\\
		\bar z_{m+1}=\dots=\bar z_{m+{n-m\over 3}} = 1,\quad
		\bar z_{m+\ceil{n-m\over 3}+1}=\dots=\bar z_{m+{2(n-m)\over 3}} = 2.
	\end{gathered}
\end{align}
In other words, for each $z\in \calZ^*$, we already know the labels corresponding to the first $m+2{n-m\over 3}$ entries. For the rest of the entries the labels can either be 1 or 2, and we will put a prior on the distribution of those labels. Note that for each label vector $z\in \calZ^*$ we have $|\{i:z_i=g\}|\geq \ceil{n\alpha}$ for each $g=1,\dots,k$. Under the above choice of label set, it suffices to show 
$$\inf_{\hat z\in \calZ^*}\sup_{(z,\{\theta_i\}_{i=1}^k)\in \calP_0:z\in \calZ^* }\EE\qth{\ell(\hat z,z)}
\geq {1-k\alpha-k/n \over 12}\cdot \decay(\snr + C_{\decay}),\quad \Delta\geq \sigma C_{\decay},$$
for a suitable choice of $C_{\decay}$. 

To show the above, we first note that for any $z,z'\in \calZ^*, z\neq z'$, we have
$$
\sum_{i=1}^n\indc{z_i\neq z_i'}
=\sum_{i=m+{2(n-m)\over 3}+1}^n\indc{z_i\neq z_i'}
\leq {n-m\over 3},
$$ 
and for any $\pi\in \calS_k$ that is not an identity map 
\begin{align*}
\sum_{i=1}^n\indc{\pi(z_i)\neq z_i'}
\geq
\begin{cases}
	2\ceil{n\alpha}+\sum_{i=m+{2(n-m)\over 3}+1}^n\indc{z_i\neq z_i'}, &\text{ if $\pi(1)=1,\pi(2)=2$}\\
	{n-m\over 3},
	&\text{ if $\pi(1)\neq 1$ or $\pi(2)\neq 2$}.
\end{cases}
\end{align*}
This implies the loss $\EE\qth{\ell(\hat z,z)}=\EE\qth{\inf_{\pi\in \calS_k}\qth{\frac 1n\sum_{i=1}^n\indc{\pi(z_i)\neq \hat z_i}}}$ reduces to $\EE\qth{\sum_{i=1}^n\frac 1n\indc{z_i\neq z_i'}}$.
In view of the above deductions, with the parameter (label) set $\calZ^*$ we will apply Assouad's Lemma \cite{yu1997assouad} to bound 
$$
\inf_{\hat z\in \calZ^*}\sup_{\pth{z,\{\theta_i\}_{i=1}^k}\in \calP_0:z\in \calZ^* }
\frac 1n\EE\qth{\sum_{i=1}^n\indc{z_i\neq z_i'}}.
$$
\begin{lemma}[Assouad]\label{lmm:assouad}
	Let $r\geq 1$ be an integer and  $\calF_{r}=\{\calQ_z: z\in \calZ\}$ contains $2^r$ probability measures. Let $\hat f$ be an estimator of $f(\calQ_z)$ taking values in a metric space $(\calD,d)$, for some $\calQ_z\in \calF_r$. Write $v\sim v'$ if $v$ and $v'$ differ in only one coordinate, and write $v\sim_j v'$ when the coordinate is the $j$-th. Suppose that there are $r$ pseudo-distances $d_1,\dots,d_r$ on $\calD$ such that for any $x,y\in \calD$
	\begin{align*}
		d(x,y)=\sum_{j=1}^r d_j(x,y),
	\end{align*}
	and further assume that, if $v\sim_jv'$ then 
		$d_j(f(\calQ_z),f(\calQ_z'))\geq \delta$.
	Then 
	\begin{align*}
		\max_{\calQ_z\in \calF_r} \EE_z\qth{d(\hat f,f(\calQ_z))}
		\geq r\cdot {\delta\over 2}\cdot \min\{1-\TV(\calQ_z,\calQ_{z'}):z\sim z'\}.
	\end{align*}
\end{lemma}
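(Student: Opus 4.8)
\emph{Proof proposal.} This is the standard form of Assouad's lemma, so the plan is to run the classical argument. First I would identify the index set $\calZ$ with the binary cube $\{0,1\}^r$ (so that $|\calZ|=2^r$ and ``$z\sim_j z'$'' means $z,z'$ agree in all coordinates except the $j$-th), pass from the worst case to the uniform average over the cube, and then use the additive hypothesis $d=\sum_{j=1}^r d_j$ to split the risk coordinatewise:
\begin{align*}
	\max_{\calQ_z\in\calF_r}\EE_z\qth{d(\hat f,f(\calQ_z))}
	\;\ge\;\frac1{2^r}\sum_{z\in\calZ}\EE_z\qth{d(\hat f,f(\calQ_z))}
	\;=\;\sum_{j=1}^r\frac1{2^r}\sum_{z\in\calZ}\EE_z\qth{d_j(\hat f,f(\calQ_z))}.
\end{align*}
It then suffices to show, for each fixed $j$, that the inner double term is at least $\tfrac\delta2\min\sth{1-\TV(\calQ_z,\calQ_{z'}):z\sim z'}$; summing over $j$ produces the factor $r$.

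For a fixed $j$ I would partition the $2^r$ vertices into the $2^{r-1}$ pairs $\{z,z^{(j)}\}$ related by $\sim_j$, where $z^{(j)}$ is $z$ with the $j$-th bit flipped, and bound each pair by a two-point (testing) inequality. With $\psi_1(\cdot)=d_j(\hat f(\cdot),f(\calQ_z))$ and $\psi_2(\cdot)=d_j(\hat f(\cdot),f(\calQ_{z^{(j)}}))$, both nonnegative, the triangle inequality for the pseudo-distance $d_j$ and the separation hypothesis give $\psi_1+\psi_2\ge d_j(f(\calQ_z),f(\calQ_{z^{(j)}}))\ge\delta$ pointwise on the sample space. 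Writing $\calQ_z\wedge\calQ_{z^{(j)}}$ for the common-mass measure (density the pointwise minimum of the two densities, total mass $1-\TV(\calQ_z,\calQ_{z^{(j)}})$) and using $\psi_i\ge0$ together with the domination $\calQ_z\ge\calQ_z\wedge\calQ_{z^{(j)}}$ and $\calQ_{z^{(j)}}\ge\calQ_z\wedge\calQ_{z^{(j)}}$ as measures,
\begin{align*}
	\EE_z[\psi_1]+\EE_{z^{(j)}}[\psi_2]
	\;\ge\;\int(\psi_1+\psi_2)\,d(\calQ_z\wedge\calQ_{z^{(j)}})
	\;\ge\;\delta\bigl(1-\TV(\calQ_z,\calQ_{z^{(j)}})\bigr).
\end{align*}
Summing this over the $2^{r-1}$ pairs, dividing by $2^r$, and then summing over $j=1,\dots,r$ yields $\max_{\calQ_z\in\calF_r}\EE_z[d(\hat f,f(\calQ_z))]\ge r\cdot\tfrac\delta2\cdot\min_{z\sim z'}\{1-\TV(\calQ_z,\calQ_{z'})\}$, as claimed.

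I do not expect a genuine obstacle here — this is a textbook computation (cf.\ \cite{yu1997assouad}); the only points needing care are (a) the two-point inequality in the last display, i.e.\ justifying $\EE_z[\psi_1]=\int\psi_1\,d\calQ_z\ge\int\psi_1\,d(\calQ_z\wedge\calQ_{z^{(j)}})$ from nonnegativity of $\psi_1$ and measure domination, together with recalling $(\calQ_z\wedge\calQ_{z^{(j)}})(\text{sample space})=1-\TV(\calQ_z,\calQ_{z^{(j)}})$, and (b) the bookkeeping when reassembling the pair sums so that each vertex is counted exactly once. For the application in \prettyref{thm:lb-general}, the lemma is invoked with $\hat f=\hat z$ restricted to $\calZ^*$, $d$ the normalized Hamming distance, each $d_j$ the contribution of one free coordinate (so $\delta=1/n$ and $r=\tfrac{n-m}{3}$), and the $\calQ_z$ the laws of $(Y_1,\dots,Y_n)$ under $z$; the remaining task, which uses condition \ref{pt:decay-prop}, is to bound $\min_{z\sim z'}\{1-\TV(\calQ_z,\calQ_{z'})\}$ below by a multiple of $\decay(\snr+C_{\decay})$.
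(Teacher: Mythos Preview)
The paper does not give its own proof of this lemma: it states Assouad's lemma with a citation to \cite{yu1997assouad} and then applies it directly in the proof of \prettyref{thm:lb-general}. Your argument is the standard textbook proof and is correct; the two-point step and the bookkeeping are handled properly. One small discrepancy with the paper's application (not with the lemma itself): the paper takes $d$ to be the unnormalized Hamming distance on the free coordinates with $\delta=1$ and divides by $n$ afterward, whereas you describe $\delta=1/n$ with the normalized Hamming distance --- these are equivalent up to the same overall scaling.
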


To apply the above lemma, define the data distribution $\calQ_z$ given any label vector $z\in \calZ^*$
\begin{align*}
	\begin{gathered}
		z\in \calZ^*,\quad \calQ_{z_i}=\text{Distribution of $\theta_{z_i}+w_i$, } i=1,\dots,n.\\
	\overline \calQ =\calQ_{\bar z_1}\times\dots\times \calQ_{\bar z_{m+{2(n-m)\over 3}}},\quad 
	Q_z= \bar \calQ\times \calQ_{z_{m+{2(n-m)\over 3}+1}}\times\dots\times \calQ_{z_n}.
	\end{gathered}
\end{align*}
In view of the above definition, to apply \prettyref{lmm:assouad}, we choose
\begin{align*}
	\calZ=\calZ^*,\quad r={n-m\over 3}, \quad f(\calQ_z)=z,\quad d_j(z,z')=\indc{z_{m+{2(n-m)\over 3}+j}\neq z'_{m+{2(n-m)\over 3}+j}},\quad 
	\delta= 1.
\end{align*}
Hence, using \prettyref{lmm:assouad} we get that given any estimator $\hat z$ (which we can choose to be in $\calZ^*$) it satisfies
\begin{align}
	\label{eq:HT_1}
	\max_{\calQ_z\in \calF_r} \EE_z\qth{\sum_{i=1}^n\indc{\hat z_i\neq z_i}}
	&\geq {n-m\over 6}\min\{1-\TV(\calQ_z,\calQ_{z'}):z\sim z'\in \calZ^*\}
	\nonumber\\
	&= {n-m\over 6}(1-\TV(P_1,P_2)),
\end{align}
We further specify the error distributions corresponding to the labels $\{z_{m+1},\dots,z_n\}$ based on the decay function $G$. As $G$ is already differentiable on $(\sigma c_G,\infty)$ we can extend $G$ on $(0, \sigma c_G]$ such that it is differentiable throughout with $G(0)=1,G'(0)=0$. Then $1-G(\cdot)$ is a distribution function with a density $-G'$. We define 
\begin{align}\label{eq:HT_3}
	\begin{gathered}
		\{w_i\}_{i=m+{2(n-m)\over 3}+1}^n\iiddistr ~R\cdot V,\\
	\PP\qth{R\geq x} = 1-\decay(x/\sigma),\quad \PP\qth{V={{\theta_1-\theta_2\over \|\theta_1-\theta_2\|}}}=\frac 12 = \PP\qth{V={{\theta_2-\theta_1\over \|\theta_1-\theta_2\|}}}.
	\end{gathered}
\end{align}
In view of the above we can simplify \eqref{eq:HT_1} as
\begin{align}
	\label{eq:HT_2}
	\max_{\calQ_z\in \calF_r} \EE_z\qth{\sum_{i=1}^n\indc{\hat z_i\neq z}}\geq  {n-m\over 6}(1-\TV(P_1,P_2)),
\end{align}
where $P_i$ denotes the distribution of $\theta_i+R\cdot V$ for $i=1,2$. To analyze the total variation term in the above formula, we first note that without a loss of generality we can assume that $\theta_1,\theta_2$ lie on the real line with $\theta_1=-{\Delta\over 2},\theta_2={\Delta\over 2}$. This is because the total variation distance is invariant under location shifts and rotational transformations. Then the distributions in \eqref{eq:HT_3} can be simplified in terms of the density of $w_i$-s given by
\begin{align*}
	f_{w_i}(x) = -\frac 1{2\sigma}G'(|x|/\sigma),\quad i=m+{2(n-m)\over 3}+1,\dots,n.
\end{align*}
Hence, using a location shift argument, we get the densities of $P_1,P_2$ on $(-\infty,\infty)$ as
\begin{align}
	\label{eq:HT_0}
	dP_1(y) = -\frac 1{2\sigma} G'\pth{|y+{\Delta/2}|\over \sigma} dy,\quad 
	dP_2(y) = -\frac 1{2\sigma} G'\pth{|y-{\Delta/2}|\over \sigma} dy.
\end{align}
Then the total variation distance between $P_1,P_2$ can be bounded as
\begin{align*}
	\TV(P_1,P_2)
	&=\frac 12 \int_{-\infty}^\infty |dP_1(y) - dP_2(y)|
	\nonumber\\
	&=\frac 1{4\sigma} \int_{-\infty}^\infty \abs{G'\pth{\abs{y+{\Delta\over 2}}\over \sigma} - G'\pth{\abs{y-{\Delta\over 2}}\over \sigma}} dy
	\nonumber\\
	&\stepa{=}\frac 1{2\sigma} \int_{-\infty}^0 \abs{G'\pth{\abs{y+{\Delta\over 2}}\over \sigma} - G'\pth{\abs{y-{\Delta\over 2}}\over \sigma}} dy
	\nonumber\\
	&=\frac 1{2\sigma} \pth{\int_{-\infty}^{-{\Delta\over 2}-\sigma c_G}+
	\int_{-{\Delta\over 2}-\sigma c_G}^{-{\Delta\over 2}+\sigma c_G}
	+\int_{-{\Delta\over 2}+\sigma c_G}^{0}} 
	\nonumber\\ 
	& \quad \abs{-G'\pth{\abs{y+{\Delta\over 2}}\over \sigma} - \pth{-G'\pth{\abs{y-{\Delta\over 2}}\over \sigma}}} dy
	\nonumber\\
	&\stepb{\leq} -\frac 1{2\sigma} \pth{\int_{-\infty}^{-{\Delta\over 2}-\sigma c_G}
	+\int_{-{\Delta\over 2}+\sigma c_G}^{0}} {G'\pth{\abs{y+{\Delta\over 2}}\over \sigma}} dy
	\nonumber\\
	&~\quad -
	\frac 1{2\sigma} \int_{-{\Delta\over 2}-\sigma c_G}^{-{\Delta\over 2}+\sigma c_G}
	\pth{G'\pth{\abs{y+{\Delta\over 2}}\over \sigma} + G'\pth{\abs{y-{\Delta\over 2}}\over \sigma}} dy
\end{align*}	
where $c_G>0$ is as prescribed in \ref{pt:decay-prop} and
\begin{itemize}
	\item (a) followed as $\abs{G'\pth{\abs{y+{\Delta\over 2}}\over \sigma} - G'\pth{\abs{y-{\Delta\over 2}}\over \sigma}}$,  as a function of $y$, is symmetric about 0
	\item (b) followed as $G'(y)$ is negative for all $y>0$ and we allow ${\Delta}\geq 2\sigma c_G$ implies
	$$-G'\pth{\abs{y+{\Delta\over 2}}\over \sigma} \geq {-G'\pth{\abs{y-{\Delta\over 2}}\over \sigma}}\geq 0, \quad y\in (-\infty,-{\Delta\over 2}-\sigma c_G)\cup (-{\Delta\over 2}+\sigma c_G,0).$$
\end{itemize}
We continue the last inequality on $\TV(P_1,P_2)$ to get
\begin{align*}
	\TV(P_1,P_2)
	&\leq -\int_{-\infty}^{0} \frac 1{2\sigma} {G'\pth{\abs{y+{\Delta\over 2}}\over \sigma}} dy
	-  \int_{-{\Delta\over 2}-\sigma c_G}^{-{\Delta\over 2}+\sigma c_G}
		\frac 1{2\sigma} {G'\pth{\abs{y-{\Delta\over 2}}\over \sigma}} dy
	\nonumber\\
	&\stepa{=} 1+ \int_{0}^{\infty} \frac 1{2\sigma} {G'\pth{{y+{\Delta\over 2}}\over \sigma}} dy
	- \int_{-{\Delta\over 2}-\sigma c_G}^{-{\Delta\over 2}+\sigma c_G}
	\frac 1{2\sigma} {G'\pth{-\pth{y-{\Delta\over 2}}\over \sigma}} dy
	\nonumber\\
	&\stepb{=} 1 + \int_{\Delta/2}^{\infty} \frac 1{2\sigma} {G'\pth{z\over \sigma}} dz
	- \int_{{\Delta}-\sigma c_G}^{{\Delta}+\sigma c_G}
	\frac 1{2\sigma} {G'\pth{z\over \sigma}} dz
	\nonumber\\
	&
	\stepc{\leq} 1 + \int_{\Delta/2}^{\infty} \frac 1{2\sigma} {G'\pth{z\over \sigma}} dz
	- \int_{{\Delta/2}}^{{\Delta/2}+2\sigma c_G}
	\frac 1{2\sigma} {G'\pth{z\over \sigma}} dz
	= 1 - \frac 12 G\pth{{\Delta\over 2\sigma}+2c_G},
\end{align*}
where 
\begin{itemize}
	\item (a) followed as $-\frac 1{2\sigma} {G'\pth{\abs{y+{\Delta\over 2}}\over \sigma}}$ is a density on $(-\infty,\infty)$ from \eqref{eq:HT_0}
	\item (b) followed by change of variables
	\item (c) followed as $-G'(y)=|G'(y)|$ is decreasing over $(c_G,\infty)$ and $\Delta \geq 2\sigma c_G$.
\end{itemize} 
Plugging the last display in \eqref{eq:HT_2} completes the proof.

\section{Technical results}

\begin{proof}[Proof of \prettyref{lmm:E-norm-genr}]
	It suffices to show that for any set $S\subseteq [n]$
	$$
	\PP\qth{\sum_{i\in S}\indc{\norm{w_i} > \sigma\decay^{-1}\pth{\exp\sth{-\frac {1+1/\beta}{\tau}}}}\geq \tau |S|}
	\leq e^{-n},
	$$
	and then use the union bound over different choices of $S$ to get the result. We define 
	$$
	V_i=\indc{\norm{w_i} > \sigma\decay^{-1}\pth{\exp\sth{-\frac {1+1/\beta}{\tau}}}}.
	$$
	In view of the above definitions it is enough to show that
	\begin{align}
		\label{eq:ht9-genr}
		\PP\qth{\sum_{i\in S} V_{i} \geq \tau |S|}\leq e^{-n} \text{ for all } S\subseteq [n].
	\end{align}
	Note that
	\begin{align}
		\PP\qth{\norm{w_i} > \sigma\decay^{-1}\pth{\exp\sth{-\frac {1+1/\beta}{\tau}}}}
		 \leq {\exp\sth{-\frac {1+1/\beta}{\tau}}}\leq {\tau\over 1+1/\beta}<\tau.
	\end{align}
	We note the following result on stochastic dominance.
	\begin{lemma}\cite[Chapter 4.2]{roch2024modern}
		Given two random variables $X,Y\in \reals$, suppose that we call $X$ to be stochastically smaller than $Y$ if
		$
			\PP[X\geq a]\leq \PP[Y\geq a],\quad \forall a\in \reals.
		$ 
		Then, for $X\sim \Binom(n,p),Y\sim\Binom(n,q)$ for $0<p<q<1$, we get that $X$ is stochastically smaller than $Y$.
	\end{lemma}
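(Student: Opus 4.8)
The plan is to establish the stochastic dominance of $\Binom(n,p)$ by $\Binom(n,q)$ via an explicit monotone coupling, which is the shortest route and sidesteps any computation. First I would place both variables on a common probability space: let $U_1,\dots,U_n\iiddistr\Unif(0,1)$ and set $X=\sum_{i=1}^n\indc{U_i\leq p}$ and $Y=\sum_{i=1}^n\indc{U_i\leq q}$. Each summand $\indc{U_i\leq p}$ is $\Bern(p)$ and each $\indc{U_i\leq q}$ is $\Bern(q)$, and the summands are independent across $i$, so $X\sim\Binom(n,p)$ and $Y\sim\Binom(n,q)$, exactly the laws in the statement.

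Second, because $p<q$ we have $\indc{U_i\leq p}\leq\indc{U_i\leq q}$ deterministically for every $i$, hence $X\leq Y$ almost surely. Consequently, for every $a\in\reals$ the event $\{X\geq a\}$ is contained in $\{Y\geq a\}$, so $\PP[X\geq a]\leq\PP[Y\geq a]$; that is, $X$ is stochastically smaller than $Y$ for this coupled pair. Since stochastic dominance depends only on the marginal laws, the same comparison holds for arbitrary random variables with laws $\Binom(n,p)$ and $\Binom(n,q)$, which is what is needed when it is invoked for $\sum_{i\in S}V_i$ in the proof of \prettyref{lmm:E-norm-genr}.

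If one preferred to avoid a coupling, an alternative would be to show directly that, for each fixed integer $a$, the map $p\mapsto\PP[\Binom(n,p)\geq a]=\sum_{j=a}^{n}\binom nj p^j(1-p)^{n-j}$ is nondecreasing on $(0,1)$; differentiating term by term, the sum telescopes to $\frac{d}{dp}\PP[\Binom(n,p)\geq a]=n\binom{n-1}{a-1}p^{a-1}(1-p)^{n-a}\geq 0$, and monotonicity then yields the inequality for $p<q$. I would nonetheless present the coupling proof for its brevity.

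There is no genuinely hard step here; the only points to be careful about are (i) verifying that the coupled variables carry exactly the stated binomial marginals, which is immediate from the independence of the $U_i$, and (ii) recording explicitly that the conclusion transfers from the coupled pair to arbitrary representatives of the two laws. With this lemma in hand, the proof of \prettyref{lmm:E-norm-genr} is finished by noting that $\sum_{i\in S}V_i$ is stochastically dominated by a $\Binom(|S|,q_0)$ variable with $q_0=\exp\sth{-\frac{1+1/\beta}{\tau}}<\tau$ (the same coupling handles the mildly non-identical situation, since $\PP[V_i=1]\leq q_0$ for every $i$), then applying a Chernoff bound to get $\PP[\Binom(|S|,q_0)\geq\tau|S|]\leq e^{-n}$, and finally taking a union bound over the at most $2^n$ subsets $S\subseteq[n]$.
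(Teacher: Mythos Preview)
Your coupling proof is correct and is the standard argument for this fact. Note, however, that the paper does not give its own proof of this lemma: it is simply quoted from \cite[Chapter 4.2]{roch2024modern} and used as a black box in the proof of \prettyref{lmm:E-norm-genr}. So there is nothing to compare against; your write-up supplies a proof where the paper has none. Your parenthetical remark that the same coupling handles the non-identically-distributed case (each $V_i$ having success probability at most $q_0$ rather than exactly $q_0$) is a useful addition, since strictly speaking that is what the application to $\sum_{i\in S}V_i$ requires.
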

	The above result implies that $\sum_{i\in S}V_i$ is stochastically smaller than any $Z\sim \Binom(|S|, {\exp\{-\frac {1+1/\beta}{\tau}}\})$. We continue to analyze \eqref{eq:ht9-genr} using Chernoff's inequality for the Binomial random variable:
	\begin{lemma}{\cite[Section 2.2]{boucheron2013concentration}}
		\label{lmm:chernoff}
		For a random variable $Z\sim \Binom(m,q)$, we have
		\begin{align*}
			\begin{gathered}
				\PP\qth{Z\geq ma}
				\leq \exp\pth{-mh_q(a)};\quad q<a<1,\
				h_q(a)=a\log{a\over q}+(1-a)\log{1-a\over 1-q}.
			\end{gathered}
		\end{align*}
	\end{lemma}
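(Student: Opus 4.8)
The plan is to prove this by the standard Cram\'er--Chernoff argument. Writing $Z=\sum_{i=1}^m X_i$ with $X_1,\dots,X_m$ i.i.d.\ $\Bern(q)$, the moment generating function factorizes, so for every $\lambda>0$ Markov's inequality applied to $e^{\lambda Z}$ gives
\[
\PP\qth{Z\geq ma}\leq e^{-\lambda m a}\,\EE\qth{e^{\lambda Z}}
= e^{-\lambda m a}\pth{1-q+qe^{\lambda}}^m
= \exp\pth{m\pth{\log(1-q+qe^{\lambda})-\lambda a}}.
\]
Hence it suffices to show that $\sup_{\lambda>0}\pth{\lambda a-\log(1-q+qe^{\lambda})}=h_q(a)$, i.e.\ that the Legendre transform of the $\Bern(q)$ log-moment generating function equals the binary relative entropy evaluated at $(a,q)$.

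To establish this, I would differentiate $\phi(\lambda):=\lambda a-\log(1-q+qe^{\lambda})$ and set the derivative to zero: $\phi'(\lambda)=a-\frac{qe^{\lambda}}{1-q+qe^{\lambda}}=0$ forces $e^{\lambda^\star}=\frac{a(1-q)}{q(1-a)}$. The hypothesis $q<a<1$ is exactly what makes $\lambda^\star>0$, so the stationary point lies in the admissible region, and since $\phi$ is concave (the log-moment generating function is convex) this stationary point is the global maximum over $\lambda>0$. Substituting $1-q+qe^{\lambda^\star}=\frac{1-q}{1-a}$ and $\lambda^\star a=a\log\frac{a(1-q)}{q(1-a)}$ back into $\phi$ and collecting the terms in $\log a,\log q,\log(1-a),\log(1-q)$ yields $\phi(\lambda^\star)=a\log\frac aq+(1-a)\log\frac{1-a}{1-q}=h_q(a)$, which combined with the displayed bound finishes the proof.

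There is no real conceptual obstacle here: this is the textbook Chernoff bound for sums of Bernoulli variables (see \cite[Section~2.2]{boucheron2013concentration}). The only points requiring a little care are verifying that the optimizing $\lambda^\star$ is strictly positive --- which uses precisely the assumption $a>q$, so that the supremum over $\lambda>0$ is attained at an interior point --- and carrying out the short logarithmic simplification identifying $\phi(\lambda^\star)$ with $h_q(a)$. Alternatively, one may simply invoke the general Cram\'er--Chernoff theorem together with the well-known fact that the large-deviation rate function of $\Bern(q)$ is the binary Kullback--Leibler divergence $h_q(\cdot)$.
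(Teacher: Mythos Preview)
Your proof is correct and is precisely the standard Cram\'er--Chernoff derivation. The paper does not give its own proof of this lemma at all---it simply cites \cite[Section~2.2]{boucheron2013concentration} as a known result---so there is nothing further to compare.
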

	Hence, using 
	$$
	Z\sim \Binom(|S|,q),\quad 
	q={\exp\sth{-\frac {1+1/\beta}{\tau}}},
	\quad a=\tau,
	\quad 
	m=|S|
	$$ 
	in the above lemma and $x\log x\geq -0.5$ for $x\in (0,1)$ we get
	\begin{align}\label{eq:ht11}
		\PP\qth{\sum_{i\in S} V_{i} \geq \tau|S|}
		&\leq \PP\qth{Z\geq \tau|S|}
		\nonumber \\
		&\leq \exp\pth{-m\pth{\tau\log{\tau\over q}+(1-\tau)\log{1-\tau\over 1-q}}}
		\nonumber\\
		&\leq  \exp\pth{-m\sth{{(1+ 1/\beta)}+\tau\log \tau + (1-\tau)\log(1-\tau)}}
		\leq e^{-n}.
	\end{align}
	Finally taking an union bound over all choices of $S$ we get the desired bound.
\end{proof}

	\section{Proof of the two cluster initialization result (\prettyref{thm:proof-algo-2})}
\label{sec:outlier-proof-init-2}

\subsection{Preparation}

\label{sec:n_i-conc_results}

For the proofs involving the initialization results, let $\calB(x,R)$ denote the Euclidean ball of radius $R$ around the point $x$. Our proofs rely on the following high probability guarantees.
\begin{lemma}\label{lmm:E-conc-init-2}
	There is an event $\tilde \calE$ with 
	$
	\PP[\tilde \calE]\geq 1-4e^{-{\min_{g=1,2} n_g^*\over 4}}
	$
	on which the following statements hold for the $2$-cluster problem, for any given $\beta\in (0,1)$:
	\begin{enumerate}[label=(\roman*)]
		\item $\abs{\calB(\theta_i,\sigma \decay^{-1}(e^{-{5\over 4\beta^2}}))\cap \sth{Y_i:i\in T^*_i}}\geq n^*_i(1-{\beta^2})$ for each $i=1,2$,
		\item $\abs{\calB(\theta_i,{\Delta\over 32})\cap \sth{Y_i:i\in T^*_i}}\geq n^*_i\pth{1-{5\over 4\log\pth{1/G\pth{\Delta/ (32\sigma)}}}}$ for each $i=1,2$.
	\end{enumerate} 
\end{lemma}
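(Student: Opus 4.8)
The plan is to derive both assertions directly from the concentration bound \prettyref{lmm:E-conc-genr} (with $k=2$), invoked with two different choices of its threshold parameter $\epsilon_0$, and then to intersect the two resulting events. The key reduction is that for a true cluster label $i$ and any index $j\in T_i^*$ one has $Y_j-\theta_i=w_j$, so the event $Y_j\in\calB(\theta_i,R)$ coincides with $\{\|w_j\|\le R\}$; hence the left-hand sides in (i) and (ii) equal $n_i^*-\#\{j\in T_i^*:\|w_j\|>R\}$ for the appropriate radius $R$, and everything follows once we control $\sum_{j\in T_i^*}\indc{\|w_j\|\ge R}$.

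For part (i) I would apply \prettyref{lmm:E-conc-genr} with $k=2$ and $\epsilon_0=\epsilon=\frac{\sigma}{\Delta}\decay^{-1}(e^{-5/(4\beta^2)})$, so that $\epsilon_0\Delta=\sigma\decay^{-1}(e^{-5/(4\beta^2)})$. Using $\decay(\decay^{-1}(y))\le y$ for the monotone (generalized) inverse --- the same elementary step already used in the proof of \prettyref{lmm:E-norm-genr} --- gives $\log(1/\decay(\epsilon_0\Delta/\sigma))\ge 5/(4\beta^2)$, whence \prettyref{lmm:E-conc-genr} delivers, on its event,
\[
\sum_{j\in T_i^*}\indc{\|w_j\|\ge \sigma\decay^{-1}(e^{-5/(4\beta^2)})}\ \le\ \frac{5n_i^*}{4\log(1/\decay(\epsilon_0\Delta/\sigma))}\ \le\ \beta^2 n_i^*,\qquad i=1,2,
\]
which is exactly the complement of (i). For part (ii) I would apply the same lemma with $\epsilon_0=\epsilon=1/32$, so the radius is $\Delta/32$ and the bound reads $\sum_{j\in T_i^*}\indc{\|w_j\|\ge \Delta/32}\le \frac{5n_i^*}{4\log(1/\decay(\Delta/(32\sigma)))}$ for $i=1,2$, i.e.\ precisely the complement of (ii).

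Since each of these two applications of \prettyref{lmm:E-conc-genr} with $k=2$ holds on an event of probability at least $1-2e^{-\min_{g=1,2}n_g^*/4}$, letting $\tilde\calE$ be their intersection and taking a union bound over the two complements yields $\PP[\tilde\calE]\ge 1-4e^{-\min_{g=1,2}n_g^*/4}$, and on $\tilde\calE$ both (i) and (ii) hold. A fully self-contained derivation is also available by repeating the binomial-domination-plus-Chernoff scheme of the proofs of \prettyref{lmm:E-conc-genr} and \prettyref{lmm:E-norm-genr}: $\sum_{j\in T_i^*}\indc{\|w_j\|\ge R}$ is stochastically dominated by $\Binom(n_i^*,\decay(R/\sigma))$, and for each of the two radii above Chernoff's inequality (\prettyref{lmm:chernoff}), together with $x\log x\ge -1/2$ on $(0,1)$, bounds the deviation probability by $e^{-n_i^*/4}$.

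There is no real obstacle here --- the statement is essentially a repackaging of \prettyref{lmm:E-conc-genr} --- and the only two points that deserve a line of care are the direction of the inequality $\decay(\decay^{-1}(y))\le y$ and the check that the threshold $\epsilon_0=\frac{\sigma}{\Delta}\decay^{-1}(e^{-5/(4\beta^2)})$ in part (i) is strictly positive and finite. The latter holds because $e^{-5/(4\beta^2)}\in(0,1)$ for every $\beta\in(0,1)$ and $\decay$ decreases to $0$ (under the usual normalization $\decay(0)=1$; without it the admissible range of $\beta$ is harmlessly restricted).
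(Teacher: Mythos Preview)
Your proposal is correct and follows exactly the paper's approach: the paper's proof is just two sentences, applying \prettyref{lmm:E-conc-genr} with $\epsilon_0=\frac{\sigma}{\Delta}\decay^{-1}(e^{-5/(4\beta^2)})$ for (i) and $\epsilon_0=\frac{1}{32}$ for (ii). Your additional remarks on $\decay(\decay^{-1}(y))\le y$ and the positivity of $\epsilon_0$ are fine elaborations but not required.
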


\begin{proof}
	The proof of part (i) follows by choosing $\epsilon_0={\sigma\over \Delta} \decay^{-1}(e^{-{5\over 4\beta^2}})$ in \prettyref{lmm:E-conc-genr}. The proof of part (ii) follows by choosing by choosing $\epsilon_0=\frac 1{32}$ in \prettyref{lmm:E-conc-genr}.
\end{proof}
%
%

\subsection{Proof of \prettyref{thm:proof-algo-2}}

In the proof below, we assume that all the mentioned constants depend on $\decay,\alpha,\sigma$, unless otherwise specified. We will provide a proof of the result involving the outliers, as in \prettyref{thm:proof-algo-outliers} as the proof in absence of the outliers is very similar. In summary, we show the following:

{\it 	Suppose that out of the $n$ many observed data points, $n^*_i$ many are from cluster $T_i^*, i=1,2$ and $n^\out$ many are adversarial outliers (i.e., $n_1^*+n_2^*+n^\out=n$). Also, assume that for some constant $\alpha>0$ the counts satisfy $n^*_1,n^*_2> n\alpha$, $n^\out\leq {n\alpha^2\over 32}$. Then the following holds with probability at least $1-4e^{-{n\alpha/ 4}}$. There are constants $c_1,c_2>0$ such that if $\Delta\geq c_1\sigma\decay^{-1}\pth{e^{-{c_2\over \alpha^2}}}$ then the centroid outputs from $\iod_{2,m_1,m,\beta}$ algorithm satisfy $
	\max_{i=1,2}\|\theta_{\pi(i)}-\mu_i^*\|\leq \Delta/3$ for a permutation $\pi$ on $\{1,2\}$ with 
	$$m_1=\ceil{n\alpha\over 4},m=\max\{1,\floor{n\alpha^2\over 16}\},\beta={\alpha\over 4}.$$
}

We prove the above statement here. For our entire analysis we will assume that the event $\tilde \calE$ in \prettyref{lmm:E-conc-init-2} holds, which has a high probability guarantee. We will extensively use the following definition of order statistics: Given any set $V$ of real numbers and fraction $0<p<1$, define $V^{\sth{p}}$ as the $\ceil{p|V|}$-th smallest number in $V$. Then, the proof is a combination of the following results.

\begin{lemma}\label{lmm:outlier-centroid1}
	There is one $\theta_i$, such that $\|\theta_i-\mu_1^{(1)}\|\leq 3\sigma \decay^{-1}(e^{-{5\over 4\beta^2}})$.
\end{lemma}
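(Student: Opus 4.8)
The plan is to prove \prettyref{lmm:outlier-centroid1} by a two-sided control of the radius $R_{i^*}$ of the tightest $m_1$-neighborhood produced by the \hdp step. Abbreviate $r = \sigma\decay^{-1}(e^{-5/(4\beta^2)})$, so the claim is that $\|\theta_i - \mu_1^{(1)}\| \le 3r$ for some $i\in\{1,2\}$. Recall that $\mu_1^{(1)} = Y_{i^*}$, where $i^* = \argmin_{i\in[n]} R_i$ and $R_i$ is the $m_1$-th smallest entry of $\{\|Y_i - Y_j\| : j\in[n]\}$, with the standing choices $m_1 = \ceil{n\alpha/4}$, $\beta = \alpha/4$, and at most $n^{\out} \le n\alpha^2/32$ adversarial outliers. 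All of the argument takes place on the high-probability event $\tilde\calE$ of \prettyref{lmm:E-conc-init-2}, and it uses only part (i) of that lemma with the present value of $\beta$.

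First I would bound $R_{i^*}$ from above. By \prettyref{lmm:E-conc-init-2}(i) the ball $\calB(\theta_1, r)$ contains at least $n_1^*(1-\beta^2)$ data points of cluster $1$; since $n_1^* > n\alpha$ and $\beta^2 = \alpha^2/16$ is small, this count exceeds $m_1$ for $n$ large. Fixing any such point $Y_j$, all of these $\ge m_1$ points lie within $2r$ of $Y_j$ by the triangle inequality, so the $m_1$-th smallest distance from $Y_j$ satisfies $R_j \le 2r$, whence $R_{i^*} \le R_j \le 2r$. In particular $\calB(Y_{i^*}, 2r)$ contains at least $m_1$ data points.

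Next I would argue by contradiction. Suppose $\|\theta_i - Y_{i^*}\| > 3r$ for both $i = 1,2$. By \prettyref{lmm:E-conc-init-2}(i) again, at most $n_i^*\beta^2$ of the cluster-$i$ points lie outside $\calB(\theta_i, r)$, and every point inside $\calB(\theta_i, r)$ is at distance $> 2r \ge R_{i^*}$ from $Y_{i^*}$, hence not among the $m_1$ nearest neighbours of $Y_{i^*}$. Thus $\calB(Y_{i^*}, 2r)$ contains at most $(n_1^* + n_2^*)\beta^2 + n^{\out}$ data points. Using $n_1^* + n_2^* \le n$, $\beta^2 = \alpha^2/16$, and $n^{\out} \le n\alpha^2/32$, this is at most $3n\alpha^2/32 < n\alpha/4 \le m_1$, which contradicts the previous paragraph. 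Therefore $\|\theta_i - \mu_1^{(1)}\| \le 3r$ for some $i\in\{1,2\}$, which is the assertion.

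I do not expect a genuine obstacle: the argument is bookkeeping around two triangle-inequality chains. The only point requiring care is the matching of constants --- the ``miss'' count $n\beta^2$ coming from \prettyref{lmm:E-conc-init-2}(i) together with the outlier budget $n^{\out}$ must remain strictly below $m_1$, which is exactly what forces the choices $\beta = \alpha/4$, $m_1 = \ceil{n\alpha/4}$ and the tolerance $n^{\out} \le n\alpha^2/32$ --- while the ceiling in $m_1$ and the ``$n$ large'' caveat only perturb things by a lower-order $O(1)$ term, which is harmless because the statement already holds only up to an event of probability $1 - 4e^{-n\alpha/4}$.
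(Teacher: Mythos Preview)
Your proof is correct and follows essentially the same approach as the paper: both establish $R_{i^*}\le 2r$ via \prettyref{lmm:E-conc-init-2}(i) and then derive a contradiction from a counting argument when $Y_{i^*}$ is far from both centroids. The only cosmetic difference is that the paper phrases the contradiction as ``the union of $\calB(Y_{i^*},2r)$ and $\cup_i\calB(\theta_i,r)$ would contain more than $n$ data points,'' whereas you phrase it dually as ``$\calB(Y_{i^*},2r)$ would contain fewer than $m_1$ data points''; these are equivalent pigeonhole statements.
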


\begin{lemma}\label{lmm:outlier-first-jump}
	There is a stage $\ell+1$, with $\ell\geq 1$,  such that $\dist_1^{(\ell+1)}> {\Delta\over 16}$.	
\end{lemma}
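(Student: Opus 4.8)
The plan is to argue throughout on the high-probability event $\tilde\calE$ of \prettyref{lmm:E-conc-init-2}, and, after relabelling the two clusters, to invoke \prettyref{lmm:outlier-centroid1} so that $\|\theta_1-\mu_1^{(1)}\|\le C_0$ with $C_0:=3\sigma\decay^{-1}(e^{-5/(4\beta^2)})$. Setting also $C_0':=\sigma\decay^{-1}(e^{-5/(4\beta^2)})$, the first step is to record that, because $\beta=\alpha/4$ and $\Delta\ge c_1\sigma\decay^{-1}(e^{-c_2/\alpha^2})$ for $c_1,c_2$ chosen large enough, we have $C_0+C_0'\le\Delta/16$; this is the only place the size hypothesis on $\Delta$ is used.

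Next I would split the $n$ indices into the cluster-$1$ inliers $\calG_1=\{i\in T_1^*:\|Y_i-\theta_1\|\le C_0'\}$, the cluster-$2$ inliers $\calG_2=\{i\in T_2^*:\|Y_i-\theta_2\|\le C_0'\}$, and the remaining ``bad'' indices $\calN=[n]\setminus(\calG_1\cup\calG_2)$, which collects the $n^{\out}$ outliers together with all large-error points of $T_1^*\cup T_2^*$. By \prettyref{lmm:E-conc-init-2}(i), $|\calG_j|\ge n_j^*(1-\beta^2)$, so with $\beta=\alpha/4$ and $n^{\out}\le n\alpha^2/32$ we get $|\calN|\le n\beta^2+n^{\out}\le 3n\alpha^2/32$. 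The geometric content I would extract is: every point indexed by $\calG_1$ lies within $C_0+C_0'\le\Delta/16$ of $\mu_1^{(1)}$, whereas every point indexed by $\calG_2$ lies at distance at least $\Delta-C_0-C_0'\ge 15\Delta/16$ from $\mu_1^{(1)}$.

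The core of the argument is then combinatorial. At every stage $\calP_1^{(\ell)}$ consists of exactly $m_1+(\ell-1)m$ indices, so if $(1-\beta)|\calP_1^{(\ell)}|>|\calG_1|+|\calN|$, then, since $\calP_1^{(\ell)}\subseteq\calG_1\cup\calG_2\cup\calN$, strictly more than $\beta|\calP_1^{(\ell)}|$ of its indices lie in $\calG_2$; all of those correspond to points at distance $>\Delta/16$ from $\mu_1^{(1)}$, and a one-line count of order statistics then forces $\dist_1^{(\ell)}$, the $\ceil{(1-\beta)|\calP_1^{(\ell)}|}$-th smallest of the distances from $\mu_1^{(1)}$ to $\calP_1^{(\ell)}$, to exceed $\Delta/16$. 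Accordingly I would define $\ell^\dagger$ to be the smallest stage with $|\calP_1^{(\ell^\dagger)}|>(|\calG_1|+|\calN|)/(1-\beta)$; since $|\calG_1|\ge n_1^*(1-\beta^2)>m_1=|\calP_1^{(1)}|$ for $n\alpha$ not too small (using $n_1^*>n\alpha$ and $\beta=\alpha/4$), stage $1$ fails the defining inequality, so $\ell^\dagger\ge 2$ and $\ell:=\ell^\dagger-1\ge 1$.

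The one place that needs genuine care---and what I expect to be the main obstacle---is verifying that $\ell^\dagger$ is still within the loop range, i.e.\ that $|\calP_1^{(\ell^\dagger)}|$ has not exhausted the data. Minimality of $\ell^\dagger$ gives $|\calP_1^{(\ell^\dagger)}|\le(|\calG_1|+|\calN|)/(1-\beta)+m$; substituting $|\calG_1|\le n_1^*\le n-n\alpha$, $|\calN|\le 3n\alpha^2/32$, $\beta=\alpha/4$ and $m\le n\alpha^2/16$, the bound $|\calP_1^{(\ell^\dagger)}|<n-m$ reduces to the elementary inequality $7\alpha/32-\alpha^2/32<3/4$, valid for all $\alpha\in(0,1)$. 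Since the last loop index $\ceil{(n-m_1)/m}$ already has $|\calP_1|\ge n-m$, this shows $\ell^\dagger<\ceil{(n-m_1)/m}$, so $\ell+1=\ell^\dagger$ is a legitimate stage of the algorithm with $\dist_1^{(\ell+1)}>\Delta/16$, which is the assertion. Apart from the choice of $c_1,c_2$ absorbing $C_0+C_0'\le\Delta/16$, the whole argument is counting.
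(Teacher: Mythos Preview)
Your proof is correct, and the counting is clean. The two arguments share the same pigeonhole core but are organized differently. The paper works directly at the \emph{terminal} loop index $\tilde\ell$: there $|\calP_1^{(\tilde\ell)}|\ge n-m$, so the $(1-\beta)$-neighborhood of $\mu_1^{(1)}$ inside $\calP_1^{(\tilde\ell)}$ already has at least $n-n\beta-m$ points, and together with the $\ge n\alpha/2$ points of $\{Y_j:j\in T_2^*\}\cap\calB(\theta_2,c_1)$ this exceeds $n$, forcing an intersection and hence $\dist_1^{(\tilde\ell)}\ge\Delta-4c_1>\Delta/16$. Since the terminal index is automatically a legitimate stage, no range check is needed. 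Your route instead introduces the decomposition $\calG_1\cup\calG_2\cup\calN$ and locates a \emph{threshold} stage $\ell^\dagger$ at which $|\calP_1^{(\ell^\dagger)}|$ first exceeds $(|\calG_1|+|\calN|)/(1-\beta)$; the price is the extra verification that $\ell^\dagger$ lands strictly inside the loop, which you handle with the inequality $7\alpha/32-\alpha^2/32<3/4$. The paper's version is a touch shorter; yours is more explicit about where the jump can first occur and the three-set bookkeeping is tidy. One remark: the paper also records $\dist_1^{(1)}\le\Delta/16$ in this proof---not needed for the present statement, but used in \prettyref{lmm:outlier-small-totdist} so that the minimum $\ell=\min\{r\ge1:\dist_1^{(r+1)}>\Delta/16\}$ also satisfies $\dist_1^{(\ell)}\le\Delta/16$ in the boundary case $\ell=1$. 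Your argument does not supply this directly, so if you intend to feed into that lemma you would still need the one-line observation that $\calP_1^{(1)}\subset\calB(\mu_1^{(1)},4c_1)$.
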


\begin{lemma}\label{lmm:outlier-small-totdist}
	Suppose that $\ell=\min\sth{r\geq 1:\dist_1^{(r+1)}> {\Delta\over 16}}$. Then $\totdist_{\ell}\leq \Delta/8$.
\end{lemma}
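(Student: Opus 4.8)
The plan is to bound $\dist_1^{(\ell)}$ and $\dist_2^{(\ell)}$ separately, showing each is at most $\Delta/16$, and then add. I work throughout on the event $\tilde\calE$ of \prettyref{lmm:E-conc-init-2} (probability $\geq 1-4e^{-n\alpha/4}$, since $n^*_g>n\alpha$), on which \prettyref{lmm:outlier-centroid1} and \prettyref{lmm:outlier-first-jump} also hold, and I take $c_1,c_2$ large enough that the numerical slack below is absorbed; in particular \prettyref{lmm:outlier-centroid1} lets me assume without loss of generality that $\|\mu_1^{(1)}-\theta_1\|\leq 3\sigma\decay^{-1}(e^{-5/(4\beta^2)})\leq \Delta/32$. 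For $\dist_1^{(\ell)}$: if $\ell\geq 2$ then $\dist_1^{(\ell)}$ is one of the values $\dist_1^{(2)},\dots,\dist_1^{(\ell)}$, none of which exceeds $\Delta/16$ by minimality of $\ell$; if $\ell=1$ then by \prettyref{lmm:E-conc-init-2}(ii) at least $m_1$ points of $T_1^*$ lie within $\Delta/32$ of $\theta_1$, hence within $\Delta/16$ of $\mu_1^{(1)}$, so the $m_1$-th (a fortiori the $(1-\beta)m_1$-th) closest point of $\calP_1^{(1)}$ to $\mu_1^{(1)}$ lies within $\Delta/16$. So it remains to prove $\dist_2^{(\ell)}\leq \Delta/16$.

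The key structural claim is that $\overline{\calP_1^{(\ell)}}$ is, up to a $\beta$-fraction, contained in the ball $\calB(\theta_2,\Delta/32)$. Put $B=\{i\in T_1^*:\|Y_i-\theta_1\|\leq \Delta/32\}$ and $A=\{i\in T_2^*:\|Y_i-\theta_2\|\leq \Delta/32\}$; by \prettyref{lmm:E-conc-init-2}(ii), $|B|\geq n_1^*(1-\xi)$ and $|A|\geq n_2^*(1-\xi)$ with $\xi=\frac{5}{4\log(1/\decay(\Delta/(32\sigma)))}$, which the lower bound on $\Delta$ makes smaller than $\alpha^2/64$. First I would show $B\subseteq \calP_1^{(\ell)}$: every point of $B$ is within $\Delta/32+\|\mu_1^{(1)}-\theta_1\|\leq \Delta/16$ of $\mu_1^{(1)}$, while the jump $\dist_1^{(\ell+1)}>\Delta/16$ says fewer than $\ceil{(1-\beta)(|\calP_1^{(\ell)}|+m)}$ data points lie within $\Delta/16$ of $\mu_1^{(1)}$; since the parameter choices give $(1-\beta)m\leq \beta m_1\leq \beta|\calP_1^{(\ell)}|$, this count is strictly below $|\calP_1^{(\ell)}|$, and as $\calP_1^{(\ell)}$ is exactly the set of the $|\calP_1^{(\ell)}|$ points nearest $\mu_1^{(1)}$ it must contain every point within $\Delta/16$ of $\mu_1^{(1)}$, hence $B$. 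Second, since $\dist_1^{(\ell)}\leq \Delta/16$, at least $\ceil{(1-\beta)|\calP_1^{(\ell)}|}$ points of $\calP_1^{(\ell)}$ lie within $\Delta/16$ of $\mu_1^{(1)}$, so at most $\beta|\calP_1^{(\ell)}|$ of them lie farther; every point of $A$ is at distance at least $\Delta-\|\mu_1^{(1)}-\theta_1\|-\Delta/32>\Delta/16$ from $\mu_1^{(1)}$, so $|A\cap \calP_1^{(\ell)}|\leq \beta|\calP_1^{(\ell)}|$.

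Combining: on one hand $\overline{\calP_1^{(\ell)}}\setminus A$ consists only of points of $T_1^*$ missing from $\calP_1^{(\ell)}$, points of $T_2^*\setminus A$, and outliers, so $|\overline{\calP_1^{(\ell)}}\setminus A|\leq (n_1^*-|B|)+(n_2^*-|A|)+n^\out\leq \xi n+n\alpha^2/32\leq n\alpha^2/16$. On the other hand, writing $|\calP_1^{(\ell)}|=n-|\overline{\calP_1^{(\ell)}}|$ and using $|A\cap \calP_1^{(\ell)}|\leq \beta|\calP_1^{(\ell)}|$, one gets $|\overline{\calP_1^{(\ell)}}|\geq |A|-\beta(n-|\overline{\calP_1^{(\ell)}}|)$, i.e.\ $(1-\beta)|\overline{\calP_1^{(\ell)}}|\geq |A|-\beta n\geq n\alpha(1-\xi)-n\alpha/4$, so $|\overline{\calP_1^{(\ell)}}|\geq n\alpha/2$ and $\beta|\overline{\calP_1^{(\ell)}}|\geq n\alpha^2/8>|\overline{\calP_1^{(\ell)}}\setminus A|$. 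Hence at least $(1-\beta)|\overline{\calP_1^{(\ell)}}|$ points of $\overline{\calP_1^{(\ell)}}$ lie in $A\subseteq \calB(\theta_2,\Delta/32)$, and any two such points are within $\Delta/16$ of each other; feeding any one of them as the candidate centre in the ${\sf HDP}_{1-\beta}$ step applied to $\overline{\calP_1^{(\ell)}}$ shows its $R$-value is at most $\Delta/16$, so $\dist_2^{(\ell)}=\min_i R_i\leq \Delta/16$. With $\dist_1^{(\ell)}\leq\Delta/16$ this gives $\totdist_\ell=\dist_1^{(\ell)}+\dist_2^{(\ell)}\leq \Delta/8$.

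The main obstacle is the constant bookkeeping tying together the parameter calibration: one must check that $m_1=\ceil{n\alpha/4}$, $m=\max\{1,\floor{n\alpha^2/16}\}$, $\beta=\alpha/4$ jointly force $(1-\beta)m\leq \beta m_1$ (which is what makes the "jump" trap $B$ inside $\calP_1^{(\ell)}$), and that the outlier budget $n^\out\leq n\alpha^2/32$ together with the tail-loss $\xi$ stays strictly below $\beta|\overline{\calP_1^{(\ell)}}|\gtrsim n\alpha^2/8$ — this is exactly where $\Delta\geq c_1\sigma\decay^{-1}(e^{-c_2/\alpha^2})$ enters, forcing $\xi\leq \alpha^2/64$ and $\|\mu_1^{(1)}-\theta_1\|\leq \Delta/32$. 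Two minor wrinkles to dispatch: ties at distance exactly $\Delta/16$ in the "$\calP_1^{(\ell)}$ absorbs all nearby points" step, handled by the fixed (arbitrary) tie-breaking in \prettyref{algo:init}; and the degenerate case $n\alpha^2<16$ (so $m=1$), which is irrelevant because the conclusion is only meaningful once $e^{-n\alpha/4}$ is small, i.e.\ $n$ large.
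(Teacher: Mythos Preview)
Your proof is correct and follows essentially the same route as the paper's: bound $\dist_1^{(\ell)}\leq\Delta/16$ from the definition of $\ell$, use the jump $\dist_1^{(\ell+1)}>\Delta/16$ together with $\dist_1^{(\ell)}\leq\Delta/16$ to show that $\overline{\calP_1^{(\ell)}}$ is, up to a $\beta$-fraction, contained in $\calB(\theta_2,\Delta/32)$, and conclude $\dist_2^{(\ell)}\leq\Delta/16$. The only tactical difference is in absorbing $B$ into $\calP_1^{(\ell)}$: the paper argues $\calP_1^{(\ell)}\supseteq\calP_1^{(\ell+1)}\setminus A$ with $|A|\leq m$ (so it loses an additive $m$ in the count), whereas you use the calibration $(1-\beta)m\leq\beta m_1\leq\beta|\calP_1^{(\ell)}|$ to get $B\subseteq\calP_1^{(\ell)}$ outright.
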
	

\begin{lemma}\label{lmm:outlier-apprx-centroid}
	If $\totdist_\ell\leq {\Delta\over 8}$, then there is a permutation $\pi$ of $\{1,2\}$ such that $$\max_{i=1,2} {\|\mu_i^{(\ell)}-\theta_{\pi(i)}\|}\leq {\Delta\over 3}.$$
\end{lemma}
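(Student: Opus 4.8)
The plan is to show that $\mu_1^{(\ell)}$ lies within $\Delta/3$ of one true centroid and $\mu_2^{(\ell)}$ within $\Delta/3$ of the other, arguing throughout on the event $\tilde\calE$ of \prettyref{lmm:E-conc-init-2} (which has probability at least $1-4e^{-n\alpha/4}$ and carries all the concentration we need). Write $\rho:=\tfrac{5}{4\log(1/\decay(\Delta/(32\sigma)))}$; the hypothesis $\Delta\ge c_1\sigma\decay^{-1}(e^{-c_2/\alpha^2})$ (with $c_1$ large) simultaneously forces, via \prettyref{lmm:outlier-centroid1} and $\beta=\alpha/4$, that $\|\mu_1^{(1)}-\theta_j\|\le \Delta/32$ for some $j\in\{1,2\}$, and that $\rho$ is below any prescribed fixed multiple of $\alpha$. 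After relabelling so that $j=1$ we have $\|\mu_1^{(\ell)}-\theta_1\|=\|\mu_1^{(1)}-\theta_1\|\le \Delta/32\le\Delta/3$, so it remains to prove $\|\mu_2^{(\ell)}-\theta_2\|\le \Delta/3$, after which $\pi=\id$ (or the transposition, if $j=2$) works.

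The first step uses only $\dist_1^{(\ell)}\le\totdist_\ell\le\Delta/8$. By construction at least $(1-\beta)|\calP_1^{(\ell)}|$ points of $\calP_1^{(\ell)}$ lie in $\calB(\mu_1^{(1)},\Delta/8)\subseteq\calB(\theta_1,5\Delta/32)$. Any $T_2^*$-point in $\calB(\theta_1,5\Delta/32)$ has $\|w_i\|\ge 27\Delta/32$, so by \prettyref{lmm:E-conc-init-2}(ii) that ball contains at most $n_1^*+\rho n+n^{\out}$ data points; hence $|\calP_1^{(\ell)}|\le(n_1^*+\rho n+n^{\out})/(1-\beta)$, and plugging in $\beta=\alpha/4$, $n_1^*\le n-n\alpha-n^{\out}$ and $\rho$ small yields $|\overline{\calP_1^{(\ell)}}|=n-|\calP_1^{(\ell)}|\ge n\alpha/2$. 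Crucially this holds for an \emph{arbitrary} stage $\ell$ satisfying the hypothesis, not just the ``first-jump'' stage of \prettyref{lmm:outlier-small-totdist}; that is what is needed, since the stage $\ell^*$ actually selected by the algorithm need not be the first jump.

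The second step locates $\mu_2^{(\ell)}$. By the definition of ${\sf HDP}_{1-\beta}$ and of $\dist_2^{(\ell)}\le\Delta/8$, the ball $B:=\calB(\mu_2^{(\ell)},\dist_2^{(\ell)})$ has radius at most $\Delta/8$ and contains at least $(1-\beta)|\overline{\calP_1^{(\ell)}}|\ge n\alpha/4$ points of $\overline{\calP_1^{(\ell)}}$. I would rule out two positions for $\mu_2^{(\ell)}$. (i) If $\mu_2^{(\ell)}$ is at distance more than $\Delta/3$ from both $\theta_1$ and $\theta_2$, then every point of $B$ is at distance more than $5\Delta/24>\Delta/32$ from its own centroid, so \prettyref{lmm:E-conc-init-2}(ii) bounds the number of data points in $B$ by $\rho n+n^{\out}$, which is $<n\alpha/4$ because $n^{\out}\le n\alpha^2/32$ and $\rho$ is small --- a contradiction. (ii) If $\|\mu_2^{(\ell)}-\theta_1\|\le\Delta/3$, then every point of $B$ lies within $d_1:=\Delta/8+\Delta/3+\Delta/32<\Delta/2$ of $\mu_1^{(1)}$; a $T_2^*$-point within $d_1$ of $\mu_1^{(1)}$ has $\|w_i\|\ge 23\Delta/48$, so $\calB(\mu_1^{(1)},d_1)$ holds at most $n_1^*+\rho n+n^{\out}$ data points, of which $\calP_1^{(\ell)}$ already supplies at least $(1-\beta)|\calP_1^{(\ell)}|$; hence $|B\cap\overline{\calP_1^{(\ell)}}|\le (n_1^*+\rho n+n^{\out})-(1-\beta)|\calP_1^{(\ell)}|$. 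Combining with the lower bound $(1-\beta)|\overline{\calP_1^{(\ell)}}|$ and $|\calP_1^{(\ell)}|+|\overline{\calP_1^{(\ell)}}|=n$ gives $(1-\beta)n\le n_1^*+\rho n+n^{\out}\le n(1-\alpha+\rho)$, i.e.\ $3\alpha/4\le\rho$, which is false for $\Delta/\sigma$ large. Thus $\|\mu_2^{(\ell)}-\theta_2\|\le\Delta/3$, completing the proof.

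I expect case (ii) to be the only real obstacle: the point is that $\calP_1^{(\ell)}$, being the $|\calP_1^{(\ell)}|$ data points closest to $\mu_1^{(1)}$, necessarily absorbs almost all of the data mass sitting near $\theta_1$, so the near-$\theta_1$ points left in $\overline{\calP_1^{(\ell)}}$ number at most $\beta|\calP_1^{(\ell)}|+\rho n+n^{\out}$, which is too few to host a $(1-\beta)$-fraction of $\overline{\calP_1^{(\ell)}}$. The remaining work is bookkeeping --- arranging the constants $\tfrac{\Delta}{8},\tfrac{\Delta}{3},\tfrac{\Delta}{32}$ so that $d_1<\tfrac{\Delta}{2}$, and checking that the stated lower bound on $\Delta$ makes $\rho$ smaller than the handful of multiples of $\alpha$ appearing above --- and this is routine given \prettyref{lmm:E-conc-init-2}, $n_i^*>n\alpha$, $n^{\out}\le n\alpha^2/32$, and $\beta=\alpha/4$.
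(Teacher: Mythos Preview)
Your argument is correct and takes a genuinely different route from the paper's.

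The paper proves the lemma in two symmetric stages without invoking \prettyref{lmm:outlier-centroid1}. First it shows both $\mu_1^{(\ell)},\mu_2^{(\ell)}\in\cup_i\calB(\theta_i,\Delta/3)$: assuming (say) $\mu_2^{(\ell)}$ lies outside, it deduces that $\overline{\calP_1^{(\ell)}}$ is tiny, hence $\calP_1^{(\ell)}$ has at least $n-n\alpha^2/8$ points, and then uses a \emph{spread} argument---any data set of that size contains two points at distance $\ge 3\Delta/4$---to contradict $\dist_1^{(\ell)}\le\Delta/8$. Second, to show the two centroids land in different balls, it uses a pigeonhole step: one of $\calP_1^{(\ell)},\overline{\calP_1^{(\ell)}}$ must hold at least half of $\{Y_i:i\in T_2^*\}\cap\calB(\theta_2,\Delta/8)$, which is then shown to be incompatible with both $\mu_i^{(\ell)}\in\calB(\theta_1,\Delta/3)$.

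You instead invoke \prettyref{lmm:outlier-centroid1} to pin $\mu_1^{(\ell)}=\mu_1^{(1)}$ within $\Delta/32$ of $\theta_1$ up front, then run two counting contradictions for $\mu_2^{(\ell)}$: case~(i) is essentially the paper's first step specialized to $\mu_2^{(\ell)}$, but your case~(ii) is new---you observe that if $\mu_2^{(\ell)}\in\calB(\theta_1,\Delta/3)$ then both the $(1-\beta)$-core of $\calP_1^{(\ell)}$ and the $(1-\beta)$-core of $\overline{\calP_1^{(\ell)}}$ sit inside a single ball $\calB(\mu_1^{(1)},d_1)$ with $d_1<\Delta/2$, forcing $(1-\beta)n\le n_1^*+\rho n+n^{\out}$, which is impossible. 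This replaces the paper's pigeonhole/half argument entirely. Your route is shorter and leans on the asymmetry that $\mu_1^{(\ell)}$ is already located; the paper's argument is self-contained and symmetric in the two centroid estimates, which is what lets it extend cleanly to the $k$-cluster analogue in \prettyref{lmm:outlier-apprx-centroid-k}.
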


\prettyref{lmm:outlier-centroid1}, \prettyref{lmm:outlier-first-jump} and \prettyref{lmm:outlier-small-totdist} together implies, provided $\Delta$ is large enough, that among all of the iterations of our algorithm there is an instance on which the $\totdist_\ell$ measure becomes smaller than ${\Delta\over 8}$. As our algorithm finally picks the iteration step $\ell=\ell^*$ with the lowest $\totdist_\ell$ measure, it ensures that $\totdist_{\ell^*}\leq {\Delta\over 8}$. In view of \prettyref{lmm:outlier-apprx-centroid} this implies $\max_{i=1,2}\|\theta_{\pi(i)}-\mu_i^*\|\leq \Delta/3$ as required. Below we provide the proofs.

\begin{proof}[Proof of \prettyref{lmm:outlier-centroid1}]
	In view of \prettyref{lmm:E-conc-init-2}, there is a constant $c_1=\sigma \decay^{-1}(e^{-{5\over 4\beta^2}})$ such that
	\begin{align}\label{eq:outlier-init1}
		|\sth{j\in [n]:Y_j\in \calB(\theta_i,c_1)}|\geq n^*_i\pth{1-\beta^2},\quad i\in \{1,2\}.
	\end{align}
	As we have $n^*_1,n^*_2>n\alpha$ by assumption, it follows that there is a point $Y_i$ such that 
	$$|\sth{j\in [n]:Y_j\in \calB(Y_i,2c_{1})}|\geq m_1 \geq {n\alpha\over 4}.$$
	Hence, the tightest neighborhood around any point $Y_i,i\in [n]$, that contains at least $n\alpha/4$ points from $Y_1,\dots,Y_n$, has a radius of at most $2c_{1}$ around that $Y_i$. Define
	\begin{align}\label{eq:outlier-distance}
		D({x},S)=\sth{\|x-Y_{i}\|:i\in S},\quad x\in \reals^d,S\subseteq [n].
	\end{align} 
	Let $i^*$ be one such index in $[n]$ that satisfies 
	\begin{align}
		\sth{D(Y_{i^*},[n])}^{\sth{m_1\over n}} = \min_{j\in [n]} \sth{D(Y_{j},[n])}^{\sth{m_1\over n}}.
	\end{align} 
	Then $\calB(Y_{i^*},2c_{1})$ and $\cup_{i=1,2}\calB(\theta_i,c_{1})$ can not be disjoint, as in view of \eqref{eq:outlier-init1} the disjointedness will imply that their union will contain more than $n$ points from $Y_1,\dots,Y_n$
	\begin{align*}
		&\abs{\sth{i\in [n]:Y_i\in \calB(Y_{i^*},2c_1)}\cup\qth{\cup_{j=1,2}\sth{i\in [n]:Y_i\in \calB(\theta_j,c_1)}}}
		\nonumber\\
		&\geq m_1 + \sum_{i=1,2}n^*_i\pth{1-\beta^2}
		= {n\alpha\over 4} + (n - n^\out)(1-\beta^2) 
		\geq n +{n\alpha\over 4} - n\beta^2 - n^\out
		\geq n + {n\alpha^2\over 8},
	\end{align*}	
	where we use the fact that $\sth{i\in [n]:Y_i\in \calB(\theta_i,c_1)},i=1,2$ are disjoint sets as $\|\theta_1-\theta_2\|\geq \Delta, \beta=\frac \alpha4, n^*_1+n^*_2+n^{\out}=n$ and $n^\out\leq {n\alpha^2\over 16}$. Hence, $Y_{i^*}$ is at a distance at most $3c_{1}$ from one of the true centroids $\theta_1,\theta_2$. Without a loss of generality we can pick $\mu_1^{(1)} = Y_{i^*}$ and we assume that $\mu_1^{(1)}$ is closer to  $\theta_1$ than $\theta_2$.	
\end{proof}

\begin{proof}[Proof of \prettyref{lmm:outlier-first-jump}]
	In view of the proof of \prettyref{lmm:outlier-centroid1}, let us assume that $\theta_1$ is the closest centroid to $\mu_1^{(1)}$ and define $c_1=\sigma \decay^{-1}(e^{-{5\over 4\beta^2}})$ as before to have
	\begin{align}\label{eq:outlier-init0}
		\mu_1^{(1)}\in \calB\pth{\theta_1,3c_1},\quad 
		\abs{\{Y_j:j\in T^*_i\}\cap\calB\pth{\theta_i,c_1}}\geq n^*_i\pth{1-\beta^2},i\in \{1,2\}.
	\end{align}
	We observe the following: 
	\begin{itemize}
		\item In view of $\calB(\mu_1^{(1)},4c_1)\supset\calB(\theta_1,c_1)$ we get
		\begin{align*}
			\abs{\sth{Y_i:i\in [n]}\cap \calB(\mu_1^{(1)},4c_1)}
			\geq \abs{\sth{Y_i:i\in [n]}\cap\calB(\theta_1,c_1)}
			\geq n\alpha(1-\beta^2)\geq {n\alpha\over 2}.
		\end{align*}
		As the size of $\calP_1^{(1)}$ is at most $m_1 = \ceil{n\alpha\over 4}$ the distance of $\mu_1^{(1)}$ to any point in $\calP_1^{(1)}$ is less than $4c_1$. As $\Delta\geq 64 c_1$ the last statement implies $\dist_1^{(1)}\leq {\Delta\over 16}$.
		\item At the last step, say $\tilde \ell$, in our algorithm, $\calP_1^{(\tilde \ell)}$ will have at least $n-m = n-{n\alpha^2\over 16}$ many points. In view of \eqref{eq:outlier-init0} we also have
		\begin{align*}
			|\sth{Y_j:j\in [n],Y_j\in \calB(\theta_2,c_1)}\cap \calP_1^{(\tilde\ell)}|
			&\geq
			|\sth{Y_j:j\in [n],Y_j\in \calB(\theta_2,c_1)}|-  |\overline{\calP_1^{(\tilde\ell)}}|
			\nonumber\\
			&\geq n^*_2\pth{1-{\beta^2}}-{n\alpha^2\over 16}
			\geq n\alpha-{n\alpha^2\over 16}-{n\alpha\beta^2}.
		\end{align*}
	\end{itemize} 
	As we have
	\begin{itemize}
		\item the tightest neighborhood in the data set around $\mu_1^{(1)}$ with a size at least $(1-\beta)|\calP_1^{(\tilde\ell)}|$, say, $N$, will include at least $(1-\beta)(n-{n\alpha^2\over 16})\geq n-{n\beta}-{n\alpha^2\over 16}$ points, and
		\item \eqref{eq:outlier-init0} implies that $\{Y_j:j\in T^*_2\}\cap\calB\pth{\theta_2,c_1}$ will contain at least ${n\alpha\over 2}$ points
	\end{itemize}
	we get that
	$$
	|N|+\abs{\{Y_j:j\in T^*_2\}\cap\calB\pth{\theta_2,c_1}} \geq n + {n\alpha\over 4}.
	$$ 
	This implies $N\cap \{Y_j: j\in T^*_2,\calB\pth{\theta_2,c_1}\}$ is nonempty. Suppose that $y$ is an element in the above set. Then we have that the distance of $y$ from $\mu_1^{(\tilde\ell)}$ is at least $\Delta-4c_1$, 
	\begin{align*}
		\|\mu_1^{(\tilde\ell)}-y\|
		\geq \|\theta_1-\theta_2\|-\|\mu_1^{(\tilde\ell)}-\theta_1\|-\|\theta_2-y\|
		\geq \Delta - 4c_1.
	\end{align*}
	Hence we get $\dist_1^{(\tilde \ell)}\geq \Delta-4c_1$. As $\Delta>64c_1$ we get $\dist_1^{(\tilde \ell)}\geq {\Delta\over 2}$ as required.
\end{proof}

\begin{proof}[Proof of \prettyref{lmm:outlier-small-totdist}]
	In view of \prettyref{lmm:outlier-centroid1}, without a loss of generality we assume that $\theta_1$ is the closest centroid to $\mu_1^{(1)}$ and \eqref{eq:outlier-init0}. We first prove the following claims:
	\begin{align}
		\abs{\calP_1^{(\ell)}\cap \{Y_i:i\in T^*_1\}}&\geq n^*_1-m-{5n\over 4\log(1/(G(\Delta/32\sigma)))}
		\label{eq:outlier-init-claim1}\\
		\abs{\calP_1^{(\ell)}\cap \{Y_i:i\in T^*_2\}}&\leq n\beta + {5n\over 4\log(1/(G(\Delta/32\sigma)))}.
		\label{eq:outlier-init-claim2}
	\end{align}
	The first claim \eqref{eq:outlier-init-claim1} follows from the following sequence of arguments
	\begin{itemize}
		\item Note that $\mu_1^{(\ell)}=\mu_1^{(\ell+1)}$ from the description in \prettyref{algo:init}. This implies $$\dist_1^{(\ell+1)}=\sth{D(\mu_1^{(\ell)},\calP_1^{(\ell+1)})}^{\sth{1-\beta}} > {\Delta\over 16}.$$ 
		As $\calP_1^{(\ell+1)}$ is constructed by including the data points according to increasing Euclidean distances from $\mu_1^{(\ell)}=\mu_1^{(\ell+1)}$ we get
		\begin{align}\label{eq:outlier-init6}
			\calP_1^{(\ell+1)}\supseteq \{Y_i:i\in [n]\}\cap \calB\pth{\mu_1^{(\ell)},{\Delta\over 16}}.
		\end{align}
		As we have $\calP_1^{(\ell)}\subset \calP_1^{(\ell+1)}$ and $|\calP_1^{(\ell)}|\leq |\calP_1^{(\ell+1)}|= |\calP_1^{(\ell)}|+m$, we get that there is a set $A\subseteq \{Y_i:i\in [n]\}$ that satisfies
		$$
		\calP_1^{(\ell)}\supseteq\calP_1^{(\ell+1)}/A,\quad |A| \leq m \leq  {n\alpha^2\over 16},
		$$
		for large enough $n$ such that ${n\alpha^2\over 16}>1$. In view of \eqref{eq:outlier-init6} the last display implies
		$$
		\calP_1^{(\ell)}\cap\{Y_i:i\in T^*_1\}\supseteq\calP_1^{(\ell+1)}\cap\{Y_i:i\in T^*_1\}/A
		\supseteq \calB\pth{\mu_1^{(\ell)},{\Delta\over 16}}\cap\{Y_i:i\in T^*_1\}/A,
		$$
		and hence
		\begin{align}\label{eq:outlier-init7}
			\abs{\calP_1^{(\ell)}\cap\{Y_i:i\in T^*_1\}}&\geq \abs{\calB\pth{\mu_1^{(\ell)},{\Delta\over 16}}\cap\{Y_i:i\in T^*_1\}}-|A|
			\nonumber\\
			& \geq \abs{\calB\pth{\mu_1^{(\ell)},{\Delta\over 16}}\cap\{Y_i:i\in T^*_1\}}-m.
		\end{align}
		
		\item As we have from \eqref{eq:outlier-init0}, with $\Delta\geq 96c_1$:
		\begin{align}
			\begin{gathered}
				\calB\pth{\mu_1^{(\ell)},{\Delta\over 16}}\supseteq \calB\pth{\theta_1,{\Delta\over 16}-3c_1}
				\supseteq \calB\pth{\theta_1,{\Delta\over 32}},
			\end{gathered}	
		\end{align}
		in view of \prettyref{lmm:E-conc-init-2}(ii) we get
		\begin{align}\label{eq:outlier-init8}
			&\abs{\calB\pth{\mu_1^{(\ell)},{\Delta\over 16}}\cap \{Y_i:i\in T^*_1\}}
			\geq \abs{\calB\pth{\theta_1,{\Delta\over 32}}\cap \{Y_i:i\in T^*_1\}}
			\nonumber\\
			&\geq n^*_1\pth{1-{5\over 4\log\pth{1/G\pth{\Delta\over 32\sigma}}}}
			\geq n^*_1-{5n\over 4\log(1/(G(\Delta/32\sigma)))}. 
		\end{align}
	\end{itemize}
	Combining \eqref{eq:outlier-init7} and \eqref{eq:outlier-init8} we get \eqref{eq:outlier-init-claim1}.
	Next, to prove the claim \eqref{eq:outlier-init-claim2}, we note that:
	\begin{itemize}
		\item In view of \prettyref{lmm:E-conc-init-2} there are at most ${5n\over 4\log(1/(G(\Delta/32\sigma)))}$ many points from $\sth{Y_i:i\in T^*_2}$ outside $\calB(\theta_2,{\Delta\over 32})$. As \eqref{eq:outlier-init0} implies $\calB\pth{\mu_1^{(\ell)},{\Delta\over 16}}\subseteq \calB\pth{\theta_1,{\Delta\over 16}+3c_1}$, and, $\calB\pth{\theta_1,{\Delta\over 16}+3c_1}$ and  $\calB(\theta_2,{\Delta\over 32})$ are disjoint, we have
		\begin{align}\label{eq:outlier-init9}
			\abs{\calP_1^{(\ell)}\cap \{Y_i:i\in T^*_2\}\cap \calB\pth{\mu_1^{(\ell)},{\Delta\over 16}}}
			\leq \abs{\{Y_i:i\in T^*_2\}/ \calB\pth{\theta_2,{\Delta\over 32}}}
			\leq {5n\over 4\log(1/(G(\Delta/32\sigma)))} .
		\end{align}
		\item On the other hand,
		$\dist_1^{(\ell)}=\sth{D(\mu_1^{(\ell)},\calP_1^{(\ell)})}^{\sth{1-\beta}}\leq {\Delta\over 16}$
		implies that
		\begin{align}\label{eq:outlier-init10}
			\abs{\calP_1^{(\ell)}\cap \{Y_i:i\in T^*_2\}/ \calB\pth{\mu_1^{(\ell)},{\Delta\over 16}}}\leq n\beta.
		\end{align}
	\end{itemize}
	Combining \eqref{eq:outlier-init9} and \eqref{eq:outlier-init10} we get \eqref{eq:outlier-init-claim2}.
	
	Hence, we have proven the inequalities \eqref{eq:outlier-init-claim1} and \eqref{eq:outlier-init-claim2}. These inequalities together imply
	\begin{align}
		\abs{\overline{\calP_1^{(\ell)}}\cap \{Y_i:i\in T^*_2\}}&\geq n^*_2- n\beta-{5n\over 4\log(1/(G(\Delta/32\sigma)))}
		\label{eq:outlier-init-claim3}\\
		\abs{\overline{\calP_1^{(\ell)}}\cap \{Y_i:i\in T^*_1\}}&\leq m +{5n\over 4\log(1/(G(\Delta/32\sigma)))}.
		\label{eq:outlier-init-claim4}
	\end{align}
	In view of $\abs{\sth{Y_i:i\in T^*_2}/ \calB(\theta_2,{\Delta/32})}\leq {5n\over 4\log(1/(G(\Delta/32\sigma)))}$ from \prettyref{lmm:E-conc-init-2}, we have  
	\begin{align}\label{eq:outlier-init2-1}
		&\abs{\overline{\calP_1^{(\ell)}}/\calB(\theta_2,{\Delta/ 32})}
		\nonumber\\
		&\leq \abs{\sth{\overline{\calP_1^{(\ell)}}\cap \{Y_i:i\in T^*_2\}/{\calB(\theta_2,{\Delta/ 32})}}
			\cup \sth{\overline{\calP_1^{(\ell)}}\cap \{Y_i:i\in T^*_1\}/ {\calB(\theta_2,{\Delta/ 32})}}}
		+n^\out
		\nonumber\\
		&\leq \abs{\{Y_i:i\in T^*_2\}/ \sth{\calB(\theta_2,{\Delta/ 32})}}
		+ \abs{\overline{\calP_1^{(\ell)}}\cap \{Y_i:i\in T^*_1\}}
		+n^\out
		\nonumber\\
		&\leq {5n\over 4\log(1/(G(\Delta/32\sigma)))} + m +{5n\over 4\log(1/(G(\Delta/32\sigma)))}
		+{n\alpha^2\over 32}
		\nonumber\\
		&\leq  {3n\alpha^2\over 32}+{5n\over 2\log(1/(G(\Delta/32\sigma)))}
		\leq {5n\alpha^2\over 32},
	\end{align}
	where the last inequality followed from ${5n\over 2\log(1/(G(\Delta/32\sigma)))}\leq {n\alpha^2\over 16}$ as $\Delta\geq 32\sigma \decay^{-1}(e^{-{40\over \alpha^2}})$. Then we make the following observations.
	\begin{itemize}
		\item As $\abs{\overline{\calP_1^{(\ell)}}}\geq{n\alpha}-{n\beta}-{n\alpha\over 16}\geq {11n\alpha\over 16}$ from \eqref{eq:outlier-init-claim3}, any subset of $\overline{\calP_1^{(\ell)}}$ with size ${\pth{1-\beta}|\overline{\calP_1^{(\ell)}}|}$, discards a set of size at least ${11n\alpha\beta\over 16}\geq {n\alpha^2\over 6}$ (note that $\beta={\alpha\over 4}$).
		\item From \eqref{eq:outlier-init2-1} we get $\abs{\overline{\calP_1^{(\ell)}}/\calB(\theta_2,{\Delta/ 32})}\leq {n\alpha^2\over 6.4}$. In view of the last argument this implies that the set $\overline{\calP_1^{(\ell)}}\cap\calB(\theta_2,{\Delta/ 32})$, which has a diameter at most $\Delta\over 16$, contains more points than any subset of $\overline{\calP_1^{(\ell)}}$ with size ${\pth{1-\beta}|\overline{\calP_1^{(\ell)}}|}$.
		\item Hence, the diameter of the tightest subset of $\overline{\calP_1^{(\ell)}}$ with size ${\pth{1-\beta}|\overline{\calP_1^{(\ell)}}|}$ is at most $\Delta\over 16$.
	\end{itemize} 
	This implies $\dist_2^{(\ell)}\leq \Delta/16$ and concludes our proof.
\end{proof}

\begin{proof}[Proof of \prettyref{lmm:outlier-apprx-centroid}]
	As $\totdist_\ell\leq {\Delta\over 8}$, we have $\dist_i^{(\ell)}\leq {\Delta\over 8}$ for $i\in \{1,2\}$. First we show that both $\mu_1^{(\ell)}$ and $\mu_2^{(\ell)}$ lie in $\cup_{i=1,2}\calB(\theta_i,{\Delta/ 3})$. If not, without a loss of generality let $\mu_2^{(\ell)}$ lie outside  $\cup_{i=1,2}\calB(\theta_i,\Delta/3)$. Then we have
	\begin{align}\label{eq:outlier-init3}
		\calB\pth{\mu_2^{(\ell)},{\Delta\over 8}}\cap \sth{\cup_{i\in \{1,2\}}\calB\pth{\theta_i,{\Delta\over 8}}}=\phi,.
	\end{align} 
	As we have $\dist_2^{(\ell)}\leq {\Delta\over 8}$, we get that 
	\begin{align}\label{eq:outlier-init4}
		\abs{\overline{\calP_1^{(\ell)}} \cap \calB\pth{\mu_2^{(\ell)},{\Delta\over 8}}}\geq \pth{1-\beta}\abs{\overline{\calP_1^{(\ell)}}}.
	\end{align}
	Using \prettyref{lmm:E-conc-init-2}, we get that 
	\begin{align}\label{eq:outlier-init5}
		\abs{\{Y_i:i\in [n]\}/ \sth{\cup_{i\in \{1,2\}}\calB\pth{\theta_i,{\Delta\over 8}}}}\leq {5n\over 4\log(1/(G(\Delta/32\sigma)))}+n^\out.
	\end{align}
	In view of the last display, using \eqref{eq:outlier-init3} and\eqref{eq:outlier-init4} we get
	\begin{align*}
		\abs{\overline{\calP_1^{(\ell)}}}
		&\leq \frac 1{1-\beta}\abs{\overline{\calP_1^{(\ell)}} \cap \calB\pth{\mu_2^{(\ell)},{\Delta\over 8}}}
		\nonumber\\
		&\leq \frac 1{1-\beta}
		\abs{\{Y_i:i\in [n]\}/ \sth{\cup_{i\in \{1,2\}}\calB\pth{\theta_i,{\Delta\over 8}}}}
		\leq {5n\over 2\log(1/(G(\Delta/32\sigma)))}
		+2n^\out.
	\end{align*}
	The last display implies 
	\begin{align}\label{eq:outlier-init2-2}
		\abs{\calP_1^{(\ell)}}\geq n-{5n\over 2\log(1/(G(\Delta/32\sigma)))} - 2n^\out
		\geq n-{n\alpha^2\over 8},
	\end{align}
	where the last inequality followed using $n^\out\leq {n\alpha^2\over 32}$, provided 
	$$
	{5n\over 2\log(1/(G(\Delta/32\sigma)))}\leq {n\alpha^2\over 16}, \quad \text{i.e.},\ \Delta\geq 32\sigma \decay^{-1}\pth{e^{-{40\over \alpha^2}}}.
	$$ 
	In view of \eqref{eq:outlier-init5} with $\cap_{i\in \{1,2\}}\calB\pth{\theta_i,{\Delta\over 8}}=\phi$, $n^\out\leq {n\alpha^2\over 32}$ and $n^*_1,n^*_2\geq n\alpha$, we get that given any set $\calS\subseteq \{Y_i:i\in [n]\}$ of size at least $n-{n\alpha\over 2}$, there will be at least two points in $\calS$ that are at least $\Delta-{\Delta\over 4}$ distance away. Choose $\calS = \{i\in [n]:Y_i\in \calB(\mu_1^{(\ell)},\dist_1^{(\ell)})\}$. Using \eqref{eq:outlier-init2-2} we get
	\begin{align*}
		|\calS|&\geq (1-\beta)\abs{\calP_1^{(\ell)}}
		\geq n(1-{\alpha/ 4})(1-\alpha^2/8)	\geq n-{n\alpha\over 2},
	\end{align*}
	Suppose $x,y$ are the farthest away points in $\calS$. This leads to a contradiction as 
	$$
	\Delta-{\Delta\over 4}\leq \|x-y\|\leq \|x-\mu_1^{(\ell)}\|+\|y-\mu_1^{(\ell)}\|
	\leq 2\cdot \dist_1^{(\ell)}\leq {\Delta\over 4}.
	$$
	Now it remains to show that $\mu_1^{(\ell)},\mu_2^{(\ell)}$ lie in different balls among $\calB(\theta_1,\Delta/3)$ and $\calB(\theta_2,\Delta/3)$. If not, then suppose that both lie in $\calB(\theta_1,\Delta/3)$. Note that either $\calP_1^{(\ell)}$ or $\overline{\calP_1^{(\ell)}}$ will contain more than half the points from $\{Y_i:i\in T^*_2\}\cap \calB\pth{\theta_2,{\Delta\over 8}}$. We deal with the case where $\overline{\calP_1^{(\ell)}}$ is the partition with more than half of the points in $\{Y_i:i\in T^*_2\}\cap \calB\pth{\theta_2,{\Delta\over 8}}$. The case with $\calP_1^{(\ell)}$ can be worked out similarly. Then we have the following
	\begin{itemize}
		\item As $\dist_2^{(\ell)}\leq {\Delta\over 8}$ and $\mu_2^{(\ell)}\in \calB(\theta_1,\Delta/3)$ we get
		$$
		\calB\pth{\mu_2^{(\ell)},\dist_2^{(\ell)}}
		\subseteq \calB\pth{\mu_2^{(\ell)},{\Delta\over 8}}\subseteq \calB\pth{\theta_1,{\Delta\over 3}+{\Delta\over 8}} = \calB\pth{\theta_1,{11\Delta\over 24}}
		$$
		\item In view of the last argument we have 
		\begin{align}\label{eq:init2-outlier-0}
			\calB\pth{\theta_2,{\Delta\over 8}}\cap \calB\pth{\mu_2^{(\ell)},\dist_2^{(\ell)}}\subseteq \calB\pth{\theta_2,{\Delta\over 8}}\cap \calB\pth{\theta_1,{11\Delta\over 24}}=\phi,
		\end{align}
		\item From \prettyref{lmm:E-conc-init-2}, whenever ${5n\over 8\log(1/(G(\Delta/32\sigma)))}\leq {n\alpha\over 6}$, i.e., $\Delta\geq 32\sigma \decay^{-1}\pth{e^{-{3\over 4\alpha}}}$, we get
		\begin{align}\label{eq:init2-outlier-1}
			\frac 12 \abs{\{Y_i:i\in T^*_2\}\cap \calB\pth{\theta_2,{\Delta\over 8}}}\geq {n\alpha\over 2}-{5n\over 8\log(1/(G(\Delta/32\sigma)))}\geq {n\alpha\over 3}.
		\end{align}
	\end{itemize} 
	However, this leads to a contradiction, as in view of \eqref{eq:init2-outlier-0} we have
	$$
	\abs{\overline{\calP_1^{(\ell)}}\cap \calB\pth{\theta_2,{\Delta\over 8}}}
	\leq \abs{\overline{\calP_1^{(\ell)}}/ \sth{Y_i: i\in [n]Y_i\in  \calB\pth{\mu_2^{(\ell)},\dist_2^{(\ell)}}}}
	\leq n\beta \leq {n\alpha\over 4},
	$$
	but on the other hand, using the fact that $\overline{\calP_1^{(\ell)}}$ contains more than half of the points in $\{Y_i:i\in T^*_2\}\cap \calB\pth{\theta_2,{\Delta\over 8}}$, we get from \eqref{eq:init2-outlier-1}
	$$
	\abs{\overline{\calP_1^{(\ell)}}\cap \calB\pth{\theta_2,{\Delta\over 8}}}
	\geq \frac 12 \abs{\{Y_i:i\in T^*_2\}\cap \calB\pth{\theta_2,{\Delta\over 8}}}\geq {n\alpha\over 3}.
	$$
\end{proof}

	\section{Proof of initialization results with $k$-clusters (\prettyref{thm:proof-algo-k})}

\label{app:proof-init-k}

We will use the following high probability guarantee for proving our initialization result. The proof is identical to that of \prettyref{lmm:E-conc-init-2} and is omitted.

\begin{lemma}\label{lmm:E-conc-init-k}
	Given any $\beta\in (0,1)$, there is an event $\tilde \calE_k$ with 
	$
	\PP\qth{\tilde \calE_k}\geq 1-2ke^{-{\min_{g\in [k]} n_g^*\over 4}}
	$
	on which the following holds for the $k$-cluster problem:
	\begin{enumerate}[label=(\roman*)]
		\item $\abs{\calB(\theta_i,\sigma \decay^{-1}(e^{-{5\over 4\beta^2}}))\cap \sth{Y_i:i\in T^*_i}}\geq n^*_i(1-{\beta^2})$ for each $i\in [k]$,
		\item $\abs{\calB(\theta_i,{\Delta\over 16 k})\cap \sth{Y_i:i\in T^*_i}}\geq n^*_i\pth{1-{5\over 4\log\pth{1/G\pth{\Delta/ (16\sigma k)}}}}$ for each $i\in [k]$.
	\end{enumerate} 
\end{lemma}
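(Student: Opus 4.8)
The plan is to obtain both parts as direct corollaries of the tail-concentration bound in \prettyref{lmm:E-conc-genr}, applied twice with two different values of its threshold parameter $\epsilon_0$, mirroring the two-cluster argument behind \prettyref{lmm:E-conc-init-2}. The only bookkeeping changes relative to that case are that the radius in part (ii) now scales like $1/k$ and that the failure probability carries a factor $k$ rather than $2$.

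For part (i), I would first note that, since $Y_i-\theta_g=w_i$ for every $i\in T_g^*$, the number of cluster-$g$ data points lying outside a ball $\calB(\theta_g,r)$ equals $\sum_{i\in T_g^*}\indc{\|w_i\|>r}\leq\sum_{i\in T_g^*}\indc{r\leq\|w_i\|}$. Taking $r=\sigma\decay^{-1}\pth{e^{-5/(4\beta^2)}}$, i.e.\ invoking \prettyref{lmm:E-conc-genr} with $\epsilon_0=(\sigma/\Delta)\decay^{-1}\pth{e^{-5/(4\beta^2)}}$ and $\epsilon=\epsilon_0$, one has $\decay(\epsilon_0\Delta/\sigma)=\decay\pth{\decay^{-1}\pth{e^{-5/(4\beta^2)}}}\leq e^{-5/(4\beta^2)}$, so that $\log\pth{1/\decay(\epsilon_0\Delta/\sigma)}\geq 5/(4\beta^2)$ and the lemma yields $\sum_{i\in T_g^*}\indc{r\leq\|w_i\|}\leq\tfrac{5n_g^*}{4}\cdot\tfrac{4\beta^2}{5}=n_g^*\beta^2$ on the event $\econ_{\epsilon_0}$. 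Subtracting from $n_g^*$ and using the all-$g$ statement already built into \prettyref{lmm:E-conc-genr} gives (i).

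For part (ii), I would apply \prettyref{lmm:E-conc-genr} a second and independent time with $\epsilon_0=\tfrac{1}{16k}$, so that $\epsilon_0\Delta=\Delta/(16k)$; the lemma then bounds the number of cluster-$g$ points outside $\calB\pth{\theta_g,\Delta/(16k)}$ by $\tfrac{5n_g^*}{4\log\pth{1/\decay(\Delta/(16\sigma k))}}$, and subtracting from $n_g^*$ gives (ii) (note that for $k=2$ this reproduces the radius $\Delta/32$ of \prettyref{lmm:E-conc-init-2}). Intersecting the two events produced by the two applications of \prettyref{lmm:E-conc-genr}, each of which fails with probability at most $k\,e^{-\min_{g\in[k]}n_g^*/4}$, a union bound gives $\PP[\tilde\calE_k]\geq 1-2k\,e^{-\min_{g\in[k]}n_g^*/4}$.

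There is no genuine obstacle here — the statement is an immediate specialization of \prettyref{lmm:E-conc-genr}, which is why the paper omits it. The only points that need care are: using a consistent (generalized) definition of $\decay^{-1}$ so that $\decay\circ\decay^{-1}\leq\mathrm{id}$ and the logarithm does not reverse direction in the displayed bound; and matching the closed-ball convention for $\calB(\cdot,\cdot)$ with the non-strict inequality $\indc{r\leq\|w_i\|}$ in \prettyref{lmm:E-conc-genr}, so that "number of points outside the ball'' is correctly upper bounded by that indicator sum.
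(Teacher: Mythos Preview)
Your proposal is correct and follows exactly the approach the paper intends: specialize \prettyref{lmm:E-conc-genr} twice, once with $\epsilon_0=(\sigma/\Delta)\decay^{-1}(e^{-5/(4\beta^2)})$ for part (i) and once with $\epsilon_0=1/(16k)$ for part (ii), then union-bound the two failure events. This is precisely the argument behind \prettyref{lmm:E-conc-init-2} with the radius and probability constants adjusted for $k$ clusters, which is why the paper simply states the proof is identical and omits it.
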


In the proof below, we assume that all the mentioned constants depend on $\decay,\alpha,\sigma,k$, unless otherwise specified. In addition, for our entire analysis we will assume that the event $\tilde \calE_k$ mentioned in \prettyref{lmm:E-conc-init-k} holds, which has a high probability. We will provide a proof of the result involving outliers, as in \prettyref{thm:proof-algo-outliers} as the proof in absence of outliers is very similar. For the sake of simplifying our analysis we will also assume that $\min_{h\in [k]}n_h^*={n\alpha\over k}$. In summary, the above modifications are equivalent to showing the following:

{\it Suppose that out of the $n$ many observed data points, $n^*_i$ many are from cluster $T_i^*, i=1,\dots,k$ and $n^\out$ many are adversarial outliers (i.e., $\sum_{i=1}^kn^*_i+n^\out=n$). Also assume for some constant $1>\alpha>0$ the counts satisfy $n^*_i> {n\alpha\over k}, i=1,\dots,k$, $n^\out\leq {n\alpha^2\over 64 k^3}$. Then there are constants $c_1,c_2$ such that the following is satisfied.  Whenever $\Delta>c_1k\sigma\decay^{-1}\pth{e^{-{c_2/ \beta^2}}}$, there is a permutation $\pi$ of the set $[k]$ that satisfies $\max_{i\in [k]}\|\theta_{\pi(i)}-\mu_i^*\|\leq \Delta/3$ with probability at least $1-2ke^{-n\alpha/4k}$, where the $\{\mu_i^*\}$ are centroid estimates generated via the $\iod_{k,m_1,m,\beta}$ algorithm with 
	$$m_1=\ceil{n\alpha\over 4k},m=\max\sth{1,\floor{n\beta^2\over 2}},\beta={\alpha\over 4k^2}.$$}
Similar to before, we will extensively use the following definition of order statistics: Given any set $V$ of real numbers and fraction $0<p<1$, let $V^{\sth{p}}$ define the $\ceil{p|V|}$-th smallest number in $V$. We make the following observations for simplifying the notation. Whenever we call the \iod algorithm to find $j$ centroids from the remaining data set, it contains a {\it for-loop} with the loop counter denoted by $\ell_j$. As a result, whenever we find a set of centroids $\hat \mu_k,\dots,\hat\mu_2, \hat\mu_2$ it corresponds to a set of loop counts $\tilde \ell_k,\dots,\tilde\ell_2$
\begin{align*}
	(\hat \mu_k,\dots,\hat\mu_2,\hat\mu_1)
	= (\mu_k^{(k,\tilde \ell_k)},\dots,\mu_2^{(2,\tilde \ell_2)},\mu_1^{(1,\tilde \ell_2)}),
\end{align*}
and vice-versa. In view of this relation, in the proofs below we will interchangeably use the centroids and the indices.

The proof is a combination of the following results.

\begin{lemma}\label{lmm:outlier-centroid1-k}
	There is one $\theta_i$, such that $\|\theta_i-\mu_k^{(k,1)}\|\leq 3\sigma G^{-1}\pth{e^{-{5\over 4\beta^2}}}$.
\end{lemma}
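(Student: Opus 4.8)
The plan is to adapt the argument of Lemma~\ref{lmm:outlier-centroid1} (the $k=2$ case) to the present multi-cluster setting, the only substantive change being the bookkeeping in the counting step. Throughout we work on the high-probability event $\tilde\calE_k$ of Lemma~\ref{lmm:E-conc-init-k}, and we set $\rho=\sigma\decay^{-1}\pth{e^{-{5\over 4\beta^2}}}$, so that by part~(i) of that lemma each ball $\calB(\theta_i,\rho)$, $i\in[k]$, contains at least $n_i^*(1-\beta^2)$ of the data points in $\{Y_j:j\in T_i^*\}$; recall that $\decay^{-1}$ is non-increasing since $\decay$ is decreasing.

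First I would argue that the radius $R_{i^*}$ produced inside the ${\sf HDP}$ subroutine in Step~\ref{steps:old1} is at most $2\rho$. Indeed, fix any cluster $i$ and any $Y_j\in\calB(\theta_i,\rho)\cap\{Y_\ell:\ell\in T_i^*\}$; since $\calB(\theta_i,\rho)\subseteq\calB(Y_j,2\rho)$, the ball $\calB(Y_j,2\rho)$ contains at least $n_i^*(1-\beta^2)$ data points, which exceeds $m_1=\ceil{n\alpha/(4k)}$ because $n_i^*>n\alpha/k$, $1-\beta^2\geq 15/16$ (as $\beta=\alpha/(4k^2)\leq 1/4$), and $n$ is large. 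Hence the smallest $(m_1/n)$-quantile distance over all data points is at most $2\rho$, so the output $\mu_k^{(k,1)}=Y_{i^*}$ of the subroutine satisfies $|\{Y_\ell:\ell\in[n]\}\cap\calB(Y_{i^*},2\rho)|\geq m_1$.

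Next I would show $\calB(Y_{i^*},2\rho)\cap\bigl(\cup_{i=1}^k\calB(\theta_i,\rho)\bigr)\neq\emptyset$ by a counting contradiction. Choosing the constants $c_1,c_2$ in the hypothesis $\Delta>c_1 k\sigma\decay^{-1}(e^{-c_2/\beta^2})$ large enough (using $c_2\geq 5/4$ and monotonicity of $\decay^{-1}$) gives $\Delta>2\rho$, so the balls $\calB(\theta_1,\rho),\dots,\calB(\theta_k,\rho)$ are pairwise disjoint. If $\calB(Y_{i^*},2\rho)$ were also disjoint from all of them, then counting the distinct data points in $\calB(Y_{i^*},2\rho)$ together with those in $\calB(\theta_i,\rho)\cap\{Y_j:j\in T_i^*\}$, $i\in[k]$, yields
\[
n \;\geq\; m_1 + \sum_{i=1}^k n_i^*(1-\beta^2) \;=\; \frac{n\alpha}{4k} + (n-n^{\out})(1-\beta^2) \;\geq\; n + \frac{n\alpha}{4k} - n\beta^2 - n^{\out},
\]
hence $m_1\leq n\beta^2+n^{\out}$; but with $\beta=\alpha/(4k^2)$ and $n^{\out}\leq n\alpha^2/(64k^3)$ one has $n\beta^2+n^{\out}\leq n\alpha/(16k)+n\alpha/(64k)<n\alpha/(4k)\leq m_1$, a contradiction. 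Therefore there is a data point $y\in\calB(Y_{i^*},2\rho)\cap\calB(\theta_i,\rho)$ for some $i$, and the triangle inequality gives
\[
\|\theta_i-\mu_k^{(k,1)}\| \;\leq\; \|\theta_i-y\| + \|y-Y_{i^*}\| \;\leq\; \rho+2\rho \;=\; 3\sigma\decay^{-1}\pth{e^{-{5\over 4\beta^2}}},
\]
which is the assertion of the lemma.

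I do not expect a genuine obstacle here, the proof being essentially a transcription of the $k=2$ argument; the one point requiring care is verifying that the tighter parameter budget for $k$ clusters --- the smaller truncation level $\beta=\alpha/(4k^2)$, the smaller neighborhood size $m_1=\ceil{n\alpha/(4k)}$, and the outlier allowance $n^{\out}\leq n\alpha^2/(64k^3)$ --- still makes both $\Delta>2\rho$ and $m_1>n\beta^2+n^{\out}$ hold, which is exactly the elementary computation displayed above, and all the constants $c_1,c_2$ may be taken to be absolute.
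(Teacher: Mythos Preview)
Your proof is correct and follows essentially the same approach as the paper: show that the smallest $m_1$-neighborhood radius is at most $2\rho$ by exhibiting a witness inside some $\calB(\theta_i,\rho)$, then derive a counting contradiction if $\calB(Y_{i^*},2\rho)$ were disjoint from all $\calB(\theta_i,\rho)$, and conclude via the triangle inequality. The only cosmetic difference is that the paper records the outlier requirement at this step as $n^{\out}\le n\alpha/(16k)$ (to be combined with the other constraints at the end), whereas you invoke the final, stronger bound $n^{\out}\le n\alpha^2/(64k^3)$ directly; either suffices.
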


\begin{lemma}\label{lmm:outlier-first-jump-k}
	There is a stage $\bar \ell_k+1$, with $\bar \ell_k\geq 1$,  such that $\dist_k^{(\bar \ell_k+1)}> {\Delta\over 8k},\dist_k^{(\bar \ell_k)}\leq  {\Delta\over 8k}$.	
\end{lemma}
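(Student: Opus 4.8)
The plan is to transplant the two–cluster argument of \prettyref{lmm:outlier-first-jump} to general $k$, replacing the threshold $\Delta/16$ by $\Delta/(8k)$, the per–cluster size $n\alpha$ by $n\alpha/k$, and the truncation level $\beta$ from $\alpha/4$ to $\alpha/(4k^2)$, and then checking that the counting inequalities still close. I would work throughout on the event $\tilde\calE_k$ of \prettyref{lmm:E-conc-init-k}, abbreviate $r_1=\sigma\decay^{-1}(e^{-5/(4\beta^2)})$, and invoke \prettyref{lmm:outlier-centroid1-k} to assume, after relabelling, that $\mu_k^{(k,1)}\in\calB(\theta_1,3r_1)$. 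A preliminary observation that streamlines everything: since each transfer step in \prettyref{algo:init-k} moves the $m$ \emph{closest} remaining points into $\calP_k^{(\ell_k)}$, the set $\calP_k^{(\ell_k)}$ is exactly the collection of the $m_1+(\ell_k-1)m$ points of $\{Y_1,\dots,Y_n\}$ nearest to $\mu_k^{(k,1)}$; hence $\dist_k^{(\ell_k)}$ is non-decreasing in $\ell_k$, namely the distance from $\mu_k^{(k,1)}$ to the farthest of its $\ceil{(1-\beta)|\calP_k^{(\ell_k)}|}$ nearest points.

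First I would bound $\dist_k^{(1)}$ from above. By \prettyref{lmm:E-conc-init-k}(i) the ball $\calB(\theta_1,r_1)$ contains at least $n^*_1(1-\beta^2)\ge(n\alpha/k)(1-\beta^2)\ge m_1$ of the $Y_i$ (for $n$ large), and it sits inside $\calB(\mu_k^{(k,1)},4r_1)$; therefore the $m_1$ points nearest $\mu_k^{(k,1)}$ all lie within $4r_1$, giving $\dist_k^{(1)}\le 4r_1$. Choosing the constants $c_1,c_2$ in the hypothesis $\Delta>c_1k\sigma\decay^{-1}(e^{-c_2/\beta^2})$ large enough (e.g. $c_2\ge 5/4$, so $\decay^{-1}(e^{-c_2/\beta^2})\ge\decay^{-1}(e^{-5/(4\beta^2)})$, and $c_1\ge 32$) forces $r_1\le\Delta/(32k)$ and hence $\dist_k^{(1)}\le 4r_1\le\Delta/(8k)$.

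Next I would bound $\dist_k^{(\tilde\ell)}$ from below at the final loop index $\tilde\ell=\floor{(n-m_1)/m}$, where $|\calP_k^{(\tilde\ell)}|\ge n-2m$ and the associated nearest-neighbour set $N\subseteq\calP_k^{(\tilde\ell)}$ of size $\ceil{(1-\beta)|\calP_k^{(\tilde\ell)}|}$ has complement $|\calP_k^{(\tilde\ell)}\setminus N|\le\beta|\calP_k^{(\tilde\ell)}|\le\beta n$. By \prettyref{lmm:E-conc-init-k}(i) at least $n^*_2(1-\beta^2)\ge(n\alpha/k)(1-\beta^2)$ points of $T^*_2$ lie in $\calB(\theta_2,r_1)$, so at least $(n\alpha/k)(1-\beta^2)-2m$ of them lie in $\calP_k^{(\tilde\ell)}$; since $\beta=\alpha/(4k^2)$ makes $\beta n$ strictly smaller than this count once $n$ is large (even after subtracting $m\le n\beta^2/2+1$ and the contamination budget $n^\out\le n\alpha^2/(64k^3)$), such a point $y\in\calB(\theta_2,r_1)$ must belong to $N$. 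Then $\dist_k^{(\tilde\ell)}\ge\|\mu_k^{(k,1)}-y\|\ge\|\theta_1-\theta_2\|-\|\mu_k^{(k,1)}-\theta_1\|-\|y-\theta_2\|\ge\Delta-4r_1>\Delta/(8k)$, using $4r_1\le\Delta/(8k)$ and $k\ge 3$. Since $\dist_k^{(\cdot)}$ starts at most $\Delta/(8k)$ and ends strictly above it, and $\tilde\ell\ge 2$ for $n$ large, setting $\bar\ell_k=\min\{\ell_k\ge 1:\dist_k^{(\ell_k+1)}>\Delta/(8k)\}$ gives $\bar\ell_k\ge 1$, $\dist_k^{(\bar\ell_k+1)}>\Delta/(8k)$, and $\dist_k^{(\bar\ell_k)}\le\Delta/(8k)$ (the case $\bar\ell_k=1$ being covered by the first-stage bound), which is the assertion.

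The step I expect to be the genuine bottleneck is the lower bound at the last stage: one must guarantee that a witness point near the \emph{second} centroid actually survives inside the nearest-neighbour set $N$ of $\mu_k^{(k,1)}$, despite (a) $\calP_k^{(\tilde\ell)}$ now holding almost all $n$ points, including up to $n^\out$ adversarial ones possibly planted near $\theta_1$, and (b) the $\beta$-trimming discarding $\beta n$ points. This is precisely what dictates the choices $\beta=\alpha/(4k^2)$ and $n^\out\le n\alpha^2/(64k^3)$ — both tuned so that $\beta n$ and $n^\out$ are of smaller order than the $\Theta(n\alpha/k)$ genuine $T^*_2$-points concentrated near $\theta_2$; the remainder is routine constant-chasing identical to the $k=2$ case, together with the easy verification that $\tilde\ell\ge 2$ once $n$ exceeds a constant depending on $\alpha,k$.
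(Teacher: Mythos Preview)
Your proposal is correct and follows essentially the same approach as the paper: bound $\dist_k^{(1)}$ above via the concentration of cluster $\theta_1$ (the paper's $\theta_k$) inside $\calB(\mu_k^{(k,1)},4r_1)$, bound $\dist_k^{(\tilde\ell)}$ below by exhibiting a point near a different centroid that survives the $\beta$-trim, and take the first crossing. The only cosmetic differences are that you make the monotonicity of $\dist_k^{(\cdot)}$ explicit and use a single alternate cluster for the lower bound, whereas the paper uses a disjointness/pigeonhole argument over the union $\cup_{j\neq k}\{Y_i\in\calB(\theta_j,c_1)\}$; both variants close with the same constant-chasing.
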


\begin{lemma} \label{lmm:outlier-small-totdist-k}
	There exists steps $\ell_{k},\dots,\ell_2$ such that for each $i=2,\dots,k$, at the $\ell_i$-th step the distance to the $(1-\beta)|\calP_i^{(\ell_i)}|$-th closest point from $\mu_i^{(i,\ell_i)}$, within the set $\calP_i^{(\ell_i)}$ will all be smaller than $\Delta\over 8k$ and the $(1-\beta)|\overline{\calP_2^{(\ell_2)}}|$-th closest point from $\mu_1^{(1,1)}$, within the set $\overline{\calP_2^{(\ell_2)}}$ will be smaller than $\Delta\over 8k$.
\end{lemma}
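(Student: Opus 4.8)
The plan is to prove this by downward recursion on the number of clusters, mirroring the recursive structure of \prettyref{algo:init-k} and bottoming out at the two-cluster analysis of \prettyref{lmm:outlier-small-totdist}. Throughout, $k$ — and hence the target radius $\Delta/(8k)$ — is held fixed, and the induction runs on the current cluster count $j$ from $k$ down to $2$; the concentration guarantee \prettyref{lmm:E-conc-init-k}(ii) is already phrased with the uniform radius $\Delta/(16k)$, so the target does not degrade as the recursion descends.

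First I would take $\ell_k = \bar\ell_k$, the step supplied by \prettyref{lmm:outlier-first-jump-k}, which immediately gives $\dist_k^{(\ell_k)}\le \Delta/(8k)$. The substance at this level is to describe the partition $(\calP_k^{(\ell_k)},\overline{\calP_k^{(\ell_k)}})$ at that step, by copying the chain of arguments in \prettyref{lmm:outlier-small-totdist}: use \prettyref{lmm:outlier-centroid1-k} to place $\mu_k^{(k,1)}$ within $3\sigma G^{-1}(e^{-5/(4\beta^2)})$ of some true centroid, say $\theta_1$; then use \prettyref{lmm:E-conc-init-k}(ii) to count how many points of $T_1^*$ versus of the other clusters lie in $\calB(\theta_1,\Delta/(16k))$. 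This shows that $\calP_k^{(\ell_k)}$ captures all but $m + 5n/(4\log(1/G(\Delta/(16k\sigma))))$ of the points of $T_1^*$ and at most $n\beta + 5n/(4\log(1/G(\Delta/(16k\sigma))))$ points of the remaining clusters, so that $\overline{\calP_k^{(\ell_k)}}$ still contains at least $n_i^* - n\beta - 5n/(4\log(1/G(\Delta/(16k\sigma))))$ points of each $T_i^*$, $i=2,\dots,k$, while its total ``contamination'' (true outliers, heavy-tail points, and leaked $T_1^*$ points) is bounded by $n^\out + m + 5n/(4\log(1/G(\Delta/(16k\sigma))))$.

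The key step is then to check that $\overline{\calP_k^{(\ell_k)}}$ — which is exactly the set on which $\iod_{k-1,m_1,m,\beta}$ is run inside \prettyref{algo:init-k} — is a legitimate ``good'' data set for the $(k-1)$-cluster problem: its $k-1$ genuine clusters each retain at least the required $\Theta(n\alpha/k)$ points after the shrinkage, and its effective outlier budget stays below the tolerance $n\alpha^2/(64k^3)$ demanded by the $(k-1)$-level of the induction, once $\beta=\alpha/(4k^2)$, $m=\max\{1,\floor{n\beta^2/2}\}$ are plugged in and $\Delta\ge c_1 k\sigma G^{-1}(e^{-c_2/\beta^2})$ is used to drive every $5n/(4\log(1/G(\Delta/(16k\sigma))))$ term below the needed fraction of $n$. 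Granting this, the inductive hypothesis applied to $\overline{\calP_k^{(\ell_k)}}$ yields steps $\ell_{k-1},\dots,\ell_2$ at which all inner quantile distances are at most $\Delta/(8k)$, and concatenating with $\ell_k$ gives the desired coherent choice; the base case $j=2$ is \prettyref{lmm:outlier-small-totdist} in its outlier form, with the radius bookkeeping carried out with $\Delta/(8k)$ in place of $\Delta/16$, which is admissible since $\Delta$ is taken large in a $k$-dependent way.

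I expect the main obstacle to be precisely this contamination bookkeeping across the $k-1$ recursion levels: at each descent a fresh batch of ``bad'' points (the $\le m$ leaked cluster points plus the $5n/(4\log(1/G(\cdot)))$ heavy-tail points) is added to whatever outlier budget was inherited, and one must verify these never accumulate past the $n\alpha^2/(64k^3)$ threshold that the next level can absorb — this is what forces the $k^2$ factor in $\beta$ and the $k^3$ factor in the outlier bound. Verifying that $\Delta\ge c_1 k\sigma G^{-1}(e^{-c_2/\beta^2})$ with $\beta=\alpha/(4k^2)$ indeed makes each $5n/(4\log(1/G(\cdot)))$ term small enough is the most delicate computation, whereas the geometric arguments at each level are near-verbatim repetitions of the two-cluster case.
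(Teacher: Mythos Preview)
Your proposal is correct and follows essentially the same route as the paper. The paper also runs a downward induction from level $k$ to level $2$, using \prettyref{lmm:outlier-centroid1-k} and \prettyref{lmm:outlier-first-jump-k} to anchor the top level and then peeling off one cluster at a time while tracking how much of each remaining cluster survives in the complement set and how much contamination leaks in.

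The only organizational difference worth noting is that the paper does \emph{not} phrase the descent as a black-box call to the $(k-1)$-cluster result. Because the algorithm parameters $m_1,m,\beta$ are fixed once and for all (they are not rescaled as the recursion descends), one cannot literally invoke ``the $(k-1)$-cluster theorem'' on $\overline{\calP_k^{(\ell_k)}}$; the hypotheses would not match. Instead the paper carries four explicit invariants (Q1)--(Q4) through the induction: (Q1) the new HDP point lands within $3\sigma G^{-1}(e^{-5/(4\beta^2)})$ of some remaining true centroid, (Q2) the current quantile distance is at most $\Delta/(8k)$, (Q3) each of the $i-1$ unprocessed clusters retains at least $n_h^*-(k-i+1)n\beta-\frac{5(k-i+1)n_h^*}{4\log(1/G(\Delta/16\sigma k))}$ of its points in the current complement, and (Q4) the total leakage from already-processed clusters into the complement is at most $(k-i+1)m+\frac{5\sum_{g=i}^k n_g^*}{4\log(1/G(\Delta/16\sigma k))}$. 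These are exactly the ``fresh batch of bad points added at each descent'' that you flagged, and the linear-in-$(k-i+1)$ accumulation is precisely why $\beta=\alpha/(4k^2)$ and the $\Delta$ lower bound scale the way they do. After the induction reaches $i=2$, the final centroid $\mu_1$ is handled by a direct quantile argument on $\overline{\calP_2^{(\bar\ell_2)}}$ rather than by invoking \prettyref{lmm:outlier-small-totdist} verbatim, again because the parameters differ from the two-cluster theorem. Your plan would work if you unfold the ``inductive hypothesis'' into these explicit bounds; the paper simply writes them out from the start.
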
 

\begin{lemma}\label{lmm:outlier-apprx-centroid-k}
	If $\totdist_k^{(\ell_k)}\leq {\Delta\over 8}$ for some $\ell_k$, then for the loop-index $\ell_k,\dots,\ell_2$ achieving the above we get that there is a permutation $\pi$ of $[k]$ such that (with $\ell_1$ being set as $\ell_2$) $$\max_{i\in [k]} {\|\mu_i^{(i,\ell_i)}-\theta_{\pi(i)}\|}\leq {\Delta\over 3}.$$
\end{lemma}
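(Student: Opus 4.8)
The plan is to argue by induction on $k$, with the base case $k=2$ being precisely \prettyref{lmm:outlier-apprx-centroid}, and to run the whole argument on the high-probability event $\tilde\calE_k$ of \prettyref{lmm:E-conc-init-k}. On that event each true centroid $\theta_j$ keeps at least $n_j^*\bigl(1-\tfrac 5{4\log(1/G(\Delta/(16\sigma k)))}\bigr)$ of its cluster's points inside $\calB(\theta_j,\Delta/(16k))$; since $\Delta>c_1k\sigma\decay^{-1}(e^{-c_2/\beta^2})$ and $n^\out\le n\alpha^2/(64k^3)$, the number of data points lying outside $\cup_j\calB(\theta_j,\Delta/(16k))$, together with the outliers, is a tiny fraction of $n\alpha/k$, and balls of radius $\le\Delta/2$ around distinct centroids are pairwise disjoint. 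The first step is structural: unrolling the recursion of \prettyref{algo:init-k} along the loop indices $\ell_k,\dots,\ell_2$ produced by the nested argmins partitions the data into $k$ cells — $\calP_k^{(\ell_k)}$, the nested sets $\calP_i^{(\ell_i)}$ generated by the level-$i$ calls, and the final $\overline{\calP_2^{(\ell_2)}}$ — each carrying one \hdp{}-type estimate $\mu_i^{(i,\ell_i)}$ and one nonnegative quantile $\dist_i^{(\ell_i)}$, with $\totdist_k^{(\ell_k)}=\sum_i\dist_i^{(\ell_i)}$. Hence $\totdist_k^{(\ell_k)}\le\Delta/8$ already forces $\dist_i^{(\ell_i)}\le\Delta/8$ for every $i$, that is, a $(1-\beta)$-fraction of cell $i$ lies within $\Delta/8$ of $\mu_i^{(i,\ell_i)}$.

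The second step locates each estimate near a true centroid. For $\mu_k^{(k,1)}$ this is \prettyref{lmm:outlier-centroid1-k}, which after relabeling gives $\|\mu_k^{(k,1)}-\theta_k\|\le 3\sigma\decay^{-1}(e^{-5/(4\beta^2)})\ll\Delta/3$; combined with $\dist_k^{(\ell_k)}\le\Delta/8$ one then shows (this is where the induction and the helper lemmas enter) that $\calP_k^{(\ell_k)}$ is dominated by $T_k^*$, so that $\overline{\calP_k^{(\ell_k)}}$ still contains each remaining cluster up to a negligible deficit and only negligibly many stray $T_k^*$-points plus outliers — hence meets the $(k-1)$-cluster hypotheses with the same $m_1,m,\beta$ and a safely reduced outlier budget, and $\totdist_{k-1}^{(\ell_k)}\le\Delta/8$ lets the induction hypothesis place $\mu_{k-1}^{(k-1,\ell_{k-1})},\dots,\mu_1^{(1,\ell_2)}$ within $\Delta/3$ of $k-1$ distinct centroids among $\theta_1,\dots,\theta_{k-1}$. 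Equivalently, using \prettyref{lmm:outlier-first-jump-k} and \prettyref{lmm:outlier-small-totdist-k} at every level, one checks directly that the point set handed to each \hdp{} call still contains $\ge m_1$ points inside some $\calB(\theta_j,\Delta/(16k))$, so the tightest-neighborhood property of \hdp{} forces its radius $\le\Delta/(8k)$ and hence $\mu_i^{(i,\ell_i)}$ within $\Delta/(8k)+\Delta/(16k)<\Delta/3$ of that $\theta_j$. Either way, every $\mu_i^{(i,\ell_i)}$ lies in $\cup_j\calB(\theta_j,\Delta/3)$.

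The third step upgrades this to a bijection by a pigeonhole-and-counting argument, exactly as in the $k=2$ case. If some $\theta_{j_0}$ had no estimate within $\Delta/3$, then for every cell $i$ the ball $\calB(\mu_i^{(i,\ell_i)},\Delta/8)$ would be disjoint from $\calB(\theta_{j_0},\Delta/(16k))$, because $\mu_i^{(i,\ell_i)}$ sits within $\Delta/3$ of some $\theta_j\ne\theta_{j_0}$ and $\Delta/3+\Delta/8+\Delta/(16k)<\Delta\le\|\theta_j-\theta_{j_0}\|$; consequently cell $i$ meets $\calB(\theta_{j_0},\Delta/(16k))$ only in its at most $\beta|\text{cell }i|$ points outside that ball, and summing over the $k$ cells gives $|\calB(\theta_{j_0},\Delta/(16k))\cap\{Y_1,\dots,Y_n\}|\le\beta n=n\alpha/(4k^2)$, contradicting the lower bound $n_{j_0}^*(1-o(1))>n\alpha/(4k^2)$ from $\tilde\calE_k$. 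Hence every true centroid is matched, the matching is a permutation $\pi$, and $\max_{i\in[k]}\|\mu_i^{(i,\ell_i)}-\theta_{\pi(i)}\|\le\Delta/3$. I expect the real obstacle to be the second step: controlling, uniformly over the $O(1/\beta^2)$ outer-loop positions and down all $k-1$ recursion levels, that removing the higher cells never destroys the concentrated structure of the clusters that still remain to be found; this is exactly the bookkeeping that dictates $m_1=\lceil n\alpha/(4k)\rceil$, $m=\max\{1,\lfloor n\beta^2/2\rfloor\}$, $\beta=\alpha/(4k^2)$ and $n^\out\le n\alpha^2/(64k^3)$, and it is the service that \prettyref{lmm:outlier-first-jump-k} and \prettyref{lmm:outlier-small-totdist-k} perform.
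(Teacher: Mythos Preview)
Your third step—the pigeonhole argument upgrading ``every $\mu_i^{(i,\ell_i)}$ lies in $\cup_j\calB(\theta_j,\Delta/3)$'' to a bijection—is correct and is exactly how the paper handles that half of the proof.

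The gap is in your second step. Neither route you sketch establishes that every $\mu_i^{(i,\ell_i)}$ lands in $\cup_j\calB(\theta_j,\Delta/3)$. For the induction on $k$, you assert that $\dist_k^{(\ell_k)}\le\Delta/8$ together with $\mu_k^{(k,1)}\in\calB(\theta_k,3c_1)$ forces $\overline{\calP_k^{(\ell_k)}}$ to contain ``only negligibly many stray $T_k^*$-points'', so that the $(k{-}1)$-cluster induction hypothesis applies on $\overline{\calP_k^{(\ell_k)}}$. That assertion is unsupported and in general false: $\dist_k^{(\ell_k)}\le\Delta/8$ controls only the $(1{-}\beta)$-quantile of distances \emph{within} $\calP_k^{(\ell_k)}$, not the size of $\calP_k^{(\ell_k)}$. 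For $\ell_k=1$ one has $|\calP_k^{(1)}|=m_1$ and $\dist_k^{(1)}\le\Delta/(8k)$, yet $\overline{\calP_k^{(1)}}$ retains at least $n_k^*-m_1\ge 3n\alpha/(4k)$ points of $T_k^*$—orders of magnitude beyond any outlier budget the $(k{-}1)$-problem can accommodate. Your appeal to \prettyref{lmm:outlier-first-jump-k} and \prettyref{lmm:outlier-small-totdist-k} does not close this: those lemmas supply the needed structural control only along the \emph{particular} path $\bar\ell_k,\dots,\bar\ell_2$ at which $\dist_i$ first exceeds $\Delta/(8k)$, and say nothing about an arbitrary sequence $\ell_k,\dots,\ell_2$ with $\totdist_k^{(\ell_k)}\le\Delta/8$, which is precisely what this lemma must treat.

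The paper avoids the recursion altogether with a direct pigeonhole. It uses only that the $k$ cells $\calP_k^{(\ell_k)},\dots,\calP_2^{(\ell_2)},\overline{\calP_2^{(\ell_2)}}$ partition the data and each has $\dist_i^{(\ell_i)}\le\Delta/8$: if some $\mu_{i_0}$ lay outside $\cup_j\calB(\theta_j,\Delta/3)$ then, since on $\tilde\calE_k$ almost no data lies outside $\cup_j\calB(\theta_j,\Delta/8)$, its cell would be tiny. The remaining $k-1$ cells would then have to absorb, inside their $\dist$-balls, at least $3n\alpha/(4k)$ points from \emph{every one} of the $k$ true clusters; by pigeonhole one cell's $\dist$-ball holds $\ge 3n\alpha/(4k^2)$ points from each of two distinct clusters $j_1\neq j_2$, hence two points at distance $\ge\Delta/3$ inside a set of diameter $\le 2\dist_g\le\Delta/4$. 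This argument is symmetric in the cells and never needs to locate any inner $\mu_i$ beforehand, so the circularity in your inductive scheme does not arise.
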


\prettyref{lmm:outlier-centroid1-k}, \prettyref{lmm:outlier-first-jump-k} and \prettyref{lmm:outlier-small-totdist-k} together implies, provided $\Delta$ is large enough, that among all of the iterations of our algorithm there is an instance on which the $\totdist^{(\ell)}_k$ measure becomes smaller than ${\Delta\over 8}$. As our algorithm finally picks the iteration step $\ell=\ell^*$ with the lowest $\totdist^{(\ell)}_k$ measure, it ensures that $\totdist^{(\ell^*_k)}_k\leq {\Delta\over 8}$. In view of \prettyref{lmm:outlier-apprx-centroid-k} this implies $\max_{i\in [k]}\|\theta_{\pi(i)}-\mu_i\|\leq \Delta/3$, for the centroid estimates $\mu_k,\dots,\mu_1$ generated at that iteration stage, as required. Now we prove below \prettyref{lmm:outlier-centroid1-k}, \prettyref{lmm:outlier-first-jump-k}, \prettyref{lmm:outlier-small-totdist-k} and \prettyref{lmm:outlier-apprx-centroid-k}.

\begin{proof}[Proof of \prettyref{lmm:outlier-centroid1-k}]
	
	In view of \prettyref{lmm:E-conc-init-k}, there is a constant $c_1>0$ such that
	\begin{align}\label{eq:outlier-init1-k}
		|\sth{j\in [n]:Y_j\in \calB(\theta_i,c_1)}|\geq n^*_i\pth{1-\beta^2},\quad i\in [k].
	\end{align}
	As we have $n^*_i\geq {n\alpha\over k}$ by assumption, it follows that there is a point $Y_i$ such that 
	$$|\sth{j\in [n]:Y_j\in \calB(Y_i,2c_{1})}|\geq {n\alpha\over 4k} (=m_1).$$
	Hence, the tightest neighborhood around any point $Y_i,i\in [n]$, that contains at least ${n\alpha\over 4k}$ points from $Y_1,\dots,Y_n$, has a radius of at most $2c_{1}$ around that $Y_i$. Using the definition \eqref{eq:outlier-distance}
	\begin{align*}
		D({x},S)=\sth{\|x-Y_{i}\|:i\in S},\quad x\in \reals^d,S\subseteq [n],
	\end{align*} 
	pick $i^*\in [n]$ that satisfies 
	\begin{align*}
		\sth{D(Y_{i^*},[n])}^{\sth{\frac {m_1}n}} = \min_{j\in [n]} \sth{D(Y_{j},[n])}^{\sth{\frac {m_1}n}}.
	\end{align*} 
	Then $\calB(Y_{i^*},2c_{1})$ and $\cup_{j\in [k]}\calB(\theta_j,c_{1})$ can not be disjoint, as in view of \eqref{eq:outlier-init1-k} it will imply that their union will contain more than $n$ points from $Y_1,\dots,Y_n$
	\begin{align*}
		&\abs{\sth{i\in [n]:Y_i\in \calB(Y_{i^*},2c_1)}\cup\qth{\cup_{j\in [k]}\sth{i\in [n]:Y_i\in \calB(\theta_j,c_1)}}}
		\nonumber\\
		&\geq {n\alpha\over 4k} + \sum_{j\in [k]}n^*_j\pth{1-{\beta^2}}
		\geq 
		(n-n^\out)(1-\beta^2) + {n\alpha\over 4k} > n + {n\alpha\over 8k},
	\end{align*}	
	where we assume that 
	\begin{align}\label{eq:out-req-1}
		n^\out\leq {n\alpha\over 16k},
	\end{align}
	and we use the fact that $\sth{i\in [n]:Y_i\in \calB(\theta_j,c_1)},j\in [k]$ are disjoint sets as $\min_{g\neq h\in [k]}\|\theta_g-\theta_h\|\geq \Delta$.
	Hence, $Y_{i^*}$ is at a distance at most $3c_{1}$ from one of the centroids. Without a loss of generality we can pick $\mu_k^{(k,1)} = Y_{i^*}$ and we assume that $\theta_k$ is the closest true centroid to $\mu_k^{(k,1)}$ than any of the other centroids.	
\end{proof}

\begin{proof}[Proof of \prettyref{lmm:outlier-first-jump-k}]
	In view of \prettyref{lmm:outlier-centroid1-k} we have for $c_1=\sigma\decay^{-1}\pth{e^{-{5\over 4\beta^2}}}$
	\begin{align}\label{eq:outlier-init0-k}
		\mu_k^{(k,1)}\in \calB\pth{\theta_k,3c_1},\quad 
		\abs{\{Y_i:i\in T^*_j\}\cap\calB\pth{\theta_j,c_1}}\geq n^*_j\pth{1-\beta^2},j\in [k].
	\end{align}
	We observe the following: 
	\begin{itemize}
		\item The set $\calB(\mu_k^{(k,1)},4c_1)$ contains $\calB(\theta_k,c_1)$, which contains at least ${n\alpha\over k}(1-\beta^2)$ points from $\{Y_i:i\in T^*_k\}$. As the size of $\calP_k^{(1)}$ is at most $\ceil{n\alpha\over 4k}$ the distance of $\mu_k^{(k,1)}$ to any point in $\calP_k^{(1)}$ is less than $4c_1$. As we have $\Delta\geq 32 k c_1$ the last statement implies $\dist_k^{(1)}\leq {\Delta\over 8k}$.
		\item 
		At the last step, say $\tilde \ell_k$, in the for loop indexed by $\ell_k$, $\calP_k^{(\tilde \ell_k)}$ will have at least $n-m$ many points (recall that $m=\floor{n\alpha^2\over 32k^4}\geq 1$ for large $n$). This implies: 
		\begin{enumerate}[label=(\alph*)]
			\item The tightest neighborhood (say $N$) around $\mu_k^{(k,1)}$ with a size at least $(1-\beta)|\calP_k^{(\tilde\ell_k)}|$ will include at least $(1-\beta)(n-m)\geq n-n\beta-m$ points,
			\item \eqref{eq:outlier-init0-k} implies that $\cup_{j\in [k-1]}\sth{\{Y_i:i\in T^*_j\}\cap\calB\pth{\theta_j,c_1}}$ will contain at least ${n\alpha\over 2k}$ points.
		\end{enumerate}
		Hence we get that the sets $N$ and $\cup_{j\in [k-1]}\sth{Y_i:i\in [n],Y_i\in \calB(\theta_j,c_1)}$ can not be disjoint as their union will contain at least $n$ points. The above implies that the neighborhood $N$ will contain at least one point $y$ from the set 
		$\cup_{j\in [k-1]}\sth{Y_i:i\in [n],Y_i\in \calB(\theta_j,c_1)}.$
		Suppose that $y\in N$ is such that
		$$
		y\in \sth{Y_i:i\in [n],Y_i\in \calB(\theta_j,c_1)}\
		\text{for some $j\in [k-1]$}.
		$$Then the distance of $y$ from $\mu_k^{(\tilde\ell_k)}$ is at least $\Delta-4c_1$, 
		\begin{align}\label{eq:outlier-init2-k}
			\|\mu_k^{(\tilde\ell_k)}-y\|
			\geq \|\theta_k-\theta_j\|-\|\mu_k^{(\tilde\ell_k)}-\theta_k\|-\|\theta_j-y\|
			\geq \Delta - 4c_1.
		\end{align}
	\end{itemize} 
	As $\Delta-4c_1\geq {\Delta\over 8k}$ we have that there exist some $1\leq \ell_k\leq n-1$ such that $\dist_k^{(\ell_k+1)} > {\Delta\over 8k}$. Then the following choice of $\bar \ell_k$ finishes the proof
	$$
	\bar \ell_k=\min\sth{r\geq 1:\dist_k^{(r+1)}> {\Delta\over 8k}}.
	$$
\end{proof}

\begin{proof}[Proof of \prettyref{lmm:outlier-small-totdist-k}]
	We will verify the result using an induction argument: The following is satisfied for each $i=k,k-1,\dots,2$ (induction variable).
	There exists an index value $\bar \ell_j$ corresponding the the $j$-th loop count $\ell_j$, for $j=k,\dots,2$ such that the corresponding centroids $\mu_k^{(k,{\bar \ell_k})},\dots,\mu_i^{(i,\bar \ell_i)}$ satisfy
	\begin{enumerate}[label=(Q\arabic*)]
		
		\item \label{prop:outlier-good-centroid} For each $g=k,\dots,2$, there is one $\theta_g$, such that $\|\theta_g-\mu_g^{(g,\ell_g)}\|\leq 3\sigma\decay^{-1}\pth{e^{-{5\over 4\beta^2}}}$.
		
		\item \label{prop:outlier-low-dist} At the $\ell_i$-th step the distance to the $(1-\beta)|\calP_i^{(\ell_i)}|$-th closest point from $\mu_i^{(i,1)}$, within the set $\calP_i^{(\ell_i)}$ will all be smaller than $\Delta\over 8k$.
		
		\item \label{prop:outlier-more-in-complement} For $h=1,\dots,i-1$ 
		$$\abs{\overline{\calP_i^{(\bar \ell_i)}}\cap {\{Y_j:j\in T^*_h\}}}
		\geq n^*_h- (k-i+1)n\beta -{5(k-i+1)n^*_h\over 4\log(1/(G(\Delta/16\sigma k)))}.$$
		\item \label{prop:outlier-less-in-complement}  $\abs{\overline{\calP_{i}^{(\bar \ell_i)}}\cap \sth{\cup_{g=i}^k\{Y_j:j\in T^*_g\}}}\leq (k-i+1)m + {5\sum_{g=i}^k n^*_g\over 4\log(1/G(\Delta/(16k\sigma)))}.$
		
	\end{enumerate}
	
	\paragraph*{\textbf{Base case $i=k$}} 
	Note that our algorithm starts by picking the tightest neighborhood with $m_1$ points, and then we keep adding $m$ points from $\overline{\calP_k^{(\ell_k)}}$ to $\calP_k^{(\ell_k)}$ at each step. In view of \prettyref{lmm:outlier-centroid1-k} we get that the estimate $\mu_k^{(k,1)}$ lies within a radius $3c_1$ of $\theta_k$, with $c_1 = \sigma\decay^{-1}\pth{e^{-{5\over 4\beta^2}}}$. Hence \ref{prop:outlier-good-centroid} is satisfied. In view of \prettyref{lmm:outlier-first-jump-k}, when we run the $k$-th for loop at the iteration $\bar \ell_k$, we get $\dist_k^{(\bar \ell_k)}\leq {\Delta\over 8k}$, and hence \ref{prop:outlier-low-dist} is satisfied.
	
	Let $\bar \ell_k$ be as in \prettyref{lmm:outlier-first-jump-k}. Without a loss of generality we assume that $\theta_k$ is the closest centroid to $\mu_k^{(k,1)}$ and \eqref{eq:outlier-init0-k} holds.
	We first prove the following claims:
	\begin{align}
		\abs{\calP_k^{(\bar \ell_k)}\cap \{Y_i:i\in T^*_k\}}&\geq n^*_k-m-{5n^*_k\over 4\log(1/(G(\Delta/16\sigma k)))}
		\label{eq:outlier-init-claim1-k}\\
		\abs{\calP_k^{(\bar \ell_k)}\cap {\{Y_i:i\in T^*_j\}}}&\leq n\beta + {5n^*_j\over 4\log(1/(G(\Delta/16\sigma k)))},\quad j\in [k-1].
		\label{eq:outlier-init-claim2-k}
	\end{align}
	The first claim \eqref{eq:outlier-init-claim1-k} follows from the following sequence of arguments (note the definition in \eqref{eq:outlier-distance})
	\begin{itemize}
		\item Using $\mu_k^{(k,\bar \ell_k)}=\mu_k^{(k,\bar \ell_k+1)}$ and from \prettyref{lmm:outlier-first-jump-k} we get $$\sth{D(\mu_k^{(k,\bar \ell_k)},\calP_k^{(\bar \ell_k+1)})}^{\sth{1-\beta}} =\sth{D(\mu_k^{(k,\bar \ell_k+1)},\calP_k^{(\bar \ell_k+1)})}^{\sth{1-\beta}} = \dist_k^{(\bar \ell_k+1)} >  {\Delta\over 8k},$$
		which implies 
		\begin{align}\label{eq:outlier-init6-k}
			\calP_k^{(\bar \ell_k+1)}\supseteq \{Y_i:i\in [n]\}\cap \calB\pth{\mu_k^{(k,\bar \ell_k)},{\Delta\over 8k}}.
		\end{align}
		As we have $\calP_k^{(\bar \ell_k)}\subset \calP_k^{(\bar \ell_k+1)}$ and $|\calP_k^{(\bar \ell_k)}|\leq |\calP_k^{(\bar \ell_k+1)}|\leq |\calP_k^{(\bar \ell_k)}|+m$, we get that there is a set $A\subseteq \{Y_i:i\in [n]\}$ that satisfies
		$$
		\calP_k^{(\bar \ell_k)}\supseteq\calP_k^{(\bar \ell_k+1)}/A,\quad |A|\leq m.
		$$
		In view of \eqref{eq:outlier-init6-k} the last display implies
		\begin{align}
			\sth{\calP_k^{(\bar \ell_k)}\cap\{Y_i:i\in T^*_k\}}
			&\supseteq \sth{\calP_k^{(\bar \ell_k+1)}\cap\{Y_i:i\in T^*_k\}/A}
			\nonumber\\
			&\supseteq \sth{\calB\pth{\mu_k^{(k,\bar \ell_k)},{\Delta\over 8k}}\cap\{Y_i:i\in T^*_k\}/A},
		\end{align}
		and hence
		\begin{align}\label{eq:outlier-init7-k}
			\abs{\calP_k^{(\bar \ell_k)}\cap\{Y_i:i\in T^*_k\}}&\geq \abs{\calB\pth{\mu_k^{(k,\bar \ell_k)},{\Delta\over 8k}}\cap\{Y_i:i\in T^*_k\}}-|A|
			\nonumber\\
			& \geq \abs{\calB\pth{\mu_k^{(k,\bar \ell_k)},{\Delta\over 8k}}\cap\{Y_i:i\in T^*_k\}}-{n\beta^2\over 2}.
		\end{align}
		
		\item Note that we have from \eqref{eq:outlier-init0-k} and $\Delta\geq 48\sigma k$:
		\begin{align}
			\begin{gathered}
				\calB\pth{\mu_k^{(k,\bar \ell_k)},{\Delta\over 8k}}\supseteq \calB\pth{\theta_k,{\Delta\over 8k}-3c_1}
				\supseteq \calB\pth{\theta_k,{\Delta\over 16k}}.
			\end{gathered}	
		\end{align}
	 	In view of \prettyref{lmm:E-conc-init-k}(ii) this implies
		\begin{align}\label{eq:outlier-init8-k}
			&\abs{\calB\pth{\mu_k^{(k,\bar \ell_k)},{\Delta\over 8k}}\cap \{Y_i:i\in T^*_k\}}
			\geq \abs{\calB\pth{\theta_k,{\Delta\over 16k}}\cap \{Y_i:i\in T^*_k\}}
			\nonumber\\
			&\geq n^*_k\pth{1-{5\over 4\log\pth{1/G\pth{\Delta\over 16\sigma k}}}}
			\geq n^*_k-{5n^*_k\over 4\log(1/(G(\Delta/16\sigma k)))}. 
		\end{align}
	\end{itemize}
	Combining \eqref{eq:outlier-init7-k} and \eqref{eq:outlier-init8-k} we get \eqref{eq:outlier-init-claim1-k}.
	
	Next, to prove the claim \eqref{eq:outlier-init-claim2-k}, we note that:
	\begin{itemize}
		\item In view of \prettyref{lmm:E-conc-init-k}, for each $j=1,\dots,k-1$, there are at most ${5n^*_j\over 4\log(1/(G(\Delta/16\sigma k)))}$ many points from $\sth{Y_i:i\in T^*_j}$ outside $\calB(\theta_j,{\Delta\over 16k})$. In view of \eqref{eq:outlier-init0-k} and $\mu_k^{(k,\bar \ell_k)}=\mu_k^{(k,1)}$ we get $\calB\pth{\mu_k^{(k,\bar \ell_k)},{\Delta\over 8k}}$ is a subset of $\calB\pth{\theta_k,{\Delta\over 8k}+3c_1}$ as $\Delta\geq 24c_1 k$. Hence we get $\calB\pth{\mu_k^{(k,\bar \ell_k)},{\Delta\over 8k}}$ and $\cup_{j=1}^{k-1}\calB(\theta_j,{\Delta\over 16k})$ are disjoint, and hence we have for each $j=1,\dots,k-1$
		\begin{align}\label{eq:outlier-init9-k}
			&\abs{\calP_k^{(\bar \ell_k)}\cap {\{Y_i:i\in T^*_j\}}\cap \calB\pth{\mu_k^{(k,\bar \ell_k)},{\Delta\over 8k}}}
			\nonumber\\
			&\leq \abs{{\{Y_i:i\in T^*_j\}}/ {\cup_{j=1}^{k-1}\calB\pth{\theta_j,{\Delta\over 16k}}}}
			\leq {5n^*_j\over 4\log(1/(G(\Delta/16\sigma k)))} .
		\end{align}
		\item On the other hand, from \prettyref{lmm:outlier-first-jump-k} we have
		$\dist_k^{(\bar \ell_k)}=\sth{D(\mu_k^{(k,\bar \ell_k)},\calP_k^{(\bar \ell_k)})}^{\sth{1-\beta}}\leq {\Delta\over 8k}$. This
		implies that for each $j\in [k-1]$
		\begin{align}\label{eq:outlier-init10-k}
			\abs{\calP_k^{(\bar \ell_k)}\cap {\{Y_i:i\in T^*_j\}}/ \calB\pth{\mu_k^{(k,\bar \ell_k)},{\Delta\over 8k}}}
			\leq {n\beta}.
		\end{align}
	\end{itemize}
	Combining \eqref{eq:outlier-init9-k} and \eqref{eq:outlier-init10-k} we get \eqref{eq:outlier-init-claim2-k}.
	
	Hence, we have proven the inequalities \eqref{eq:outlier-init-claim1-k} and \eqref{eq:outlier-init-claim2-k}. These inequalities together imply
	\begin{align}
		\abs{\overline{\calP_k^{(\bar \ell_k)}}\cap {\{Y_i:i\in T^*_j\}}}
		&\geq n^*_j- n\beta -{5n^*_j\over 4\log(1/(G(\Delta/16\sigma k)))},\quad j\in [k-1]
		\label{eq:outlier-init-claim3-k}\\
		\abs{\overline{\calP_k^{(\bar \ell_k)}}\cap \{Y_i:i\in T^*_k\}}&\leq m+{5n^*_k\over 4\log(1/(G(\Delta/16\sigma k)))}.
		\label{eq:outlier-init-claim4-k}
	\end{align}
	The first inequality above verifies \ref{prop:outlier-more-in-complement} and the second inequality verifies \ref{prop:outlier-less-in-complement}.

	\paragraph*{\textbf{Induction step from $i$ to $i-1$}} To complete the induction argument, let us suppose that the statement holds for some $3\leq i\leq k$ and we intend to prove the case of $i-1$. The proof of \ref{prop:outlier-good-centroid} follows from the following general result. The proof is essentially a repetition of argument as in the proof of \prettyref{lmm:outlier-centroid1-k}, and is presented at the end of this section.
	\begin{lemma}\label{lmm:outlier-good-centroid-in-complement}
		Suppose that we have for $i\geq 3$ and $h=1,\dots,i-1$
		\begin{align*}
			\abs{\overline{\calP_i^{(\bar \ell_i)}}\cap {\{Y_j:j\in T^*_h\}}}
			&\geq {3n^*_h\over 5}, \quad
			\abs{\overline{\calP_i^{(\bar \ell_i)}}\cap \sth{\cup_{g=i}^k\{Y_j:j\in T^*_g\}}}
			\leq {n\alpha\beta\over 5k}.
		\end{align*} 
		Then there is a centroid $\theta_{i-1}$ such that $\|\mu_{i-1}^{(i-1,1)}-\theta_{i-1}\|\leq 3\sigma\decay^{-1}\pth{e^{-{5\over 4\beta^2}}}$ if $\Delta\geq 16\sigma k\decay^{-1}\pth{e^{-{5k\over \alpha}}}$.
	\end{lemma}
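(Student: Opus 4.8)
The plan is to reuse, almost verbatim, the argument of \prettyref{lmm:outlier-centroid1-k}, but to run the counting inside the residual set $\mathcal{S}:=\overline{\calP_i^{(\bar \ell_i)}}$ in place of the full sample. Recall that the recursive call $\iod_{i-1,m_1,m,\beta}$ launched on $\mathcal{S}$ begins (the {\sf HDP} step of \prettyref{algo:hdp}) by setting $\mu_{i-1}^{(i-1,1)}=Y_{i^*}$, where $i^*\in\mathcal{S}$ is an index whose smallest ball containing $m_1$ points of $\mathcal{S}$ has minimal radius. Write $c_1=\sigma\decay^{-1}(e^{-5/(4\beta^2)})$ as in the rest of the section and work on the event $\tilde\calE_k$ of \prettyref{lmm:E-conc-init-k}.

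First I would check that this minimal $m_1$-neighbourhood radius is at most $2c_1$. On $\tilde\calE_k$, part~(i) gives $|T^*_h\setminus\calB(\theta_h,c_1)|\le n^*_h\beta^2$ for each $h\le i-1$; combined with the first hypothesis $|\mathcal{S}\cap T^*_h|\ge \tfrac35 n^*_h$ this yields $|\mathcal{S}\cap T^*_h\cap\calB(\theta_h,c_1)|\ge \tfrac35 n^*_h-n^*_h\beta^2\ge \tfrac12 n^*_h\ge \tfrac{n\alpha}{2k}\ge m_1$ (using $\beta\le\tfrac14$, $n^*_h\ge n\alpha/k$, and $m_1=\lceil n\alpha/(4k)\rceil\le n\alpha/(2k)$ for $n$ large). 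Any $Y_j$ in this nonempty set satisfies $\calB(Y_j,2c_1)\cap\mathcal{S}\supseteq\calB(\theta_h,c_1)\cap\mathcal{S}\cap T^*_h$ by the triangle inequality, so its $m_1$-neighbourhood within $\mathcal{S}$ has radius at most $2c_1$; hence so does that of the minimiser $Y_{i^*}$, and in particular $|\calB(Y_{i^*},2c_1)\cap\mathcal{S}|\ge m_1$.

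Next I would bound the number of points of $\mathcal{S}$ lying outside $\bigcup_{h=1}^{i-1}\calB(\theta_h,c_1)$: such a point is either a point of some $T^*_h$, $h\le i-1$, outside $\calB(\theta_h,c_1)$ (at most $\sum_{h\le i-1}n^*_h\beta^2\le n\beta^2$ of these), or a point of $\mathcal{S}\cap\bigcup_{g\ge i}T^*_g$ (at most $n\alpha\beta/(5k)$ by the second hypothesis), or an outlier (at most $n^\out\le n\alpha^2/(64k^3)$ under the running assumption). Therefore
\[
\Bigl|\mathcal{S}\setminus\textstyle\bigcup_{h=1}^{i-1}\calB(\theta_h,c_1)\Bigr|\;\le\;n\beta^2+\frac{n\alpha\beta}{5k}+n^\out\;<\;\frac{n\alpha}{4k}\;\le\;m_1 ,
\]
a short computation using the stated choices of $\beta,m,m_1$ together with $\alpha<1<k$. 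Now the bound $\Delta\ge 16\sigma k\,\decay^{-1}(e^{-5k/\alpha})$ forces $\Delta>2c_1$, so the balls $\calB(\theta_h,c_1)$, $h\le i-1$, are pairwise disjoint; if $\|Y_{i^*}-\theta_h\|>3c_1$ for every $h\le i-1$, then $\calB(Y_{i^*},2c_1)$ is disjoint from each $\calB(\theta_h,c_1)$, whence $\calB(Y_{i^*},2c_1)\cap\mathcal{S}\subseteq\mathcal{S}\setminus\bigcup_{h\le i-1}\calB(\theta_h,c_1)$ has fewer than $m_1$ points, contradicting the previous step. Hence $\mu_{i-1}^{(i-1,1)}=Y_{i^*}$ lies within $3c_1=3\sigma\decay^{-1}(e^{-5/(4\beta^2)})$ of one of $\theta_1,\dots,\theta_{i-1}$, which is the assertion.

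The one genuinely delicate point is the bookkeeping in the two chains of inequalities above: one must verify that the constants ($\tfrac35$, $\beta^2$, $n\alpha\beta/(5k)$ and the outlier budget $n\alpha^2/(64k^3)$) are arranged so that the ``uncovered'' count stays strictly below $m_1$ while the ``tight-neighbourhood'' count stays at least $m_1$, uniformly over $3\le i\le k$, and that the $\Delta$-threshold in the statement simultaneously yields disjointness of the $\calB(\theta_h,c_1)$ and keeps \prettyref{lmm:E-conc-init-k} applicable. There is no extra probabilistic cost, since $\mathcal{S}$ is a deterministic function of the data and the guarantees of \prettyref{lmm:E-conc-init-k} are stated for the raw sample, so the whole argument takes place on the single event $\tilde\calE_k$.
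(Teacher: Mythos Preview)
Your proposal is correct and follows essentially the same strategy as the paper: both arguments use \prettyref{lmm:E-conc-init-k}(i) to show that within $\overline{\calP_i^{(\bar\ell_i)}}$ there is an $m_1$-neighbourhood of radius at most $2c_1$, and then derive a cardinality contradiction if $\calB(Y_{i^*},2c_1)$ were disjoint from $\cup_{h\le i-1}\calB(\theta_h,c_1)$. The only stylistic difference is that the paper phrases the contradiction additively (the union of $\calB(Y_{i^*},2c_1)$ and the $\calB(\theta_h,c_1)$'s would contain more points than $|\overline{\calP_i^{(\bar\ell_i)}}|$), whereas you phrase it via the complement (fewer than $m_1$ points of $\mathcal{S}$ lie outside $\cup_{h\le i-1}\calB(\theta_h,c_1)$); these are equivalent.

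One minor caveat: your assertion that the lemma's own hypothesis $\Delta\ge 16\sigma k\,\decay^{-1}(e^{-5k/\alpha})$ ``forces $\Delta>2c_1$'' is not literally true for a general decay function $G$, since $c_1=\sigma\decay^{-1}(e^{-5/(4\beta^2)})$ involves the much larger exponent $5/(4\beta^2)=20k^4/\alpha^2$. The disjointness of the balls $\calB(\theta_h,c_1)$ actually comes from the global standing assumption $\Delta\ge c_1 k\sigma\decay^{-1}(e^{-c_2/\beta^2})$ in \prettyref{thm:proof-algo-k}; the paper's proof makes the same implicit appeal when it asserts disjointness, so this is not a defect unique to your argument.
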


	Next we prove \ref{prop:outlier-low-dist} for the loop indexed by $\ell_{i-1}$. In view of the \prettyref{lmm:outlier-good-centroid-in-complement} and \prettyref{lmm:E-conc-init-k} we note that
	\begin{align}\label{eq:outlier-init0-k-induction}
		\mu_{i-1}^{(i-1,1)}\in \calB\pth{\theta_{i-1}, 3c_1},\quad 
		\abs{\{Y_j:j\in T^*_h\}\cap\calB\pth{\theta_h,c_1}}\geq n^*_h\pth{1-\beta^2},h\in [i-1].
	\end{align}
	where $c_1 = \sigma\decay^{-1}\pth{e^{-{5\over 4\beta^2}}}$. In view of a reasoning similar as in the proof of \prettyref{lmm:outlier-first-jump-k} we note the following.
	As we keep adding $m$ points from $\overline{\calP_{i-1}^{(\ell_{i-1})}}$ to $\calP_{i-1}^{(\ell_{i-1})}$ at each step $\ell_{i-1}=2,\dots,\floor{n'- m_1\over m}$, note that at some stage $\ell_{i-1}$, before we exhaust all the points, the distance to the $(1-{\beta})|\calP_{i-1}^{(\ell_{i-1})}|$-th closest point from $\mu_{i-1}^{(i-1,1)}$, within the set $\calP_{i-1}^{(\ell_{i-1})}$ will exceed ${\Delta\over 8k}$. 
	Hence, to prove our claim \ref{prop:outlier-low-dist} we observe the following: 
	\begin{itemize}
		\item In view of \eqref{eq:outlier-init0-k-induction} we have $\calB(\mu_{i-1}^{(i-1,1)},4c_1)\supseteq\calB(\theta_{i-1},c_1)$, which implies
		\begin{align}\label{eq:outlier-initk-1}
			\abs{\{Y_j:j\in T^*_{i-1}\}/\calB(\mu_{i-1}^{(i-1,1)},4c_1)}
			\leq \abs{\{Y_j:j\in T^*_{i-1}\}/\calB(\theta_{i-1},c_1)}
			\leq n^*_{i-1}\beta^2.
		\end{align}
		In view of the assumption \ref{prop:outlier-more-in-complement} at the induction step $i$ the last display implies 
		\begin{align}
			&\abs{\overline{\calP_i^{(\bar \ell_i)}}\cap \{Y_j:j\in T^*_{i-1}\}\cap \calB(\mu_{i-1}^{(i-1,1)},4c_1)}
			\nonumber\\
			&=\abs{\overline{\calP_i^{(\bar \ell_i)}}\cap \{Y_j:j\in T^*_{i-1}\}}-\abs{\overline{\calP_i^{(\bar \ell_i)}}\cap \{Y_j:j\in T^*_{i-1}\}/ \calB(\mu_{i-1}^{(i-1,1)},4c_1)}
			\nonumber\\
			&\geq \abs{\overline{\calP_i^{(\bar \ell_i)}}\cap \{Y_j:j\in T^*_{i-1}\}}-\abs{\{Y_j:j\in T^*_{i-1}\}/ \calB(\mu_{i-1}^{(i-1,1)},4c_1)}
			\nonumber\\
			&\stepa{\geq} n^*_{i-1}- (k-i+1)n\beta - n^*_{i-1}\beta^2 -{5(k-i+1)n^*_{i-1}\over 4\log(1/(G\pth{{\Delta\over 16\sigma k}}))}
			\stepb{\geq} {n\alpha\over 2k},
		\end{align}
		where (a) used the inequality \eqref{eq:outlier-initk-1} and (b) holds whenever $\Delta\geq 16\sigma k\decay^{-1}\pth{e^{-{10k\over \alpha}}}$ as $(k-i+1)n\beta\leq {n\alpha\over 4k}, n\beta^2\leq {n\alpha\over 16k^4}$. As the size of $\calP_{i-1}^{(1)}$ is at most $\ceil{n\alpha\over 4k}$ the distance of $\mu_{i-1}^{({i-1},1)}$ to any point in $\calP_{i-1}^{(1)}$ is less than $4c_1$, which implies $\dist_{i-1}^{(1)}\leq {\Delta\over 8k}$.
		\item We first note that 
		at the last step, say $\tilde \ell_{i-1}$, in the for loop indexed by $\ell_{i-1}$, $\overline{\calP_{i-1}^{(\tilde \ell_{i-1})}}$ will have at most $m$ many points and
		\begin{align}\label{eq:temp3}
			{\overline{\calP_i^{(\tilde \ell_i)}}} = \overline{\calP_{i-1}^{(\tilde \ell_{i-1})}}\cup {\calP_{i-1}^{(\tilde \ell_{i-1})}},
			\quad
			\abs{\overline{\calP_{i-1}^{(\tilde \ell_{i-1})}}}\leq m.
		\end{align} 
		Hence, in view of \ref{prop:outlier-more-in-complement} and $n^*_1\geq 4n\beta$ we get
		\begin{align}\label{eq:outlier-init11-k}
			&\abs{\calP_{i-1}^{(\tilde \ell_{i-1})}\cap \{Y_j:j\in T^*_{1}\}}
			\nonumber\\
			&\geq \abs{\overline{\calP_i^{(\tilde \ell_i)}}\cap \{Y_j:j\in T^*_{1}\}} - m
			\stepa{\geq} n^*_1 - kn\beta - {n\beta^2\over 2} - {5kn^*_1\over 4\log(1/\decay({\Delta\over 16\sigma k}))}
			\stepb{\geq} 2n\beta, 
		\end{align} 
		where (a) followed from \eqref{eq:temp3}
		and (b) followed as $\Delta\geq 16\sigma k\decay^{-1}\pth{e^{-{5k^2\over \alpha}}}$ and $kn\beta\leq {n\alpha\over 4k}, n\beta^2\leq {n\alpha\over 16k^4}$. As the tightest neighborhood (say $N$) around $\mu_{i-1}^{(i-1,1)}$ with a size at least $(1-\beta)|\calP_{i-1}^{(\tilde\ell_{i-1})}|$ will exclude at most $n\beta$ points from $\calP_{i-1}^{(\tilde\ell_{i-1})}$, in view of \eqref{eq:outlier-init11-k} we get that the neighborhood $N$ will include at least $n\beta$ points from $\{Y_j:j\in T^*_{1}\}$. Now, \eqref{eq:outlier-init0-k} implies that $\{Y_i:i\in T^*_1\}\cap\calB\pth{\theta_1,c_1}$ will contain at lest $n^*_1(1-\beta^2)$ points from $\{Y_j:j\in T^*_{1}\}$, hence we get that the above neighborhood $N$ will contain at least one point $y\in \sth{Y_j:j\in [n],Y_i\in \calB(\theta_1,c_1)}$. Then the distance of $y$ from $\mu_{i-1}^{(i-1,\tilde\ell_{i-1})}$ is at least $\Delta-4c_1$, 
		\begin{align}
			\|\mu_{i-1}^{(i-1,\tilde\ell_{i-1})}-y\|
			\geq \|\theta_1-\theta_{i-1}\|-\|\mu_{i-1}^{(i-1,\tilde\ell_{i-1})}-\theta_{i-1}\|-\|\theta_1-y\|
			\geq \Delta - 4c_1.
		\end{align}
	\end{itemize} 
	Hence we have that there exist some $1\leq \ell_{i-1}\leq n-1$ such that $\dist_{i-1}^{(\ell_{i-1}+1)} > {\Delta\over 8k}$. Choose $\bar \ell_{i-1}$ as
	\begin{align}\label{eq:temp4}
	\bar \ell_{i-1}=\min\sth{r\geq 1:\dist_{i-1}^{(r+1)}> {\Delta\over 8k}}.
	\end{align}
	to satisfy the condition \ref{prop:outlier-low-dist}.
	%
	
	Next we establish \ref{prop:outlier-more-in-complement} and \ref{prop:outlier-less-in-complement} for the induction level ${i-1}$. 
	Let $\bar \ell_{i-1}$ is as in the last definition.
	We prove the following claims:
	For $h=1,\dots,i-2$
	\begin{align}
		\abs{\overline{\calP_{i-1}^{(\bar \ell_{i-1})}}\cap {\{Y_j:j\in T^*_{h}\}}}
		&\geq n^*_h- (k-i+2)n\beta -{5(k-i+2)n^*_h\over 4\log(1/(G(\Delta/16\sigma k)))},
		\label{eq:outlier-init-claim3-k-induction}
		\\
		\abs{\overline{\calP_{i-1}^{(\bar \ell_{i-1})}}\cap \{Y_j:j\in T^*_{i-1}\}}&\leq m+{5n^*_{i-1}\over 4\log(1/(G(\Delta/16\sigma k)))}.
		\label{eq:outlier-init-claim4-k-induction}
	\end{align}
	To prove the claim \eqref{eq:outlier-init-claim3-k-induction}, we note that:
	\begin{itemize}
		\item In view of \prettyref{lmm:E-conc-init-k}, for each $h=1,\dots,i-2$, there are at most ${5n^*_h\over 4\log(1/(G({\Delta\over 16\sigma k})))}$ many points from $\sth{Y_j:j\in T^*_h}$ outside $\calB(\theta_h,{\Delta\over 16k})$. In view of \eqref{eq:outlier-init0-k-induction} we get 
		$$
		\calB\pth{\mu_{i-1}^{({i-1},\bar \ell_{i-1})},{\Delta\over 8k}}\subseteq \calB\pth{\theta_{i-1},{\Delta\over 8k}+4c_1}.
		$$ 
		This implies $\calB\pth{\mu_{i-1}^{({i-1},\bar \ell_{i-1})},{\Delta\over 8k}}$ and $\cup_{h=1}^{i-2}\calB(\theta_h,{\Delta\over 16k})$ are disjoint. Hence for $h\in [i-2]$
		\begin{align}\label{eq:outlier-init9-k-induction}
			&\abs{\calP_{i-1}^{(\bar \ell_{i-1})}\cap {\{Y_j:j\in T^*_h\}}\cap \calB\pth{\mu_{i-1}^{(i-1,\bar \ell_{i-1})},{\Delta\over 8k}}}
			\nonumber\\
			&\leq \abs{{\{Y_j:j\in T^*_h\}}/ {\cup_{h=1}^{i-2}\calB\pth{\theta_h,{\Delta\over 16k}}}}
			\leq {5n^*_h\over 4\log(1/(G(\Delta/16\sigma k)))},
		\end{align}
		where the last inequality followed from \prettyref{lmm:E-conc-init-k}.
		\item On the other hand, in view of already proven \ref{prop:outlier-low-dist} at the induction step $i-1$ we get
		$\dist_{i-1}^{(\bar \ell_{i-1})}=\sth{D(\mu_{i-1}^{(i-1,\bar \ell_{i-1})},\calP_{i-1}^{(\bar \ell_{i-1})})}^{\sth{1-\beta}}\leq {\Delta\over 8k}$, which
		implies that for each $h\in [i-2]$
		\begin{align}\label{eq:outlier-init10-k-induction}
			\abs{\calP_{i-1}^{(\bar \ell_{i-1})}\cap {\{Y_j:j\in T^*_h\}}/ \calB\pth{\mu_{i-1}^{(i-1,\bar \ell_{i-1})},{\Delta\over 8k}}}
			\leq {n\beta}.
		\end{align}
	\end{itemize}
	Combining \eqref{eq:outlier-init9-k-induction} and \eqref{eq:outlier-init10-k-induction} we get
	\begin{align*}
		&\abs{\calP_{i-1}^{(\bar \ell_{i-1})}\cap {\{Y_j:j\in T^*_h\}}}
		\nonumber\\
		&\leq 
		\abs{\calP_{i-1}^{(\bar \ell_{i-1})}\cap {\{Y_j:j\in T^*_h\}}\cap \calB\pth{\mu_{i-1}^{(i-1,\bar \ell_{i-1})},{\Delta\over 8k}}}+\abs{\calP_{i-1}^{(\bar \ell_{i-1})}\cap {\{Y_j:j\in T^*_h\}}/ \calB\pth{\mu_{i-1}^{(i-1,\bar \ell_{i-1})},{\Delta\over 8k}}}
		\nonumber\\
		&\leq {n\beta} + {5n^*_h\over 4\log(1/(G(\Delta/16\sigma k)))}.
	\end{align*}
	Combining the above display with \ref{prop:outlier-more-in-complement} at the induction level $i$ we get for each $h\in [i-2]$
	\begin{align}
		\abs{\overline{\calP_{i-1}^{(\bar \ell_{i-1})}}\cap {\{Y_j:j\in T^*_{h}\}}}
		&=\abs{\overline{\calP_{i}^{(\bar \ell_{i})}}\cap {\{Y_j:j\in T^*_{h}\}}}
		-\abs{\calP_{i-1}^{(\bar \ell_{i-1})}\cap {\{Y_j:j\in T^*_h\}}}
		\nonumber\\
		&\geq n^*_h- (k-i+2)n\beta -{5(k-i+2)n^*_h\over 4\log(1/(G(\Delta/16\sigma k)))}.
	\end{align}
	This completes the verification of \ref{prop:outlier-more-in-complement} for the level $i-1$.
	
	The claim \eqref{eq:outlier-init-claim4-k-induction} follows from the following sequence of arguments (note the definition in \eqref{eq:outlier-distance})
	\begin{itemize}
		\item Using $\mu_{i-1}^{(i-1,\bar \ell_{i-1})}=\mu_{i-1}^{(i-1,\bar \ell_{i-1}+1)}$ and \eqref{eq:temp4} we get $$\sth{D(\mu_{i-1}^{({i-1},\bar \ell_{i-1})},\calP_{i-1}^{(\bar \ell_{i-1}+1)})}^{\sth{1-\beta}} = \sth{D(\mu_{i-1}^{({i-1},\bar \ell_{i-1}+1)},\calP_{i-1}^{(\bar \ell_{i-1}+1)})}^{\sth{1-\beta}}
		=\dist_{i-1}^{(\bar \ell_{i-1}+1)} > {\Delta\over 8k},$$
		which implies 
		\begin{align}\label{eq:outlier-init6-k-iterate}
			\calP_{i-1}^{(\bar \ell_{i-1}+1)}\supseteq \overline{\calP_{i}^{(\bar \ell_{i})}}\cap \calB\pth{\mu_{i-1}^{({i-1},\bar \ell_{i-1})},{\Delta\over 8k}}.
		\end{align}
		As we have $\calP_{i-1}^{(\bar \ell_{i-1})}\subset \calP_{i-1}^{(\bar \ell_{i-1}+1)}$ and $|\calP_{i-1}^{(\bar \ell_{i-1})}|\leq |\calP_{i-1}^{(\bar \ell_{i-1}+1)}|\leq |\calP_{i-1}^{(\bar \ell_{i-1})}|+{n\beta^2\over 2}$, we get that there is a set $A\subseteq \{Y_i:i\in [n]\}$ that satisfies
		$$
		\calP_{i-1}^{(\bar \ell_{i-1})}\supseteq\calP_{i-1}^{(\bar \ell_{i-1}+1)}/A,\quad |A|\leq m.
		$$
		In view of \eqref{eq:outlier-init6-k-iterate} the last display implies
		\begin{align}
			\sth{\calP_{i-1}^{(\bar \ell_{i-1})}\cap\{Y_j:j\in T^*_{i-1}\}}
			&\supseteq
			\sth{\calP_{i-1}^{(\bar \ell_{i-1}+1)}\cap\{Y_j:j\in T^*_{i-1}\}/A}
			\nonumber\\
			&\supseteq 
			\sth{\overline{\calP_{i}^{(\bar \ell_{i})}}\cap \calB\pth{\mu_{i-1}^{({i-1},\bar \ell_{i-1})},{\Delta\over 8k}}\cap\{Y_j:j\in T^*_{i-1}\}/A},
		\end{align}
		and hence
		\begin{align}\label{eq:outlier-init7-k-induction}
			\abs{\calP_{i-1}^{(\bar \ell_{i-1})}\cap\{Y_j:j\in T^*_{i-1}\}}&\geq \abs{\overline{\calP_{i}^{(\bar \ell_{i})}}\cap\calB\pth{\mu_{i-1}^{({i-1},\bar \ell_{i-1})},{\Delta\over 8k}}\cap\{Y_j:j\in T^*_{i-1}\}}-|A|
			\nonumber\\
			& \geq \abs{\overline{\calP_{i}^{(\bar \ell_{i})}}\cap\calB\pth{\mu_{i-1}^{({i-1},\bar \ell_{i-1})},{\Delta\over 8k}}\cap\{Y_j:j\in T^*_{i-1}\}}-m.
		\end{align}
		
		\item As we have from \eqref{eq:outlier-init0-k-induction} and $\Delta\geq 48c_1 k$:
		\begin{align*}
			\begin{gathered}
				\calB\pth{\mu_{i-1}^{({i-1},\bar \ell_{i-1})},{\Delta\over 8k}}\supseteq \calB\pth{\theta_{i-1},{\Delta\over 8k}-3c_1}
				\supseteq \calB\pth{\theta_{i-1},{\Delta\over 16k}},
			\end{gathered}	
		\end{align*}
		in view of \prettyref{lmm:E-conc-init-k}(ii) we get
		\begin{align}\label{eq:outlier-init8-k-induction}
			&\abs{\overline{\calP_{i}^{(\bar \ell_{i})}}\cap\calB\pth{\mu_{i-1}^{({i-1},\bar \ell_{i-1})},{\Delta\over 8k}}\cap \{Y_j:j\in T^*_{i-1}\}}
			\nonumber\\
			&\geq \abs{\overline{\calP_{i}^{(\bar \ell_{i})}}\cap\calB\pth{\theta_{i-1},{\Delta\over 16k}}\cap \{Y_j:j\in T^*_{i-1}\}}
			\nonumber\\
			& {\geq} \abs{\overline{\calP_{i}^{(\bar \ell_{i})}}\cap \{Y_j:j\in T^*_{i-1}\}}
			- \abs{\{Y_j:j\in T^*_{i-1}\}/\calB\pth{\theta_{i-1},{\Delta\over 16k}}}
			\nonumber\\
			&\geq \abs{\overline{\calP_{i}^{(\bar \ell_{i})}}\cap \{Y_j:j\in T^*_{i-1}\}}
			- {5n^*_{i-1}\over 4\log(1/(G(\Delta/16\sigma k)))}.\,
		\end{align}
	where the last inequality followed from \prettyref{lmm:E-conc-init-k}.
	\end{itemize}
	Combining \eqref{eq:outlier-init7-k-induction}, \eqref{eq:outlier-init8-k-induction} and $\overline{\calP_i^{(\bar \ell_i)}}={\calP_{i-1}^{(\bar \ell_{i-1})}} \cup \overline{\calP_{i-1}^{(\bar \ell_{i-1})}}$ we get
	\begin{align*}
		\abs{\overline{\calP_{i-1}^{(\bar \ell_{i-1})}}\cap \{Y_j:j\in T^*_{i-1}\}}
		&= \abs{\overline{\calP_{i}^{(\bar \ell_{i})}}\cap \{Y_j:j\in T^*_{i-1}\}}
		-	\abs{\calP_{i-1}^{(\bar \ell_{i-1})}\cap\{Y_j:j\in T^*_{i-1}\}}
		\nonumber\\
		&\leq  m +{5n^*_{i-1}\over 4\log(1/(G(\Delta/16\sigma k)))}.
	\end{align*}
	In view of \ref{prop:outlier-less-in-complement} for the induction level $i$, with $\overline{\calP_{i-1}^{(\bar \ell_{i-1})}}\subseteq \overline{\calP_{i}^{(\bar \ell_{i})}}$ we get
	\begin{align}
		\abs{\overline{\calP_{i-1}^{(\bar \ell_{i-1})}}\cap \sth{\cup_{g=i-1}^k\{Y_j:j\in T^*_g\}}}
		&\leq \abs{\overline{\calP_{i}^{(\bar \ell_i)}}\cap \sth{\cup_{g=i}^k\{Y_j:j\in T^*_g\}}} + \abs{\overline{\calP_{i-1}^{(\bar \ell_{i-1})}}\cap \{Y_j:j\in T^*_{i-1}\}}
		\nonumber\\
		&\leq (k-i+2) m +{5\sum_{g=i-1}^k n^*_g\over 4\log(1/G(\Delta/(16k\sigma)))}.
	\end{align}
	This concludes the verification of \ref{prop:outlier-less-in-complement} for the induction level $i-1$. This also concludes the proof of the induction results.

	In view of the induction arguments, we have that 
	\begin{align}\label{eq:outlier-primary-distances-k}
		\dist_i^{(\bar \ell_i)}\leq {\Delta\over 8k},\quad i=k,k-1,\dots,2.
	\end{align} 
	Finally, to complete the proof of \prettyref{lmm:outlier-small-totdist-k} it remains to show that $\dist_1^{(\ell_2)}\leq {\Delta\over 8k}$. In view of the induction statement we have that 
	\begin{enumerate}
		\item $\abs{\overline{\calP_2^{(\bar \ell_2)}}\cap {\{Y_j:j\in T^*_1\}}}
		\geq n^*_1- (k-1)n\beta -{5(k-1)n^*_1\over 4\log(1/(G(\Delta/16\sigma k)))}.$
		\item $\abs{\overline{\calP_{2}^{(\bar \ell_2)}}\cap \sth{\cup_{g=2}^k\{Y_j:j\in T^*_g\}}}
		\leq (k-1)m+{5\sum_{g=2}^k n^*_g\over 4\log(1/G(\Delta/(16\sigma k)))}
		\leq {n\alpha\beta\over 8k}+{5\sum_{g=2}^k n^*_g\over 4\log(1/G(\Delta/(16\sigma k)))}.$
	\end{enumerate}
	According to \prettyref{algo:init-k}, in the final stage, to find $\mu_1^{(1,\bar \ell_2)}$ we deploy the ${\sf HDP}_{1-\beta}$ algorithm. In view of 
	$$
	\abs{\sth{Y_j:j\in T^*_1}/ \calB\pth{\theta_1,{\Delta\over 16k}}}\leq {5n^*_1\over 4\log(1/(G(\Delta/16\sigma k)))}
	$$ 
	from \prettyref{lmm:E-conc-init-k}, we have  
	\begin{align*}
		&\abs{\overline{\calP_{2}^{(\bar \ell_2)}}/\calB\pth{\theta_1,{\Delta\over 16k}}}
		\nonumber\\
		&=\abs{\sth{\overline{\calP_{2}^{(\bar \ell_2)}}\cap \{Y_j:j\in T^*_1\}/\calB\pth{\theta_1,{\Delta\over 16k}}}
			\cup \sth{\overline{\calP_{2}^{(\bar \ell_2)}}\cap \sth{\cup_{g=2}^k\{Y_j:j\in T^*_g\}}/\calB\pth{\theta_1,{\Delta\over 16k}}}}
		+n^\out
		\nonumber\\
		&\leq \abs{\{Y_j:j\in T^*_1\}/\calB\pth{\theta_1,{\Delta\over 16k}}}
		+ \abs{\overline{\calP_{2}^{(\bar \ell_2)}}\cap \sth{\cup_{g=2}^k\{Y_j:j\in T^*_g\}}}
		+n^\out
		\nonumber\\
		&\leq {5n^*_1\over 4\log(1/(G(\Delta/16\sigma k)))} + (k-1){n\beta^2\over 2}+{5\sum_{g=2}^k n^*_g\over 4\log(1/G(\Delta/(16\sigma k)))}
		+n^\out
		\nonumber\\
		&\leq  {n\alpha\beta\over 8k}+{5n\over 2\log(1/(G(\Delta/(16\sigma k))))}+n^\out
		\leq {n^*_1\beta\over 4},
	\end{align*}
	where for the last inequality we assume that 
	\begin{align}\label{eq:out-req-2}
		\Delta\geq 16 k\sigma\decay^{-1}\pth{c^{-{40\over \alpha\beta}}},
		\quad n^\out \leq {n\alpha\beta\over 16k}.
	\end{align}
	As we have
	\begin{align}
		\abs{\overline{\calP_2^{(\bar \ell_2)}}}\geq \abs{\overline{\calP_2^{(\bar \ell_2)}}\cap {\{Y_j:j\in T^*_1\}}}\geq {n^*_1\over 2},
	\end{align} 
	any subset of $\overline{\calP_2^{(\bar \ell_2)}}$ with size $\pth{1-\beta}\abs{\overline{\calP_2^{(\bar \ell_2)}}}$, discards a set of size at least ${n^*_1\beta\over 2}$ from $\overline{\calP_2^{(\bar \ell_2)}}$. Hence the tightest subset of $\overline{\calP_2^{(\bar \ell_2)}}$ with size $\pth{1-{\beta}}\abs{\overline{\calP_1^{(\ell)}}}$ will have a diameter of at most $\Delta\over 16k$. This implies $\dist_1^{(\bar \ell_2)}\leq {\Delta\over 16 k}$. In view of \eqref{eq:outlier-primary-distances-k} this proves that there is a path of indices $\bar \ell_k,\dots\bar\ell_2$ such that $\dist_1^{(\bar\ell_2)}+\sum_{h=2}^k\dist_h^{(\bar \ell_h)}\leq {\Delta\over 8}$. Hence, when we pick the indices to optimize $\totdist$, we get $\min_{\ell_k}\totdist_k^{(\ell_k)}\leq {\Delta\over 8}$, as required.
\end{proof}

\begin{proof}[Proof of \prettyref{lmm:outlier-small-totdist-k}]
	
Note that the term $\totdist_k^{(\ell_k)}$ is given by $\sum_{i=1}^k\dist_{i}^{(\ell_i)}$ for some sequence of indices originating from the inbuilt {\it for-loops} at different levels $\ell_k,\dots,\ell_2$ and $\ell_2=\ell_1$. Hence it suffices to prove that if the sum $\sum_{i=1}^k\dist_{i}^{(\ell_i)}$ is smaller than ${\Delta\over 8}$ for any sequence of the loop counts we have good centroid approximations. This is summarized in the following result. 

\begin{lemma}
	Suppose that for a sequence of indices $\ell_1,\dots,\ell_k$ we have $\sum_{i=1}^k\dist_{i}^{(\ell_i)}\leq {\Delta\over 8}$. Then if the corresponding centroids are $\{\mu_i^{(i,\ell_i)}\}_{i=1}^k$, with $\ell_2=\ell_1$, we get that there is a permutation $\pi$ of $[k]$ such that $\mu_i^{i,\ell_i}\in \calB(\theta_{\pi(i)},\Delta/3)$ for each $i\in 1,\dots, k$.
\end{lemma}

\begin{proof}

	First we show that all of the centroids lie in $\cup_{i=1}^k\calB(\theta_i,\Delta/3)$. If not, without a loss of generality let $\mu_1^{(1,\ell_1)}$ lie outside  $\cup_{i=1}^k\calB(\theta_i,\Delta/3)$. Then we have
	\begin{align}\label{eq:outlier-init12-k}
		&\abs{\{Y_i:i\in [n]\}\cap \calB\pth{\mu_1^{(1,\ell_1)}, {\Delta\over 8}}}
		\nonumber\\
		&\leq \abs{\{Y_i:i\in [n]\}/ \cup_{i=1}^k\calB(\theta_i,{\Delta\over 8})}
		\leq {5n\over 4\log(1/(G(\Delta/16\sigma k)))}+n^\out,
	\end{align} 
	where the last inequality followed from \prettyref{lmm:E-conc-init-k}.
	For an ease of notation, throughout the proof we define 
	\begin{align}
		\ell_1\eqdef\ell_2,\quad \calP_1^{(\ell_{1})}\eqdef\overline{\calP_1^{(\ell_{2})}},\quad
		\mu_1^{(1,\ell_1)}
		\eqdef \mu_1^{(1,\ell_2)}.
	\end{align}
	Note that in terms of the indices $\ell_2,\dots,\ell_k$ we have the partition of $\{Y_i:i\in [n]\}$ as
	\begin{align}
		\{Y_i:i\in [n]\}={\cup_{g=1}^k{\calP_g^{(\ell_g)}}},\quad
		\calP_g^{(\ell_g)}\cap \calP_h^{(\ell_h)}=\phi,g\neq h\in [k].
	\end{align}
	In view of the assumption $\dist_1^{(\ell_1)}\leq {\Delta\over 8}$ and the fact that the $\abs{\calP_1^{(\ell_1)}\cap\calB\pth{\mu_1^{(1,\ell_1)},\dist_1^{(\ell_1)}}}\geq (1-\beta)\abs{\calP_1^{(\ell_1)}}$ we have 
	\begin{align}
		\abs{\calP_1^{(\ell_1)}/ \calB\pth{\mu_1^{(1,\ell_1)}, {\Delta\over 8}}}
		\leq 
		\abs{\calP_1^{(\ell_1)}/ \calB\pth{\mu_1^{(1,\ell_1)}, \dist_1^{(\ell_1)}}}
		\leq n\beta.
	\end{align}
	In view of \eqref{eq:outlier-init12-k} the last display implies
	\begin{align}
		\abs{\calP_1^{(\ell_1)}}
		&\leq \abs{\calP_1^{(\ell_1)}/ \calB\pth{\mu_1^{(1,\ell_1)}, {\Delta\over 8}}}
		+	\abs{\{Y_i:i\in [n]\}\cap \calB\pth{\mu_1^{(1,\ell_1)}, {\Delta\over 8}}}
		\nonumber\\
		&\leq n\beta + {3n\over 4\log(1/(G(\Delta/16\sigma k)))}+n^\out. 
	\end{align}
	As $\cup_{i=2}^{k}\calP_i^{(\ell_i)}$ and $\calP_1^{(\ell_1)}$ are disjoint and their union covers all the data points, the last display implies for any $j=2,\dots,k$
	\begin{align}\label{eq:outlier-init13-k}
		\abs{\{Y_i:i\in T^*_j\}\cap\sth{\cup_{g=2}^{k}\calP_g^{(\ell_g)}}}
		&=\abs{\{Y_i:i\in T^*_j\}/ \calP_1^{(\ell_1)}}
		\nonumber\\
		&\geq n^*_j-{n\beta}-{5n\over 4\log(1/(G(\Delta/16\sigma k)))}-n^\out
		\geq {7n\alpha\over 8k},
	\end{align}
	where the last inequality follows from $\beta\leq {\alpha\over 12k}$ as $k\geq 3$, $\Delta\geq $ and we assume that 
	\begin{align}\label{eq:out-req-3}
		n^\out\leq {n\beta\over 2}.
	\end{align}
	Then we have for $j=1,\dots,k$
	\begin{align}
		&\abs{\{Y_i:i\in T^*_j\}\cap\qth{\cup_{g=2}^{k}\sth{\calP_g^{(\ell_g)}\cap \calB(\mu_g^{(g,\ell_g)},\dist_g^{(\ell_g)})}}}
		\nonumber\\
		&=\abs{\{Y_i:i\in T^*_j\}\cap \sth{\cup_{g=2}^{k}\calP_g^{(\ell_g)}}\cap\qth{\cup_{g=2}^{k}\sth{\calP_g^{(\ell_g)}\cap \calB(\mu_g^{(g,\ell_g)},\dist_g^{(\ell_g)})}}}
		\nonumber\\
		&= \abs{\{Y_i:i\in T^*_j\}\cap \sth{\cup_{g=2}^{k}\calP_g^{(\ell_g)}}}\nonumber\\
		&\quad -\abs{\{Y_i:i\in T^*_j\}\cap \sth{\cup_{g=2}^{k}\calP_g^{(\ell_g)}}/\qth{\cup_{g=2}^{k}\sth{\calP_g^{(\ell_g)}\cap \calB(\mu_g^{(g,\ell_g)},\dist_g^{(\ell_g)})}}}
		\nonumber\\
		&\geq \abs{\{Y_i:i\in T^*_j\}\cap \sth{\cup_{g=2}^{k}\calP_g^{(\ell_g)}}}\nonumber\\
		&\quad-\abs{ \sth{\cup_{g=2}^{k}\calP_g^{(\ell_g)}}/\qth{\cup_{g=2}^{k}\sth{\calP_g^{(\ell_g)}\cap \calB(\mu_g^{(g,\ell_g)},\dist_g^{(\ell_g)})}}}
		\nonumber\\
		&\stepa{\geq} {7n\alpha\over 8k}
		-\abs{\cup_{g=2}^{k}\sth{\calP_g^{(\ell_g)}/ \calB(\mu_g^{(g,\ell_g)},\dist_g^{(\ell_g)})}}
		\nonumber\\
		& = {7n\alpha\over 8k}
		-\sum_{g=2}^{k}\abs{\calP_g^{(\ell_g)}/ \calB(\mu_g^{(g,\ell_g)},\dist_g^{(\ell_g)})}
		\stepb{\geq} {7n\alpha\over 8k}-n\beta
		\geq {3n\alpha\over 4k},
		\label{eq:outlier-init14-k}
	\end{align}
	where (a) followed \eqref{eq:outlier-init13-k} and the fact that $\{\calP_g^{(\ell_g)}\}_{g=2}^k$ are disjoint and (b) followed from as
	$$
	\sum_{g=2}^{k}\abs{\calP_g^{(\ell_g)}/ \calB(\mu_g^{(g,\ell_g)},\dist_g^{(\ell_g)})}
	\leq 
	\beta\sum_{g=2}^{k}\abs{\calP_g^{(\ell_g)}}\leq n\beta.
	$$
	As we have for each $j=1,\dots,k$
	\begin{align*}
		&\{Y_i:i\in T^*_{j}\}\cap \qth{\cup_{g=2}^{k}\sth{\calP_g^{(\ell_g)}\cap \calB(\mu_g^{(g,\ell_g)},\dist_g^{(\ell_g)})}}
		\nonumber\\
		&=\cup_{g=2}^{k} \sth{\{Y_i:i\in T^*_{j}\}\cap\calP_g^{(\ell_g)}\cap \calB(\mu_g^{(g,\ell_g)},\dist_g^{(\ell_g)})}
	\end{align*} 
	with the union on right side of the above display is disjoint, by the pigeon hole principle there exist indices $g,j_1,j_2$ such that
	\begin{align*}
		&\abs{\sth{\calP_g^{(\ell_g)}\cap \calB(\mu_g^{(g,\ell_g)},\dist_g^{(\ell_g)})}\cap \{Y_i:i\in T^*_{j}\}}
		\nonumber\\
		&\geq {\min_{j=1}^k\abs{\{Y_i:i\in T^*_j\}\cap\qth{\cup_{g=2}^{k}\sth{\calP_g^{(\ell_g)}\cap \calB(\mu_g^{(g,\ell_g)},\dist_g^{(\ell_g)})}}}
			\over k}
		\geq {3n\alpha\over 4k^2},
		\quad j=j_1,j_2,
	\end{align*}
	where the last display followed using \eqref{eq:outlier-init14-k}.
	However, as $\Delta\geq 16\sigma k \decay^{-1\pth{e^{-{5k^2\over \alpha}}}}$ implies
	$$\abs{\{Y_i:i\in T^*_j\}/\calB(\theta_{j},\Delta/3)}
	\leq {5n^*_j\over 4\log(1/G(\Delta/16\sigma k))}
	\leq {n\alpha\over 4k^2}, \quad j=j_1,j_2,
	$$ 
	we get that for $j=j_1,j_2$
	\begin{align*}
		&\abs{\sth{\calP_g^{(\ell_g)}\cap \calB(\mu_g^{(g,\ell_g)},\dist_g^{(\ell_g)})}\cap \{Y_i:i\in T^*_{j}\}\cap \calB(\theta_{j},\Delta/3)}
		\nonumber\\
		&\geq \abs{\sth{\calP_g^{(\ell_g)}\cap \calB(\mu_g^{(g,\ell_g)},\dist_g^{(\ell_g)})}\cap \{Y_i:i\in T^*_{j}\}}
		-\abs{\{Y_i:i\in T^*_{j}\} /\calB(\theta_{j},\Delta/3)}
		\geq {n\alpha\over 2k^2}.
	\end{align*}
	Hence, there exists $x,y\in \{Y_i:i\in [n]\}$ such that 
	\begin{align}
		x&\in \sth{\calP_g^{(\ell_g)}\cap \calB(\mu_g^{(g,\ell_g)},\dist_g^{(\ell_g)})}\cap\{Y_i:i\in T^*_{j_1}\}\cap \calB(\theta_{j_1},\Delta/3),
		\nonumber\\
		y&\in \sth{\calP_g^{(\ell_g)}\cap \calB(\mu_g^{(g,\ell_g)},\dist_g^{(\ell_g)})}\cap\{Y_i:i\in T^*_{j_2}\}\cap \calB(\theta_{j_2},\Delta/3).
	\end{align} 	
	As we have $\dist_g^{(\ell_g)}\leq {\Delta\over 8}$ and $\|\theta_{j_1}-\theta_{j_2}\|\geq \Delta$ we get a contradiction
	\begin{align}
		\|x-y\|&\geq
		\|\theta_{j_1}-\theta_{j_2}\|
		-\|x-\theta_{j_1}\|
		-\|y-\theta_{j_2}\|
		\geq {\Delta\over 3}.
		\nonumber\\
		\|x-y\|&\leq \|x-\mu_g^{(g,\ell_g)}\|
		+\|y-\mu_g^{(g,\ell_g)}\|\leq 2\dist_g^{(\ell_g)}\leq {\Delta\over 4}.
	\end{align}
	Hence all of the centroids lie in $\cup_{i=1}^k\calB(\theta_i,\Delta/3)$.

	Now it remains to show that $\sth{\mu_g^{(g,\ell_g)}}_{g=1}^k$ lie in different balls among $\sth{\calB(\theta_g,\Delta/3)}_{g=1}^k$. If not, then without a loss of generality let $\calB\pth{\theta_1,{\Delta\over 3}}$ contains two of the centroids, say $\mu_{j_1}^{(j_1,\ell_{j_1})},\mu_{j_2}^{(j_2,\ell_{j_2})}$.
	Also, as $\calB\pth{\theta_1,{\Delta\over 3}}$ contains two centroids, by the pigeonhole principle we get that there is an index $g\neq 1$ such that 
	$$
	\mu_j^{(j,\ell_j)}\notin \calB\pth{\theta_g,{\Delta\over 3}},\quad j=1,\dots,k.
	$$ 
	In view of the disjoint union $\sth{\calB\pth{\theta_g,{\Delta\over8}}}_{g=1}^k$,and $\dist_j^{(\ell_j)}\leq{\Delta\over 8}$ the above implies
	\begin{align}
		\calB\pth{\theta_g,{\Delta\over 8}}\cap 
		\calB\pth{\mu_j^{(j,\ell_j)},\dist_j^{(\ell_j)}}=\phi,\quad j=1,\dots,k.
	\end{align}  
	Note that by \prettyref{lmm:E-conc-init-k}
	\begin{align}\label{eq:outlier-init15-k}
		\abs{\{Y_i:i\in [n]\}\cap \calB\pth{\theta_g,{\Delta\over 8}}}
		\geq {n\alpha\over 2k}.
	\end{align}
	As the disjoint union $\cup_{m=1}^k\calP_m^{(\ell_m)}$ is the entire set of data points, 
	we get
	\begin{align}
		\{Y_i:i\in [n]\}\cap \calB\pth{\theta_g,{\Delta\over 8}}
		&=\sth{\cup_{j=1}^k\calP_j^{(\ell_j)}}
		\cap \calB\pth{\theta_g,{\Delta\over 8}}
		\nonumber\\
		&= \cup_{j=1}^k\sth{\calP_j^{(\ell_j)}
			\cap \calB\pth{\theta_g,{\Delta\over 8}}}
		\subseteq 
		\cup_{j=1}^k\sth{\calP_j^{(\ell_j)}
			/ \calB\pth{\mu_j^{(j,\ell_j)},\dist_j^{(\ell_j)}}},
	\end{align}
	which implies 
	\begin{align}
		\abs{\{Y_i:i\in [n]\}\cap \calB\pth{\theta_g,{\Delta\over 8}}}
		&\leq \sum_{j=1}^k
		\sth{\calP_j^{(\ell_j)}
			/ \calB\pth{\mu_j^{(j,\ell_j)},\dist_j^{(\ell_j)}}}
		\nonumber\\
		&\leq \sum_{j=1}^k \beta\abs{\calP_j^{(\ell_j)}}
		\leq n\beta={n\alpha\over 4k^2}.
	\end{align}
	This provides a contradiction to \eqref{eq:outlier-init15-k}. Hence, all the centroids must lie in different $\calB\pth{\theta_j,{\Delta\over 3}}$ sets.
\end{proof}
\end{proof}

\begin{proof}[Proof of \prettyref{lmm:outlier-good-centroid-in-complement}]
	
	In view of \prettyref{lmm:E-conc-init-k}, there is a constant $c_1 = \sigma \decay^{-1}\pth{e^{-{5\over 4\beta^2}}}$ such that
	\begin{align}\label{eq:outlier-init1-k-induction}
		\abs{\{Y_j:j\in T^*_h\}/\calB(\theta_h,c_1)}\leq n^*_h\beta^2,\quad h\in [k].
	\end{align}
	Hence we get that for $h\in [i-1]$
	\begin{align}
		&\abs{\overline{\calP_i^{(\bar \ell_i)}}\cap {\{Y_j:j\in T^*_h\}}\cap\calB(\theta_h,c_1)}
		\nonumber\\
		& =	\abs{\overline{\calP_i^{(\bar \ell_i)}}\cap {\{Y_j:j\in T^*_h\}}} -\abs{\overline{\calP_i^{(\bar \ell_i)}}\cap{\{Y_j:j\in T^*_h\}}/\calB(\theta_h,c_1)}
		\nonumber\\
		& \geq 	\abs{\overline{\calP_i^{(\bar \ell_i)}}\cap {\{Y_j:j\in T^*_h\}}} -\abs{{\{Y_j:j\in T^*_h\}}/\calB(\theta_h,c_1)}
		\nonumber\\
		& \geq 	\abs{\overline{\calP_i^{(\bar \ell_i)}}\cap {\{Y_j:j\in T^*_h\}}} 
		- n^*_h\beta^2
		\label{eq:outlier-init16-k}\\
		&\geq n^*_h- (k-i+1)n\beta -{5(k-i+1)n^*_h\over 4\log(1/(G(\Delta/16\sigma k)))}
		-n^*_h\beta^2
		\geq m_1.
		\nonumber
	\end{align}
	where the last inequality holds whenever $\Delta\geq 16\sigma k\decay^{-1}\pth{e^{-{5k\over \alpha}}}$ as $(k-i+1)n\beta, n\beta^2\leq {n\alpha\over 4k}$. As we have from the lemma statement
	\begin{align}
		\label{eq:outlier-init18-k}
		\abs{\overline{\calP_i^{(\bar \ell_i)}}\cap \sth{\cup_{g=i}^k\{Y_j:j\in T^*_g\}}}
		&\leq {2n\alpha\beta\over 5k}
		\leq {m_1\over 2},
	\end{align}
	we get that the tightest neighborhood around any point in $\overline{\calP_i^{(\bar \ell_i)}}$ with a size $m_1$ will have a radius of at most $2c_{1}$ around that $Y_i$. Let $\mu_{i-1}^{(i-1,1)} = Y_{i^*}$ be the chosen centroid. Hence $\abs{\sth{\overline{\calP_i^{(\bar \ell_i)}}\cap \calB(Y_{i^*},2c_1)}}\geq m_1$. Then $\calB(Y_{i^*},2c_{1})$ and $\cup_{j\in [i-1]}\calB(\theta_j,c_{1})$ can not be disjoint, as in view of \eqref{eq:outlier-init16-k} it will imply that
	\begin{align}
		\abs{\overline{\calP_i^{(\bar \ell_i)}}}
		&\geq\abs{\sth{\overline{\calP_i^{(\bar \ell_i)}}\cap \calB(Y_{i^*},2c_1)}\cup\sth{\overline{\calP_i^{(\bar \ell_i)}}\cap\sth{\cup_{h\in [i-1]}\calB(\theta_h,c_1)}}}
		\nonumber\\
		&\stepa{\geq} m_1+ \sum_{h\in [i-1]} \abs{\overline{\calP_i^{(\bar \ell_i)}}\cap {\{Y_j:j\in T^*_h\}}\cap\calB(\theta_h,c_1)}
		\nonumber\\
		&\stepb{\geq} m_1-n\beta^2+ \sum_{h\in [i-1]} \abs{\overline{\calP_i^{(\bar \ell_i)}}\cap {\{Y_j:j\in T^*_h\}}} 
		\nonumber\\
		&= m_1-n\beta^2+ \abs{\overline{\calP_i^{(\bar \ell_i)}}\cap \qth{\cup_{h\in [i-1]}{\{Y_j:j\in T^*_h\}}}} 
		\nonumber\\
		&\geq m_1-n\beta^2+ \abs{\overline{\calP_i^{(\bar \ell_i)}}}-
		\abs{\overline{\calP_i^{(\bar \ell_i)}}\cap \sth{\cup_{g=i}^k\{Y_j:j\in T^*_g\}}} 
		-n^\out
		\nonumber\\
		&\stepc{=} {m_1\over 2}-n\beta^2+ \abs{\overline{\calP_i^{(\bar \ell_i)}}}-n^\out
		\stepd{\geq} \abs{\overline{\calP_i^{(\bar \ell_i)}}} + {m_1\over 8}.
		\label{eq:outlier-init17-k}
	\end{align}	
	where (a) follows from the fact that $\sth{i\in [n]:Y_i\in \calB(\theta_j,c_1)},j\in [k]$ are disjoint sets as $\min_{g\neq h\in [k]}\|\theta_g-\theta_h\|\geq \Delta$, (b) follows from \eqref{eq:outlier-init18-k}, (c) followed from \eqref{eq:outlier-init16-k} and (d) follows from assuming 
	\begin{align}\label{eq:out-req-4}
		n^\out\leq {n\alpha\over 32k}\leq {m_1\over 8}.
	\end{align}
	Hence, $Y_{i^*}$ is at a distance at most $3c_{1}$ from one of the centroids. Without loss of we can the closest centroid to be $\theta_{i-1}$. 
\end{proof}

\noindent \textbf{Requirement on $n^\out$}: To summarize how many outliers our initialization technique can tolerate, we combine \eqref{eq:out-req-1},\eqref{eq:out-req-2},\eqref{eq:out-req-3},\eqref{eq:out-req-4} to get
\begin{align*}
	n^\out\leq\min\sth{{n\alpha\over 16k},{n\alpha\beta\over 16k},{n\beta\over 2},{n\alpha\over 32k}} = {n\alpha^2\over 64k^3}.
\end{align*}

\end{document}